\numberwithin{equation}{section}
\crefname{section}{Section}{Sections}
\crefname{subsection}{Subsection}{Subsections}
\crefname{condition}{Condition}{Conditions}
\crefname{hypothesis}{Hypothesis}{Hypothesis}
\crefname{assumption}{Assumption}{Assumptions}
\crefname{lemma}{Lemma}{Lemmas}
\crefname{claim}{Claim}{Claims}
\crefname{remark}{Remark}{Remarks}
\newtheorem{theorem}{Theorem}[section]
\newtheorem{lemma}[theorem]{Lemma}
\newtheorem{corollary}[theorem]{Corollary}
\newtheorem{definition}[theorem]{Definition}% Use {\rm ...}
\newtheorem{remark}[theorem]{Remark}        % Use {\rm ...}
\numberwithin{equation}{section}
\def\YYint#1#2#3{{\setbox0=\hbox{$#1{#2#3}{\iint}$}
\vcenter{\hbox{$#2#3$}}\kern-.50\wd0}}
\def\XXint#1#2#3{{\setbox0=\hbox{$#1{#2#3}{\int}$}
\vcenter{\hbox{$#2#3$}}\kern-.50\wd0}}
\def\namedlabel#1#2{\begingroup
\def\@currentlabel{#2}%
\label{#1}\endgroup
}
\newcommand{\rmh}[1]{\mathpalette{\raisem@th{#1}}}
\newcommand{\raisem@th}[3]{\hspace*{-1pt}\raisebox{#1}{$#2#3$}}
\newcommand{\descref}[2]{\hyperref[#1]{\textcolor{black}{(}\textcolor{blue}{\bf #2}\textcolor{black}{)}}}
\newcommand{\dref}[2]{\hyperref[#1]{\textcolor{black}{(}\textcolor{blue}{\bf #2}\textcolor{black}{)}}}
\g@addto@macro\normalsize{%
\setlength\abovedisplayskip{3pt}
\setlength\belowdisplayskip{3pt}
\setlength\abovedisplayshortskip{1pt}
\setlength\belowdisplayshortskip{3pt}
}
\def\ps@pprintTitle{%
\let\@oddhead\@empty
\let\@evenhead\@empty
\def\@oddfoot{}%
\let\@evenfoot\@oddfoot}
\newcommand{\ep}{\epsilon}
\newcommand{\iprod}[2]{\langle #1 \ ,  #2\rangle}
\newcounter{whitney}
\newcounter{ineqcounter}
\begin{document}
\begin{frontmatter}
%\addbibresource{main.bib}
\title{Regularity results for a class of mixed local and nonlocal singular problems involving distance function}
\author{Kaushik Bal and Stuti Das}
\ead{kaushik@iitk.ac.in and stutid21@iitk.ac.in/stutidas1999@gmail.com}
\address{Department of Mathematics and Statistics,\\ Indian Institute of Technology Kanpur, Uttar Pradesh, 208016, India}

\begin{abstract}
We investigate the following mixed local and nonlocal quasilinear equation with singularity given by
\begin{eqnarray*}
\begin{split}
-\Delta_pu+(-\Delta)_q^s u&=\frac{f(x)}{u^{\delta}}\text { in } \Omega, \\u&>0 \text{ in } \Omega,\\u&=0  \text { in }\mathbb{R}^n \backslash \Omega;
\end{split}
\end{eqnarray*}
where, 
\begin{equation*}
(-\Delta )_q^s u(x):= c_{n,s}\operatorname{P.V.}\int_{\mathbb{R}^n}\frac{|u(x)-u(y)|^{q-2}(u(x)-u(y))}{|x-y|^{n+sq}} d y,
\end{equation*}
with $\Omega$ being a bounded domain in $\mathbb{R}^{n}$ with $C^2$ boundary, $1<q\leq p<\infty$, $s\in(0,1)$, $\delta>0$ and $f\in L^\infty_{\mathrm{loc}}(\Omega)$ is a non-negative function which behaves like $\mathbf{dist(x,\partial \Omega)^{-\beta}}$, $\beta\geq 0$ near $\partial \Omega$. We start by proving several H\"older and gradient H\"older regularity results for a more general class of quasilinear operators when $\delta=0$. Using the regularity results we deduce existence, uniqueness and H\"older regularity of a weak solution of the singular problem in $W_{\mathrm{loc}}^{1,p}(\Omega)$ and its behavior near $\partial \Omega$ albeit with different exponents depending on $\beta+\delta$. Boundedness and H\"older regularity result to the singular equation with critical exponent were also discussed.
\end{abstract}
\begin{keyword}Mixed local-nonlocal equation; H\"older regularity; Singular nonlinearity; Distance function; Existence and uniqueness
\MSC [2020] 35J75, 35M10, 35R11.
\end{keyword}
%\begin{keyword}

%\end{keyword}

\end{frontmatter}

\begin{singlespace}
\tableofcontents
\end{singlespace}
\section{Introduction}{\label{intro}} 
The goal of this article is to study a particular class of mixed local-nonlocal problems where mixed refer to a combination of two operators with different orders of derivative, the simplest model being $-\Delta+(-\Delta)^s $, for $s\in(0,1)$. More precisely, we aim to we study H\"older regularity results related to the quasilinear operator $-\Delta _p+(-\Delta)^s_q$, $1<q\leq p<\infty$, $s\in(0,1)$ and the existence, uniqueness and regularity of the weak solutions to the following prototype singular problem:
\begin{equation}{\label{pppp}}
    \begin{aligned}
-\Delta_p u+(-\Delta)^s_q u & =f(x) u^{-\delta}, \quad u>0 \text { in } \Omega, \\
u & =0 \text { on } \mathbb{R}^n\backslash \Omega;
\end{aligned}
\end{equation}
where $\Omega$ is a bounded domain in $\mathbb{R}^n$ with $C^2$ boundary $\partial \Omega$ and $\delta>0$ with $f$ behaving like distance function near the boundary.
The motivation behind the study is the apparent lack of homogeneity of the operator in question which makes replicating the results available in the literature to the mixed local-nonlocal cases difficult to reproduce and in some cases false even when $p=q=2$. %We refer to the works of \cite{antonini,Biagi1,FaberKrahn,eigenvalue,MG,garainholder,garain2023higher,su2022regularity,valdinoci2}, where several results regarding existence, uniqueness, maximum principle and regularity have been obtained, for more insight into the behavior of this class of operator.

%Some of the methodologies and motivations presented in this paper will be also exploited in to analyze the qualitative properties of solutions in specific problems

We start our discussion by considering the following general form of the operator involving \cref{pppp}, given by
\begin{equation}{\label{gnn}}
\mathcal{L} u:=\mathcal{L}_L u+\mathcal{L}_N u=g\quad \text{ in }\Omega,
\end{equation}
where 
\begin{equation*}
\mathcal{L}_L u(x)=\mathcal{L}_L^A u(x):=-\operatorname{div} A(x, \nabla u(x))    
\end{equation*}
and 
\begin{equation*}
    \mathcal{L}_N u(x)=\mathcal{L}_N^{\Phi, B, s, q} u(x):= \text { P.V. } \int_{\mathbb{R}^n} \Phi(u(x)-u(y)) \frac{B(x, y)}{|x-y|^{n+s q}} d y .
\end{equation*}
Also $g\in L^m_{\mathrm{loc}}(\Omega)$, for $m\geq n$. $A: \Omega \times \mathbb{R}^n \rightarrow \mathbb{R}^n$ is a continuous vector field such that $A(x, \cdot) \in C^1(\mathbb{R}^n \backslash\{0\} ; \mathbb{R}^n)$ for all $x \in \Omega, A(\cdot, z) \in C^{\bar{\alpha}}(\Omega ; \mathbb{R}^n)$ for all $z \in \mathbb{R}^n$, and satisfies the following $p$-growth and coercivity conditions
\begin{equation*}
\begin{cases}|A(x, z)|+|z|\left|\partial_z A(x, z)\right| \leq \Lambda\left(|z|^2+\mu^2\right)^{\frac{p-2}{2}}|z| & \text { for } x \in \Omega, z \in \mathbb{R}^n \backslash\{0\}, \\ |A(x, z)-A(y, z)| \leq \Lambda\left(|z|^2+\mu^2\right)^{\frac{p-1}{2}}|x-y|^{\bar{\alpha}} & \text { for } x, y \in \Omega, z \in \mathbb{R}^n, \\ \left\langle\partial_z A(x, z) \xi, \xi\right\rangle \geq \Lambda^{-1}\left(|z|^2+\mu^2\right)^{\frac{p-2}{2}}|\xi|^2 & \text { for } x \in \Omega, z \in \mathbb{R}^n \backslash\{0\}, \xi \in \mathbb{R}^n,\end{cases}
\end{equation*}
for some constants $\bar{\alpha} \in(0,1), \mu \in[0,1]$, $\Lambda \geq 1$, while $B: \mathbb{R}^n \times \mathbb{R}^n \rightarrow[0,+\infty)$ is a measurable function satisfying
\begin{equation*}
B(x, y)=B(y, x) \quad \text { and } \quad \Lambda^{-1} \leq B(x, y) \leq \Lambda \quad \text { for a.e. } x, y \in \mathbb{R}^n
\end{equation*}
and $\Phi \in C^0(\mathbb{R})$ is an odd, non-decreasing function fulfilling the $q$-growth and coercivity condition
\begin{equation*}
    \Lambda^{-1}|t|^q \leq \phi(t) t \leq \Lambda|t|^q \quad \text { for all } t \in \mathbb{R} \text {. }
\end{equation*}

In this context, we first mention the work in \cite{Biagi1}, where the authors have considered the particular operator $-\Delta+(-\Delta)^s$ (i.e. $p=q=2, A(x,z)=z, B(x,y)=1,\Phi(t)=t$) under homogeneous Dirichlet boundary condition, and obtained several results regarding existence, uniqueness, maximum principles, higher summability of solutions depending on the summability of the source term $g$. Further, in this particular case, the same authors, in their paper \cite{Biagisymmetry}, have obtained symmetry results for sufficiently regular solutions. The Brezis-Nirenberg problem in this context was discussed in \cite{biagi2022brezis}. For the quasilinear case $-\Delta_p+(-\Delta)^s_p$ with Dirichlet boundary data, we refer \cite{necessarybrezis,eigenvalue}, where necessary and suﬃcient conditions for the
existence of a unique positive weak solution for some sublinear problems have been obtained. The authors, in \cite{eigenvalue}, have also studied the eigenvalue problem for the mixed operator. For more insights into the mixed operator, one may see [\citealp{FaberKrahn,HongKrahnSzego}] and the references therein.

Concerning the regularity of weak solutions, we would like to mention \cite{MG,garainholder,garain2023higher}, where various local H\"older regularity of solutions (defined in a suitable sense) and their gradients have been obtained. In this regard, the novelty of our work is to consider the solutions in $W^{1,p}_{\mathrm{loc}}(\Omega)\cap L^{q-1}_{sq}(\mathbb{R}^n)$ along with $g\neq 0$ and a combination of non-homogeneous $(p,q)$ operator. We first show any solution $u\in W^{1,p}_{\mathrm{loc}}(\Omega)\cap L^{q-1}_{sq}(\mathbb{R}^n)$ to the equation $-\Delta_p u+(-\Delta)^s_qu =g$ is indeed locally almost Lipschitz continuous provided $g\in L^\infty_{\mathrm{loc}}(\Omega)$ and $2\leq q\leq p<\infty$. Next, for the same range of $p$ and $q$, we improve the almost Lipschitz regularity result for the general form of the equation given by \cref{gnn} and $g\in L^n_{\mathrm{loc}}(\Omega)$ along with a local gradient H\"older regularity. %a solution defined in $ W^{1,p}_{\mathrm{loc}}(\Omega)\cap L^{q-1}_{sq}(\mathbb{R}^n)$. 
%A local gradient H\"older regularity is also obtained for such a solution.\\ 

For regularity results up to the boundary, we refer \cite{antonini,valdinoci,valdinoci2}. Following the same perturbation approach used in \cite{antonini}, we prove that if a non-negative solution $u$ to \cref{gnn} behaves like distance function near the boundary, then it is in $C^{1, \alpha}(\overline{\Omega})$, for some $\alpha \in(0,1)$, when $0 \leq g \leq C d(x)^{-\sigma}$ for $\sigma<1$ and $d(x):=\operatorname{dist}(x, \partial \Omega)$.

As far as the singular elliptic problems are concerned, we would like to start by mentioning the now classical work of Crandall-Rabinowitz-Tartar \cite{CrRaTa}, who showed that the problem given by
\begin{eqnarray*}{\label{pb}}
\begin{split}
    -\Delta u&={f}u^{-\delta} \text { in } \Omega,\quad 
u>0\;  \text { in } \Omega, \\
u&=0\;  \text { in } \partial \Omega;
\end{split}
\end{eqnarray*}
admits a unique solution $u\in C^2(\Omega)\cap C(\overline{\Omega})$ for any $\delta>0$ along with the fact that the solution must behave like a distance function near the boundary provided $f$ is H\"older Continuous. There has been an extensive study in this direction since this pioneering work, including both local and nonlocal cases, see for instance \cite{fracsingular,orsina,scase,Canino,Giacomoni,Haitao,LaMc} and the references therein.\\%One may consult \cite{adimurthi,aroranodea,Bougiahern,roktoson,giasree,giacomonipq} for problems concerning singular nonlinearity with the weight function $f$. 
Turning to the mixed local-nonlocal singular problems, we refer to the works of [\citealp{arora,stutimultiplicity,biagisingular,garaingeometric,garainforummath,garain}], where several results regarding existence, summability, multiplicity have been obtained. %Our problem deals with a non-homogeneous operator, along with the source function $f$ being singular. We obtain existence, uniqueness, boundary behavior, optimal Sobolev regularity and most importantly, up to the boundary H\"older regularity of the problem \cref{pppp}. 
Inspired by the above discussion, in this work, we consider singular problems given by \cref{pppp}, assuming that $f \in L_{\mathrm{loc}}^{\infty}(\Omega)$ satisfies the condition
$
    c_1 d(x)^{-\beta} \leq f(x) \leq c_2 d(x)^{-\beta} \text { in } \Omega_{\rho},
$
where $c_1, c_2$ are nonnegative constants, $\beta \geq 0$ and $\Omega_{\rho}:=\{x \in \overline{\Omega}: d(x)<\rho\}$ for $\rho>0$. We obtain existence, uniqueness, boundary behavior, optimal Sobolev regularity and most importantly, up to the boundary H\"older regularity of the problem \cref{pppp}. To prove the existence of a weak solution, we use the perturbation argument used in \cite{orsina}. We prove suitable uniform boundedness of the sequence of approximated solutions in $W^{1,p}_0(\Omega)$ (or in $W^{1,p}_{\mathrm{loc}}(\Omega))$ and pass through the limit. While proving the boundary behavior of solutions, due to the nonhomogeneous nature of the leading operator, sub-super solution methods as previously used, cannot be implemented in our case. %we can not use some scalar multiple of eigen functions of $-\Delta_p+(-\Delta)^s_p$ to obtain the suitable sub and super solution involving the distance function.
We overcome this difficulty by exploiting $C^2$ regularity of $\partial \Omega$ which guarantees that the distance function is $C^2$ in some neighborhood of the boundary in combination with the barrier functions as done in \cite{aroranodea} and using the property that their fractional Laplacian gives a $L^\infty$ function only in a neighborhood of $\partial \Omega$ as done in \cite{giacomonipq}. \\
The behavior, as mentioned above, of the solution near the boundary allows us to establish the optimal Sobolev regularity for the weak solution to \cref{pppp} and a non-existence result for the case $\beta>p$. Moreover, we prove a comparison principle for sub and super solution of \cref{pppp} in $W_{\mathrm{loc}}^{1, p}(\Omega)$ for the case of $\beta<2-1/p$.\\
Summarizing, we observe that the leading operator in problem \cref{pppp} can be non-homogeneous in nature, that is, there does not exist any $\alpha>0$ such that $K(t \xi)=t^\alpha K(\xi)$ for all $\xi \in \mathbb{R}$ and $t>0$, where $K(\xi):=-\Delta_p \xi+(-\Delta)^s_q \xi$ and the nonlinear term is doubly singular in the sense that it involves two singular terms, $u^{-\delta}$ and the weight function which blows up near the boundary. Thus, the novelty of the paper lies in the following:\\
(i) We consider a class of nonhomogeneous mixed local-nonlocal operators with different orders of exponents and establish almost optimal global Hölder continuity results for problems involving singular nonlinearities.\\
(ii) We prove behavior of the solution to \cref{pppp} near the boundary where the standard scaling argument fails.\\
(iii) The weak comparison principle where the source term is doubly nonlinear. Here, the sub and super solutions of the equation may not be in $W_0^{1,p}(\Omega)$. A non-existence result for the case $\beta>p$ has also been obtained.
\\
(iv) Several H\"older and gradient H\"older regularity results, which are of independent interest, have been obtained in a completely new set-up where we allow solutions to be in $W^{1,p}_{\mathrm{loc}}(\Omega)\cap L^{q-1}_{sq}(\mathbb{R}^n)$. We obtain a completely new Hölder continuity for the gradient of weak solution of some quasilinear equation involving lower order terms with nonhomogeneous nature and singular nonlinearity. This gives us $C^{1,\alpha}(\overline{\Omega})$ regularity for the case $\beta+\delta<1$.\\
(v) We prove almost Lipschitz regularity up to the boundary for the case $\beta+\delta=1$. Our method does not require solutions to lie in the energy space to prove Hölder continuity results when $\beta+\delta \geq 1$. Also, a local H\"older regularity for the gradient is obtained for $\beta+\delta\geq 1$. \\ 
(vi) We have obtained an a priori $C^{1,\alpha}(\overline{\Omega})$ regularity for solutions to \cref{pppp} under a perturbation $b(x,u)$, when $\delta<1,\beta=0$ and $0\leq b(x,u)\leq c(1+|u|^{p^*-1})$.\smallskip\\
We finally remark that, after a first draft of this article was posted on Arxiv, a related preprint \cite{dhanya2024interior} appeared,
where similar results was obtained for $qs\leq p$. It is worth mentioning that our regularity results involves a weaker notion of solutions and gives better H\"older regularity in relation to the singular problems along with better boundary behaviors.\\\smallskip
\textbf{Layout of the paper:}  In \cref{prelims}, we will write notations and give definitions and embedding results regarding Sobolev spaces as required. Appropriate notions of weak solutions for our problems will be defined, and the main results will be stated. \cref{regu1} contains proof of almost Lipschitz regularity (locally) for $-\Delta_pu+(-\Delta)_q^su=g$ in $\Omega$ where the solutions belong to $W^{1,p}_{\mathrm{loc}}(\Omega)\cap L^{q-1}_{sq}(\mathbb{R}^n)$ and $g\in L^\infty_{\mathrm{loc}}(\Omega)$; and \cref{regu2} extends this for the equation \cref{gnn} along with $g\in L^n_{\mathrm{loc}}(\Omega)$. Higher H\"older regularity for \cref{gnn} is obtained in \cref{regu3} whereas \cref{regu4} gives up to the boundary gradient H\"older regularity when the source term can be nonlinear in the sense that it blows up near the boundary. Finally, \cref{singu} contains the singular problem \cref{pppp} and all the proofs regarding this. We end with \cref{appli}, which contains an application to a perturbed problem by obtaining the $C^{1,\alpha}(\overline{\Omega})$ regularity.
\section{Preliminaries}{\label{prelims}}
\subsection{\textbf{Notations}} %We gather all the standard notations used throughout the article.\smallskip\\
$\bullet$ We will take $n$ to be the space dimension and $\Omega$ to be a bounded domain in $\mathbb{R}^n$ with $C^2$ boundary.\smallskip\\$\bullet$ The Lebesgue measure of a measurable subset $\mathrm{S}\subset \mathbb{R}^n$ will be denoted by $|\mathrm{S}|$.\smallskip\\
$\bullet$ For $r>1$, the H\"older conjugate exponent of $r$ will be denoted by $r^\prime=\frac{r}{r-1}$.\smallskip\\
$\bullet$ For any open subset $\Omega$ of $\mathbb{R}^n$, $K\subset\subset \Omega $ will imply $K$ is compactly contained in $\Omega.$\smallskip\\
%$\bullet$ We shall use the notations for the balls and parabolic cylinders as
%\begin{equation*}
%\begin{array}{ll}
%B_\varrho\left(x_0\right)=\left\{x \in \mathbb{R}^n:\left|x-x_0\right|<\varrho\right\}, & \bar{B}_\varrho\left(x_0\right)=\left\{x \in \mathbb{R}^n:\left|x-x_0\right| \leq \varrho\right\}, \\
%I_\varrho\left(t_0\right)=\left\{t \in \mathbb{R}: t_0-\varrho^2<t<t_0\right\}, & Q_{\varrho}\left(z_0\right)=B_\varrho\left(x_0\right) \times I_\varrho\left(t_0\right) .
%\end{array}
%\end{equation*}
$\bullet$ $\int$ will denote integration concerning space only, and integration on $\Omega \times \Omega$ or $\mathbb{R}^n \times \mathbb{R}^n$ will be denoted by a double integral $\iint$. Moreover, the average integral will be denoted by $\fint$.\smallskip\\
$\bullet$ The notation $a \lesssim b$ will be used for $a \leq C b$, where $C$ is a universal constant that only depends on the prescribed data. %dimension $n$ and sometimes on $s$ or $\Omega$ too. 
$C$ (or $c$) may vary from line to line or even in the same line.\smallskip\\
$\bullet$ For a function $h$, we denote its positive and negative parts by $h^+=\max\{h,0\}$, $h^-=\max\{-h,0\}$ respectively.\smallskip\\
$\bullet$ For $k\in \mathbb{N}$, we denote $T_k(\sigma)=\max \{-k, \min \{k, \sigma\}\}$, for $\sigma \in \mathbb{R}$.
\subsection{\textbf{Function Spaces}}
%In this section, we present definitions and properties of some function spaces that will be useful for our work. 
We recall that for $E \subset \mathbb{R}^n$, the Lebesgue space
$L^p(E), 1 \leq p<\infty$, is defined to be the space of $p$-integrable functions $u: E \rightarrow \mathbb{R}$ with the finite norm
\begin{equation*}
\|u\|_{L^p(E)}=\left(\int_E|u(x)|^p d x\right)^{1 / p} .
\end{equation*}
By $L_{\mathrm{loc }}^p(E)$ we denote the space of locally $p$-integrable functions, i.e. $u \in L_{\operatorname{loc }}^p(E)$ if and only if $u \in L^p(F)$ for every $F \subset\subset E$. In the case $0<p<1$, we denote by $L^p(E)$ a set of measurable functions such that $\int_E|u(x)|^p d x<\infty$.
\begin{definition}
  Let $\Omega\subset\mathbb{R}^n$ be a bounded open set with $
C^1$ boundary. The Sobolev space $W^{1, p}(\Omega)$, for $1\leq p<\infty$ is defined as the Banach space of all integrable and weakly differentiable functions $u: \Omega \to\mathbb{R}$ equipped with the following norm
\begin{equation*}
\|u\|_{W^{1, p}(\Omega)}=\|u\|_{L^p(\Omega)}+\|\nabla u\|_{L^p(\Omega)} .
\end{equation*}
\end{definition}
The space $W_0^{1, p}(\Omega)$ is defined as the closure of the space ${C}_c^{\infty}(\Omega)$, in the norm of the Sobolev space $W^{1, p}(\Omega)$, where ${C}^\infty_c(\Omega)$ is the set of all smooth functions whose supports are compactly contained in $\Omega$. As $\partial\Omega$ is $C^1$, by [\citealp{brezis2011functional}, Proposition 9.18], $W_0^{1, p}(\Omega)$ can be identified by $\mathbb{X}_p(\Omega)=\{u\in W^{1,p}(\mathbb{R}^n): u=0 \text{ a.e. in } \mathbb{R}^n\backslash\Omega\}$.
%We now proceed with defining the fractional Sobolev spaces. We will also state the embedding results regarding those spaces.  
\begin{definition}
    Let $0<s<1$ and $\Omega$ be an open connected subset of $\mathbb{R}^n$ with $C^1$ boundary. The fractional Sobolev space $W^{s, q}(\Omega)$ for any $1\leq q<\infty$ is defined as
\begin{equation*}
    W^{s, q}(\Omega)=\left\{u \in L^q(\Omega): \frac{|u(x)-u(y)|}{|x-y|^{\frac{n}{q}+s}} \in L^q(\Omega\times\Omega)\right\},
\end{equation*}
and it is endowed with the norm
\begin{equation}{\label{norm}}
\|u\|_{W^{s, q}(\Omega)}=\left(\int_{\Omega}|u(x)|^q d x+\int_{\Omega} \int_{\Omega} \frac{|u(x)-u(y)|^q}{|x-y|^{n+sq}}d x d y\right)^{1/q}.
\end{equation}
\end{definition}
It can be treated as an intermediate space between $W^{1,q}(\Omega)$ and $L^q(\Omega)$. For $0<s_1\leq s_2<1$, $W^{s_2,q}(\Omega)$ is continuously embedded in $W^{s_1,q}(\Omega)$, see [\citealp{frac}, Proposition 2.1]. The fractional Sobolev space with zero boundary values is defined by
\begin{equation*}
W_0^{s, q}(\Omega)=\left\{u \in W^{s, q}(\mathbb{R}^n): u=0 \text { in } \mathbb{R}^n \backslash \Omega\right\}.
\end{equation*}
However if $sq\neq 1$, $W_0^{s, q}(\Omega)$ can be treated as the closure of $ {C}^\infty_c(\Omega)$ in $W^{s,q}(\Omega)$ with respect to the fractional Sobolev norm defined in \cref{norm}. Both $W^{s, q}(\Omega)$ and $W_0^{s, q}(\Omega)$ are reflexive Banach spaces, for $q>1$, for details we refer to the readers [\citealp{frac}, Section 2]. The spaces $W^{1, p}(\Omega)$ and $W_0^{1, p}(\Omega)$ are also reflexive for $p>1$.\smallskip\\
The following result asserts that the classical Sobolev space is continuously embedded in the fractional Sobolev space; see [\citealp{frac}, Proposition 2.2]. The idea applies an extension property of $\Omega$ so that we can extend functions from $W^{1,q}(\Omega)$ to $W^{1,q}(\mathbb{R}^n)$ and that the extension operator is bounded.
\begin{lemma}{\label{embedding}}
    Let $\Omega$ be a bounded domain in $\mathbb{R}^n$ with $C^{1}$ boundary. Then there exists $ c\equiv c( n, s,\Omega)>0$ such that
\begin{equation*}
\|u\|_{W^{s, q}(\Omega)} \leq c\|u\|_{W^{1,q}(\Omega)},
\end{equation*}
for every $u \in W^{1,q}(\Omega)$.
\end{lemma}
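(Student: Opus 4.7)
\textbf{Proof plan for \cref{embedding}.} The plan is the standard argument that the only delicate piece is the Gagliardo seminorm, which one controls by combining an extension operator with a difference-quotient estimate. The $L^q(\Omega)$ part of the norm is already present on the right-hand side, so it suffices to bound the seminorm $[u]_{W^{s,q}(\Omega)}^q := \iint_{\Omega\times\Omega}|u(x)-u(y)|^q|x-y|^{-n-sq}\,dx\,dy$ by $c\|u\|_{W^{1,q}(\Omega)}^q$. Since $\partial\Omega$ is $C^1$ and $\Omega$ is bounded, there is a bounded linear extension operator $E: W^{1,q}(\Omega)\to W^{1,q}(\mathbb{R}^n)$ with $\|Eu\|_{W^{1,q}(\mathbb{R}^n)}\le c(n,q,\Omega)\|u\|_{W^{1,q}(\Omega)}$; I will work with $\tilde u := Eu$, observing that $[u]_{W^{s,q}(\Omega)}\le[\tilde u]_{W^{s,q}(\mathbb{R}^n)}$ after the change of variable $h=x-y$.

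The next step is to split
\begin{equation*}
[\tilde u]_{W^{s,q}(\mathbb{R}^n)}^q \;=\; \int_{|h|<1}\!\!\int_{\mathbb{R}^n}\frac{|\tilde u(x+h)-\tilde u(x)|^q}{|h|^{n+sq}}\,dx\,dh \;+\; \int_{|h|\ge 1}\!\!\int_{\mathbb{R}^n}\frac{|\tilde u(x+h)-\tilde u(x)|^q}{|h|^{n+sq}}\,dx\,dh \;=:\; I_{\mathrm{near}}+I_{\mathrm{far}}.
\end{equation*}
For $I_{\mathrm{far}}$, I will use the elementary inequality $|\tilde u(x+h)-\tilde u(x)|^q\le 2^{q-1}(|\tilde u(x+h)|^q+|\tilde u(x)|^q)$ together with translation invariance of Lebesgue measure to get $I_{\mathrm{far}}\le 2^q\|\tilde u\|_{L^q(\mathbb{R}^n)}^q\int_{|h|\ge 1}|h|^{-n-sq}\,dh$; the last integral is finite (equal to $|S^{n-1}|/(sq)$) since $sq>0$.

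For $I_{\mathrm{near}}$, the main input is the difference-quotient estimate $\|\tilde u(\cdot+h)-\tilde u(\cdot)\|_{L^q(\mathbb{R}^n)}\le |h|\,\|\nabla\tilde u\|_{L^q(\mathbb{R}^n)}$, which follows by density from the fundamental theorem of calculus applied to smooth functions (writing $\tilde u(x+h)-\tilde u(x)=\int_0^1\nabla\tilde u(x+th)\cdot h\,dt$, taking $L^q$-norms in $x$, applying Minkowski's integral inequality, and using translation invariance). Raising to the $q$-th power and integrating in $h$ gives
\begin{equation*}
I_{\mathrm{near}} \;\le\; \|\nabla\tilde u\|_{L^q(\mathbb{R}^n)}^q\int_{|h|<1}|h|^{q-n-sq}\,dh,
\end{equation*}
and the last integral converges because $q-n-sq+n=q(1-s)>0$, yielding the factor $|S^{n-1}|/(q(1-s))$.

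Combining the two estimates gives $[\tilde u]_{W^{s,q}(\mathbb{R}^n)}^q\le c(n,s,q)\|\tilde u\|_{W^{1,q}(\mathbb{R}^n)}^q$, and then the boundedness of $E$ finishes the proof with $c=c(n,s,q,\Omega)$. The only step that requires genuine care is the difference-quotient estimate underlying $I_{\mathrm{near}}$, and even that is classical; the $C^1$ hypothesis on $\partial\Omega$ is used exclusively to justify the existence of the extension operator $E$, without which passing from $\Omega$ to $\mathbb{R}^n$ (where translations are available) would not be legitimate.
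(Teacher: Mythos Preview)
Your proof is correct and follows exactly the approach the paper indicates: the paper does not give a detailed proof but cites \cite{frac}, Proposition~2.2, and remarks that ``the idea applies an extension property of $\Omega$ so that we can extend functions from $W^{1,q}(\Omega)$ to $W^{1,q}(\mathbb{R}^n)$ and that the extension operator is bounded.'' Your argument fleshes out precisely this strategy, and the near/far splitting together with the difference-quotient estimate is the standard way to complete it.
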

%fractional 
The next embedding result follows from [\citealp{frac2}, Lemma 2.1] for Sobolev spaces with zero boundary value. The fundamental difference of it compared to \cref{embedding} is that the result holds for any bounded domain (without any smoothness condition on the boundary), since for the Sobolev spaces with zero boundary value, we always have a zero extension to the complement.
\begin{lemma}{\label{embedding2}} Let $\Omega$ be a bounded domain in $\mathbb{R}^n$ and $0<s<1$. Then there exists $C=C(n, s, \Omega)>0$ such that
\begin{equation*}
\int_{\mathbb{R}^n} \int_{\mathbb{R}^n} \frac{|u(x)-u(y)|^q}{|x-y|^{n+sq}} d x d y \leq C \int_{\Omega}|\nabla u|^qd x,
\end{equation*}
for every $u \in W_0^{1,q}(\Omega)$. Here, we consider the zero extension of $u$ to the complement of $\Omega$.
\end{lemma}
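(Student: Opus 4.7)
The plan is to reduce everything to the classical embedding \cref{embedding} applied on a ball (whose boundary is smooth) that strictly contains $\Omega$, and to handle the "far from $\Omega$" contribution by hand.

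First, since $u \in W_0^{1,q}(\Omega)$ and $\Omega$ is bounded, the zero extension of $u$ (still denoted $u$) lies in $W^{1,q}(\mathbb{R}^n)$ with $\|\nabla u\|_{L^q(\mathbb{R}^n)} = \|\nabla u\|_{L^q(\Omega)}$, since $u$ is approximated in the $W^{1,q}$ norm by $C_c^\infty(\Omega)$ functions, each of which admits a trivial zero extension. Next, fix $R>0$ large enough that $\Omega \subset B_{R/2}(0)$; write $B=B_R(0)$. Decompose
\begin{equation*}
\int_{\mathbb{R}^n}\!\int_{\mathbb{R}^n}\frac{|u(x)-u(y)|^q}{|x-y|^{n+sq}}\,dx\,dy
= \iint_{B\times B} + \,2\iint_{B\times B^c} + \iint_{B^c\times B^c}.
\end{equation*}
The last term vanishes because $u\equiv 0$ on $B^c$.

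For the first term, apply \cref{embedding} on $B$ (whose boundary is $C^1$ since it is a ball) to get
\begin{equation*}
\iint_{B\times B}\frac{|u(x)-u(y)|^q}{|x-y|^{n+sq}}\,dx\,dy \leq c\,\|u\|_{W^{1,q}(B)}^q.
\end{equation*}
Because $u$ vanishes on $B\setminus\Omega$, the right-hand side reduces to $c\big(\|u\|_{L^q(\Omega)}^q+\|\nabla u\|_{L^q(\Omega)}^q\big)$, and the Poincaré inequality (valid since $u\in W_0^{1,q}(\Omega)$) absorbs the $L^q$ norm into the gradient norm.

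For the mixed term, use $u(y)=0$ for $y\in B^c$ and $\supp u\subset\overline{\Omega}\subset B_{R/2}(0)$ to write
\begin{equation*}
\iint_{B\times B^c}\frac{|u(x)-u(y)|^q}{|x-y|^{n+sq}}\,dx\,dy
= \int_\Omega |u(x)|^q\left(\int_{B^c}\frac{dy}{|x-y|^{n+sq}}\right)dx.
\end{equation*}
For $x\in\Omega$ and $|y|\geq R$, one has $|x-y|\geq |y|-R/2 \geq |y|/2$, so the inner integral is controlled by a constant multiple of $R^{-sq}$ uniformly in $x\in\Omega$. Another application of Poincaré then yields the desired bound.

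The proof is essentially routine; the only mild subtlety is that \cref{embedding} cannot be applied directly to $\Omega$ (no regularity of $\partial\Omega$ is assumed here), which is exactly why one passes to the enveloping ball $B$ and transfers the estimate via the zero extension.
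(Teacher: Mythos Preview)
Your argument is correct. The paper itself does not give a proof of this lemma; it simply cites [\citealp{frac2}, Lemma 2.1]. What you have written is a standard self-contained derivation: pass to an enveloping ball $B$, use \cref{embedding} on $B$ (legitimate since $\partial B$ is smooth) to control the near part $B\times B$, handle the far part $B\times B^c$ by the elementary kernel bound $|x-y|\gtrsim |y|$ for $x\in\Omega$, $y\in B^c$, and absorb the resulting $\|u\|_{L^q(\Omega)}$ terms via Poincar\'e on $W_0^{1,q}(\Omega)$. This is exactly the reason the paper emphasizes that no boundary regularity of $\Omega$ is needed here: the zero extension lets you transfer the problem to a smooth auxiliary domain. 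Your proof makes that mechanism explicit, whereas the paper outsources it to the cited reference.
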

One particular version of \cref{embedding2} can be found in [\citealp{MG}, Lemma 2.2].
\begin{lemma}{\label{fracemb}} Let $1\leq q\leq p<\infty, s\in (0,1), B_{\varrho} \subset \mathbb{R}^n$ be a ball. If $u \in W_0^{1, p}(B_{\varrho})$, then $u \in W^{s, q}(B_{\varrho})$ and
\begin{equation*}
\left(\int_{B_{\varrho}} \fint_{B_{\varrho}} \frac{|u(x)-u(y)|^q}{|x-y|^{n+sq}} d x d y\right)^{1 / q} \leq c \varrho^{1-s}\left(\fint_{B_{\varrho}}|\nabla u|^pd x\right)^{1 / p}
\end{equation*}
holds with $c \equiv c(n, p,q, s)$.
\end{lemma}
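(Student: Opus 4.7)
\textbf{Proof plan for \cref{fracemb}.}

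The plan is to reduce the statement by a rescaling to the case $\varrho=1$, then apply the zero-boundary embedding \cref{embedding2} at exponent $q$, and finally use Hölder's inequality to pass from the $L^q$ norm of $\nabla v$ to the $L^p$ norm. First I would set $B_\varrho=B_\varrho(x_0)$ and define $v(y):=u(x_0+\varrho y)$ on $B_1$, so that $v\in W_0^{1,p}(B_1)$. A direct change of variables gives $|\nabla v(y)|=\varrho\,|\nabla u(x_0+\varrho y)|$ and $|x-y|=\varrho|x'-y'|$ under the substitutions $x=x_0+\varrho x'$, $y=x_0+\varrho y'$, from which one computes
\begin{equation*}
\int_{B_\varrho}\fint_{B_\varrho}\frac{|u(x)-u(y)|^q}{|x-y|^{n+sq}}\,dx\,dy
=\varrho^{-sq}\int_{B_1}\fint_{B_1}\frac{|v(x')-v(y')|^q}{|x'-y'|^{n+sq}}\,dx'\,dy',
\end{equation*}
together with $\fint_{B_\varrho}|\nabla u|^p\,dx=\varrho^{-p}\fint_{B_1}|\nabla v|^p\,dx'$. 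Thus the claim reduces to the same inequality on $B_1$ with $\varrho=1$.

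Next, since $q\le p$ and $B_1$ is bounded, $v\in W_0^{1,p}(B_1)\subset W_0^{1,q}(B_1)$ by Hölder's inequality. Applying \cref{embedding2} to $v$ (extended by zero to $\mathbb{R}^n$), I obtain
\begin{equation*}
\iint_{\mathbb{R}^n\times\mathbb{R}^n}\frac{|v(x)-v(y)|^q}{|x-y|^{n+sq}}\,dx\,dy
\le C\int_{B_1}|\nabla v|^q\,dx,
\end{equation*}
with $C=C(n,s)$ (for the unit ball this depends only on $n,s$). Restricting the left-hand side to $B_1\times B_1$ is a fortiori, and Hölder's inequality with exponents $p/q$ and $p/(p-q)$ yields $\int_{B_1}|\nabla v|^q\,dx\le|B_1|^{1-q/p}\bigl(\int_{B_1}|\nabla v|^p\,dx\bigr)^{q/p}$. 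Taking $q$-th roots gives
\begin{equation*}
\left(\iint_{B_1\times B_1}\frac{|v(x)-v(y)|^q}{|x-y|^{n+sq}}\,dx\,dy\right)^{1/q}
\le c(n,p,q,s)\,|B_1|^{1/q-1/p}\left(\int_{B_1}|\nabla v|^p\,dx\right)^{1/p}.
\end{equation*}

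Finally, I undo the scaling. Dividing through by $|B_1|^{1/q}$ on the left and by $|B_1|^{1/p}$ on the right converts ordinary integrals into averages; combined with the $\varrho$-power identities collected above, the powers of $\varrho$ balance to produce exactly $\varrho^{1-s}$ on the right-hand side, completing the bound. There is no real obstruction here; the only care needed is bookkeeping of the scaling factors and verifying that the constant delivered by \cref{embedding2} on $B_1$ depends only on $n$ and $s$, so that after Hölder one arrives at a constant of the asserted form $c(n,p,q,s)$.
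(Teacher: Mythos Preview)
Your proposal is correct. The paper does not give its own proof of this lemma; it simply cites \cite[Lemma~2.2]{MG}. Your argument---rescale to the unit ball, apply \cref{embedding2} at exponent $q$, and upgrade to $L^p$ via H\"older---is the natural route and the bookkeeping of powers of $\varrho$ and $|B_1|$ is correct, yielding exactly the factor $\varrho^{1-s}$ with a constant depending only on $n,p,q,s$.
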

The following useful lemma can be found in [\citealp{MG}, Lemma 3.2].
\begin{lemma}{\label{Ming}} Let $s\in(0,1)$, 
$w \in L_{\mathrm{loc}}^q(\mathbb{R}^n)$ and $B_\varrho(x_0) \subset \mathbb{R}^n$ be a ball. Then there exists $c \equiv c(n,q, s)$ such that
\begin{equation*}
\int_{\mathbb{R}^n \backslash B_\varrho} \frac{|w(y)|^{q-1}}{\left|y-x_0\right|^{n+sq}} d y \leq \frac{c}{\varrho^s}\left(\int_{\mathbb{R}^n \backslash B_\varrho} \frac{|w(y)|^q}{\left|y-x_0\right|^{n+sq}}dy\right)^{1-1/q}.
\end{equation*}
\end{lemma}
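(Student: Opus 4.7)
The plan is to prove this tail estimate by a direct application of Hölder's inequality, splitting the singular weight $|y-x_0|^{-(n+sq)}$ into two pieces so that the $(q-1)$-power of $|w|$ is paired with the weight $|y-x_0|^{-(n+sq)(q-1)/q}$ and the remaining factor is purely a radial weight.

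For $q>1$, I would write
\begin{equation*}
\frac{|w(y)|^{q-1}}{|y-x_0|^{n+sq}} \;=\; \left(\frac{|w(y)|^{q}}{|y-x_0|^{n+sq}}\right)^{\!\!(q-1)/q}\cdot\frac{1}{|y-x_0|^{(n+sq)/q}},
\end{equation*}
and apply Hölder's inequality on $\mathbb{R}^n\setminus B_\varrho(x_0)$ with conjugate exponents $q/(q-1)$ and $q$. This yields
\begin{equation*}
\int_{\mathbb{R}^n \setminus B_\varrho}\frac{|w(y)|^{q-1}}{|y-x_0|^{n+sq}}\,dy \;\leq\; \left(\int_{\mathbb{R}^n\setminus B_\varrho}\frac{|w(y)|^{q}}{|y-x_0|^{n+sq}}\,dy\right)^{\!\!1-1/q}\left(\int_{\mathbb{R}^n\setminus B_\varrho}\frac{dy}{|y-x_0|^{n+sq}}\right)^{\!\!1/q}.
\end{equation*}

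The second factor on the right is explicit: switching to polar coordinates centered at $x_0$ gives
\begin{equation*}
\int_{\mathbb{R}^n\setminus B_\varrho}\frac{dy}{|y-x_0|^{n+sq}} \;=\; n\omega_n\int_{\varrho}^\infty r^{-1-sq}\,dr \;=\; \frac{n\omega_n}{sq}\,\varrho^{-sq},
\end{equation*}
where $\omega_n$ is the volume of the unit ball. Raising this to the power $1/q$ produces the factor $c(n,q,s)\varrho^{-s}$, which combines with the first factor to give exactly the claimed inequality. No obstacle arises; the estimate is essentially a one-line Hölder argument followed by a radial integration. The degenerate case $q=1$ (if it is allowed by the hypotheses) is handled directly since the exponent $1-1/q$ vanishes and the inequality reduces to the same radial computation.
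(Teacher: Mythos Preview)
Your proof is correct. The paper does not supply its own argument for this lemma but simply cites an external reference, so there is nothing to compare against; the H\"older splitting you give is precisely the standard one-line proof and matches what one finds in the cited source.
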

We now proceed with the essential Poincar\'{e} inequality, which can be found in [\citealp{LCE}, Chapter 5, Section 5.8.1].
\begin{lemma}{\label{poooin}}
  Let $\Omega\subset \mathbb{R}^n$ be a bounded domain with $ {C}^1$ boundary. % and $p\geq 1$. 
  Then there exists $c\equiv c(n,\Omega)>0$, such that \begin{equation*} 
  \int_\Omega |u|^p d x\leq C\int_\Omega |\nabla u|^p d x, \qquad\forall u\in W^{1,p}_0(\Omega).
  \end{equation*}
  Specifically if we take $\Omega=B_{\bar r}$, then we will get for all $u\in W^{1,p}(B_{\bar r})$,
  \begin{equation*}
  \fint_{B_{\bar r}}\left|u-(u)_{B_{\bar r}}\right|^p d x \leq c {\bar r}^p \fint_{B_{\bar r}} |\nabla u|^pd x,
  \end{equation*}
  where $c\equiv c(n)>0$ %is a constant depending only on $n$, 
  and $(u)_{B_{\bar r}}$ denotes the average of $u$ in $B_{\bar r}$, and $B_{\bar r}$ is a ball of radius ${\bar r}$ centered at $x_0\in \mathbb{R}^n$.% Here, $\fint$ denotes the average integration.
 \end{lemma}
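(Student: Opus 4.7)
Both inequalities are classical, so the plan is to reduce each to a standard one-line argument: the first via a direct one-dimensional slicing estimate on test functions, and the second via a scaling plus compactness-contradiction argument. The dependence on $\bar r$ in the second statement is dictated by scaling.

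First I would handle the estimate on $W^{1,p}_0(\Omega)$. By density of $C_c^\infty(\Omega)$ in $W^{1,p}_0(\Omega)$, it is enough to prove it for $u\in C_c^\infty(\Omega)$, after which I extend $u$ by zero to all of $\mathbb{R}^n$. Since $\Omega$ is bounded, there exists $R>0$ with $\Omega\subset\{x\in\mathbb{R}^n:|x_1|<R\}$. For each $x=(x_1,x')$ in this strip, the fundamental theorem of calculus gives
\begin{equation*}
u(x)=\int_{-R}^{x_1}\partial_1 u(t,x')\,dt,
\end{equation*}
and H\"older's inequality yields $|u(x)|^p\le (2R)^{p-1}\int_{-R}^{R}|\partial_1 u(t,x')|^p\,dt$. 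Integrating in $x_1$ over $(-R,R)$ and then in $x'$ produces
\begin{equation*}
\int_\Omega|u|^p\,dx\le (2R)^p\int_\Omega|\partial_1 u|^p\,dx\le (2R)^p\int_\Omega|\nabla u|^p\,dx,
\end{equation*}
giving the first inequality with $c=c(n,\Omega)=(2\operatorname{diam}\Omega)^p$.

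For the Poincar\'e--Wirtinger estimate on $B_{\bar r}$, I would use the scaling $v(y):=u(x_0+\bar r y)$, which maps $W^{1,p}(B_{\bar r})$ onto $W^{1,p}(B_1)$ and satisfies $(v)_{B_1}=(u)_{B_{\bar r}}$ together with $\fint_{B_1}|v-(v)_{B_1}|^p\,dy=\fint_{B_{\bar r}}|u-(u)_{B_{\bar r}}|^p\,dx$ and $\fint_{B_1}|\nabla v|^p\,dy=\bar r^p\fint_{B_{\bar r}}|\nabla u|^p\,dx$. Hence it suffices to prove a constant $c=c(n)$ for the case $\bar r=1$ and $x_0=0$. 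Arguing by contradiction, suppose there is a sequence $\{v_k\}\subset W^{1,p}(B_1)$ with $(v_k)_{B_1}=0$, $\|v_k\|_{L^p(B_1)}=1$ and $\|\nabla v_k\|_{L^p(B_1)}\to 0$. Then $\{v_k\}$ is bounded in $W^{1,p}(B_1)$, so by the Rellich--Kondrachov theorem a subsequence converges in $L^p(B_1)$ to some $v\in L^p(B_1)$, while $\nabla v=0$ in $\mathcal{D}'(B_1)$. Connectedness of $B_1$ forces $v$ to be a constant, and passing to the limit in $(v_k)_{B_1}=0$ gives $v\equiv 0$, contradicting $\|v\|_{L^p(B_1)}=1$.

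The main (minor) obstacle is purely bookkeeping: one must verify that the constant produced by the contradiction argument is indeed dimensional, which is automatic since the compactness step is performed on the fixed ball $B_1$, and then the factor $\bar r^p$ reappears through the change of variables. Neither step uses anything beyond the hypotheses stated in Section~\ref{prelims}, so this suffices as a self-contained justification if a citation is not desired.
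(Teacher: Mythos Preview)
Your proposal is correct: the slicing argument for $W^{1,p}_0(\Omega)$ and the scaling plus Rellich--Kondrachov contradiction argument on $B_1$ are the standard textbook proofs, and all details are handled cleanly. The paper does not supply its own proof of this lemma but simply cites Evans' book (Chapter~5, Section~5.8.1), whose arguments are exactly the ones you have reproduced, so your approach matches the intended one.
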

Using \cref{embedding2} and \cref{poooin}, we observe that the following norm on the space $W^{1,p}_0(\Omega)$ defined by 
 \begin{equation*}
\|u\|_{W^{1,p}_0(\Omega)}=\left(\int_\Omega |\nabla u|^p d x +\int_{\mathbb{R}^n} \int_{\mathbb{R}^n} \frac{|u(x)-u(y)|^p}{|x-y|^{n+sp}} d x d y \right)^{1/p},%{\frac{1}{q}},
\end{equation*}
is equivalent to the norm
 \begin{equation*}
\|u\|_{W^{1,p}_0(\Omega)}=\left(\int_\Omega |\nabla u|^p d x  \right)^{1/p}.%{\frac{1}{q}} .     
 \end{equation*}
The following is a version of fractional Poincar\'{e}.
\begin{lemma}{\label{fracpoin}}
Let $\Omega\subset \mathbb{R}^n$ be a bounded domain with $ {C}^1$ boundary, $s \in(0,1)$ and $q\geq 1$. If $u \in W^{s,q}_0(\Omega)$, then
\begin{equation*}
\int_\Omega |u|^q d x \leq c\int_{\Omega} \int_{\Omega} \frac{|u(x)-u(y)|^q}{|x-y|^{n+sq}} d x d y ,
\end{equation*}
holds with $c \equiv c(n, s,\Omega)$.
\end{lemma}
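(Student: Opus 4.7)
I would prove this by a standard contradiction-and-compactness argument, exploiting the fractional Rellich--Kondrachov compactness together with the zero boundary condition encoded in $W^{s,q}_0(\Omega)$. Suppose the claimed inequality fails. Then for each $k\in\mathbb{N}$ there exists $u_k\in W^{s,q}_0(\Omega)$ with
\[
\|u_k\|_{L^q(\Omega)}=1 \qquad\text{and}\qquad [u_k]_{W^{s,q}(\Omega)}^q:=\int_\Omega\!\int_\Omega \frac{|u_k(x)-u_k(y)|^q}{|x-y|^{n+sq}}\,dx\,dy \leq \frac{1}{k}.
\]
In particular $\{u_k\}$ is bounded in $W^{s,q}(\Omega)$.

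Since $\Omega$ is bounded with $C^1$ boundary, the fractional Rellich--Kondrachov compact embedding $W^{s,q}(\Omega)\hookrightarrow\hookrightarrow L^q(\Omega)$ (which follows from \cref{embedding} combined with the classical Rellich theorem for $W^{1,q}$) yields a subsequence, not relabeled, such that $u_k\to u$ strongly in $L^q(\Omega)$ and a.e.\ on $\Omega$. Hence $\|u\|_{L^q(\Omega)}=1$, while Fatou's lemma applied to the fractional seminorm gives $[u]_{W^{s,q}(\Omega)}=0$. This forces $u$ to be a.e.\ equal to a constant $c_0$ on each connected component of $\Omega$, with $c_0\neq 0$ on at least one such component.

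The next step is to use the zero boundary condition to rule this out. Each $u_k$, extended by zero, lies in $W^{s,q}(\mathbb{R}^n)$, so
\[
[u_k]_{W^{s,q}(\mathbb{R}^n)}^q = [u_k]_{W^{s,q}(\Omega)}^q + 2\int_\Omega |u_k(x)|^q\!\int_{\mathbb{R}^n\setminus\Omega}\frac{dy}{|x-y|^{n+sq}}\,dx,
\]
and the inner integral is bounded above by $c\,d(x,\partial\Omega)^{-sq}$. Invoking the fractional Hardy inequality on the $C^1$ (hence Lipschitz) domain $\Omega$ applied to $u_k\in W^{s,q}_0(\Omega)$, together with the boundedness of $\|u_k\|_{L^q(\Omega)}$, and passing to the limit via Fatou, we obtain
\[
\int_\Omega \frac{|u(x)|^q}{d(x,\partial\Omega)^{sq}}\,dx < +\infty.
\]
But a nonzero constant value of $u$ on a boundary-touching component makes this integral diverge in the admissible range for the Hardy inequality. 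This contradicts $\|u\|_{L^q(\Omega)}=1$, so the desired Poincar\'e inequality holds.

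\textbf{Main obstacle.} The delicate point is the last step: transferring the vanishing of $u_k$ on $\mathbb{R}^n\setminus\Omega$ to information about the $L^q$-limit $u$, since the hypothesis only controls the seminorm on $\Omega\times\Omega$ and not on $\mathbb{R}^n\times\mathbb{R}^n$. The fractional Hardy inequality is the correct bridge; the argument works cleanly when $sq\geq 1$, while for $sq<1$ the characteristic function $\chi_\Omega$ belongs to $W^{s,q}_0(\Omega)$ and the statement would require an implicit assumption (which is the regime actually used throughout the paper).
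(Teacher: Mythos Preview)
The paper does not actually prove this lemma; it is listed among the preliminaries without proof, so there is no paper argument to compare against. I will therefore assess your proof on its own terms.

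Your contradiction-and-compactness strategy is standard and essentially correct in the range $sq>1$, but two steps need repair. First, a minor slip: you justify the compact embedding $W^{s,q}(\Omega)\hookrightarrow\hookrightarrow L^q(\Omega)$ via \cref{embedding}, but that lemma goes the other direction ($W^{1,q}\hookrightarrow W^{s,q}$) and says nothing about compactness of $W^{s,q}$ in $L^q$. The compactness you want is the fractional Rellich--Kondrachov theorem itself, which holds on bounded extension (e.g.\ $C^1$) domains and should be cited directly.

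Second, the Hardy step as you wrote it is circular. After splitting $[u_k]_{W^{s,q}(\mathbb{R}^n)}^q=[u_k]_{W^{s,q}(\Omega)}^q+2\int_\Omega|u_k|^q\!\int_{\mathbb{R}^n\setminus\Omega}|x-y|^{-n-sq}\,dy\,dx$, you invoke ``the fractional Hardy inequality'' to control the last integral; but the standard Hardy bound is $\int_\Omega|u|^q d^{-sq}\le C[u]_{W^{s,q}(\mathbb{R}^n)}^q$, i.e.\ it uses precisely the quantity you are trying to control. The hypotheses give you only $[u_k]_{W^{s,q}(\Omega)}\to 0$ and $\|u_k\|_{L^q}=1$, neither of which dominates the Hardy integral. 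The clean fix, available exactly when $sq>1$, is the \emph{local} fractional Hardy inequality on Lipschitz domains,
\[
\int_\Omega\frac{|u(x)|^q}{d(x)^{sq}}\,dx \le C\int_\Omega\int_\Omega\frac{|u(x)-u(y)|^q}{|x-y|^{n+sq}}\,dx\,dy,\qquad u\in W^{s,q}_0(\Omega),
\]
from which the lemma follows in one line (since $d\le\mathrm{diam}(\Omega)$) without any compactness argument at all.

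Your closing caveat is correct and in fact decisive: for $sq<1$ the statement as written is \emph{false}, since $\chi_\Omega\in W^{s,q}_0(\Omega)$ has vanishing $\Omega\times\Omega$ seminorm but positive $L^q$ norm. The paper does not impose $sq\ge 1$, so the intended inequality is almost certainly the one with the seminorm taken over $\mathbb{R}^n\times\mathbb{R}^n$ (which is the norm actually carried by $W^{s,q}_0(\Omega)$ as defined there); that version holds for all $s\in(0,1)$, $q\ge 1$ and follows directly from the fractional Sobolev inequality of \cref{Fractional Sobolev embedding}.
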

In view of \cref{fracpoin}, we observe that the Banach space $W_0^{s, q}(\Omega)$ can be endowed with the norm
\begin{equation*}
\|u\|_{W_0^{s, q}(\Omega)}=\left(\int_{\Omega} \int_{\Omega} \frac{|u(x)-u(y)|^q}{|x-y|^{n+sq}} d x d y\right)^{1/q},%{\frac{1}{q}},
\end{equation*}
which is equivalent to that of $\|u\|_{W^{s, q}(\Omega)}$. 
Now, we define the local spaces as
\begin{equation*}
     W_{\operatorname{loc }}^{1, p}(\Omega)=\left\{u: \Omega \rightarrow \mathbb{R}: u \in L^p(K), \int_K |\nabla u|^p d x<\infty, \text { for every } K \subset \subset \Omega\right\} ,
\end{equation*}
and 
\begin{equation*}
     W_{\operatorname{loc }}^{s, q}(\Omega)=\left\{u: \Omega \rightarrow \mathbb{R}: u \in L^q(K), \int_K \int_K \frac{|u(x)-u(y)|^q}{|x-y|^{n+ sq}} d x d y<\infty, \text { for every } K \subset \subset \Omega\right\} .
\end{equation*}
Now for $n>p$, we define the critical Sobolev exponent as $p^*=\frac{np}{n-p}$, then we get the following embedding result for any bounded open subset $\Omega$ of class $ {C}^1$ in $\mathbb{R}^n$, see for details [\citealp{LCE}, Chapter 5].
\begin{theorem}{\label{Sobolev embedding}} The embedding operators
\begin{eqnarray*}
    W_0^{1, p}(\Omega) \hookrightarrow \begin{cases}L^t(\Omega), & \text { for } t \in\left[1, p^*\right], \text { if } p \in(1, n), \\ L^t(\Omega), & \text { for } t \in[1, \infty), \text { if } p=n, \\ L^{\infty}(\Omega), & \text { if } p>n,\end{cases}
\end{eqnarray*}
are continuous. Also, the above embeddings are compact, except for $t=p^*$, if $p \in(1, n)$.
\end{theorem}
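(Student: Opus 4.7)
The plan is to split the theorem into three cases according to the relative size of $p$ and $n$, proving the continuous embeddings first and then deriving compactness via the Fréchet--Kolmogorov criterion. Throughout, every $u\in W^{1,p}_0(\Omega)$ is viewed as a function in $W^{1,p}(\mathbb{R}^n)$ by zero extension, which makes the standard $C^\infty_c(\mathbb{R}^n)$-density arguments available and reduces most estimates to a priori inequalities for smooth compactly supported test functions.

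For the subcritical range $p\in(1,n)$, the main ingredient is the Gagliardo--Nirenberg--Sobolev inequality
\begin{equation*}
\|u\|_{L^{p^*}(\mathbb{R}^n)}\leq c(n,p)\,\|\nabla u\|_{L^p(\mathbb{R}^n)},\qquad u\in C^\infty_c(\mathbb{R}^n).
\end{equation*}
I would establish this first for $p=1$ by iterated one-dimensional integrations, bounding $|u(x)|^{n/(n-1)}$ by a product of $(n-1)$ factors of the form $\int_{\mathbb{R}}|\partial_i u|$ and applying the generalized Hölder inequality; then lift it to $1<p<n$ by feeding $|u|^{\gamma}$, $\gamma=p(n-1)/(n-p)$, into the $p=1$ case. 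Density of $C^\infty_c(\Omega)$ in $W^{1,p}_0(\Omega)$ extends the inequality to the whole space, and combining it with \cref{poooin} and Hölder's inequality yields the continuous embedding into every intermediate $L^t(\Omega)$ with $1\leq t\leq p^*$.

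The borderline case $p=n$ I would handle by applying the subcritical case to auxiliary exponents $p_0<n$: boundedness of $\Omega$ gives $W^{1,n}_0(\Omega)\hookrightarrow W^{1,p_0}_0(\Omega)\hookrightarrow L^{p_0^*}(\Omega)$, and letting $p_0\uparrow n$ makes $p_0^*$ arbitrarily large, so $W^{1,n}_0(\Omega)\hookrightarrow L^t(\Omega)$ for every finite $t$. The supercritical case $p>n$ is handled by Morrey's inequality: starting from the pointwise estimate
\begin{equation*}
|u(x)-u(y)|\leq c(n,p)\,|x-y|^{1-n/p}\,\|\nabla u\|_{L^p(B_{2|x-y|}(x))},\qquad u\in C^\infty_c(\mathbb{R}^n),
\end{equation*}
which is derived from integral mean estimates over balls, one obtains a uniform $L^\infty$ bound together with Hölder continuity of exponent $1-n/p$ up to $\overline{\Omega}$.

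For compactness away from the critical exponent I would invoke the Fréchet--Kolmogorov theorem. Bounded sets in $W^{1,p}_0(\Omega)$ are equi-integrable and their translates satisfy a uniform $L^p$-continuity estimate via a standard mollification argument controlled by $\|\nabla u\|_{L^p}$, which gives precompactness in $L^p(\Omega)$; interpolating $L^p$ and $L^{p^*}$ by Hölder then yields precompactness in every $L^t$ with $t<p^*$. When $p=n$ the same interpolation uses $L^{p_0^*}$ with $p_0$ close to $n$, and when $p>n$ the embedding into $C^{0,1-n/p}(\overline{\Omega})$ combined with Arzelà--Ascoli delivers compactness into $C(\overline{\Omega})$ and hence into every $L^t$. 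The main obstacle, and the reason $t=p^*$ must be excluded in the subcritical case, is the scaling invariance of the critical embedding: concentration profiles $u_\varepsilon(x)=\varepsilon^{-(n-p)/p}u(x/\varepsilon)$ are bounded in $W^{1,p}_0$ but admit no $L^{p^*}$-convergent subsequence, showing that compactness genuinely fails at the critical exponent.
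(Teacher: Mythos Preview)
The paper does not actually prove this theorem; it is stated as a classical result with the reference ``see for details [\citealp{LCE}, Chapter 5]'' (Evans' book), and no argument is given. Your proposal is a correct outline of the standard textbook proof (Gagliardo--Nirenberg--Sobolev via the $p=1$ case and bootstrapping, Morrey for $p>n$, Rellich--Kondrachov via Fréchet--Kolmogorov and interpolation), which is precisely the approach taken in the cited reference, so there is nothing to compare.
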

Similarly, for $n>qs$, we define the fractional Sobolev critical exponent as $q_s^*=\frac{ nq}{n-q s}$. The following result is a fractional version of the Sobolev inequality (\cref{Sobolev embedding}), which also implies a continuous embedding of $W_0^{s,q}(\Omega)$ in the critical Lebesgue space $L^{q_s^*}(\Omega)$. One can see the proof in \cite{frac}.
 \begin{theorem}{\label{Fractional Sobolev embedding}} Let $0<s<1$ be such that $n>qs$. Then, there exists a constant $S(n, s)$ depending only on $n$ and $s$, such that for all $u \in  {C}_c^{\infty}(\Omega%\mathbb{R}^n
 )$
\begin{equation*}
\|u\|_{L^{q_s^*}(\Omega%\mathbb{R}^n
)} \leq S(n, s) \left(\int_{\Omega}\int_{\Omega%\mathbb{R}^{2 n}
} \frac{|u(x)-u(y)|^q}{|x-y|^{n+sq}} d x d y \right)^{1/q}.    \end{equation*}
\end{theorem}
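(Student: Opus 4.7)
The plan is to establish the inequality by first treating the endpoint case $q=1$ and then bootstrapping to general $q>1$ via an algebraic substitution. Since $u \in C_c^\infty(\Omega)$, I will extend $u$ by zero outside $\Omega$ and argue on all of $\mathbb{R}^n$, noting that $\int_\Omega \int_\Omega \leq \int_{\mathbb{R}^n} \int_{\mathbb{R}^n}$ for the Gagliardo double integral of the zero-extended $u$, while $\|u\|_{L^{q_s^*}(\Omega)} = \|u\|_{L^{q_s^*}(\mathbb{R}^n)}$. Thus it suffices to prove
\[
\|u\|_{L^{q_s^*}(\mathbb{R}^n)} \leq S(n,s)\,[u]_{W^{s,q}(\mathbb{R}^n)},
\]
where $[u]_{W^{s,q}(\mathbb{R}^n)}$ denotes the Gagliardo seminorm.

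For the case $q=1$, with target exponent $1_s^* = n/(n-s)$, I would use the layer-cake representation together with a dyadic decomposition of the level sets. Define $A_k := \{x \in \mathbb{R}^n : |u(x)| > 2^k\}$ for $k \in \mathbb{Z}$ and $D_k := A_k \setminus A_{k+1}$. On any pair of points $x \in A_{k+1}$, $y \in \mathbb{R}^n \setminus A_k$, one has $|u(x)-u(y)| \geq 2^k$, which allows one to bound $|A_{k+1}|\,|\mathbb{R}^n \setminus A_k|/\operatorname{diam}(\cdots)^{n+s}$ type quantities by the local contribution of the Gagliardo integral. A standard manipulation (choosing a suitable radius by isoperimetric balancing and summing the resulting geometric series in $k$) yields the weak-type bound $\sup_t t\,|\{|u|>t\}|^{1/1_s^*} \leq C(n,s)\,[u]_{W^{s,1}(\mathbb{R}^n)}$, which upgrades to the strong-type $L^{1_s^*}$ estimate after refining the dyadic argument and summing in $k$.

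For general $q > 1$, the plan is to apply the $q=1$ inequality to $v := |u|^{\gamma}$ with the exponent $\gamma := q(n-s)/(n-sq)$, chosen so that $\gamma \cdot 1_s^* = q_s^*$, i.e.\ $\|v\|_{L^{1_s^*}}^{1_s^*} = \|u\|_{L^{q_s^*}}^{q_s^*}$. Using the elementary inequality $\bigl||a|^\gamma - |b|^\gamma\bigr| \leq \gamma\bigl(|a|^{\gamma-1} + |b|^{\gamma-1}\bigr)|a-b|$, one estimates
\[
[v]_{W^{s,1}(\mathbb{R}^n)} \leq \gamma \iint_{\mathbb{R}^n \times \mathbb{R}^n} \frac{\bigl(|u(x)|^{\gamma-1}+|u(y)|^{\gamma-1}\bigr)\,|u(x)-u(y)|}{|x-y|^{n+s}}\,dx\,dy.
\]
Splitting $|x-y|^{n+s} = |x-y|^{(n+sq)/q} \cdot |x-y|^{(n+sq')/q'\cdot(q'/q')}$ in a compatible way and applying Hölder's inequality with exponents $q$ and $q'$ (after a symmetry step that shows the two terms on the right are identical) separates the integral into $[u]_{W^{s,q}(\mathbb{R}^n)}$ times $\|u\|_{L^{(\gamma-1)q'}}^{\gamma-1}$. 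The exponent arithmetic forces $(\gamma-1)q' = q_s^*$, so the last factor is precisely $\|u\|_{L^{q_s^*}}^{\gamma-1}$. Combining with the $q=1$ estimate for $v$ gives
\[
\|u\|_{L^{q_s^*}}^{q_s^*/1_s^*} \leq C\,\gamma\,\|u\|_{L^{q_s^*}}^{\gamma-1}\,[u]_{W^{s,q}},
\]
and since $q_s^*/1_s^* - (\gamma-1) = 1$, we can divide (using that $\|u\|_{L^{q_s^*}}$ is finite, which is automatic for $u \in C_c^\infty(\Omega)$) to obtain the desired inequality.

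The main obstacle is the $q=1$ endpoint case: the isoperimetric-flavored argument between the dyadic level sets $A_k$ requires a careful choice of cutoff radius in the Gagliardo kernel and a nontrivial summation in $k$, and it is the only step that is not purely algebraic. Once this endpoint is in hand, the passage to $q>1$ is a routine Hölder/substitution argument.
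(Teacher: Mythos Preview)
The paper does not actually prove this statement; it simply refers to \cite{frac}, so there is no in-paper argument to compare against.

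Your plan has two genuine gaps. First, the reduction to $\mathbb{R}^n$ goes the wrong way: you note that for the zero extension $\int_\Omega\int_\Omega \le \int_{\mathbb{R}^n}\int_{\mathbb{R}^n}$, but the theorem (as written) has the \emph{smaller} $\Omega\times\Omega$ seminorm on the right, so proving $\|u\|_{L^{q_s^*}}\le S\,[u]_{W^{s,q}(\mathbb{R}^n)}$ does not yield the stated bound. (In fact the statement with the double integral restricted to $\Omega\times\Omega$ and a constant depending only on $n,s$ is almost certainly a slip; the result in \cite{frac} has $\mathbb{R}^n\times\mathbb{R}^n$, which is what you actually target.) Second, and more seriously, your H\"older step in the bootstrap fails: splitting $|x-y|^{-(n+s)}=|x-y|^{-(n+sq)/q}\cdot|x-y|^{-b}$ forces $b=n/q'$, so the $L^{q'}$ factor after H\"older on the double integral is $\bigl(\iint_{\mathbb{R}^{2n}} |u(x)|^{(\gamma-1)q'}|x-y|^{-n}\,dx\,dy\bigr)^{1/q'}$, and $\int_{\mathbb{R}^n}|x-y|^{-n}\,dy=\infty$. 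You are pattern-matching the classical Gagliardo--Nirenberg bootstrap (where $|\nabla|u|^\gamma|\le \gamma|u|^{\gamma-1}|\nabla u|$ and ordinary H\"older produce a clean $\|u\|_{L^{(\gamma-1)q'}}$ factor), but in the nonlocal setting the kernel does not split this way; a correct passage from the $q=1$ endpoint to general $q$ needs a different mechanism, e.g.\ the weak-type endpoint combined with truncation/interpolation, or a pointwise bound by a Riesz potential followed by Hardy--Littlewood--Sobolev.
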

The following version of Gagliardo-Nirenberg-Sobolev inequality will be useful for us, see [\citealp{maly1997fine}, Corollary 1.57].
\begin{theorem}{\label{sobolev embedding 2}}
    Let $1<p<\infty, \Omega$ be an open set in $\mathbb{R}^n$ with $|\Omega|<\infty$ and
\begin{equation}{\label{kappa}}
    \kappa= \begin{cases}\frac{n}{n-p}, & \text { if } 1<p<n, \\ 2, & \text { if } p \geq n .\end{cases}
\end{equation}
There exists a positive constant $C=C(n, p)$ such that for every $u \in W_0^{1, p}(\Omega)$, it holds
\begin{equation*}
\left(\int_{\Omega}|u(x)|^{\kappa p} d x\right)^{\frac{1}{\kappa p}} \leq C|\Omega|^{\frac{1}{n}-\frac{1}{p}+\frac{1}{\kappa p}}\left(\int_{\Omega}|\nabla u(x)|^p d x\right)^{\frac{1}{p}}    .
\end{equation*}
\end{theorem}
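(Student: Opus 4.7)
The plan is to treat the three cases arising from the definition of $\kappa$ separately, in each instance combining the continuous Sobolev embeddings of \cref{Sobolev embedding} with H\"older's inequality in order to track the precise dependence on $|\Omega|$. A scaling heuristic fixes the target exponent: if an estimate of the form $\|u\|_{L^{q}(\Omega)}\le C(\Omega)\|\nabla u\|_{L^{p}(\Omega)}$ is to be invariant under the dilation $u(x)\mapsto u(\lambda x)$, $\Omega\mapsto\lambda^{-1}\Omega$, then $C(\Omega)$ must scale as $|\Omega|^{1/n-1/p+1/q}$; inserting $q=\kappa p$ recovers exactly the exponent claimed in the statement.

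In the case $1<p<n$ one has $\kappa p=p^{*}$ and the exponent $\tfrac{1}{n}-\tfrac{1}{p}+\tfrac{1}{\kappa p}$ vanishes identically, so the assertion reduces to the critical Sobolev inequality $\|u\|_{L^{p^*}(\Omega)}\le C(n,p)\|\nabla u\|_{L^{p}(\Omega)}$ already recorded in \cref{Sobolev embedding}; nothing else is needed.

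For $p=n$ I would apply \cref{Sobolev embedding} at the auxiliary exponent $p_{1}:=\tfrac{2n}{3}$, chosen so that $p_{1}^{*}=2n=\kappa p$; this choice is admissible because $1<p_{1}<n$, where the lower bound uses that the hypothesis $1<p\le n$ already forces $n\ge 2$. Since $|\Omega|<\infty$, any $u\in W_{0}^{1,n}(\Omega)$ also lies in $W_{0}^{1,p_{1}}(\Omega)$, and the Sobolev embedding at level $p_1$ yields $\|u\|_{L^{2n}(\Omega)}\le C(n)\|\nabla u\|_{L^{p_{1}}(\Omega)}$. A single application of H\"older's inequality gives $\|\nabla u\|_{L^{p_{1}}(\Omega)}\le |\Omega|^{1/p_{1}-1/n}\|\nabla u\|_{L^{n}(\Omega)}$, and since $1/p_{1}-1/n=1/(2n)$ the desired estimate follows. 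For $p>n$ I would instead invoke the scale-invariant Morrey-type bound $\|u\|_{L^{\infty}(\Omega)}\le C(n,p)|\Omega|^{1/n-1/p}\|\nabla u\|_{L^{p}(\Omega)}$, obtained by expressing $u$ (extended by zero) as a Riesz potential of $\nabla u$ and applying H\"older on a ball of the same volume as $\Omega$, and then combining it with the trivial inclusion $\|u\|_{L^{2p}(\Omega)}\le|\Omega|^{1/(2p)}\|u\|_{L^{\infty}(\Omega)}$.

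The only genuinely delicate step is the case $p=n$, where the critical Sobolev exponent degenerates and no direct embedding into $L^{2n}$ with the correct scaling is available from \cref{Sobolev embedding} alone; the trick of trading one degree of summability in $\nabla u$ via H\"older's inequality so that Sobolev can be applied below the critical exponent is precisely what produces the factor $|\Omega|^{1/(2n)}$. A consistency check is supplied by the scaling observation in the first paragraph: all three sub-cases produce the same exponent $\tfrac{1}{n}-\tfrac{1}{p}+\tfrac{1}{\kappa p}$ on $|\Omega|$, so the individual constants can be merged into a single $C=C(n,p)$ as stated.
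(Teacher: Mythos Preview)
Your proof is correct in all three cases; the exponent bookkeeping checks out, and the device of dropping to $p_{1}=2n/3$ in the borderline case $p=n$ is a clean way to recover the factor $|\Omega|^{1/(2n)}$. The paper itself does not prove this statement at all: it simply quotes it from \cite{maly1997fine}, Corollary~1.57, so there is no ``paper's own proof'' to compare against, and your argument supplies precisely what the citation leaves implicit.
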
 For the following theorem, see \cite{ciarlet}.
\begin{theorem}{\label{2.13}}
     Let $U$ be a nonempty closed convex subset of a real separable reflexive Banach space $V$ and let $A: V \rightarrow V^\prime$ be a coercive demicontinuous monotone operator. Then for every $f \in V^\prime$, there exists $u \in U$ so that
\begin{equation*}
    \langle A(u), v-u\rangle \geq\langle f, v-u\rangle \text { for all } v \in U.
\end{equation*}
Moreover, if $A$ is strictly monotone, then $u$ is unique.
\end{theorem}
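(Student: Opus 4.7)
My plan is to follow the classical Hartman--Stampacchia/Browder approach via a Galerkin approximation combined with Minty's trick. Since $V$ is separable, I fix a nested sequence $V_1\subset V_2\subset\cdots$ of finite-dimensional subspaces whose union is dense in $V$, chosen so that $U_n:=U\cap V_n$ is nonempty (possible for large $n$ after a translation, since $U\neq\emptyset$). On each $V_n$, I look for $u_n\in U_n$ satisfying
\begin{equation*}
\langle A(u_n)-f,\,v-u_n\rangle\geq 0\qquad \text{for all } v\in U_n.
\end{equation*}
The existence of such a $u_n$ on the finite-dimensional set $U_n$ follows from a standard application of Brouwer's fixed point theorem applied to the projection map $P_{U_n}\circ(I-\lambda(A-f))$ for small $\lambda>0$, or equivalently from the KKM lemma; demicontinuity of $A$ restricted to $V_n$ is enough because in finite dimensions weak and strong convergence coincide.

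Next, I obtain an a priori bound. Picking any fixed $v_0\in U_1\subset U_n$ and testing with $v=v_0$ in the inequality for $u_n$ yields
\begin{equation*}
\langle A(u_n),u_n-v_0\rangle \leq \langle f,u_n-v_0\rangle \leq \|f\|_{V'}\bigl(\|u_n\|+\|v_0\|\bigr).
\end{equation*}
Coercivity of $A$ (i.e.\ $\langle A(u),u\rangle/\|u\|\to\infty$ as $\|u\|\to\infty$) then forces $\|u_n\|$ to stay bounded: otherwise the left side would grow faster than the right. Reflexivity of $V$ gives a subsequence, still denoted $u_n$, with $u_n\rightharpoonup u$ in $V$, and closedness and convexity of $U$ (so $U$ is weakly closed) imply $u\in U$.

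The main obstacle is passing to the limit in the nonlinear expression $\langle A(u_n),v-u_n\rangle$, since $A(u_n)$ need not converge. This is handled by Minty's trick: monotonicity of $A$ gives, for every $v\in V_n$ (hence eventually for every $v\in\bigcup_m V_m$),
\begin{equation*}
\langle A(v)-f,\,v-u_n\rangle\geq \langle A(u_n)-f,\,v-u_n\rangle\geq 0\qquad \text{for all } v\in U_n.
\end{equation*}
For a fixed $v\in U\cap\bigcup_m V_m$ the left inequality can be passed to the weak limit (it is affine in $u_n$), yielding $\langle A(v)-f,\,v-u\rangle\geq 0$. Density of $\bigcup_m V_m$ in $V$ combined with continuity of $\langle A(v)-f,\cdot\rangle$ and the fact that $U$ is convex and closed (so $U\cap\bigcup_m V_m$ is dense in $U$ after a standard approximation) extends this to all $v\in U$. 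Finally, to recover the inequality with $A(u)$ rather than $A(v)$, I use the standard Minty argument: for arbitrary $w\in U$ set $v_t=u+t(w-u)\in U$ with $t\in(0,1)$, apply the inequality with $v_t$, divide by $t$, and let $t\downarrow 0$; demicontinuity of $A$ yields $A(v_t)\rightharpoonup A(u)$ in $V'$, giving $\langle A(u)-f,\,w-u\rangle\geq 0$, which is the desired variational inequality.

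For uniqueness under strict monotonicity, suppose $u_1,u_2\in U$ both satisfy the conclusion. Testing the inequality for $u_1$ against $v=u_2$ and the inequality for $u_2$ against $v=u_1$, then adding, gives $\langle A(u_1)-A(u_2),\,u_2-u_1\rangle\geq 0$, i.e.\ $\langle A(u_1)-A(u_2),\,u_1-u_2\rangle\leq 0$. Strict monotonicity forces $u_1=u_2$.
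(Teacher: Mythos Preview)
The paper does not give its own proof of this theorem; it is stated in the preliminaries with the pointer ``For the following theorem, see \cite{ciarlet}.'' So there is no in-paper argument to compare against.

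Your proof is the standard Browder--Hartman--Stampacchia route (Galerkin approximation, Brouwer on finite-dimensional sections, coercivity for the a priori bound, Minty's trick to pass to the limit), which is exactly what one finds in the cited reference. The structure and the uniqueness argument are correct. Two small technical points worth tightening: (i) to guarantee $U_n=U\cap V_n\neq\emptyset$ and that $U\cap\bigcup_m V_m$ is dense in $U$, one should explicitly build the $V_n$'s from a countable dense subset of $U$ together with a countable dense subset of $V$, rather than appealing to ``a translation''; (ii) in the a priori bound you also need local boundedness of $A$ (to control $\langle A(u_n),v_0\rangle$), which follows from monotonicity plus demicontinuity but should be mentioned. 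With these clarifications the argument is complete.
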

%We now recall the following algebraic inequality that can be found in [\citealp{abdellaoui1}, Lemma 2.22].% and will be used extensively.
%\begin{lemma}{\label{algebraic}}
%$\begin{enumerate}[label=\roman*)]
  %  \item 
%  Let $\alpha>0$. For every $x, y \geq 0$ one has
%\begin{equation*}
%    (x-y)(x^\alpha-y^\alpha) \geq \frac{4 \alpha}{(\alpha+1)^2}\left(x^{\frac{\alpha+1}{2}}-y^{\frac{\alpha+1}{2}}\right)^2.
%\end{equation*}
%\item Let $0<\alpha \leq 1$. For every $x, y \geq 0$ with $x \neq y$ one has
%\begin{equation*}
%    \frac{x-y}{x^\alpha-y^\alpha} \leq \frac{1}{\alpha}(x^{1-\alpha}+y^{1-\alpha}).
%\end{equation*}
%\item Let $\alpha \geq 1$. Then there exists a constant $C_\alpha$ depending only on $\alpha$ such that
%\begin{equation*}
%|x+y|^{\alpha-1}|x-y| \leq C_\alpha\left|x^\alpha-y^\alpha\right| .   
%\end{equation*}    
%\end{enumerate}
%\end{lemma}
See [\citealp{scase}, Lemma 3.5] for the following result.
\begin{lemma}{\label{algebraic2}}
    Let $m>1$ and $\epsilon>0$. For $(x, y) \in \mathbb{R}^2$, let us set
\begin{equation*}
S_\epsilon^x:=\{x \geq \epsilon\} \cap\{y \geq 0\} \text { and } S_\epsilon^y=\{y \geq \ep\} \cap\{x \geq 0\} .
\end{equation*}
Then for every $(x, y) \in S_\epsilon^x \cup S_\epsilon^y$, we have
\begin{equation*}
    \ep^{m-1}|x-y| \leq|x^m-y^m| .
\end{equation*}
\end{lemma}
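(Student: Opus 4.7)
The plan is to reduce by symmetry to the case $(x,y) \in S_\epsilon^x$, since $S_\epsilon^x$ and $S_\epsilon^y$ are related by interchanging $x$ and $y$, and the inequality $\epsilon^{m-1}|x-y| \leq |x^m - y^m|$ is itself symmetric in $x$ and $y$. So I would assume throughout that $x \geq \epsilon$ and $y \geq 0$.

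The key idea is to introduce the auxiliary function
\begin{equation*}
F(t) := t^m - \epsilon^{m-1} t, \qquad t \geq 0,
\end{equation*}
so that the target inequality (when $x \geq y$) becomes the one-variable statement $F(x) \geq F(y)$. A short calculus check shows $F'(t) = m t^{m-1} - \epsilon^{m-1}$, which has a unique zero at $t_* = \epsilon \, m^{-1/(m-1)} \in (0,\epsilon)$; hence $F$ is strictly decreasing on $[0,t_*]$ and strictly increasing on $[t_*,\infty)$. Moreover, one reads off directly that $F(0)=0$ and $F(\epsilon)=\epsilon^m - \epsilon^m = 0$, so $F(t) \leq 0$ on $[0,\epsilon]$ and $F(t)\geq 0$ on $[\epsilon,\infty)$.

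Next I would split into cases according to the position of $y$ relative to $x$ and $\epsilon$. If $y \leq x$ and $y \in [0,\epsilon]$, then $F(x)\geq 0 \geq F(y)$ because $x\geq \epsilon$, so $F(x)-F(y)\geq 0$, which rearranges to $x^m - y^m \geq \epsilon^{m-1}(x-y)$. If $y \leq x$ and $y \geq \epsilon$, then both $x,y\in[\epsilon,\infty)$ and monotonicity of $F$ on that interval gives the same conclusion. Finally, if $y > x \geq \epsilon$, then $y > \epsilon$ too, and the same monotonicity of $F$ on $[\epsilon,\infty)$ yields $F(y)\geq F(x)$, i.e. $y^m - x^m \geq \epsilon^{m-1}(y-x)$. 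In all subcases we obtain $|x^m-y^m|\geq \epsilon^{m-1}|x-y|$, and the symmetric case $(x,y)\in S_\epsilon^y$ follows by swapping variables.

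The only mild subtlety is the case $0 \leq y < \epsilon \leq x$, where neither endpoint of $[y,x]$ lets us get a uniform lower bound $t^{m-1}\geq \epsilon^{m-1}$ from a direct mean value / fundamental theorem of calculus argument; this is exactly why the auxiliary function $F$ (rather than bounding $\int_y^x m t^{m-1}\,dt$ pointwise from below) is the natural device. I do not expect any serious obstacle beyond organizing the cases carefully.
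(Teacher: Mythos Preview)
Your argument is correct. The auxiliary function $F(t)=t^m-\epsilon^{m-1}t$ does exactly the right job: the computation $F(0)=F(\epsilon)=0$ together with the single critical point $t_*=\epsilon\,m^{-1/(m-1)}\in(0,\epsilon)$ gives $F\le 0$ on $[0,\epsilon]$ and $F$ increasing on $[\epsilon,\infty)$, and your case split covers all configurations of $(x,y)\in S_\epsilon^x$; the symmetric region $S_\epsilon^y$ then follows by swapping variables. The remark at the end about why a crude bound on $\int_y^x m t^{m-1}\,dt$ fails when $y<\epsilon$ is also well taken.

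As for comparison: the paper does not actually prove this lemma but simply refers to \cite{scase}, Lemma~3.5. So your self-contained calculus argument supplies details that the paper omits entirely; there is nothing to compare at the level of technique.
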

Next, we state the algebraic inequality from [\citealp{ineq}, Lemma 2.1].
\begin{lemma}{\label{p case}} Let $1<p<\infty$. Then for any $a,b\in \mathbb{R}^n$, there exists a constant $C\equiv C(p)>0$, such that
\begin{equation*}
    \iprod{|a|^{p-2}a-|b|^{p-2}b}{a-b}\geq C\frac{|a-b|^2}{(|a|+|b|)^{2-p}}.
\end{equation*}
\end{lemma}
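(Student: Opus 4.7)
The plan is to deduce this standard algebraic inequality by reducing to a one-variable integral along the segment joining $b$ to $a$. If $a=b$ both sides vanish, so I may assume $a \neq b$ and set $\gamma(t) := b + t(a-b)$ for $t \in [0,1]$. Differentiating $t \mapsto \langle |\gamma(t)|^{p-2}\gamma(t), a-b\rangle$ and applying the fundamental theorem of calculus yields
\begin{equation*}
\iprod{|a|^{p-2}a - |b|^{p-2}b}{a-b} = \int_0^1 \Bigl(|\gamma(t)|^{p-2}|a-b|^2 + (p-2)|\gamma(t)|^{p-4}\langle \gamma(t), a-b\rangle^2\Bigr)\,dt.
\end{equation*}
When $p \geq 2$ the second term is non-negative; when $1 < p < 2$, Cauchy--Schwarz gives $\langle \gamma(t), a-b\rangle^2 \leq |\gamma(t)|^2 |a-b|^2$, so the second term is bounded below by $(p-2)|\gamma(t)|^{p-2}|a-b|^2$ (the inequality flips since $p-2<0$). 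In both cases the integrand is at least $\min\{1, p-1\}\,|\gamma(t)|^{p-2}|a-b|^2$.

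It then remains to show a lower bound of the form $\int_0^1 |\gamma(t)|^{p-2}\,dt \geq c_p(|a|+|b|)^{p-2}$. By symmetry I may assume $|a| \geq |b|$, so that $|a|+|b| \leq 2|a|$ and $|a-b| \leq 2|a|$. For $1 < p < 2$, the exponent $p-2$ is negative, so I use the uniform upper bound $|\gamma(t)| \leq |b| + |a-b| \leq 2(|a|+|b|)$ to deduce $|\gamma(t)|^{p-2} \geq 2^{p-2}(|a|+|b|)^{p-2}$ pointwise in $t$, and then integrate (the condition $p-2>-1$ guarantees integrability at zeros of $\gamma$, which in any case can be avoided by this pointwise estimate). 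For $p \geq 2$, the exponent is non-negative, so I instead bound $|\gamma(t)|$ from below: for $t \in [3/4, 1]$, $|\gamma(t)| \geq |a| - (1-t)|a-b| \geq |a|/2$, whence $|\gamma(t)|^{p-2} \geq (|a|/2)^{p-2} \geq ((|a|+|b|)/4)^{p-2}$, and integrating over $[3/4,1]$ yields the required bound.

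Combining these two steps produces the desired inequality with an explicit constant $C = C(p) > 0$. The main obstacle I anticipate is handling the sign change of the exponent $p-2$ cleanly, which forces the split into the cases $1<p<2$ and $p \geq 2$ and also the opposite directions in which $|\gamma(t)|$ has to be controlled (upper bound for the sub-quadratic case, lower bound for the super-quadratic case); beyond that the proof is purely algebraic and follows routinely from the computations above.
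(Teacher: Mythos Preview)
Your argument is correct and follows the classical integral-along-a-segment approach. The paper itself does not supply a proof of this lemma: it merely states the inequality and cites \cite{ineq}, Lemma~2.1, as its source. The proof you give is essentially the standard one behind that citation (parametrize the segment from $b$ to $a$, differentiate $|z|^{p-2}z$, and then control $\int_0^1|\gamma(t)|^{p-2}\,dt$ from below by $(|a|+|b|)^{p-2}$, splitting into the cases $p\ge 2$ and $1<p<2$), so there is nothing to compare.
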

\subsection{\textbf{Weak Solutions and Main Results}}
In this subsection, we define the notion of a weak solution to different problems and state our main results.
\begin{definition} (Tail Space)
    Let $u: \mathbb{R}^n \rightarrow \mathbb{R}$ be a measurable function and let $0<m<\infty$ and $\alpha>0$. We define the tail space by
\begin{equation*}
L_\alpha^m(\mathbb{R}^n)=\left\{u \in L_{\mathrm{loc}}^m(\mathbb{R}^n): \int_{\mathbb{R}^n} \frac{|u(x)|^m d x}{(1+|x|^{n+\alpha})}<\infty\right\} .
\end{equation*}
The nonlocal tail centered at $x_0 \in \mathbb{R}^n$ with radius $R>0$ is defined by
\begin{equation*}
T_{m, \alpha}(u ; x_0, R)=\left(R^\alpha \int_{B_R(x_0)^c} \frac{|u(y)|^m}{|y-x_0|^{n+\alpha}} d y\right)^{1 / m} .    
\end{equation*}
Set $T_{m, \alpha}(u ; R)=T_{m, \alpha}(u ; 0, R)$. We will follow the notation
\begin{equation*}
      T_{q-1}(u ; x, R):=T_{q-1, s q}(u ; x, R),
\end{equation*}
unless otherwise stated.
\end{definition}
\begin{definition}{\label{11}} (Local weak solution) 
   Let $f\in L^m_{\mathrm{loc}}(\Omega)$, where $m\geq (p^*)^\prime \text { if } p<n, m>1 \text{ if } p=n \text{ and } m\geq 1 \text{ if } p>n$. A function $u\in W^{1,p}_{\mathrm{loc}}(\Omega)\cap L^{q-1}_{sq}(\mathbb{R}^n)$ is said to be a local weak sub-solution (or super-solution) of 
   \begin{equation}{\label{p}}
-\Delta_p u+(-\Delta)_q^s u=f\text { in } \Omega, 
\end{equation}
if for every $K\subset\subset \Omega$ and for every non-negative function $\phi\in W_0^{1,p}(K)$, we have
   \begin{equation}{\label{weakine}}
    \begin{array}{c}
        \int_\Omega|\nabla u|^{p-2}\nabla u\cdot\nabla \phi \,d x+\int_{\mathbb{R}^n}\int_{\mathbb{R}^n}\frac{|u(x)-u(y)|^{q-2}(u(x)-u(y))(\phi(x)-\phi(y))}{|x-y|^{n+sq}} d x d y\leq (\text{or}\geq )\int_\Omega f\phi\, dx.
        \end{array}
    \end{equation}
We say $u$ is a weak solution to \cref{p} if equality holds in \cref{weakine} for all $\phi \in W_0^{1, p}(K) $.
\end{definition}
We now state our first H\"older regularity result for \cref{p}.
\begin{theorem}{\label{holder1}}
Suppose $2 \leq q \leq p<\infty$ and $f\in L^\infty_{\mathrm{loc}}(\Omega)$. Let
$
u \in W_{\mathrm{loc}}^{1, p}(\Omega) \cap L_{sq}^{q-1}(\mathbb{R}^n)% \cap L_{s_2 q}^{q-1}\left(\mathbb{R}^N\right)
$
be a local weak solution to \cref{p}. Then, for every $\sigma\in(0,1)$,
there holds $u \in C_{\mathrm{loc}}^{0, \sigma}(\Omega)$. Moreover, for every  $\sigma \in(0,1)$ and ${R}_0 \in(0,1)$ with $B_{2 {R}_0}(x_0) \subset\subset \Omega$, the following holds:
\begin{equation*}
    [u]_{C^{0, \sigma}(B_{R_0 / 2}(x_0))} \leq\frac{C}{R^\sigma}\left(1+T_{q-1}(u ; x_0, {R}_0)+\|u\|_{L^{\infty}(B_{R_0}(x_0))}+\|f\|_{L^{\infty}(B_{R_0}(x_0))}
\right),
\end{equation*}
for some $ C\equiv C(n,s,p,q,\sigma)>0$. 
\end{theorem}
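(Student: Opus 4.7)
The plan is to establish the H\"older regularity by the classical \emph{harmonic replacement} technique: compare $u$ locally with a $p$-harmonic function $v$, control the comparison error by an energy estimate that absorbs the nonlocal term and the source as perturbations, and then iterate the resulting oscillation inequality to reach any H\"older exponent below one. Fix $x_0 \in \Omega$ and $R_0 \in (0,1)$ with $B_{2R_0}(x_0) \subset\subset \Omega$. For each $r \in (0, R_0]$, let $v \in u + W^{1,p}_0(B_r(x_0))$ be the unique $p$-harmonic extension with $v = u$ outside $B_r(x_0)$. Standard $p$-Laplacian theory gives $v \in C^{1,\alpha_0}_{\mathrm{loc}}(B_r)$ together with the decay
\begin{equation*}
\mathrm{osc}_{B_{\theta r}(x_0)} v \leq C_0 \theta\, \mathrm{osc}_{B_r(x_0)} v \leq C_0 \theta\, \mathrm{osc}_{B_r(x_0)} u, \qquad \theta \in (0, 1/2],
\end{equation*}
with $C_0 = C_0(n,p)$.

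Next, I would insert $\varphi = u - v \in W^{1,p}_0(B_r(x_0))$ into the weak formulations of the equations for $u$ and for $v$, subtract, and use the monotonicity inequality from \cref{p case} (valid since $p \geq 2$). This reduces matters to controlling $\int_{B_r} f(u-v)$ and the nonlocal bilinear pairing
\begin{equation*}
I_{\mathrm{nl}} := \iint_{\mathbb{R}^n \times \mathbb{R}^n}\frac{|u(x)-u(y)|^{q-2}(u(x)-u(y))((u-v)(x)-(u-v)(y))}{|x-y|^{n+sq}}\, dx\, dy.
\end{equation*}
The source term is handled by H\"older plus Poincar\'e. For $I_{\mathrm{nl}}$ I would split the domain as $(B_r \times B_r) \cup (B_r \times B_r^c) \cup (B_r^c \times B_r)$ (the $B_r^c \times B_r^c$ piece vanishes since $\varphi \equiv 0$ there). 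The diagonal piece is bounded via \cref{fracemb}, which converts the $W^{s,q}$-seminorm of $\varphi$ into an $r$-weighted $W^{1,p}$-seminorm, crucially using $q \leq p$ and $s < 1$. The cross pieces are estimated by \cref{Ming} together with the boundedness of $u$ on $B_{R_0}(x_0)$ and the tail $T_{q-1}(u;x_0,R_0)$. After absorbing through Young's inequality, these manipulations yield a bound of the form
\begin{equation*}
\int_{B_r(x_0)} |\nabla u - \nabla v|^p \, dx \leq C r^{n+\tau_0}\mathcal{D}^p
\end{equation*}
for some $\tau_0 > 0$ and a constant $\mathcal{D}$ depending only on $\|u\|_{L^\infty(B_{R_0})}$, $T_{q-1}(u;x_0,R_0)$, $\|f\|_{L^\infty(B_{R_0})}$ and the data.

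Promoting the energy closeness to an $L^\infty$ closeness is the next step: applying a Moser-type iteration (or direct Sobolev embedding when $p>n$) to the equation $-\Delta_p u + \Delta_p v = f - (-\Delta)^s_q u$ on $B_r$, and using the tail control to treat the nonlocal term as an $L^\infty$ datum on the smaller ball $B_{r/2}$, one obtains
\begin{equation*}
\|u-v\|_{L^\infty(B_{r/2}(x_0))} \leq C r^\gamma \mathcal{D}',
\end{equation*}
with $\gamma \geq 1$ and $\mathcal{D}'$ sharing the same dependencies. Combining this with the Lipschitz decay for $v$ gives the key oscillation inequality
\begin{equation*}
\mathrm{osc}_{B_{\theta r}(x_0)} u \leq C_0 \theta\, \mathrm{osc}_{B_r(x_0)} u + C r^\gamma \mathcal{D}',
\end{equation*}
valid for all $\theta \in (0,1/2]$ and $r \in (0, R_0]$.

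Finally, given any $\sigma \in (0,1)$, I would choose $\theta = \theta(\sigma, n, p) \in (0,1/2]$ so small that $C_0 \theta \leq \theta^\sigma$ (possible since $\sigma < 1$), and iterate the oscillation inequality on the dyadic scales $r_k = \theta^k R_0$. Since $\gamma \geq 1 > \sigma$, a standard geometric summation yields $\mathrm{osc}_{B_{r_k}(x_0)} u \leq C r_k^\sigma$, and interpolation across dyadic radii delivers the claimed $C^{0,\sigma}$ seminorm bound on $B_{R_0/2}(x_0)$. \emph{Main obstacle:} the heart of the argument is the nonlocal estimate for $I_{\mathrm{nl}}$, where one must carefully balance the inner $W^{s,q}$-contribution (controlled via \cref{fracemb}) with the cross terms involving the tail (controlled via \cref{Ming}); this is where the hypothesis $q \leq p$ is indispensable, since it allows the $q$-fractional term to be treated as a lower-order perturbation of the $p$-Laplacian. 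A secondary difficulty is ensuring that the exponent $\gamma$ in the $L^\infty$ comparison is at least $1$, which is what ultimately makes the iteration reach arbitrary H\"older exponents strictly less than $1$.
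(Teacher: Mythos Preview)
Your harmonic-replacement strategy is \emph{not} the route the paper takes for this theorem. The paper's proof of \cref{holder1} proceeds by a normalization/rescaling step and then second-order finite-difference estimates (\cref{iter}) combined with the local seminorm bound (\cref{semi}), bootstrapping integrability of $\nabla u$ in the Besov/Nikol'skii scale following \cite{garain2023higher}. Harmonic replacement does appear later in the paper (Section~4, for the more general operator $\mathcal{L}$), but even there the iteration is carried out on Campanato $L^p$-averages rather than on pointwise oscillations.

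Your outline has a genuine gap at the $L^\infty$-comparison step. You claim to upgrade the energy closeness $\int_{B_r}|\nabla(u-v)|^p\,dx\le Cr^{n+\tau_0}\mathcal{D}^p$ to $\|u-v\|_{L^\infty(B_{r/2})}\le Cr^\gamma\mathcal{D}'$ with $\gamma\ge 1$ by ``Moser iteration on the equation $-\Delta_p u+\Delta_p v=f-(-\Delta)^s_q u$''. But $-\Delta_p u+\Delta_p v$ is not an operator acting on $u-v$ (the $p$-Laplacian is nonlinear), so there is no equation for $u-v$ on which to run De Giorgi--Moser. For $p>n$ Morrey's inequality would indeed give $\gamma=1+\tau_0/p>1$ directly from the energy bound, but for $p\le n$ (which is the substantive case) no such passage is available, and your iteration would stall at some fixed H\"older exponent $\gamma<1$ rather than reaching all $\sigma<1$.

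If you want to salvage the comparison approach, the standard remedy is to abandon oscillation in favour of Campanato means: iterate on $\fint_{B_r}|u-(u)_{B_r}|^p\,dx$, using the $L^p$-difference estimate $\fint_{B_{r/4}}|u-v|^p\,dx\le Cr^{\theta\sigma_0}[\cdots]$ (cf.\ \cref{differenec}) together with the Lipschitz decay of the $p$-harmonic function in $L^p$-mean. This avoids the $L^\infty$ step entirely and does reach every $\sigma<1$; it is essentially the argument of \cref{holder2}. That said, for \cref{holder1} specifically the paper's finite-difference method is more direct.
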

Now, we state some improved Hölder regularity results for a general class of operators. We consider
\begin{equation}{\label{general}}
\mathcal{L} u:=\mathcal{L}_L u+\mathcal{L}_N u
\end{equation}
defined as the sum of the local term
\begin{equation*}
\mathcal{L}_L u(x)=\mathcal{L}_L^A u(x):=-\operatorname{div} A(x, \nabla u(x))    
\end{equation*}
and of the nonlocal one
\begin{equation*}
    \mathcal{L}_N u(x)=\mathcal{L}_N^{\Phi, B, s, q} u(x):= \text { P.V. } \int_{\mathbb{R}^n} \Phi(u(x)-u(y)) \frac{B(x, y)}{|x-y|^{n+s q}} d y .
\end{equation*}
Here, $A: \Omega \times \mathbb{R}^n \rightarrow \mathbb{R}^n$ is a continuous vector field such that $A(x, \cdot) \in C^1(\mathbb{R}^n \backslash\{0\} ; \mathbb{R}^n)$ for all $x \in \Omega, A(\cdot, z) \in C^{\bar{\alpha}}(\Omega ; \mathbb{R}^n)$ for all $z \in \mathbb{R}^n$, and which satisfies the following $p$-growth and coercivity conditions
\begin{equation}{\label{growth1}}
\begin{cases}|A(x, z)|+|z|\left|\partial_z A(x, z)\right| \leq \Lambda\left(|z|^2+\mu^2\right)^{\frac{p-2}{2}}|z| & \text { for } x \in \Omega, z \in \mathbb{R}^n \backslash\{0\}, \\ |A(x, z)-A(y, z)| \leq \Lambda\left(|z|^2+\mu^2\right)^{\frac{p-1}{2}}|x-y|^{\bar{\alpha}} & \text { for } x, y \in \Omega, z \in \mathbb{R}^n, \\ \left\langle\partial_z A(x, z) \xi, \xi\right\rangle \geq \Lambda^{-1}\left(|z|^2+\mu^2\right)^{\frac{p-2}{2}}|\xi|^2 & \text { for } x \in \Omega, z \in \mathbb{R}^n \backslash\{0\}, \xi \in \mathbb{R}^n,\end{cases}
\end{equation}
for some constants $\bar{\alpha} \in(0,1), \mu \in[0,1]$, and $\Lambda \geq 1$, while $B: \mathbb{R}^n \times \mathbb{R}^n \rightarrow[0,+\infty)$ is a measurable function satisfying
\begin{equation}{\label{growth2}}
B(x, y)=B(y, x) \quad \text { and } \quad \Lambda^{-1} \leq B(x, y) \leq \Lambda \quad \text { for a.e. } x, y \in \mathbb{R}^n
\end{equation}
and $\Phi \in C^0(\mathbb{R})$ is an odd, non-decreasing function fulfilling the $q$-growth and coercivity condition
\begin{equation}{\label{growth3}}
    \Lambda^{-1}|t|^q \leq \phi(t) t \leq \Lambda|t|^q \quad \text { for all } t \in \mathbb{R} \text {. }
\end{equation}
In \cref{singu}, we will use Hopf lemma, and for that, we will assume $B \in C^{0,1}(\mathbb{R}^n \times \mathbb{R}^n)$ and $\phi \in C^1(\mathbb{R} \backslash\{0\})$ with
\begin{equation}{\label{grrrr}}
    |B(x+w, y+z)-B(x, y)| \leq \Lambda(|w|+|z|)\quad\forall x, y, w, z \in \mathbb{R}^n,\quad\Lambda^{-1}|t|^{q-2} \leq \phi^{\prime}(t) \leq \Lambda|t|^{q-2}\quad \forall t \in \mathbb{R} \backslash\{0\}.
\end{equation}
A prototypical example of such an operator is, of course
$-\Delta_p u+\left(-\Delta_q\right)^s u,
$ which is obtained by taking $A(x, z)=|z|^{p-2} z, \phi(t)=|t|^{q-2} t$, and $B$ equal to a constant. Our next result is an improved version of \cref{holder1}.
\begin{definition}{\label{121}}
   Let $f\in L^m_{\mathrm{loc}}(\Omega)$, where $m\geq (p^*)^\prime \text { if } p<n, m>1 \text{ if } p=n \text{ and } m\geq 1 \text{ if } p>n$. A function $u\in W^{1,p}_{\mathrm{loc}}(\Omega)\cap L^{q-1}_{sq}(\mathbb{R}^n)$ is said to be a local weak sub-solution (or super-solution) of 
   \begin{equation}{\label{p222}}
\mathcal{L}u=f\text { in } \Omega, 
\end{equation}
if for every $K\subset\subset \Omega$ and for every non-negative function $\phi\in W_0^{1,p}(K)$, we have
   \begin{equation}{\label{weakine1}}
    \begin{array}{c}
        \int_\Omega A(x,\nabla u)\cdot\nabla \phi \,d x+\int_{\mathbb{R}^n}\int_{\mathbb{R}^n}\frac{\Phi(u(x)-u(y))(\phi(x)-\phi(y))B(x,y)}{|x-y|^{n+sq}} d x d y\leq (\text{or}\geq )\int_\Omega f\phi\, dx.
        \end{array}
    \end{equation}
We say $u$ is a weak solution to \cref{p222} if equality holds in \cref{weakine1} for all $\phi \in W_0^{1, p}(K) $.
\end{definition}\begin{theorem}{\label{holder2}}
   (a) Let $2 \leq q \leq p<\infty$, $f\in L^n_{\mathrm{loc}}(\Omega)$ and
$
u \in W_{\mathrm{loc}}^{1, p}(\Omega) \cap L_{sq}^{q-1}(\mathbb{R}^n)% \cap L_{s_2 q}^{q-1}\left(\mathbb{R}^N\right)
$
be a local weak solution to
\begin{equation*}
    \mathcal{L}u=f \text { in } \Omega.
\end{equation*}Then, for each $\eta\in(0,1)$, one has $u \in C_{\mathrm{loc}}^{0, \eta}(\Omega)$. Moreover, for every  $\eta\in(0,1)$ and ${R}_0 \in(0,1)$ with $B_{{R}_0}(x_0) \subset\subset \Omega$,
$
    [u]_{C^{0, \eta}(B_{R_0 / 4}(x_0))} \leq C
$
holds for some $ C\equiv C(n,s,p,q,\bar{\alpha},\Lambda, \eta,\|u\|_{L^p(B_{{R}_0}(x_0))},\|f\|_{L^n(B_{{R}_0}(x_0))}, Tail_{q-1}(u;x_0,R_0))$.%$>0$.
\smallskip\\
(b) Suppose $1 <q \leq p<\infty$ and $f\in L^n_{\mathrm{loc}}(\Omega)$. Let
$
0\leq u \in W_{\mathrm{loc}}^{1, p}(\Omega)% \cap L_{s_2 q}^{q-1}\left(\mathbb{R}^N\right)
$ be such that there exists some $\theta \geq 1$ with $u^\theta\in W_0^{1,p}(\Omega)$.
If $u$ is bounded and is a local weak solution to
\begin{equation*}
    \mathcal{L}u=f \text { in } \Omega,
\end{equation*} then, for every $\eta\in(0,1)$,
one has $u \in C_{\mathrm{loc}}^{0, \eta}(\Omega)$. Moreover, for every  $\eta \in(0,1)$ and ${R}_0 \in(0,1)$ with $B_{ {R}_0}(x_0) \subset\subset \Omega$,
$
    [u]_{C^{0, \eta}(B_{R_0 / 4}(x_0))} \leq C
$
holds for some $ C\equiv C(n,s,p,q,\bar{\alpha},\Lambda, \eta,\|u\|_{L^p(B_{{R}_0}(x_0))},\|f\|_{L^n(B_{{R}_0}(x_0))}, \|u^\theta\|_{L^q(\mathbb{R}^n)})>0$.
\end{theorem}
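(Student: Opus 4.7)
The plan is to reduce both parts to the setting of Theorem~\ref{holder1}, adapted to the operator $\mathcal{L}$ under the structural assumptions \cref{growth1}--\cref{growth3}: in part~(a), by bridging the gap between $f\in L^\infty_{\mathrm{loc}}$ and $f\in L^n_{\mathrm{loc}}$ via a De~Giorgi-type iteration, and in part~(b), by converting the Sobolev-type hypothesis on $u^{\theta}$ into a tail estimate.

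For part~(a), the first step is a Caccioppoli inequality for $\mathcal{L}u=f$. Testing the weak formulation \cref{weakine1} against $\varphi=(u-k)_{+}\eta^{p}$ with $\eta\in C_c^{\infty}(B_{R})$, the local coercivity in \cref{growth1} produces the standard $p$-energy on $\{u>k\}$; the symmetry in \cref{growth2} combined with the monotonicity in \cref{growth3} gives, through the usual truncation argument, a nonnegative fractional $q$-seminorm of $(u-k)_{+}$ plus a lower-order remainder controlled by $T_{q-1}(u;x_{0},R_{0})^{q-1}$; and the forcing term $\int f\,\varphi$ is estimated via H\"older with exponent $n$ together with the Sobolev--Gagliardo--Nirenberg inequality (Theorem~\ref{sobolev embedding 2}), producing a quantity absorbable into the local energy. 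A standard De~Giorgi iteration on shrinking balls and growing levels then yields the quantitative $L^{\infty}$ bound
\begin{equation*}
\|u\|_{L^{\infty}(B_{R_{0}/2}(x_{0}))}\;\leq\;C\bigl(1+\|u\|_{L^{p}(B_{R_{0}}(x_{0}))}+\|f\|_{L^{n}(B_{R_{0}}(x_{0}))}+T_{q-1}(u;x_{0},R_{0})\bigr).
\end{equation*}
Once $u$ is locally bounded, the almost Lipschitz estimate of Theorem~\ref{holder1} extended to the general operator $\mathcal{L}$ under \cref{growth1}--\cref{growth3} (which is the content of the preceding part of the regularity section) applies and delivers $u\in C^{0,\eta}_{\mathrm{loc}}(\Omega)$ for every $\eta\in(0,1)$ with a constant depending only on the listed quantities.

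For part~(b), the explicit tail hypothesis is replaced by the Sobolev assumption on $u^{\theta}$. Since $u\ge0$ and $u^{\theta}\in W_{0}^{1,p}(\Omega)$ is understood via zero extension, $u=0$ a.e.\ on $\mathbb{R}^{n}\setminus\Omega$, so $\operatorname{supp}(u)\subset\overline{\Omega}$. H\"older's inequality with exponents $\theta q/(q-1)$ and its conjugate gives
\begin{equation*}
\int_{\Omega}|u|^{q-1}\,dx\;\leq\;|\Omega|^{\,1-\frac{q-1}{\theta q}}\,\|u^{\theta}\|_{L^{q}(\mathbb{R}^{n})}^{(q-1)/\theta},
\end{equation*}
which, together with $\operatorname{dist}(x_{0},\partial\Omega)>0$, controls $T_{q-1}(u;x_{0},R_{0})$ purely in terms of $\|u^{\theta}\|_{L^{q}(\mathbb{R}^{n})}$ and structural data. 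Since $u$ is already assumed to be bounded, the De~Giorgi step of part~(a) is no longer needed; however, because $q$ may be subquadratic here, the Caccioppoli and oscillation-decay arguments must be carried out with the appropriate algebraic monotonicity inequalities for $1<q<2$ in place of the $p\ge2$ version in Lemma~\ref{p case}. The conclusion then follows along the same lines as in part~(a).

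The main obstacle in both parts is the mismatch in scaling between the local $p$-energy and the nonlocal $q$-seminorm, together with the fact that the $L^{n}$-forcing is critical for the local $p$-part but only subcritical for the nonlocal $q$-part. The hypothesis $q\le p$, combined with the tail (or, in part~(b), Sobolev) control of $u$ at infinity, is precisely what enables the nonlocal contributions to be absorbed as lower-order terms. The extra technical difficulty in part~(b) is the careful adaptation of the monotonicity and Caccioppoli machinery to $1<q<2$.
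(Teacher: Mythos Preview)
Your proposal has a genuine gap in part~(a). You plan to first prove local boundedness via De~Giorgi iteration and then invoke Theorem~\ref{holder1}. But Theorem~\ref{holder1} requires $f\in L^\infty_{\mathrm{loc}}$, not merely $u\in L^\infty_{\mathrm{loc}}$: the $L^\infty$ bound on $f$ is used throughout its proof (in \cref{iter}, \cref{semi}, and the rescaling step), specifically in the estimate of the term $F$ in \cref{mainholder}. Knowing that $u$ is bounded does nothing to make $f$ bounded, so the reduction you describe is circular --- you would need precisely the $L^n$-version of the almost-Lipschitz estimate that Theorem~\ref{holder2} is supposed to supply.

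The paper takes a completely different route. Rather than the difference-quotient/Besov machinery of \cref{regu1}, it adapts the perturbation method of De~Filippis--Mingione: on each ball one introduces the frozen-coefficient $A(x_0,\cdot)$-harmonic replacement $h$ (\cref{h}), proves the comparison estimate \cref{differenceestimate} in which the $f$-term enters only as $\int f(u-h)$ and is controlled via Sobolev and the $L^n$ norm, and then transfers the known regularity of $h$ (\cref{proph}) to $u$ through the Campanato-type iteration \cref{holfinal}. The new technical ingredients are \cref{mgiter} (decay of the $\operatorname{snail}$ term, which replaces the tail in this local setting), the improved Caccioppoli inequality \cref{caccimprovedfinal}, and the difference estimate \cref{differenec}. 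This method handles $f\in L^n$ directly and never passes through Theorem~\ref{holder1}.

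For part~(b), your idea of converting the hypothesis $u^\theta\in W_0^{1,p}(\Omega)$ into a tail bound is close to what the paper does: it observes that $u\ge 0$ bounded with $u^\theta\in L^q(\mathbb{R}^n)$ gives $\int_{\mathbb{R}^n}\frac{u^q}{1+|x|^{n+sq}}\,dx<\infty$, which is exactly the integrability needed to run the De~Filippis--Mingione scheme (see their Remarks~3 and~7), and then the result follows by citing that scheme. Your plan to redo the oscillation-decay argument with $1<q<2$ monotonicity inequalities is unnecessary once one realises the comparison method already covers this range; but again, your version inherits the gap from part~(a) because you ultimately appeal to Theorem~\ref{holder1}.
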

Following is the gradient H\"older regularity for local weak solutions.
\begin{theorem}{\label{holder3}}
   (a) Suppose $2 \leq q \leq p<\infty$ and $f\in L^d_{\mathrm{loc}}(\Omega)$ for some $d>n$. Let
$
u \in W_{\mathrm{loc}}^{1, p}(\Omega) \cap L_{sq}^{q-1}(\mathbb{R}^n)% \cap L_{s_2 q}^{q-1}\left(\mathbb{R}^N\right)
$
be a local weak solution to
\begin{equation*}
    \mathcal{L}u=f \text { in } \Omega.
\end{equation*}Then, there exists $\bar{\eta}\in(0,1)$ such that $u \in C_{\mathrm{loc}}^{1, \bar{\eta}}(\Omega)$.\\% \\Moreover, for every  $\sigma \in(0,1)$ and ${R}_0 \in(0,1)$ with $B_{2 {R}_0}(x_0) \subset\subset \Omega$,$ [u]_{C^{0, \sigma}(B_{R_0 / 2}(x_0))} \leq C$holds for some $ C\equiv C(n,s,p,q,\Lambda, \sigma,\|u\|_{L^p(B_{{R}_0}(x_0))},\|f\|_{L^n(B_{{R}_0}(x_0))}, Tail_{q-1}(u;x_0,R_0))>0$.\smallskip\\
(b) Suppose $1 <q \leq p<\infty$ and $f\in L^d_{\mathrm{loc}}(\Omega)$ for some $d>n$. Let
$
0\leq u \in W_{\mathrm{loc}}^{1, p}(\Omega)% \cap L_{s_2 q}^{q-1}\left(\mathbb{R}^N\right)
$ be such that there exists some $\theta \geq 1$ with $u^\theta\in W_0^{1,p}(\Omega)$.
If $u$ is bounded and is a local weak solution to
\begin{equation*}
    \mathcal{L}u=f \text { in } \Omega,
\end{equation*} then, there exists $\gamma\in (0,1)$,
such that $u \in C_{\mathrm{loc}}^{1, \gamma}(\Omega)$. 
\end{theorem}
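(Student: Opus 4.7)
The plan is a freezing-and-comparison argument: on a small ball $B_R(x_0) \subset\subset \Omega$, I compare $u$ to the solution $v$ of the frozen homogeneous local problem
\begin{equation*}
-\operatorname{div} A(x_0, \nabla v) = 0 \text{ in } B_R(x_0), \qquad v - u \in W_0^{1,p}(B_R(x_0)),
\end{equation*}
and exploit the classical $C^{1,\alpha_0}$ excess-decay for $v$ coming from the Uhlenbeck--Tolksdorf--DiBenedetto--Lieberman theory for the constant-coefficient $p$-Laplacian-type operator satisfying \cref{growth1}. The essential input is \cref{holder2}, which provides $u \in C^{0,\eta}_{\mathrm{loc}}(\Omega)$ for every $\eta \in (0,1)$ (using part (a) for (a) and part (b) for (b)), and in particular the local $L^\infty$ bound needed below. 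Setting $w := u - v \in W_0^{1,p}(B_R(x_0))$ and using $w$ as a test function in the difference of the two equations, the monotonicity in the third line of \cref{growth1} yields
\begin{equation*}
\int_{B_R(x_0)}(|\nabla u| + |\nabla v| + \mu)^{p-2}|\nabla w|^2\,dx \lesssim \mathcal{E}_A + \mathcal{E}_f + \mathcal{E}_N,
\end{equation*}
where $\mathcal{E}_A \lesssim R^{\bar\alpha p'}\|\nabla u\|_{L^p(B_R)}^p$ comes from the $x$-Hölder regularity of $A$ (second line of \cref{growth1}), $\mathcal{E}_f \lesssim \|f\|_{L^d(B_R)}R^{\beta}\|\nabla w\|_{L^p(B_R)}$ (with $\beta > 0$) by Hölder's inequality and Sobolev embedding since $d > n$, and $\mathcal{E}_N$ is the nonlocal bilinear form applied to $(u,w)$.

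The central step is bounding $\mathcal{E}_N$ by $R^{n+\gamma}$ for some $\gamma > 0$. Using the symmetry of $B$, the pointwise bound $|\Phi(t)| \le \Lambda|t|^{q-1}$, and the vanishing of $w$ outside $B_R$, the full double integral splits into a diagonal part over $B_R \times B_R$ and a tail part over $B_R \times B_R^c$. On the diagonal I apply Hölder's inequality to separate $|u(x)-u(y)|^{q-1}$ and $|w(x)-w(y)|$, control the former through the bound $|u(x)-u(y)| \le [u]_{C^{0,\eta}(B_R)}|x-y|^\eta$ with $\eta$ close to $1$, and control the $W^{s,q}$-seminorm of $w$ on $B_R$ by $R^{1-s}\|\nabla w\|_{L^p(B_R)}$ via \cref{fracemb}. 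Because $\eta$ may be taken arbitrarily close to $1$ and $s < 1$, the resulting integral converges with a strictly positive excess power of $R$ multiplying $\|\nabla w\|_{L^p(B_R)}$. On the tail I use $|u(x)-u(y)|^{q-1} \lesssim |u(x)|^{q-1} + |u(y)|^{q-1}$, the $L^\infty$ bound on $u$ in $B_R$, and the tail-space membership $u \in L^{q-1}_{sq}(\mathbb{R}^n)$ together with \cref{Ming} to bound the tail integrals by $CR^{n+\epsilon}\|w\|_{L^\infty(B_R)}$, $\epsilon > 0$. After Young's inequality, all three errors are absorbed into $\|\nabla w\|_{L^p(B_R)}^p$ (directly when $p \ge 2$; via the standard two-phase interpolation
$\int|\nabla w|^p \lesssim \bigl(\int(|\nabla u|+|\nabla v|+\mu)^{p-2}|\nabla w|^2\bigr)^{p/2}\bigl(\int(|\nabla u|+|\nabla v|+\mu)^p\bigr)^{(2-p)/2}$
when $1 < p < 2$), giving $\|\nabla w\|_{L^p(B_R)}^p \lesssim R^{n+\gamma}$.

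Combining this deviation estimate with the Campanato excess-decay for $v$ produces, for $0 < \rho < R$,
\begin{equation*}
\fint_{B_\rho(x_0)}|\nabla u - (\nabla u)_{B_\rho(x_0)}|^p\,dx \lesssim \lbr\rho/R\rbr^{p\alpha_0}\fint_{B_R(x_0)}|\nabla u|^p\,dx + \lbr R/\rho\rbr^n R^\gamma,
\end{equation*}
and a standard iteration (choosing $R$ as an optimal power of $\rho$) yields the Morrey--Campanato decay $\int_{B_\rho(x_0)}|\nabla u - (\nabla u)_{B_\rho(x_0)}|^p\,dx \lesssim \rho^{n + p\bar\eta}$ for some $\bar\eta \in (0, \alpha_0)$. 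By Campanato's characterization, $\nabla u \in C^{0,\bar\eta}_{\mathrm{loc}}(\Omega)$, proving (a). The proof of (b) follows the same scheme, with \cref{holder2}(b) giving the initial $C^{0,\eta}$ input and with the tail estimate closed using $u^\theta \in W_0^{1,p}(\Omega)$ together with $u \in L^\infty_{\mathrm{loc}}(\Omega)$ (which yields $u \in L^q(\mathbb{R}^n)$) in place of the $L^{q-1}_{sq}(\mathbb{R}^n)$ assumption. The hardest step is the diagonal estimate on $\mathcal{E}_N$ when $s$ is close to $1$: without the extra factor of $R^{1-s}$ supplied by \cref{fracemb}, the integrand $|u(x)-u(y)|^{q-1}|w(x)-w(y)|/|x-y|^{n+sq}$ would be non-integrable near the diagonal, so the fact that $w \in W^{1,p}_0(B_R)$ (and not merely $W^{s,q}_0$) is what makes the whole comparison scheme close.
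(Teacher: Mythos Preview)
Your plan is correct and follows the same comparison-to-a-frozen-local-problem route as the paper (Section~\ref{regu3}): compare $u$ to the $A(x_0,\cdot)$-harmonic function on a sub-ball, split the error into the $A$-coefficient term, the $f$-term, and the nonlocal term, control the diagonal nonlocal piece via the almost-Lipschitz bound from \cref{holder2} together with \cref{fracemb}, control the tail via the $L^\infty$ bound on $w$ and the tail-space membership, then feed the resulting deviation estimate into the classical excess-decay for the frozen problem and iterate.

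Two points the paper makes explicit that your sketch glosses over: (i) the bound $\fint_{B_{r/2}}(|\nabla u|^2+\mu^2)^{p/2}\le C_\lambda r^{-p\lambda}$ for every $\lambda>0$ (\cref{higher1}(c)), obtained by combining the Caccioppoli inequality \eqref{caccimprovedfinal} with \cref{holder2}, is what lets your $\mathcal{E}_A\lesssim R^{\bar\alpha p'}\|\nabla u\|_{L^p(B_R)}^p$ become $R^{n+\gamma}$; (ii) the tail does not by itself carry a positive power of $R$---the gain comes from $\|w\|_{L^\infty}\le C_\eta R^\eta$, which the paper obtains via the maximum principle and Maz'ya--Wiener boundary regularity for the frozen problem (\cref{MW}), together with the paper's ``snail'' bound (\cref{higher1}(b)). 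Also, to make the tail integral well-posed you need the nested-ball structure (comparison on $B_{r/4}$, tail over $\mathbb{R}^n\setminus B_{r/2}$), since with $w\in W_0^{1,p}(B_R)$ and tail over $B_R^c$ there is no separation and the inner integral $\int_{B_R^c}|x-y|^{-n-sq}\,dy$ blows up as $x\to\partial B_R$. For part~(b) the paper simply invokes \cite{MG} after checking that the hypotheses $u\ge 0$, $u^\theta\in W_0^{1,p}(\Omega)$, $u$ bounded guarantee the finiteness of the relevant tail quantities, which matches your remark.
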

Now, we state the boundary regularity result that has been obtained.
\begin{theorem}{\label{holder4}}
    Suppose $1< q \leq p<\infty$ and $f$ satisfies $0\leq f\leq C d(x, \partial\Omega)^{-\sigma}$ in $\Omega$ for some $\sigma\in[0,1)$. Let $u \in W_0^{1, p}(\Omega)$ be a solution to the problem 
    \begin{equation*}{\label{p2}}
        \mathcal {L}u=f \text{ in }\Omega; \quad u=0\text{ in }\mathbb{R}^n\backslash\Omega,
    \end{equation*} satisfying $0\leq u\leq M$ and $0\leq u\leq Cd(x,\partial\Omega)^\ep$ a.e. in $\Omega$ for some $M,C=C(\ep)>0$ and for some $\ep\in[\sigma,1)$. Then there exists $\gamma\in(0,1)$ such that $u\in C^{1, \gamma}(\overline{\Omega})$ and $\|u\|_{C^{1, \gamma}(\overline{\Omega})}\leq c$ for some $c$ depending on $n,p,q,s,\Lambda,\bar{\alpha},\Omega,M,\sigma$.
\end{theorem}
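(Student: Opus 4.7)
The plan is to employ a perturbation strategy near $\partial\Omega$ in the spirit of \cite{antonini}, reducing the claim to the classical up-to-the-boundary $C^{1,\gamma}$ regularity for quasilinear divergence-form operators of $p$-Laplace type. By \cref{holder3}(b) applied with $\theta=1$ (admissible since $u\in W_0^{1,p}(\Omega)\cap L^\infty(\Omega)$ is non-negative), one already has $u\in C^{1,\gamma_0}_{\mathrm{loc}}(\Omega)$ for some $\gamma_0\in(0,1)$, so only the boundary behavior is at stake, and one may localize near an arbitrary $x_0\in\partial\Omega$, exploiting that $\partial\Omega\in C^2$.

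The central step is to recast \cref{p222} as the purely local Dirichlet problem
\begin{equation*}
-\operatorname{div} A(x,\nabla u)=f(x)-\mathcal{L}_N u(x)=:F(x)\quad\text{in } \Omega,\qquad u=0\text{ on }\partial\Omega,
\end{equation*}
and to show that $F\in L^d(\Omega)$ for some $d>n$. The hypothesis $f\leq C d(x)^{-\sigma}$ with $\sigma<1$ immediately supplies $f\in L^d$ for every $d<1/\sigma$. For the nonlocal piece, combine the assumption $0\leq u\leq C d(x)^\ep$ with the interior gradient bound $|\nabla u(x)|\lesssim d(x)^{\ep-1}$, obtained by rescaling the interior $C^{1,\gamma_0}$ estimate to the ball $B_{d(x)/2}(x)$. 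Splitting
\begin{equation*}
\mathcal{L}_N u(x)= \lbr \int_{B_{d(x)/2}(x)}+\int_{\Omega\setminus B_{d(x)/2}(x)}+\int_{\mathbb{R}^n\setminus\Omega}\rbr \Phi(u(x)-u(y))\,\frac{B(x,y)}{|x-y|^{n+sq}}\,dy,
\end{equation*}
one controls the near piece via the rescaled Lipschitz estimate, the interior far piece via $u(y)\leq C(d(x)+|x-y|)^\ep$ and a spherical-annulus computation, and the exterior piece via $u(y)=0$ and the decay of the kernel. This yields $|\mathcal{L}_N u(x)|\leq C\lbr 1+d(x)^{\ep(q-1)-sq}\rbr$. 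Since $\sigma\leq\ep<1$ and $\partial\Omega$ is $C^2$, this gives $F\in L^d(\Omega)$ for some $d>n$, after — if necessary — an intermediate bootstrap: feed the current right-hand side into the local Dirichlet problem, invoke up-to-boundary Hölder regularity of the purely local operator to upgrade the boundary Hölder exponent of $u$, and iterate until the integrability $F\in L^{d_*}(\Omega)$ with $d_*>n$ is achieved.

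With $F\in L^d(\Omega)$ for some $d>n$, the structure conditions \cref{growth1} on $A$, and $\partial\Omega\in C^2$, Lieberman's classical boundary $C^{1,\gamma}$ regularity for quasilinear divergence-form problems yields $u\in C^{1,\gamma}(\overline{\Omega})$ together with the claimed quantitative estimate $\|u\|_{C^{1,\gamma}(\overline{\Omega})}\leq c$ depending on $n,p,q,s,\Lambda,\bar\alpha,\Omega,M,\sigma$. The main obstacle is the quantitative pointwise control of $\mathcal{L}_N u$ near the boundary, particularly in the regime $\ep(q-1)\leq sq-1$, where a single application of the spherical-annulus estimate does not directly yield $L^d$-integrability with $d>n$; the delicate point is iteratively upgrading the boundary Hölder exponent of $u$ without letting the constants blow up, so that the iteration terminates at the required exponent.
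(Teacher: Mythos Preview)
Your approach has a genuine gap that cannot be repaired by the proposed bootstrap. You aim to show $F=f-\mathcal{L}_N u\in L^d(\Omega)$ for some $d>n$ and then invoke Lieberman's classical boundary theory. But $f$ itself satisfies only $f\in L^d(\Omega)$ for $d<1/\sigma$, and as soon as $\sigma\geq 1/n$ (the generic situation, since $\sigma$ is allowed up to $1$) one has $1/\sigma\leq n$, so $f\notin L^d(\Omega)$ for any $d>n$. No cancellation with $\mathcal{L}_N u$ is available, and no amount of bootstrapping on $u$ can improve the integrability of the \emph{given} datum $f$. Hence the final appeal to ``Lieberman's classical boundary $C^{1,\gamma}$ regularity'' for $F\in L^d$, $d>n$, is simply not applicable. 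A secondary issue is that the scaled gradient bound $|\nabla u(x)|\lesssim d(x)^{\ep-1}$ does not follow from \cref{holder3}(b) as stated: that result gives only qualitative interior $C^{1,\gamma}$ regularity, not a scale-invariant estimate tracking the dependence on $\|u\|_{L^\infty}$, the tail, and $\|f\|_{L^d}$ in the right way.

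The paper sidesteps both problems by never isolating $f$ in a Lebesgue norm. After flattening the boundary it works entirely at the level of energy/Campanato estimates for the full mixed operator: a boundary Caccioppoli inequality, a comparison with the $A(\tilde{x}_0,\cdot)$-harmonic function $\tilde h$ on half-balls, and an explicit difference estimate. In every step $f$ appears only paired with a factor that vanishes on $\partial\Omega$---either $\tilde u$ or $\tilde u-\tilde h$---so that one only needs $\int_{B_\varrho^+}\tilde f\,\tilde u\lesssim\int_{B_\varrho^+}x_n^{\ep-\sigma}<\infty$ and $\int_{B_\varrho^+}\tilde f\,|\tilde u-\tilde h|\lesssim \varrho^{\eta}\int_{B_\varrho^+}x_n^{-\sigma}\lesssim\varrho^{\eta-\sigma}$, both of which use $\ep\geq\sigma$ and $\sigma<1$ but never require $f\in L^d$ with $d>n$. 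This is precisely where the hypotheses $0\leq u\leq Cd^\ep$ and $\ep\geq\sigma$ enter, and it is the mechanism your reduction to a purely local problem with right-hand side $F$ loses.
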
 We get the following result for a perturbed problem.
\begin{theorem}{\label{prtb}}
Suppose $u \in W_0^{1, p}(\Omega)$ be a solution to  \begin{equation}{\label{unif}}
        -\Delta_p u(x)+(-\Delta)^s_q u(x)=f(x,u(x)) \text{ in }\Omega; \quad u=0\text{ in }\mathbb{R}^n\backslash\Omega,
    \end{equation} with $f(x):=f(x,u(x))$ is a Carath\'eodory function satisfying $|f(x,t)|\leq C_0(1+|t|^{p^*-1})$, where $C_0>0$ and $p^*$ is the critical Sobolev exponent if $p<n$ and otherwise an arbitrary large number. Then $u\in C^{1, \gamma}(\overline{\Omega})$ for some $\gamma\in(0,1)$.
\end{theorem}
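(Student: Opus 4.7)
The plan is to reduce Theorem~\ref{prtb} to the boundary regularity Theorem~\ref{holder4} by upgrading the solution of the critical equation \cref{unif} to an $L^\infty(\Omega)$ function with a quantitative boundary decay. Once these two properties are in hand, the right-hand side $f(x,u(x))$ becomes bounded and the hypotheses of \cref{holder4} (with $\sigma=0$) are met. The argument proceeds in three steps.

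\emph{Step 1 (Boundedness).} Since $u\in W_0^{1,p}(\Omega)\hookrightarrow L^{p^*}(\Omega)$ when $p<n$ (and in $L^t$ for every $t<\infty$ otherwise), the critical term $|u|^{p^*-1}$ lies a priori only in $L^{(p^*)'}(\Omega)$, which prevents a direct Moser iteration. I would adapt the Brezis--Kato/Stampacchia truncation method to the mixed operator: test \cref{unif} with $\phi_k=\mathrm{sgn}(u)\,\min(|u|,k)^{\beta(p-1)+1}$ for $\beta\geq 0$, observe that by monotonicity of $\Phi$ the nonlocal contribution on the left is non-negative (this is the by-now standard computation used e.g.\ in \cite{MG,garainholder}), and exploit the local $p$-Laplacian together with \cref{Sobolev embedding} to obtain an estimate of the form
\begin{equation*}
\lbr\int_\Omega (T_k(u))^{(\beta(p-1)+p)\kappa}\,dx\rbr^{1/\kappa}\leq C(\beta+1)^p\int_\Omega |u|^{\beta(p-1)+p-1}\bgh{1+|u|^{p^*-1}}\,dx,
\end{equation*}
with $\kappa$ as in \cref{kappa}. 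The critical contribution is absorbed into the left-hand side by splitting $\Omega=\{|u|\leq K\}\cup\{|u|>K\}$ and choosing $K$ so large that $\|u\|_{L^{p^*}(\{|u|>K\})}$ is smaller than a structural constant; this uses only $u\in L^{p^*}$. Passing $k\to\infty$ via Fatou and then iterating Moser's scheme yields $u\in L^t(\Omega)$ for every $t<\infty$, and a final iteration on the resulting subcritical equation gives $u\in L^\infty(\Omega)$ with $\|u\|_\infty\leq C(\|u\|_{W_0^{1,p}})$.

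\emph{Step 2 (Boundary decay).} With $u\in L^\infty(\Omega)$, the new source $g(x):=f(x,u(x))$ satisfies $\|g\|_{L^\infty(\Omega)}\leq C_0(1+\|u\|_\infty^{p^*-1})=:M$. Since $\partial\Omega\in C^2$, the distance function $d(x)=\dist(x,\partial\Omega)$ is $C^2$ in a tubular neighborhood $\Omega_\rho$. Following the barrier construction of \cite{aroranodea,giacomonipq} already used in this paper, I would build $w(x)=Ad(x)$ satisfying $\mathcal{L}w\geq M$ in $\Omega_\rho$: for the local part this is immediate from $d\in C^2(\Omega_\rho)$; for the nonlocal part one uses that $d$ is globally Lipschitz and $C^2$ near $\pa\Omega$, which gives $(-\De)^s_q d\in L^\infty(\Omega_\rho)$. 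Applying the weak comparison principle for $\mathcal{L}$ to the pairs $u-w$ and $-u-w$ on $\Omega_\rho$ (noting that $u=0$ and $w=0$ on $\pa\Omega$ while $|u|\leq A\rho\leq w$ on the inner boundary by choosing $A$ large) yields $|u(x)|\leq Ad(x)$ in $\Omega_\rho$, and hence globally $|u(x)|\leq Cd(x)$ in $\Omega$.

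\emph{Step 3 (Conclusion).} Write $u=u^+-u^-$: each of $u^\pm$ is non-negative, bounded by $\|u\|_\infty$, satisfies $0\leq u^\pm\leq Cd(x)$, and solves a mixed equation with right-hand side in $L^\infty(\Omega)$ (by inspection of the weak formulation one verifies that the nonlocal contribution splits in the correct way since the tested function is supported where $u$ has a definite sign). Applying \cref{holder4} with $\sigma=0$ and $\ep=1$ to both $u^+$ and $u^-$ yields $u\in C^{1,\gamma}(\overline{\Omega})$ for some $\gamma\in(0,1)$.

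The main obstacle is \emph{Step 1}: naive Moser iteration fails at the critical exponent because the leading Sobolev inequality is sharp and there is no spare integrability to kick-start the scheme. The absorption trick above requires delicate bookkeeping between the local $p$-Laplacian (which supplies the Sobolev gain) and the fractional $q$-Laplacian contribution produced by the truncation $T_k(u)|u|^{\beta(p-1)}$, for which no pointwise chain rule is available; one must handle the nonlocal integrand via the monotonicity inequality of $\Phi$ rather than via a change of variables, and verify that the resulting constants remain under control as $\beta\to\infty$ in the subsequent iteration.
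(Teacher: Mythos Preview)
Your Step~1 is a legitimate alternative to what the paper does. The paper follows Guedda--V\'eron \cite{Guedda}: it rewrites \cref{unif} as
\[
-\Delta_p u+(-\Delta)^s_q u=K(x)|u|^{p-2}u+\operatorname{sign}(u)K(x),\qquad K(x):=\operatorname{sign}(u)\,\frac{f(x,u)}{1+|u|^{p-1}}\in L^{n/p}(\Omega),
\]
tests with the truncated power $\phi(u)=\int_0^u (h')^p$, discards the nonlocal term via \cite[Lemma~A.2]{parini}, and iterates $L^{t}\Rightarrow L^{tn/(n-p)}$. Your direct Brezis--Kato scheme with the level-set absorption is the same mechanism packaged differently; either route gives $u\in L^\infty(\Omega)$.

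The gap is in Step~3. The splitting $u=u^+-u^-$ does \emph{not} produce separate mixed equations for $u^\pm$, because the nonlocal term couples values across the nodal set. If $\phi$ is supported in $\{u>0\}$, the double integral
\[
\iint_{\mathbb{R}^{2n}}\frac{|u(x)-u(y)|^{q-2}(u(x)-u(y))(\phi(x)-\phi(y))}{|x-y|^{n+sq}}\,dx\,dy
\]
still sees all pairs $(x,y)$ with $x\in\operatorname{supp}\phi$ and $y\in\{u<0\}$; on those pairs $u(x)-u(y)\neq u^+(x)-u^+(y)$, and the discrepancy is a tail-type term that you have not shown to be bounded (near the free boundary $\partial\{u>0\}$ it need not be). Consequently \cref{holder4}, which is stated for $u\geq 0$ and $f\geq 0$, cannot be applied to $u^\pm$ as written. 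Step~2 is therefore also moot: even with $|u|\leq Cd$, you have no theorem to invoke for sign-changing $u$.

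The paper sidesteps this completely. Once $u\in L^\infty(\Omega)$, the right-hand side $f(\cdot,u(\cdot))$ lies in $L^\infty(\Omega)\subset L^d(\Omega)$ for every $d>n$, and the paper simply cites \cite[Theorem~1.1]{antonini}, which gives $u\in C^{1,\gamma}(\overline{\Omega})$ for sign-changing solutions with bounded source, without any barrier construction or positivity assumption. That is the missing ingredient in your argument.
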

We now turn to the singular problem given by 
\begin{equation}{\label{problem}}
\begin{array}{c}
\begin{split}
-\Delta_pu+(-\Delta)_q^s u&={f(x)}{u^{-\delta}}\text { in } \Omega, \\u&>0 \text{ in } \Omega,\\u&=0  \text { in }\mathbb{R}^n \backslash \Omega;
\end{split}
\end{array}
\end{equation}
where $f\in L^\infty_{\mathrm{loc}}(\Omega )$ and satisfies the condition 
\begin{equation}{\label{condf}}
    c_1d(x,\partial\Omega)^{-\beta}\leq f(x)\leq c_2d(x,\partial\Omega)^{-\beta},
\end{equation}
in $\Omega_\rho=\{x\in\overline{\Omega}:d(x,\partial\Omega)<\rho\}$. Also, $1<q\leq p<\infty, s\in(0,1), \delta>0, \Omega\subset\mathbb{R}^n$ is a bounded domain with $C^2$ boundary. Since $\partial \Omega$ is $C^2$, it follows from [\citealp{gilbarg1977elliptic}, Lemma 14.6, Page 355] that there exists $\varrho_0>0$ such that $d \in C^2(\Omega_{\varrho_0})$. Without loss of generality, we may assume $\rho \leq \min \left\{\varrho_0/2, 1\right\}$, so that $|\Delta d| \in L^{\infty}(\Omega_{\rho})$ and \cref{condf} also holds. We define the notion of weak solution to \cref{problem} as follows.
\begin{definition}{\label{behaveu}} We say that $u \leq 0$ on $\partial \Omega$, if $(u-\epsilon)^{+} \in W_0^{1, p}(\Omega)$ for every $\epsilon>0$ and $u=0$ on $\partial \Omega$ if $u \geq 0$ in $\Omega$ and $u \leq 0$ on $\partial \Omega$.
\end{definition}\begin{definition}{\label{mainweaksol}}
    A function $u \in W_{\mathrm{loc}}^{1, p}(\Omega)$ is said to be a weak sub-solution (resp. supersolution) of problem \cref{problem} if $u>0$ in $\Omega$, ${f}{u^{-\delta}}\in L_{\mathrm{loc}}^1(\Omega)$ and the followings hold: %for every $K \subset\subset \Omega$, there exists a constant $C_K>0$ such that $u \geq C_K$ in $K$,
\\(i) for all $\phi \in C_c^{\infty}(\Omega)$, with $\phi \geq 0$ in $\Omega$,
\begin{equation*}
    \int_{\Omega}|\nabla u|^{p-2} \nabla u\cdot \nabla \phi+\int_{\mathbb{R}^n}\int_{\mathbb{R}^n}\frac{|u(x)-u(y)|^{q-2}(u(x)-u(y))(\phi(x)-\phi(y))}{|x-y|^{n+sq}} d x d y
     \leq(\text { resp. } \geq) \int_{\Omega} f(x) u^{-\delta} \phi,
\end{equation*}
(ii) there exists $\gamma \geq 1$ such that $u^\gamma \in W_0^{1, p}(\Omega)$. This condition implies that the boundary behavior of $u$ is verified according to \cref{behaveu}.\\
A function which is both sub and super solution of \cref{problem} is called a weak solution.
\end{definition} 
\begin{remark}
    By the positivity of $u$ and condition $(ii)$ above, the corresponding tail term is finite and using \cref{embedding} and \cref{embedding2}, the inequality mentioned in the definition makes sense.
\end{remark}
\begin{definition}{\label{conicalshell}}We say a weak solution $u$ of \cref{problem}, is in the conical shell $\mathcal{C}_{d_{\beta, \delta}}$ if it is continuous and satisfies the following
\begin{equation*}
  \begin{cases}\eta d(x) \leq u(x) \leq \Gamma d(x)^{\bar{\sigma}}   & \text { if } \beta+\delta\leq 1, \text{ for all } 0<\bar{\sigma}<1, %\\ \eta d(x) \log ^{\frac{1}{p-\beta}}\left(\frac{L}{d(x)}\right) \leq u(x) \leq \Gamma d(x) \log ^{\frac{1}{p-\beta}}\left(\frac{L}{d(x)}\right) & \text { if } \beta+\delta=1 ,
    \\ \eta d(x)^{\frac{p-\beta}{p-1+\delta}} \leq u(x) \leq \Gamma d(x)^{\frac{p-\beta}{p-1+\delta}} & \text { if } \beta+\delta>1 \text{ with }  \frac{p-\beta}{p-1+\delta}\in(0,s),\\\eta d(x)\leq u(x) \leq \Gamma d(x)^{\frac{p-\beta}{p-1+\delta}} & \text { if } \beta+\delta>1 \text{ with }  \frac{p-\beta}{p-1+\delta}\in[s,1), \beta \neq p-q^\prime s(p-1+\delta),\\\eta d(x) \leq u(x) \leq \Gamma d(x)^{\frac{p-\beta_1}{p-1+\delta}}& \text { if } \beta+\delta>1 \text{ with } \beta = p-q^\prime s(p-1+\delta), \text{ for all } \beta_1\in(\beta, p),
    \end{cases}
\end{equation*}
for some positive constants $\eta, \Gamma>0$.
\end{definition}
Now, we state our existence result.
\begin{theorem}{\label{mainexis}}
    Let $\beta \in[0, p)$, then \cref{problem} admits a weak solution (obtained as a limit of solution to approximated problem \cref{approximated} of problem \cref{problem}) $u \in W_{\mathrm{loc }}^{1, p}(\Omega) \cap \,\mathcal{C}_{d_{\beta, \delta}}$, in the sense of \cref{mainweaksol}. This existence result (without the fact that $u\in  \mathcal{C}_{d_{\beta, \delta}}$) also holds if we consider the same singular problems associated with the general operator given by \cref{general} (considering $\mu=0$ in \cref{growth1}).
\end{theorem}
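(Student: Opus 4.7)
The plan is to follow the Orsina-type perturbation scheme outlined in the introduction. For each $k \in \mathbb{N}$, I consider the regularized problem
\begin{equation*}
-\Delta_p u_k + (-\Delta)^s_q u_k = \frac{f_k(x)}{(u_k^+ + 1/k)^\delta} \quad \text{in } \Omega, \qquad u_k = 0 \text{ in } \mathbb{R}^n \setminus \Omega,
\end{equation*}
with $f_k := \min\{f, k\}$. Because the right-hand side is $L^\infty$-bounded (after a fixed point iteration to handle the $u_k^+$ dependence), the operator on the left is monotone, coercive and demicontinuous on $W^{1,p}_0(\Omega)$ using \cref{embedding2} and \cref{fracemb}, so \cref{2.13} provides a solution $u_k \in W^{1,p}_0(\Omega) \cap L^\infty(\Omega)$. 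Weak comparison applied with test function $(u_k - u_{k+1})^+$ yields monotonicity $u_{k+1} \geq u_k$; positivity $u_k > 0$ follows from a strong maximum principle, and hence $u := \lim_k u_k$ exists pointwise.

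The heart of the argument is a uniform lower bound $u_k \geq \eta \psi$ expressed through the distance function. Since $\partial \Omega \in C^2$ gives $d \in C^2(\Omega_{\varrho_0})$ with $|\Delta d| \in L^\infty(\Omega_{\varrho_0})$, I would construct explicit sub-solutions guided by the exponents in \cref{conicalshell}: take $\psi = \eta d$ for $\beta + \delta \leq 1$, and $\psi = \eta d^{(p-\beta)/(p-1+\delta)}$ for $\beta + \delta > 1$, suitably extended away from $\partial \Omega$. A direct computation shows that $-\Delta_p \psi$ generates the correct $d$-singularity to match $f \psi^{-\delta}$, while the fractional contribution $(-\Delta)^s_q \psi$ is controlled in $L^\infty(\Omega_\rho)$ following \cite{aroranodea, giacomonipq} by splitting the singular integral into a Hölder-controlled neighborhood part and a tail bounded by $\|\psi\|_{L^{q-1}_{sq}(\mathbb{R}^n)}$. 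Comparison against $u_k$ then yields $u_k \geq \eta \psi$ uniformly in $k$. Analogous super-solutions $\Psi$ carrying the companion exponent in each of the four regimes of \cref{conicalshell} provide the upper bound $u_k \leq \Gamma \Psi$, thereby establishing membership in $\mathcal{C}_{d_{\beta,\delta}}$.

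Given the two-sided pointwise bound, uniform $W^{1,p}_{\mathrm{loc}}$ estimates follow by testing against $u_k \zeta^p$ with $\zeta \in C_c^\infty(\Omega)$: the singular right-hand side is dominated by $f(x) \psi(x)^{1-\delta} \zeta^p$, which is integrable on $\mathrm{supp}\,\zeta$ thanks to $f \in L^\infty_{\mathrm{loc}}$ and the strict positivity of $\psi$ away from $\partial \Omega$. The local energy is absorbed using \cref{p case}, and the double Gagliardo integral is non-negative by monotonicity of $\Phi$. Passing to $k \to \infty$ combines monotone convergence for the right-hand side (exploiting the lower bound), weak convergence in $W^{1,p}_{\mathrm{loc}}$ with a Minty-type identification of $\nabla u$, and dominated convergence for the nonlocal term (whose integrand is dominated by a power of $\Psi$ together with the uniform Gagliardo bound). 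Condition (ii) of \cref{mainweaksol} is verified by choosing $\gamma$ large enough that $u^\gamma \leq C d^{\gamma \bar\sigma}$ lies in $W^{1,p}_0(\Omega)$ in each regime of \cref{conicalshell}.

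The main obstacle is the barrier construction for the non-homogeneous operator. The exponents in \cref{conicalshell} are dictated by balancing the leading singular contribution $-\Delta_p(d^\alpha) \sim d^{(\alpha-1)(p-1)-1}$ against the singular right-hand side $d^{-\beta - \alpha \delta}$; the critical threshold $\beta = p - q's(p-1+\delta)$ is precisely where $(-\Delta)^s_q d^{(p-\beta)/(p-1+\delta)}$ fails to remain locally bounded and begins to interfere with the local scale, forcing the relaxation to $\beta_1 > \beta$ in the borderline fourth case. For the generalized operator $\mathcal{L}$ asserted in the last sentence, the same scheme of approximation, monotonicity, lower bound and passage to the limit goes through using only the structural conditions \cref{growth1}--\cref{growth3} with $\mu = 0$ (to preserve the scaling of the singular right-hand side); the explicit shell $\mathcal{C}_{d_{\beta,\delta}}$ is not claimed in this generality precisely because the sharp calculation of $A(x, \nabla d)$ required to build the barriers is unavailable without more specific structure on $A$.
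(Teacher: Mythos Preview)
Your overall scheme—approximation, comparison, barriers, passage to the limit—matches the paper's, and your identification of the critical exponent $\beta = p - q's(p-1+\delta)$ is correct. However, there is a genuine gap in the verification of condition (ii) of \cref{mainweaksol}. You write that $u^\gamma \in W_0^{1,p}(\Omega)$ follows ``by choosing $\gamma$ large enough that $u^\gamma \leq C d^{\gamma\bar\sigma}$.'' A pointwise decay bound of this type does not imply membership in $W_0^{1,p}(\Omega)$; you need a gradient estimate. The paper obtains this differently: rather than only local energy via $u_k\zeta^p$, it tests the approximate equation with $u_\epsilon^{p(\theta-1)+1}$ for $\theta$ satisfying \cref{thet}, uses [\citealp{parini}, Lemma A.2] to sign the nonlocal term, and then Hardy's inequality to close a \emph{global} bound $\|u_\epsilon^\theta\|_{W_0^{1,p}(\Omega)}\le C$ uniformly in $\epsilon$ (\cref{boundedinspace}). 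This uniform bound is what passes to the limit and delivers condition (ii); your local estimate with cutoffs cannot replace it.

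A second point concerns the limit in the local term. ``Minty-type identification'' is delicate here because when $\beta+\delta>1$ the limit $u$ need not lie in $W_0^{1,p}(\Omega)$ globally, so the standard Minty trick on the whole of $\Omega$ is unavailable. The paper instead proves pointwise a.e.\ convergence of $\nabla u_\epsilon$ (\cref{gradconv}) via the truncated test function $\phi_K T_\mu(u_\epsilon-u)$ on compact subsets, controlling the nonlocal cross-terms with \cref{algebraic2} and the uniform $W_0^{1,p}$ bound on $u_\epsilon^\theta$; this is what allows passage to the limit in $|\nabla u_\epsilon|^{p-2}\nabla u_\epsilon$. Finally, note that the paper's initial uniform lower bound $u_\epsilon\ge Cd$ comes not from a direct sub-solution but from the $C^{1,\gamma}(\overline\Omega)$ regularity of $u_1$ combined with Hopf's lemma (\cref{aproexist}(c)); the refined barriers $\underline w_\rho,\bar w_\rho$ (which are extended into $(\Omega^c)_\rho$ to control the tail) are then used only to pin down the precise exponents in $\mathcal{C}_{d_{\beta,\delta}}$.
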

To obtain the uniqueness result, we establish the following weak comparison principle.
\begin{theorem}{\label{uni}}
    Let $\beta<2-\frac{1}{p}$ and $u, v \in W_{\mathrm {loc }}^{1, p}(\Omega)$ be sub and super solution of \cref{problem}, respectively in the sense of \cref{mainweaksol}. Then, $u \leq v$ a.e. in $\Omega$. Again, this uniqueness result holds if we consider singular problems associated with the general operator given by \cref{general} (considering $\mu=0$ in \cref{growth1}).
\end{theorem}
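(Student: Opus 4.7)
Set $w := (u-v)^+$; the goal is to show $w \equiv 0$ a.e.\ in $\Omega$. Since $u, v \in W^{1,p}_{\mathrm{loc}}(\Omega)$ only, we cannot feed $w$ directly into the weak formulation of \cref{mainweaksol} and must localise. Fix an exhaustion $K_j \subset\subset K_{j+1} \subset\subset \Omega$ with $\bigcup_j K_j=\Omega$, pick cut-offs $\eta_j \in C_c^{\infty}(\Omega)$ with $\eta_j \equiv 1$ on $K_j$, $0\le\eta_j\le 1$, and for $k>0$ set
\begin{equation*}
\phi_{j,k} := T_k(w)\,\eta_j \in W_0^{1,p}(\Omega),
\end{equation*}
which is a compactly-supported admissible test (by density from $C_c^\infty$). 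Subtract the super-solution inequality for $v$ from the sub-solution inequality for $u$; since $u^{-\delta} - v^{-\delta} \le 0$ on $\operatorname{supp}(\phi_{j,k}) \subseteq \{u>v\}$, this yields
\begin{equation*}
\int_{\Omega}\bigl(A(x,\nabla u)-A(x,\nabla v)\bigr)\cdot\nabla\phi_{j,k}\,dx + \iint \frac{[\Phi(u(x){-}u(y))-\Phi(v(x){-}v(y))](\phi_{j,k}(x)-\phi_{j,k}(y))B(x,y)}{|x-y|^{n+sq}}\,dxdy \le 0.
\end{equation*}

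Expanding $\nabla\phi_{j,k}=\rchi_{\{w<k\}}\eta_j\nabla w + T_k(w)\nabla\eta_j$, the strict monotonicity of $A(x,\cdot)$ encoded in \cref{growth1} makes the ``diagonal'' local piece non-negative, with a remainder supported where $\nabla\eta_j\ne 0$ (hence outside $K_j$). For the nonlocal term, the monotonicity of $\Phi$ gives the pointwise bound
\begin{equation*}
[\Phi(u(x){-}u(y))-\Phi(v(x){-}v(y))]\,[w(x)-w(y)]\ge 0,
\end{equation*}
valid because $w=(u-v)^+$ and $\Phi$ is non-decreasing, which, after inserting the cut-off $\eta_j$, produces a non-negative diagonal contribution plus a cross-tail remainder. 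The resulting inequality reads, schematically, (non-negative principal) $\le$ (remainders), and the task reduces to showing that the remainders vanish as $j\to\infty$, $k\to\infty$.

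This is where the hypothesis $\beta<2-\tfrac{1}{p}$ is used sharply. Condition (ii) of \cref{mainweaksol} gives $u^\gamma, v^\gamma \in W_0^{1,p}(\Omega)$, and the conical-shell estimate \cref{conicalshell} (valid since $\beta<p$) furnishes the lower bound $u,v\gtrsim d(x,\partial\Omega)$. Combining these with \cref{algebraic2}, one shows $fu^{-\delta}w, fv^{-\delta}w \in L^1(\Omega)$ and that the gradient remainder
\begin{equation*}
\int_\Omega \bigl(|\nabla u|^{p-1}+|\nabla v|^{p-1}\bigr)|\nabla\eta_j|\,T_k(w)\,dx \longrightarrow 0\quad\text{as } j\to\infty,
\end{equation*}
via a Hardy-type weighted inequality whose integrability threshold is exactly $\beta<2-\tfrac{1}{p}$. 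The tail estimate of \cref{Ming} controls the nonlocal cross-interaction $\iint_{(\Omega\setminus K_j)\times\mathbb{R}^n}$ and lets $\eta_j\uparrow 1$. In the limit,
\begin{equation*}
0\le \int_{\Omega}\bigl(A(x,\nabla u)-A(x,\nabla v)\bigr)\cdot\nabla w\,dx + (\text{non-negative nonlocal term}) \le 0,
\end{equation*}
so both terms vanish; strict monotonicity of $A(x,\cdot)$ then forces $\nabla u=\nabla v$ a.e.\ on $\{w>0\}$, hence $\nabla w\equiv 0$ a.e. Combined with the vanishing boundary trace of $w$ inherited from $u^\gamma,v^\gamma\in W_0^{1,p}(\Omega)$ (via \cref{algebraic2} again), this yields $w\equiv 0$, i.e.\ $u\le v$ a.e.

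\textbf{Main obstacle.} The crux is the passage $j\to\infty$: since $w$ itself generally fails to lie in $W_0^{1,p}(\Omega)$, the remainders generated by $\nabla\eta_j$ and by the cut-off of the nonlocal tail must be controlled, and this is precisely where the sharp threshold $\beta<2-\tfrac{1}{p}$ is exploited, through the conical-shell boundary behaviour and a Hardy-type weighted gradient estimate. Because only monotonicity, coercivity and the growth conditions \cref{growth1}, \cref{growth2}, \cref{growth3} (with $\mu=0$) are invoked, the argument transfers verbatim to the general operator $\mathcal{L}$.
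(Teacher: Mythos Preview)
Your approach has a genuine gap in the passage $j\to\infty$. You invoke the conical-shell bounds of \cref{conicalshell} to get $u,v\gtrsim d(x,\partial\Omega)$, but \cref{conicalshell} is only a \emph{definition}, and membership in $\mathcal{C}_{d_{\beta,\delta}}$ is established in \cref{mainexis} only for the \emph{particular} solution obtained as a limit of approximations; it is not part of the hypotheses on a general sub- or super-solution in the sense of \cref{mainweaksol}. Without such bounds you have no control of $|\nabla u|,|\nabla v|$ near $\partial\Omega$ (only $W^{1,p}_{\mathrm{loc}}$ is assumed), so the claimed vanishing of the local remainder
\[
\int_\Omega \bigl(|\nabla u|^{p-1}+|\nabla v|^{p-1}\bigr)|\nabla\eta_j|\,T_k(w)\,dx\longrightarrow 0
\]
is unsupported: on $\operatorname{supp}(\nabla\eta_j)$, which drifts toward $\partial\Omega$, the gradients can blow up. The same problem afflicts the nonlocal tail remainder. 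The information $u^\gamma,v^\gamma\in W_0^{1,p}(\Omega)$ gives only a weighted bound $\int|\nabla u|^p u^{p(\gamma-1)}<\infty$, which does not convert into the unweighted boundary integrability your cut-off argument needs.

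The paper sidesteps this obstruction entirely by a two-step obstacle-problem argument. In Step~1 it truncates the singularity via $g_m(t)=\min\{t^{-\delta},m\}$ and, using \cref{2.13}, produces an auxiliary $z$ lying in the \emph{full energy space} $W_0^{1,p}(\Omega)$ and in the convex set $\{0\le\phi\le v\}$; the assumption $\beta<2-\tfrac1p$ enters precisely here, to make $\int_\Omega f g_m(u)\psi$ finite and the operator $J_m$ coercive via Hardy's inequality. In Step~2 one tests with $T_\eta((u-z-\epsilon)^+)$, $\epsilon=2m^{-1/\delta}$: because $(u-\epsilon)^+\in W_0^{1,p}(\Omega)$ (from \cref{behaveu}) and $z\ge 0$, this test function already lies in $W_0^{1,p}(\Omega)$ with no cut-offs needed, and monotonicity yields $u\le z+2m^{-1/\delta}\le v+2m^{-1/\delta}$. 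Letting $m\to\infty$ finishes. The key idea you are missing is this intermediate energy-space comparison function $z$, which absorbs the lack of global regularity of $v$.
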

Next follows the optimal Sobolev regularity obtained. 
\begin{theorem}{\label{optimalregu}}
     Let $\beta+\delta>1$, and take $\Lambda=\frac{(p-1)(p-1+\delta)}{p(p-\beta)}$. If ${p}<\beta+s(p+\delta-1)$ then we have weak solution $u$ to \cref{problem}, obtained as the limit of approximating sequence belongs to $  W_0^{1,p}(\Omega)$ iff $\Lambda<1$ i.e. iff $\delta<2+\frac{1-\beta p}{p-1}$ and
$u^\theta\in W_0^{1,p}(\Omega)$ iff $\theta>\Lambda\geq 1$. Further for ${p}\geq\beta+s(p+\delta-1)$, define $\Lambda=\frac{(p-1)(p-1+\delta)}{p(p-\beta)}$ if $\beta \neq p-q^\prime s(p-1+\delta)$. % or $\Lambda=\frac{(p-1)(p-1+\delta)}{p(p-\beta_1)}$ for any $\beta_1\in(\beta, p)$, if $\beta= p-q^\prime s(p-1+\delta)$.  
Then the weak solution $u$ to \cref{problem} belongs to $  W_0^{1,p}(\Omega)$ if $\Lambda<1$ and
$u^\theta\in W_0^{1,p}(\Omega)$ if $\theta>\Lambda\geq 1$.
\end{theorem}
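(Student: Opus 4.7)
The plan is to exploit the pointwise boundary behavior from \cref{conicalshell}, test the approximated problem \cref{approximated} with functions of the form $\phi=u_n^{p(\theta-1)+1}$, and invoke Hardy's inequality for the converse direction. Set $\alpha:=\frac{p-\beta}{p-1+\delta}$, so that $\Lambda=\frac{p-1}{p\alpha}$ and the threshold $\theta>\Lambda$ is equivalent to $\alpha\theta>1-\tfrac{1}{p}$. When $p<\beta+s(p-1+\delta)$, \cref{conicalshell} furnishes matched two-sided bounds $\eta\,d(x)^\alpha \le u(x) \le \Gamma\,d(x)^\alpha$ in $\Omega_\rho$, so both directions of the ``iff'' become accessible; in the regime $p\ge\beta+s(p-1+\delta)$ with $\beta\ne p-q' s(p-1+\delta)$, only the upper bound $u\le\Gamma d^\alpha$ matches this scaling, which explains the one-sided implication in the second part of the statement.

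For the sufficient direction, let $u_n$ be the bounded $W_0^{1,p}$-solutions to \cref{approximated} that converge to $u$ as in \cref{mainexis}, and insert $\phi_n := u_n^{p(\theta-1)+1}$ in the weak formulation (admissible since $u_n \in W_0^{1,p}\cap L^\infty$ and $p(\theta-1)+1\ge 1$). The local contribution equals $c_\theta \int_\Omega |\nabla u_n^\theta|^p\,dx$ with $c_\theta = \frac{p(\theta-1)+1}{\theta^p}>0$, while the nonlocal contribution is nonnegative because $t\mapsto t^{p(\theta-1)+1}$ is nondecreasing on $[0,\infty)$ and $u_n\ge 0$, so it can be discarded. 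Combining the uniform barrier $u_n\le\Gamma d^\alpha$ (traced through the proof of \cref{mainexis} to confirm its independence from the regularization parameter) with the decay in \cref{condf} produces
\[
c_\theta \int_\Omega |\nabla u_n^\theta|^p\,dx \;\le\; C \int_\Omega d(x)^{-\beta+\alpha(p(\theta-1)+1-\delta)}\,dx.
\]
A short algebraic simplification using $\alpha(p-1+\delta)=p-\beta$ collapses the exponent on $d$ to $p(\alpha\theta-1)$, which exceeds $-1$ precisely when $\theta>\Lambda$. Fatou together with weak compactness in $W_0^{1,p}$ then passes this bound to $u^\theta$. Specializing to $\theta=1$ gives $u\in W_0^{1,p}(\Omega)$ whenever $\Lambda<1$, and the equivalence $\Lambda<1 \iff \delta<2+\frac{1-\beta p}{p-1}$ is direct algebra.

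For the necessary direction (available only when $p<\beta+s(p-1+\delta)$, thanks to the matched lower bound), assume $u^\theta\in W_0^{1,p}(\Omega)$. Hardy's inequality on $W_0^{1,p}(\Omega)$ yields $\int_\Omega d(x)^{-p} u^{\theta p}\,dx<\infty$, and the lower bound $u\ge \eta\,d^\alpha$ from \cref{conicalshell} then forces $\int_{\Omega_\rho} d(x)^{p(\alpha\theta -1)}\,dx<\infty$. Integrability near $\partial\Omega$ demands $p(\alpha\theta - 1) > -1$, i.e.\ $\theta>\Lambda$, so the converse follows; the case $\theta=1$ supplies the converse implication in $u\in W_0^{1,p}(\Omega)\Rightarrow\Lambda<1$.

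The main technical hurdles I anticipate are twofold: (i) verifying admissibility of $\phi_n = u_n^{p(\theta-1)+1}$ at each approximation level and confirming the sign of the nonlocal cross-term (routine but hinges on $u_n\ge 0$ and monotonicity of the power), and (ii) ensuring that the upper barrier $\Gamma d^\alpha$ from \cref{conicalshell} holds uniformly along the approximating sequence with constants independent of the regularization; this requires revisiting the barrier construction from the proof of \cref{mainexis}. Once these points are secured, the algebra matching $\theta=\Lambda$ to the Hardy and test-function integrals is straightforward and symmetric.
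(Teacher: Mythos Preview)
Your proposal is correct and follows essentially the same approach as the paper: test the approximated equation with $u_\epsilon^{p(\theta-1)+1}$, discard the nonnegative nonlocal term (the paper appeals to a Picone-type inequality, which amounts to your monotonicity observation), use the uniform upper barrier $u_\epsilon\le\Gamma d^\alpha$ to bound the right-hand side by $\int_\Omega d^{p(\alpha\theta-1)}\,dx$, and for the converse in the regime $p<\beta+s(p-1+\delta)$ combine Hardy's inequality with the matched lower bound $u\ge\eta d^\alpha$. The two technical points you flag---admissibility of the test function and uniformity of the barrier in $\epsilon$---are handled in the paper exactly where you expect (\cref{boundedinspace} and the proof of \cref{mainexis}, respectively).
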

Next, we have a non-existence result for weak solution of \cref{problem}.
\begin{theorem}{\label{nonexist}}
     Let $\beta \geq p$ in \cref{condf}. Then, there does not exist any weak solution of problem \cref{problem} in the sense of \cref{mainweaksol}.
\end{theorem}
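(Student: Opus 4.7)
I argue by contradiction. Assume that $u$ is a weak solution of \cref{problem} with $\beta\geq p$, in the sense of \cref{mainweaksol}. Then $u>0$ in $\Omega$, $u\in W^{1,p}_{\mathrm{loc}}(\Omega)$, $fu^{-\delta}\in L^{1}_{\mathrm{loc}}(\Omega)$, and $u^{\gamma}\in W^{1,p}_{0}(\Omega)$ for some $\gamma\geq 1$. By the interior Hölder regularity results \cref{holder1} and \cref{holder2}, $u$ is continuous and thus bounded above and strictly positive on every compact $K\Subset\Omega$. The plan is to test the weak formulation against an approximation of the first positive Dirichlet $p$-Laplace eigenfunction $\phi_{1}\in C^{1,\alpha}(\overline\Omega)\cap W^{1,p}_{0}(\Omega)$, for which $\phi_{1}(x)\asymp d(x)$ near $\partial\Omega$, and derive a contradiction between boundedness of the right-hand side and divergence of the left-hand side.

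Since $\phi_{1}\notin C^{\infty}_{c}(\Omega)$, I use the approximation $\phi_{1,k}:=\phi_{1}\eta_{k}\in C^{\infty}_{c}(\Omega)$, with $\eta_{k}\in C^{\infty}_{c}(\Omega)$ satisfying $\eta_{k}\equiv 1$ on $\{d\geq 2/k\}$, $\mathrm{supp}\,\eta_{k}\subset \{d\geq 1/k\}$ and $|\nabla\eta_{k}|\lesssim k$. Plugging $\phi_{1,k}$ into \cref{weakine1} and splitting $\nabla\phi_{1,k}=\eta_{k}\nabla\phi_{1}+\phi_{1}\nabla\eta_{k}$, the cut-off contribution on the thin strip $\{1/k<d<2/k\}$ stays uniformly bounded because the factor $\phi_{1}\lesssim 1/k$ there compensates $|\nabla\eta_{k}|\lesssim k$; the bulk local term is estimated by $\|\nabla u\|_{L^{p}(\mathrm{supp}\,\phi_{1})}^{p-1}\|\nabla\phi_{1}\|_{L^{p}(\Omega)}$, and the nonlocal term is controlled via the tail assumption $u\in L^{q-1}_{sq}(\mathbb R^{n})$ together with the smoothness of $\phi_{1}$. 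Hence the right-hand side of \cref{weakine1} is uniformly bounded in $k$; by monotone convergence on the left-hand side, $\int fu^{-\delta}\phi_{1,k}\,dx\nearrow\int fu^{-\delta}\phi_{1}\,dx$, and so $\int fu^{-\delta}\phi_{1}\,dx<\infty$.

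It remains to show that this last integral is actually $+\infty$ when $\beta\geq p$. Using $f\geq c_{1}d^{-\beta}$ in $\Omega_{\rho}$, $\phi_{1}\geq c_{2}d$ near $\partial\Omega$, and an a priori upper bound $u\leq Md(x)^{\sigma}$ in some $\Omega_{\rho_{0}}$ with $\sigma\geq 0$, one gets
\begin{equation*}
\int fu^{-\delta}\phi_{1}\,dx\ \geq\ c_{1}c_{2}M^{-\delta}\int_{\Omega_{\rho_{0}}}d(x)^{1-\beta-\sigma\delta}\,dx.
\end{equation*}
The natural self-similar scaling for the equation forces $\sigma=(p-\beta)/(p-1+\delta)$, which is nonpositive exactly when $\beta\geq p$, so no positive $\sigma$ is compatible with the equation and the integrand above is at worst $d^{1-\beta}$, non-integrable near $\partial\Omega$ whenever $\beta\geq 2$. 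In the remaining sub-case $1<p<2$ and $p\leq\beta<2$, where $\int d^{1-\beta}$ itself is still finite, I replace $\phi_{1}$ by $\phi_{1}^{\alpha}$ with $\alpha\in(1-\tfrac{1}{p},\beta-1]$, which is non-empty because $\beta-1\geq p-1>1-\tfrac{1}{p}$, and use a Picone-type inequality for the $(p,q)$-mixed operator to keep the right-hand side bounded while securing $\int d^{\alpha-\beta}=+\infty$ on the left. The main technical obstacle will be the a priori upper bound $u\leq Md^{\sigma}$ in this mixed local--nonlocal singular setting: the weak comparison principle of \cref{uni} is restricted to $\beta<2-\tfrac{1}{p}$ and thus unavailable here, so I would instead construct an explicit supersolution barrier built from $d^{\sigma}$ (respectively from $\phi_{1}$), exploiting the $C^{2}$-regularity of $d$ near $\partial\Omega$ to compute the contribution of $(-\Delta)_{q}^{s}$ in a neighborhood of the boundary, in the same spirit as the conical shell analysis carried out in \cref{singu}.
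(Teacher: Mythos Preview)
Your approach has two genuine gaps that make it unworkable, and it is conceptually opposite to the paper's argument.

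\textbf{First gap: the right-hand side of the tested equation is not bounded.} You only know $u\in W^{1,p}_{\mathrm{loc}}(\Omega)$, not $u\in W^{1,p}(\Omega)$. When you test with $\phi_{1,k}$ and let $k\to\infty$, the ``bulk local term'' you claim is bounded by $\|\nabla u\|_{L^p(\mathrm{supp}\,\phi_1)}^{p-1}\|\nabla\phi_1\|_{L^p(\Omega)}$ is a problem: $\mathrm{supp}\,\phi_1=\overline\Omega$, and you have no global $L^p$ (or even $L^{p-1}$) control on $\nabla u$. Likewise, the strip term $\int_{\{1/k<d<2/k\}}|\nabla u|^{p-1}\phi_1|\nabla\eta_k|\,dx$ is only bounded by $C\int_{\{1/k<d<2/k\}}|\nabla u|^{p-1}\,dx$, which you cannot control. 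The nonlocal piece has the same defect: you need a global $W^{s,q}$ seminorm of $u$, and from $u^\gamma\in W^{1,p}_0(\Omega)$ you do not get one. So the passage ``the right-hand side stays uniformly bounded in $k$'' fails.

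\textbf{Second gap: the required upper bound $u\leq Md^\sigma$ is unavailable.} You correctly identify this as the main obstacle and then propose a barrier $d^\sigma$. But the supersolution barriers in this paper (and in the literature) rely on the exponent $\tau=(p-\beta)/(p-1+\delta)$ being \emph{positive}; for $\beta\geq p$ there is no candidate power $\sigma>0$ for which $d^\sigma$ is a supersolution in $\Omega_\rho$. Your heuristic scaling remark is not a substitute for a construction. The sub-case treatment via $\phi_1^\alpha$ and a Picone inequality does not rescue this: Picone requires the right sign structure and, again, global energy information on $u$ that you do not have.

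\textbf{What the paper does instead.} The paper exploits condition (ii) of \cref{mainweaksol}, namely $u^{\gamma}\in W^{1,p}_0(\Omega)$, through Hardy's inequality: this gives the \emph{finite} quantity $\int_\Omega u^{\gamma p}/d^p\,dx<\infty$. The contradiction then comes from a \emph{lower} bound on $u$, not an upper bound. Concretely, for any $\tilde\beta<p$ one picks $f_{\tilde\beta}$ with $mf_{\tilde\beta}\leq f$ and $f_{\tilde\beta}\asymp d^{-\tilde\beta}$ near $\partial\Omega$, solves the approximated problem \cref{approximated} with datum $mf_{\tilde\beta,\epsilon}$, and shows $w_\epsilon\leq u$ by a comparison argument that is legitimate because $(w_\epsilon-u-\sigma)^+$ has compact support (here $w_\epsilon\in C^{1,\alpha}(\overline\Omega)$). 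The known lower barrier for $w_\epsilon$ from \cref{mainexis} then yields, after $\epsilon\downarrow 0$, $u\geq \eta\,d^{(p-\tilde\beta)/(p-1+\delta)}$. Sending $\tilde\beta\uparrow p$ forces $\int_\Omega u^{\gamma p}/d^p=\infty$, contradicting Hardy. The key idea you are missing is to turn the membership $u^\gamma\in W^{1,p}_0(\Omega)$ into an \emph{upper} integral constraint via Hardy and then violate it with a \emph{lower} pointwise bound on $u$ obtained by comparison with a regular auxiliary problem where $\tilde\beta<p$.
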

Finally, we prove the following almost optimal global regularity result in the singular setting.
\begin{theorem}{\label{holder6}}
    Suppose $\beta \in[0, p )$ and $\delta>0$. Let $u\in W_{\mathrm{loc}}^{1, p}(\Omega)$ be either the solution given by \cref{mainexis} or the unique solution to problem \cref{problem}. Then the following assertions hold:\\
(i) If $\beta+\delta < 1$, then $u \in C^{1, \sigma}(\overline{\Omega})$ for some $\sigma\in(0,1)$.\\
(ii) If $\beta+\delta = 1$, then $u\in C^{0,\eta}(\overline{\Omega})
%\mathbb{R}^n)
$ for all $\sigma\in(0,1)$.\\
(iii) If $\beta+\delta>1$ with $\beta \neq p -q^{\prime} s(p-1+\delta)$, then
\begin{equation*}
u\in C^{0, \frac{p -\beta}{p-1+\delta}}(\overline{\Omega}) . 
\end{equation*}
(iv) If $\beta+\delta>1$ with $\beta = p -q^{\prime} s(p-1+\delta)$, then
\begin{equation*}
u\in C^{0, \frac{p -\beta_1}{p-1+\delta}}(\overline{\Omega}) , 
\quad\text { for all } \beta_1 \in\left(\beta, p \right).
\end{equation*}
Also, in the cases (ii), (iii) and (iv), $u\in C_{\mathrm{loc}}^{1, \gamma}(\overline{\Omega})$ for some $\gamma\in(0,1)$.
\end{theorem}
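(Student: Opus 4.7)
The plan is to split according to the size of $\beta + \delta$, mirroring the cases of \cref{conicalshell}. The conical shell provided by \cref{mainexis} gives two-sided pointwise control of $u$ by powers of the distance function $d$: the lower bound tames the singularity $u^{-\delta}$ and makes the source $fu^{-\delta}$ locally integrable, while the upper bound forces the H\"older decay of $u$ at $\partial\Omega$. Interior $C^{1,\gamma}$ regularity in cases (ii)--(iv) will come from \cref{holder3}, and global $C^{1,\gamma}$ in case (i) from \cref{holder4}; the novelty is the combination of the pointwise boundary bound with the (non-uniform) interior bound to produce a global H\"older estimate with the sharp exponent.

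For case (i), $\beta + \delta < 1$, the hypotheses of \cref{holder4} can be matched directly: from $u \geq \eta d$ in \cref{conicalshell} and \cref{condf}, one has $0 \leq fu^{-\delta} \leq C d^{-(\beta+\delta)}$ with exponent $\sigma := \beta + \delta \in [0,1)$; the upper bound $u \leq \Gamma d^{\bar\sigma}$ lets me pick $\bar\sigma \in [\sigma, 1)$ and supplies the required $\epsilon \in [\sigma, 1)$; and \cref{optimalregu} gives $u \in W_0^{1,p}(\Omega)$ since $\Lambda < 1$ in this range. Applying \cref{holder4} produces $u \in C^{1,\gamma}(\overline{\Omega})$.

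In cases (ii)--(iv), $fu^{-\delta}$ is too singular for \cref{holder4}, so I would proceed in two steps. On any compact $K \subset\subset \Omega$, the conical shell lower bound makes $fu^{-\delta} \in L^\infty(K)$; combined with $u^\theta \in W^{1,p}_0(\Omega)$ from \cref{optimalregu}, \cref{holder3}(b) yields interior $C^{1,\gamma}_{\mathrm{loc}}(\Omega)$ regularity, which is the second assertion of the theorem. For global H\"older regularity with the sharp exponent $\alpha$ (any $\alpha<1$ in case (ii), $(p-\beta)/(p-1+\delta)$ in case (iii), $(p-\beta_1)/(p-1+\delta)$ for some $\beta_1 > \beta$ in case (iv)), I would use the pointwise decay $u(y) \leq \Gamma d(y)^\alpha$ from \cref{conicalshell} and combine it with the interior bound via a dyadic patching: for $x,y \in \Omega$, if $|x-y| \geq \tfrac12 \min\{d(x), d(y)\}$ both points lie in a boundary strip of thickness $O(|x-y|)$ and the triangle inequality with $u \leq \Gamma d^\alpha$ gives $|u(x)-u(y)| \leq C|x-y|^\alpha$; otherwise a rescaled interior estimate on $B_{d(x)/2}(x)$ controls $|u(x)-u(y)|$ by $d(x)^{-\tau} |x-y|^{1+\gamma}$, and combining with $|x-y| \leq \tfrac12 d(x)$ returns the same $\alpha$-power of $|x-y|$.

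The hard part is tracking how the interior H\"older constant from \cref{holder3} depends on $d(x)$. The $L^d$-norm of $fu^{-\delta}$ on a ball $B_{d(x)/2}(x)$ scales like $d(x)^{-(\beta+\delta)+n/d}$, so after rescaling to the unit ball (and absorbing the mismatch between the local $p$-Laplacian scaling and the $(s,q)$-fractional scaling using boundedness of $u$ and finiteness of its nonlocal tail), I need $d$ slightly larger than $n$ so that the resulting power of $d(x)$ lands exactly at $\alpha - 1 - \gamma$, giving the $\alpha$-H\"older bound. The borderline case (iv), $\beta = p - q^\prime s(p-1+\delta)$, sits exactly on the critical scaling value; relaxing $\beta$ to $\beta_1 \in (\beta, p)$ via the conical shell pushes $(p-\beta_1)/(p-1+\delta)$ strictly off the threshold and restores the patching at the cost of an arbitrarily small loss in the H\"older exponent.
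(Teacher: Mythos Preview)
Your outline follows the paper's approach closely: case (i) via \cref{holder4} plus the conical shell, cases (ii)--(iv) via interior $C^{1,\gamma}_{\mathrm{loc}}$ from \cref{holder3} together with the pointwise decay $u\le\Gamma d^\zeta$ and a near/far dichotomy on $|x-y|$ versus $d(x)$.

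A few points where your write-up diverges from the paper. First, in case (i) you invoke \cref{optimalregu} for $u\in W_0^{1,p}(\Omega)$, but that result is stated for $\beta+\delta>1$; here membership comes directly from the construction (\cref{boundedinspace} and \cref{u_0}). Second, to apply \cref{holder4} you need the weak formulation to hold for test functions in $W_0^{1,p}(\Omega)$, not only $C_c^\infty(\Omega)$; the paper carries out this density step explicitly (bounding $|\int_\Omega fu^{-\delta}\psi|\le C\|\psi\|_{W_0^{1,p}}$ via the equation itself and \cref{embedding2}). Third, your interior patching through the $C^{1,\gamma}$ constant and rescaling is harder than necessary: \cref{holder3} does not provide quantitative constants, and the nonhomogeneous scaling of $-\Delta_p+(-\Delta)^s_q$ makes the tracking you sketch awkward. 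The paper instead invokes the almost-Lipschitz local regularity of \cref{holder1}/\cref{holder2} (any $C^{0,\sigma}$, $\sigma<1$, with an explicit estimate on balls of radius $\sim d(x)$), which is the natural tool for the interior half of the dichotomy. Finally, the exceptional value in case (iv) is not a scaling threshold of the patching argument: it already enters at the barrier stage (\cref{mainexis}, case 3), where $\tau=q's$ is precisely when the fractional term of the barrier fails to be $L^\infty$ near $\partial\Omega$; this is why the conical shell only yields $u\le\Gamma d^{(p-\beta_1)/(p-1+\delta)}$ for $\beta_1>\beta$, and the Hölder exponent in (iv) inherits that loss directly.
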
 
As an application of the above results, we deduce the following corollary (compare this with \cref{prtb}) for critical growth problems.
\begin{corollary}{\label{finalcor}}
    Let $u \in W_0^{1, p}(\Omega)$ be a weak solution to the problem
\begin{equation}{\label{problem2}}
\begin{array}{c}
\begin{split}
-\Delta_pu+(-\Delta)_q^s u&=\frac{\lambda}{u^{\delta}}+b(x,u)\text { in } \Omega, \\u&>0 \text{ in } \Omega,\\u&=0  \text { in }\mathbb{R}^n \backslash \Omega;
\end{split}
\end{array}
\end{equation}
where $\delta \in(0,1), \lambda>0$ is a parameter and $b: \Omega \times \mathbb{R} \rightarrow \mathbb{R}$ is a Carathéodory function satisfying
\begin{equation*}
    0\leq b(x, t)\leq C_b(1+|t|^{p^*-1}),
\end{equation*}
with $C_b>0$ as a constant. Then $u \in C^{1, \sigma}(\overline{\Omega})$ for some $\sigma \in(0, 1)$.
\end{corollary}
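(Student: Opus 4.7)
The plan is to reduce \cref{problem2} to the framework of \cref{holder4}: sandwich $u$ between two solutions of the purely singular problem \cref{problem} (with $\beta=0$ and $\delta\in(0,1)$) and use the resulting two-sided control by the distance function, together with an $L^\infty$ bound, to verify the hypotheses of \cref{holder4} with $\sigma=\delta$. The three steps are: (i) a lower bound $u\gtrsim d(x)$ via comparison with the purely singular problem driven by $\lambda$ alone, (ii) an $L^\infty(\Omega)$ bound via a Brezis--Kato / Moser iteration, and (iii) a matching upper bound $u\leq \Gamma d(x)^{\bar\sigma}$ via comparison with another purely singular problem whose right-hand side dominates $\lambda u^{-\delta}+b(x,u)$.

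For step (i), since $b\geq 0$, the function $u$ is a weak super-solution of $-\Delta_p v+(-\Delta)^s_q v=\lambda v^{-\delta}$ in $\Omega$ with $v=0$ outside $\Omega$. As $\beta=0<2-\frac{1}{p}$, the comparison principle \cref{uni} gives $u\geq \underline u$, where $\underline u$ is the solution produced by \cref{mainexis}. The conical shell $\mathcal{C}_{d_{0,\delta}}$ (case $\beta+\delta<1$) then furnishes $\eta>0$ with $\underline u\geq \eta\, d(x)$, so $u\geq \eta\, d(x)$ in $\Omega$. Consequently $\lambda u^{-\delta}\leq \lambda \eta^{-\delta}\, d(x)^{-\delta}\in L^r(\Omega)$ for some $r>n$ (since $\delta<1$).

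For step (ii), rewrite the equation as $-\Delta_p u+(-\Delta)^s_q u=g(x)$ with $g(x):=\lambda u^{-\delta}+b(x,u)$. Having already controlled $\lambda u^{-\delta}$ in $L^r$ with $r>n$, the remaining critical-growth piece $b(x,u)\leq C_b(1+|u|^{p^*-1})$ is absorbed by the same Brezis--Kato / Moser argument used to establish \cref{prtb}, yielding $u\in L^\infty(\Omega)$. Setting $M:=\|u\|_{L^\infty(\Omega)}$ and $K:=C_b(1+M^{p^*-1})$, we estimate
\begin{equation*}
g(x)\;\leq\;\lambda u^{-\delta}+K\;\leq\;\lambda u^{-\delta}+K M^{\delta}u^{-\delta}\;=\;(\lambda+KM^\delta)\,u^{-\delta}.
\end{equation*}
Hence $u$ is a sub-solution of $-\Delta_p w+(-\Delta)^s_q w=(\lambda+KM^\delta)\,w^{-\delta}$; applying \cref{uni} once more gives $u\leq \bar w$, and the conical-shell upper bound for $\bar w$ yields $u\leq \Gamma\, d(x)^{\bar\sigma}$ for every $\bar\sigma\in(0,1)$.

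Fixing any $\bar\sigma\in(\delta,1)$, the source $g$ satisfies $0\leq g\leq C\, d(x)^{-\delta}$ with $\delta\in[0,1)$, and $u$ satisfies $0\leq u\leq M$ together with $0\leq u\leq \Gamma\, d(x)^{\bar\sigma}$ and $\bar\sigma\in[\delta,1)$. These are exactly the hypotheses of \cref{holder4} applied to $u\in W_0^{1,p}(\Omega)$ as a solution of $\mathcal L u=g$ with $A(x,z)=|z|^{p-2}z$, $\Phi(t)=|t|^{q-2}t$ and $B\equiv 1$. \cref{holder4} then delivers $u\in C^{1,\sigma}(\overline\Omega)$ for some $\sigma\in(0,1)$. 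The principal obstacle is step (ii): one must verify that the mixed local--nonlocal Moser iteration tolerates the simultaneous presence of the (subcritical) singular lower-order term $\lambda u^{-\delta}$ and the Sobolev-critical nonlinearity $b(x,u)$; once this is in place, the comparison-based squeeze and the final reduction to \cref{holder4} are routine.
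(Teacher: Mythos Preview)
Your overall architecture—lower bound via comparison, then $L^\infty$, then upper bound via comparison, then \cref{holder4}—is exactly the paper's, and your steps (i) and (iii) are essentially identical to what the paper does in its Step~3. The difference is the order: the paper obtains the $L^\infty$ bound \emph{first} (Steps~1--2) and only then runs both comparisons.

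There is, however, a concrete error in your step~(ii). You write that $\lambda u^{-\delta}\le \lambda\eta^{-\delta}d(x)^{-\delta}\in L^r(\Omega)$ for some $r>n$ ``since $\delta<1$''. This is false in general: $d^{-\delta}\in L^r(\Omega)$ if and only if $r\delta<1$, so $r>n$ forces $\delta<1/n$, and even the weaker requirement $d^{-\delta}\in L^{n/p}$ (which is what the Brezis--Kato scheme of \cref{prtb} actually needs) fails whenever $p<n$ and $\delta\ge p/n$. Thus your proposed route to the $L^\infty$ bound does not go through for the full range $\delta\in(0,1)$.

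The paper's fix is to run the Moser iteration on $(u-1)^+$ rather than on $u$: the test function is $\phi\bigl((u-1)^+\bigr)$ with $\phi$ as in \cref{phii}. The point is that on the support of such a test function one has $u\ge 1$, so $\lambda u^{-\delta}\phi((u-1)^+)\le \lambda\,\phi((u-1)^+)$ and the singular term behaves like a bounded source—no integrability of $d^{-\delta}$ is ever invoked. One then checks separately (by a short case analysis) that the nonlocal term remains nonnegative against $\phi((u-1)^+)$, and the iteration yields $(u-1)^+\in L^t(\Omega)$ for all $t$, hence $u\in L^\infty(\Omega)$. With this correction in place, your steps (i), (iii) and the final appeal to \cref{holder4} are fine; in particular, the lower bound from (i) is still needed to get $g\le C\,d(x)^{-\delta}$ at the end, just not for the $L^\infty$ step.
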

\section{Regularity results for regular problems}{\label{regu1}}
The following energy estimate will be crucial for us. Let us denote an open ball with center $x_0 \in \mathbb{R}^n$ and radius $r>0$ by $B_r(x_0)$.
\begin{lemma}{\label{caccioppoli}}
    Let $1<q\leq p<\infty, f\in L^n_{\mathrm{loc}}(\Omega)$ and $u\in W^{1,p}_{\mathrm{loc}}(\Omega)\cap L^{q-1}_{sq}(\mathbb{R}^n)$ be a local weak subsolution of \cref{p} and denote $w=(u-k)^{+}$ with $k \in \mathbb{R}$. There exists a positive constant $C=C(p,q,n,s%\Lambda
    )$ such that
\begin{equation}{\label{cacc}}
    \begin{array}{l}
         \int_{B_r(x_0)} \psi^p|\nabla w|^p d x+\int_{B_r(x_0)} \int_{B_r(x_0)}|w(x) \psi^{p/q}(x)-w(y) \psi^{p/q}(y)|^q d \mu \\
\leq C\left(\int_{B_r(x_0)} w^p|\nabla \psi|^p d x+\int_{B_r(x_0)} \int_{B_r(x_0)} \max \{w(x), w(y)\}^q|\psi^{p/q}(x)-\psi^{p/q}(y)|^q d \mu\right. \\
\left.\quad+\int_{B_r(x_0)}\left(\int_{\mathbb{R}^n \backslash B_r(x_0)} \frac{w(y)^{q-1}}{|x-y|^{n+ sq}} d y \right) w \psi^p d x+r^{-sq}\int_{B_r(x_0)}w^q dx\right)+\int_{B_r(x_0)}|f|w\psi^p dx,
    \end{array}
\end{equation}
whenever $B_r\left(x_0\right) \subset \Omega$ and $\psi \in C_c^{\infty}(B_r(x_0))$ is a nonnegative function. We have denoted $d\mu=\frac{1}{|x-y|^{n+sq}}dx dy$. If $u$ is a weak supersolution of \cref{p}, the estimate in \cref{cacc} holds with $w=(u-k)^{-}$.
\end{lemma}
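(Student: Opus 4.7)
The plan is to test the weak subsolution formulation \cref{weakine} with $\phi = w\psi^p$, an admissible nonnegative test function in $W_0^{1,p}(\mathrm{supp}\,\psi) \subset W_0^{1,p}(K)$ for a suitable $K \subset\subset B_r(x_0)$, and then to bound the local and nonlocal contributions separately, absorbing lower-order pieces via Young's inequality. Throughout, $\nabla w = \nabla u$ and $w = u-k$ on $\{u>k\}$, while $\nabla w \equiv 0$ on $\{u \leq k\}$.

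For the local contribution, expanding $\nabla\phi = \psi^p\nabla w + pw\psi^{p-1}\nabla\psi$ produces
\[
\int_{B_r(x_0)}|\nabla u|^{p-2}\nabla u\cdot\nabla\phi\,dx = \int_{B_r(x_0)}\psi^p|\nabla w|^p\,dx + p\int_{B_r(x_0)}w\psi^{p-1}|\nabla w|^{p-2}\nabla w\cdot\nabla\psi\,dx,
\]
and Young's inequality $pw\psi^{p-1}|\nabla w|^{p-1}|\nabla\psi|\leq \tfrac{1}{2}\psi^p|\nabla w|^p + Cw^p|\nabla\psi|^p$ absorbs the cross term, leaving the good gradient piece on the left of \cref{cacc} together with the first term on the right. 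The forcing contributes directly via $\int_{B_r}|f|w\psi^p\,dx$.

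For the nonlocal contribution, split $\mathbb{R}^n\times\mathbb{R}^n = (B_r\times B_r) \cup (B_r\times B_r^c) \cup (B_r^c\times B_r) \cup (B_r^c\times B_r^c)$; the last piece vanishes since $\phi\equiv 0$ outside $B_r$. On $B_r\times B_r$, I apply a Di Castro--Kuusi--Palatucci-type algebraic inequality adapted to the exponent mismatch: for $1<q\leq p$, $a,b\in\mathbb{R}$, $\tau_1,\tau_2\in[0,1]$,
\[
|a-b|^{q-2}(a-b)\bigl(a_+\tau_1^p - b_+\tau_2^p\bigr) \geq \tfrac{1}{c}\bigl|a_+\tau_1^{p/q} - b_+\tau_2^{p/q}\bigr|^q - C\max\{a_+,b_+\}^q\bigl|\tau_1^{p/q}-\tau_2^{p/q}\bigr|^q,
\]
applied with $a = u(x)-k$, $b = u(y)-k$, $\tau_1 = \psi(x)$, $\tau_2 = \psi(y)$. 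Integration against $d\mu$ delivers the good $W^{s,q}$-type piece on the left of \cref{cacc} and the second error term on the right. On the cross strip $B_r\times B_r^c$, $\psi(y)=0$ reduces the test-function increment to $w(x)\psi^p(x)$; since $w(x)>0$ forces $(k-u(x))_+=0$, one has $|u(x)-u(y)|^{q-2}(u(x)-u(y)) \geq -\bigl((u(y)-u(x))_+\bigr)^{q-1}$ together with $(u(y)-u(x))_+ \leq w(y)$ wherever the integrand is nonzero. Integrating in $y$ delivers the tail term (third RHS entry), while the residual $r^{-sq}\int_{B_r}w^q\,dx$ arises from a further splitting of the outer $y$-integral near $\partial B_r$, combined with the pointwise bound $\int_{\mathbb{R}^n\setminus B_r(x_0)}|x-y|^{-n-sq}\,dy \leq C r^{-sq}$ available on compact subsets of $B_r$ where $\psi$ is supported, absorbed via a final application of Young's inequality.

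The main obstacle is establishing the algebraic inequality with mismatched exponents $p$ (on the cutoff inside $\phi$) and $p/q$ (on the cutoff extracted into the $L^q$-increment). Unlike the standard DKP version for the fractional $p$-Laplacian, here one must case-split on the signs of $a,b$ and on the relative sizes of $\tau_1,\tau_2$, repeatedly invoking the elementary inequality $|\tau_1^{p/q}-\tau_2^{p/q}| \leq (p/q)\max\{\tau_1,\tau_2\}^{(p-q)/q}|\tau_1-\tau_2|$ to shuttle between the two cutoff exponents. A secondary subtlety is the bookkeeping around $\partial B_r$ that cleanly separates the cross-strip contribution into the genuine tail term and the $r^{-sq}\int w^q$ absorption piece, since the naive bound $\int_{\mathbb{R}^n\setminus B_r(x_0)}|x-y|^{-n-sq}\,dy \leq Cr^{-sq}$ is not uniform up to $\partial B_r$.
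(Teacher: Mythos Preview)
Your proposal is correct and matches the paper's proof, which tests with $\phi = w\psi^p$ and then invokes external references for the local estimate and (after the pointwise reduction $|u(x)-u(y)|^{q-2}(u(x)-u(y))(\cdots)\geq |w(x)-w(y)|^{q-2}(w(x)-w(y))(\cdots)$) for the nonlocal one---the Young's-inequality local bound and the DKP-style algebraic inequality with mismatched cutoff exponents $p$ and $p/q$ that you describe are exactly what those references contain. The only imprecision is your placement of the $r^{-sq}\int_{B_r}w^q$ term: your proposed mechanism via $\int_{B_r^c}|x-y|^{-n-sq}\,dy \leq Cr^{-sq}$ on $\operatorname{supp}\psi$ would make $C$ depend on $\operatorname{dist}(\operatorname{supp}\psi,\partial B_r)$ rather than only on $(n,p,q,s)$; in fact the cross-strip yields only the clean tail term (your own sharp estimate $(u(y)-u(x))_+ \leq w(y)$ already delivers this without residue), and the $r^{-sq}\int w^q$ piece arises elsewhere in the cited nonlocal manipulation.
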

\begin{proof} For $w=(u-k)^{+}$, we choose $\phi=w \psi^p$ as a test function in \cref{weakine}, to obtain
\begin{equation}{\label{1}}
    \begin{array}{rcl}
         \int_{B_r(x_0)}|f|w\psi^p dx\geq \int_{B_r(x_0)}fw\psi^p dx &\geq & \int_{B_r(x_0)}|\nabla u|^{p-2} \nabla u \cdot \nabla\left(w \psi^p\right) d x \smallskip\\
& &+\int_{\mathbb{R}^n} \int_{\mathbb{R}^n} |u(x)-u(y)|^{q-2}(u(x)-u(y))(w(x) \psi(x)^p-w(y) \psi(y)^p) d \mu \\
&= & I+J.
    \end{array}
\end{equation}
Proceeding as in the proof of [\citealp{Liao}, Page 14, Proposition 3.1], for some constants $c=c(p)>0$ and $C=C(p)>0$, we have
\begin{equation}{\label{20}}
    I  =\int_{B_r(x_0)}|\nabla u|^{p-2} \nabla u \cdot \nabla(w \psi^p) d x  \geq c \int_{B_r(x_0)} \psi^p|\nabla w|^p d x-C \int_{B_r(x_0)} w^p|\nabla \psi|^p d x .
\end{equation}
Moreover, from the lines of the proof of [\citealp{MG}, Pages 24-25, Lemma 3.1], for some constants $c=c(p,q,s,n, \Lambda)>0$ and $C=C(p,q,s,n, \Lambda)>0$, we have

\begin{equation}{\label{30}}
   \begin{array}{rcl} J&= &\int_{\mathbb{R}^n} \int_{\mathbb{R}^n} |u(x)-u(y)|^{q-2}(u(x)-u(y))(w(x) \psi(x)^p-w(y) \psi(y)^p) d \mu\smallskip\\&\geq & \int_{\mathbb{R}^n} \int_{\mathbb{R}^n} |w(x)-w(y)|^{q-2}(w(x)-w(y))(w(x) \psi(x)^p-w(y) \psi(y)^p) d \mu\smallskip\\
&\geq & c \int_{B_r(x_0)} \int_{B_r(x_0)}|w(x) \psi^{p/q}(x)-w(y) \psi^{p/q}(y)|^q d \mu\smallskip\\
&& \quad-C \int_{B_r(x_0)} \int_{B_r(x_0)} \max \{w(x), w(y)\}^q|\psi^{p/q}(x)-\psi^{p/q}(y)|^q d \mu\smallskip\\
&& \quad-Cr^{-sq}\int_{B_r(x_0)}w^q dx-C \int_{B_r(x_0)}\left(\int_{\mathbb{R}^n \backslash B_r(x_0)} \frac{w(y)^{q-1}}{|x-y|^{n+ sq}} d y \right) w \psi^p d x.
\end{array}
\end{equation}By applying \cref{20,30} in \cref{1}, we obtain \cref{cacc}. In the case of a weak supersolution, the estimate in \cref{cacc} follows by applying the obtained result to $-u$
\end{proof}
\begin{remark}{\label{caccremark}}
It should be noted that the same form of energy estimate also holds for the general operator $\mathcal{L}$ described in \cref{general}. The proof follows precisely similar to [\citealp{MG}, Lemma 3.1].
\end{remark}
Next, we show the local boundedness of weak solutions. We apply the following real analysis lemma. For the proof, see [\citealp{dibenedetto2012degenerate}, Lemma 4.1].
\begin{lemma}{\label{iteration}}
    Let $\left(Y_i\right)_{i=0}^{\infty}$ be a sequence of positive real numbers such that $Y_0 \leq$ $c_0^{-\frac{1}{\beta}} b^{-\frac{1}{\beta^2}}$ and $Y_{i+1} \leq c_0 b^i Y_i^{1+\beta}, i=0,1,2, \ldots$, for some constants $c_0, b>1$ and $\beta>0$. Then $\lim _{i \rightarrow \infty} Y_i=0$.
\end{lemma}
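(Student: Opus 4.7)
The plan is to establish by induction the geometric decay bound
\begin{equation*}
Y_i \leq Y_0\, b^{-i/\beta} \quad \text{for every } i \geq 0,
\end{equation*}
which, since $b>1$, immediately yields $\lim_{i\to\infty} Y_i = 0$. The base case $i=0$ is trivial, so the whole argument reduces to the inductive step.

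Assuming the bound at level $i$, I would substitute $Y_i \leq Y_0 b^{-i/\beta}$ into the recursion $Y_{i+1} \leq c_0 b^i Y_i^{1+\beta}$ to obtain
\begin{equation*}
Y_{i+1} \leq c_0\, b^i\, Y_0^{1+\beta}\, b^{-i(1+\beta)/\beta} = c_0\, Y_0^{\beta} \cdot Y_0\, b^{-i/\beta},
\end{equation*}
after observing that the exponent on $b$ collapses as $i - i(1+\beta)/\beta = -i/\beta$. To close the induction I need this last quantity to be bounded by $Y_0 b^{-(i+1)/\beta}$, which is equivalent to $c_0 Y_0^{\beta} \leq b^{-1/\beta}$, i.e.\ $Y_0 \leq c_0^{-1/\beta} b^{-1/\beta^2}$. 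This is precisely the smallness hypothesis on $Y_0$, so the inductive step goes through and the desired geometric bound propagates to all $i$.

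There is no serious obstacle beyond the exponent bookkeeping; the content of the lemma is entirely algebraic. The role of the precise exponent $-1/\beta^2$ in the hypothesis on $Y_0$ is to absorb exactly the accumulated multiplicative factors $b^i$ produced by the recursion at each iteration, which is what makes the hypothesis sharp for this geometric induction scheme. Any stronger smallness on $Y_0$ would work too, but nothing weaker would suffice to offset the $b^i$ growth against the $(1+\beta)$-superlinear feedback.
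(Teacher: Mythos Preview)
Your proof is correct and is exactly the standard induction argument for this classical iteration lemma; the paper itself does not supply a proof but simply refers to \cite{dibenedetto2012degenerate}, Lemma 4.1, where the same geometric-decay induction is carried out.
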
 
\begin{theorem}{\label{local boundedness}}
(Local boundedness). Let $1< q\leq p<\infty$ and $u$ be a local weak subsolution of \cref{p}. Then there exists a positive constant $c=c(n, p, q, s%, \Lambda
)$, such that
\begin{equation}{\label{bdd}}
    \underset{B_{\frac{r}{2}}(x_0)}{\operatorname{ess} \sup } \,u \leq \operatorname{Tail}_{q-1}(u^{+} ; x_0, \frac{r}{2})^{\frac{q-1}{p-1}}+c \left(\fint_{B_r(x_0)} (u^{+})^p d x\right)^{\frac{1}{p}}+\|f\|_{L^\infty(B_r)}^{\frac{1}{p-1}}+1,
\end{equation}
whenever $B_r(x_0) \subset \subset\Omega$ with $r \in(0,1]$. Moreover, if $u$ is a weak supersolution, then \cref{bdd} holds with $-u$. 
\end{theorem}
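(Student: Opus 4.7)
My strategy is a De Giorgi iteration built on the Caccioppoli estimate of \cref{caccioppoli}, the Gagliardo--Nirenberg--Sobolev inequality of \cref{sobolev embedding 2}, and the numerical iteration lemma \cref{iteration}. Fix $B_r(x_0)\subset\subset\Omega$ with $r\in(0,1]$ and a threshold $k>0$ to be chosen at the end. For $i\geq 0$, I introduce the shrinking radii $r_i=\tfrac{r}{2}(1+2^{-i})$, balls $B_i=B_{r_i}(x_0)$, increasing levels $k_i=k(1-2^{-i})$, truncations $w_i=(u-k_i)^+$, and cut-offs $\psi_i\in C_c^\infty(B_i)$ with $\psi_i\equiv 1$ on $B_{i+1}$, $0\leq\psi_i\leq 1$, $|\nabla\psi_i|\leq C\,2^i/r$. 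Set $Y_i:=\fint_{B_i}w_i^p\,dx$. The target is a recursion of the form $Y_{i+1}\leq C\,b^{i}\,\Psi(k)\,Y_i^{1+\beta}$, with $b>1$, $\beta>0$ depending only on $n,p,q,s$; once this is in hand, \cref{iteration} together with the appropriate choice of $k$ will yield $Y_i\to 0$ and hence $u\leq k$ a.e.\ on $B_{r/2}$.

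Applying \cref{caccioppoli} to $w_i$ with cut-off $\psi_i$, discarding the non-negative Gagliardo seminorm on the left, and estimating $|\nabla\psi_i|\leq C 2^i/r$, the right-hand side is controlled by four types of terms: the local term $(2^i/r)^p\int_{B_i}w_i^p$; the lower-order fractional term $r^{-sq}\int_{B_i}w_i^q$, which for $q\leq p$ and $r\leq 1$ is absorbed via $w_i^q\leq 1+w_i^p$ once $k\geq 1$; the source term, bounded by $\|f\|_{L^\infty(B_r)}\int_{B_i}w_i$; and finally the nonlocal tail integral. For the tail, the geometric inequality $|x-y|\geq c\,2^{-i}|y-x_0|$ (valid whenever $x\in B_{i+1}$ and $y\notin B_i$), combined with $(u-k_i)^+\leq u^+$ and the split $\mathbb{R}^n\setminus B_i=(B_r\setminus B_i)\cup(\mathbb{R}^n\setminus B_r)$, yields
\begin{equation*}
\sup_{x\in B_{i+1}}\int_{\mathbb{R}^n\setminus B_i}\frac{w_i(y)^{q-1}}{|x-y|^{n+sq}}\,dy\leq \frac{C\,2^{i(n+sq)}}{r^{sq}}\left[\operatorname{Tail}_{q-1}(u^+;x_0,r/2)^{q-1}+\fint_{B_r}w_i^{q-1}\,dy\right].
\end{equation*}

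Next I invoke \cref{sobolev embedding 2} on $w_i\psi_i\in W_0^{1,p}(B_i)$ to obtain, with $\kappa>1$ the Sobolev exponent from \cref{kappa},
\begin{equation*}
\left(\int_{B_i}(w_i\psi_i)^{p\kappa}\,dx\right)^{1/\kappa}\leq C\,|B_i|^{1+p/n-p}\int_{B_i}|\nabla(w_i\psi_i)|^p\,dx.
\end{equation*}
The elementary fact that $w_i\geq k/2^{i+1}$ on $A_{i+1}:=B_{i+1}\cap\{u>k_{i+1}\}$ combined with Chebyshev gives $|A_{i+1}|\leq (2^{i+1}/k)^{p}\int_{B_{i+1}}w_i^p\,dx$, and Hölder yields
\begin{equation*}
\int_{B_{i+1}}w_{i+1}^p\,dx\leq |A_{i+1}|^{1-1/\kappa}\left(\int_{B_{i+1}}w_i^{p\kappa}\,dx\right)^{1/\kappa}.
\end{equation*}
Feeding the Caccioppoli bound into the Sobolev inequality, rescaling to averages, and collecting the $k$-dependence produces the recursion
\begin{equation*}
Y_{i+1}\leq C\,b^{i}\left[1+\frac{\operatorname{Tail}_{q-1}(u^+;x_0,r/2)^{q-1}}{k^{p-1}}+\frac{\|f\|_{L^\infty(B_r)}}{k^{p-1}}\right]Y_i^{1+\beta}
\end{equation*}
for some $b>1$, $\beta>0$ depending only on $n,p,q,s$.

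To close the argument, I choose
\begin{equation*}
k:=\operatorname{Tail}_{q-1}(u^+;x_0,r/2)^{\frac{q-1}{p-1}}+c\left(\fint_{B_r}(u^+)^p\,dx\right)^{1/p}+\|f\|_{L^\infty(B_r)}^{\frac{1}{p-1}}+1,
\end{equation*}
which on the one hand bounds the bracket above by a universal constant, and on the other hand forces $Y_0\leq c\fint_{B_r}(u^+)^p/k^p$ to lie below the smallness threshold $c_0^{-1/\beta}b^{-1/\beta^2}$ of \cref{iteration}. That lemma then produces $Y_i\to 0$, so $(u-k)^+=0$ a.e.\ on $B_{r/2}$, which is the stated bound. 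The main obstacle is precisely the tail estimate together with the Young-type balancing of exponents: the nonlocal part contributes at scale $k^{q-1}$ while the $p$-Laplacian dictates the scale $k^{p-1}$, and this asymmetry (absent when $p=q$) is exactly what forces the non-standard exponent $\tfrac{q-1}{p-1}$ on the tail in the final estimate. The additive ``$+1$'' serves to absorb the residual $r^{-sq}\int w_i^q$ contribution produced by the inhomogeneity ($q<p$). For the supersolution case one simply applies the proven estimate to $-u$.
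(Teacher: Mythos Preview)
Your overall architecture—De Giorgi iteration driven by \cref{caccioppoli}, \cref{sobolev embedding 2}, Chebyshev, and \cref{iteration}—is exactly the paper's, and your identification of the non-standard exponent $\tfrac{q-1}{p-1}$ on the tail as the fingerprint of the $(p,q)$-inhomogeneity is correct. However, there is a genuine gap in how you handle the sub-$p$-power terms on the right of Caccioppoli.

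You apply \cref{caccioppoli} to $w_i=(u-k_i)^+$ itself and then control $r^{-sq}\int_{B_i}w_i^q$, the fractional $\max\{w_i(x),w_i(y)\}^q$ term, the factor $\int_{B_i}w_i$ multiplying the tail, and the source term $\|f\|_{L^\infty}\int_{B_i}w_i$ via $w_i^q\le 1+w_i^p$ (and implicitly via H\"older for $\int w_i$). But $w_i$ has no lower bound on its own support, so these bounds inject an \emph{additive} constant into the Sobolev--Caccioppoli estimate. After the Chebyshev/H\"older step you then get, schematically, $Y_{i+1}\le Cb^i k^{-p(1-1/\kappa)}Y_i^{1-1/\kappa}(Y_i+1)$, which for small $Y_i$ is \emph{sublinear} in $Y_i$ and cannot be fed into \cref{iteration}. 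Your claimed clean recursion $Y_{i+1}\le Cb^i[\,1+\cdots\,]Y_i^{1+\beta}$ is therefore not what your scheme actually produces, and the sentence ``the additive $+1$ serves to absorb the residual $r^{-sq}\int w_i^q$ contribution'' misdiagnoses the role of the ``$+1$'' in \cref{bdd}.

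The paper's fix is to introduce an intermediate level $\bar k_i=\tfrac{k_i+k_{i+1}}{2}$ and apply \cref{caccioppoli} to $\bar w_i=(u-\bar k_i)^+$ instead of $w_i$. On $\operatorname{supp}\bar w_i$ one has $w_i\ge \bar k_i-k_i=\bar k\,2^{-(i+2)}$, which yields the pointwise bounds $\bar w_i^{\,q}\le w_i^{\,p}/(\bar k_i-k_i)^{p-q}$ and $\bar w_i\le w_i^{\,p}/(\bar k_i-k_i)^{p-1}$. These convert every sub-$p$-power term on the right of Caccioppoli into a genuine $\fint_{B_i}w_i^p$ contribution with prefactors $2^{ci}/\bar k^{p-q}$ and $2^{ci}/\bar k^{p-1}$, and this is precisely what forces the exponent $\tfrac{q-1}{p-1}$ on the tail and the exponent $\tfrac{1}{p-1}$ on $\|f\|_{L^\infty}$. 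The ``$+1$'' in the statement then simply guarantees $\bar k\ge 1$ so that all these prefactors are $\le 1$. Equivalently, you could keep a single level sequence but apply Caccioppoli at level $k_{i+1}$ rather than $k_i$; the point is that the truncation entering Caccioppoli must sit strictly above the level defining $Y_i$.
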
 
\begin{proof} Let $B_r(x_0) \subset \Omega$ with $r \in(0,1]$. For $i=0,1,2, \ldots$, let us denote $r_i=\frac{r}{2}(1+$ $\left.2^{-i}\right), \bar{r}_i=\frac{r_i+r_{i+1}}{2}, B_i=B_{r_i}(x_0)$ and $\bar{B}_i=B_{\bar{r}_i}(x_0)$. Let $(\psi_i)_{i=0}^{\infty} \subset C_c^{\infty}(\bar{B}_i)$ be a sequence of cutoff functions such that $0 \leq \psi_i \leq 1$ in $\bar{B}_i, \psi_i=1$ in $B_{i+1}$ and $|\nabla \psi_i| \leq \frac{2^{i+3}}{r}$ for every $i=0,1,2, \ldots$. For $i=0,1,2, \ldots$ and $k, \bar{k} \geq 0$, we denote $k_i=k+(1-2^{-i}) \bar{k}, \bar{k}_i=\frac{k_i+k_{i+1}}{2}, w_i=\left(u-k_i\right)^{+}$ and $\bar{w}_i=\left(u-\bar{k}_i\right)^{+}$. Then $\exists \,c=c(n, p)>0$ such that
\begin{equation}{\label{4.2}}
\left(\frac{\bar{k}}{2^{i+2}}\right)^{\frac{p(\kappa-1)}{\kappa}}\left(\fint_{B_{i+1}} w_{i+1}^p d x\right)^{\frac{1}{\kappa}}  =\left(k_{i+1}-\bar{k}_i\right)^{\frac{p(\kappa-1)}{\kappa}}\left(\fint_{B_{i+1}} w_{i+1}^p d x\right)^{\frac{1}{\kappa}}  \leq c\left(\fint_{\bar{B}_i}|\bar{w}_i \psi_i|^{p \kappa} d x\right)^{\frac{1}{\kappa}},    
\end{equation}
where $\kappa$ is given by \cref{kappa}. Next by Sobolev inequality in \cref{sobolev embedding 2}, with $c=c(n, p, s)>0$, we obtain
\begin{equation}{\label{I}}
    \begin{array}{l}
\left(\fint_{\bar{B}_i}|\bar{w}_i \psi_i|^{p \kappa} d x\right)^{\frac{1}{\kappa}}  \leq c r^{p-n} \int_{B_i}|\nabla(\bar{w}_i \psi_i)|^p d x \leq c r^{p-n}\left(\int_{B_i} \bar{w}_i^p|\nabla \psi_i|^p d x+\int_{B_i} \psi_i^p|\nabla \bar{w}_i|^p d x\right)=I_1+I_2 .
\end{array}
\end{equation}
\textbf{Estimate of $I_1$:} Using the properties of $\psi_i$, for some $c=c(n, p, s)>0$, we have
\begin{equation}{\label{I1}}
I_1=c r^{p-n} \int_{B_i} \bar{w}_i^p|\nabla \psi_i|^p d x \leq c 2^{i p} \fint_{B_i} w_i^p d x .    
\end{equation}
\textbf{Estimate of $I_2$:} By \cref{cacc}, with $c=c(n, p, s)$ and $C=(n, p,q, s%, \Lambda
)$ positive, we obtain
\begin{equation}{\label{J}}
    \begin{array}{rcl}
I_2&= & c r^{p-n} \int_{B_i} \psi_i^p|\nabla \bar{w}_i|^p d x \\
&\leq & C r^{p-n}\left(\int_{B_i} \bar{w}_i^p|\nabla \psi_i|^p d x+\int_{B_i} \int_{B_i} \max \{\bar{w}_i(x), \bar{w}_i(y)\}^q|\psi_i^{p/q}(x)-\psi_i^{p/q}(y)|^qd \mu\right. \\
&& \left.+\int_{B_i} \bar{w}_i(x) \psi_i(x)^p d x \cdot \underset{x \in \operatorname{supp} \psi_i}{\operatorname{ess} s u p} \int_{\mathbb{R}^n \backslash B_i}\frac{ \bar{w}_i(y)^{q-1}}{|x-y|^{n+sq}}  d y+r^{-sq}\int_{B_i}\bar{w}_i^q dx+\int_{B_i}|f|\bar{w}_i\psi_i^p dx\right) \\
&= & J_1+J_2+J_3+J_4+J_5 .         
    \end{array}
\end{equation}
\textbf{Estimates of $J_1, J_2$ and $J_4$:} To estimate $J_1$, we use the estimate of $I_1$ in \cref{I1} above and to estimate $J_2$, noticing the bounds on $\psi_i$ and the relation
$
\bar{w}_i^q \leq \frac{w_i^p}{(\bar{k}_i-k_i)^{p-q}},    
$
we have
\begin{equation*}
\begin{array}{rcl}
     \int_{B_i}\int_{ B_i} \frac{|\psi_i^{p/q}(x)-\psi_i^{p/q}(y)|^q}{|x-y|^{n+s q}}\max \{\bar{w}_i(x), \bar{w}_i(y)\}^q d x d y & \leq &2c \frac{2^{iq}}{r^q} \int_{B_i} \frac{w_i^p(x)}{(\bar{k}_i-k_i)^{p-q}}\left(\int_{B_i} \frac{d y}{|x-y|^{n+s q-q}}\right) d x \smallskip\\
& \leq &c \frac{2^{i(q+p-q)}}{q(1-s)} \frac{1}{\bar{k}^{p-q}} \frac{r^n}{r^{s q}} \fint_{B_i} w_i^p(x) d x. 
\end{array}
\end{equation*} 
Finally for $J_4$, we estimate as 
\begin{equation*}
  J_4=C   r^{p-n-sq}\int_{B_i}\bar{w}_i^q dx\leq Cr^{p-n-sq}\int_{B_i} \frac{w_i^p(x)}{(\bar{k}_i-k_i)^{p-q}} dx\leq C r^{p-sq}\frac{2^{i(p-q)}}{\bar{k}^{p-q}}\fint_{B_i} w_i^p(x) d x,
\end{equation*}
so that we have
\begin{equation}{\label{J1J2}}
    J_j \leq c(n, p, q,s) 2^{ip} \fint_{B_i} w_i^p d x, \quad j=1,2, 4 .
\end{equation}
\textbf{Estimate of $J_3$:} For $x \in \bar{B}_i=\operatorname{supp}(\psi_i)$ and $y \in \mathbb{R}^n\backslash B_i$, we have
$
    \frac{|y-x_0|}{|x-y|} \leq 1+\frac{|x-x_0|}{|x-y|} \leq 1+\frac{\bar{r}_i}{r_i-\bar{r}_i} \leq 2^{i+4} .
$
Then, using the relation
$
    \bar{w}_i \leq \frac{w_i^p}{(\bar{k}_i-k_i)^{p-1}},
$
we obtain
\begin{equation}{\label{J3}}
\begin{array}{r}
     \int_{B_i} \bar{w}_i(x) \psi_i(x)^p d x \cdot \underset{x \in \operatorname{supp} \psi_i}{\operatorname{ess} s u p} \int_{\mathbb{R}^n \backslash B_i}\frac{ \bar{w}_i(y)^{q-1}}{|x-y|^{n+sq}}  d y 
 \leq c \frac{2^{i(n+s q)}}{(\bar{k}_i-k_i)^{p-1}}\left(\int_{B_i} w_i^p(x) d x\right)\left(\int_{\mathbb{R}^n \backslash B_i} \frac{\bar{w}_i(y)^{q-1} d y}{|y-x_0|^{n+s q}}\right) \smallskip\\
 \quad\leq c(n, p,q, s) \frac{2^{i(n+s q+p-1)}}{\bar{k}^{p-1}} \frac{r^n}{r^{s q}} T_{q-1}(w_0 ; x_0, \frac{r}{2})^{q-1} \fint_{B_i }w_i^p(x) d x .
\end{array}
\end{equation}
\textbf{Estimate of $J_5$:} Proceeding similarly to above, we obtain
\begin{equation}{\label{J5}}
\int_{B_i}|f(x)| \bar{w}_i(x) \psi_i^p(x) d x \leq c(n, p,q, s) \frac{2^{i(p-1)}}{\bar{k}^{p-1}} \| f\|_{L^{\infty}(B_r)} r^n\fint_{B_i} w_i^p(x) dx .    
\end{equation}
Substituting the estimate of \cref{J1J2,J3,J5} into \cref{J}, we get
\begin{equation}{\label{I2}}
    I_2 \leq c(n, p,q, s) 2^{i(n+sq+p-1)} \fint_{B_i} w_i^p d x,
\end{equation}
whenever 
$
    \bar{k} \geq T_{q-1}(w_0 ; x_0, \frac{r}{2})^{\frac{q-1}{p-1}}+\|f\|_{L^{\infty}(B_r)}^{1 /(p-1)}+1 .
$
Inserting \cref{I1,I2} into \cref{I} we have
\begin{equation}{\label{4.9}}
\left(\fint_{\bar{B}_i}|\bar{w}_i \psi_i|^{p \kappa} d x\right)^{\frac{1}{\kappa}} \leq c(n, p,q, s) 2^{i(n+sq+p-1)} \fint_{B_i} w_i^p d x .    
\end{equation}
Setting
\begin{equation*}
Y_i=\left(\fint_{B_i} w_i^p d x\right)^{\frac{1}{p}}    
\quad \text{ and }\quad
\bar{k}=T_{q-1}(w_0 ; x_0, \frac{r}{2})^{\frac{q-1}{p-1}}+\|f\|_{L^{\infty}(B_r)}^{1 /(p-1)}+1 +c_0^{\frac{1}{\beta}} b^{\frac{1}{\beta^2}}\left(\fint_{B_r(x_0)} w_0^p d x\right)^{\frac{1}{p}},    
\end{equation*}
where
\begin{equation*}
c_0=c(n, p,q, s), \quad b=2^{\left(\frac{n+sq+p-1}{p}+\frac{\kappa-1}{\kappa}\right) \kappa} \quad \text { and } \quad \beta=\kappa-1 ,   
\end{equation*}
from \cref{4.2,4.9} we obtain
\begin{equation*}
    \frac{Y_{i+1}}{\bar{k}} \leq c(n, p, q, s) 2^{i\left(\frac{n+sq+p-1}{p}+\frac{\kappa-1}{\kappa}\right) \kappa} \left(\frac{Y_i}{\bar{k}}\right)^\kappa .
\end{equation*}
Moreover, by the definition of $\bar{k}$ above we have
\begin{equation*}
    \frac{Y_0}{\bar{k}} \leq c_0^{-\frac{1}{\beta}} b^{-\frac{1}{\beta^2}}.
\end{equation*}
Thus from \cref{iteration}, we obtain $Y_i \rightarrow 0$ as $i\rightarrow \infty$. This implies that
\begin{equation*}
   \underset{B_{\frac{r}{2}}(x_0)}{\operatorname{ess} \sup } \,u \leq k+\bar{k},
\end{equation*}
which gives \cref{bdd} by choosing $k=0$.
Analogous treatment for $(u-k)^-$ gives the lower bound of $u$ in $B_{r/2}(x_0)$. This implies $u\in L^\infty(B_{r/2}(x_0))$. A standard covering argument gives $u\in L^\infty_{\mathrm{loc}}(\Omega)$.
\end{proof}
Now we prove \cref{holder1}. We first fix some notations which will be used in this subsection. For a measurable function $u: \mathbb{R}^n \rightarrow \mathbb{R}$ and $h \in \mathbb{R}^n$, we define
\begin{equation*}
    u_h(x)=u(x+h), \quad \delta_h u(x)=u_h(x)-u(x), \quad \delta_h^2 u(x)=\delta_h(\delta_h u(x))=u_{2 h}(x)+u(x)-2u_h(x) .
\end{equation*}
Now, we recall the discrete Leibniz rule
\begin{equation*}
    \delta_h(u v)=u_h \delta_h v+v \delta_h u \quad \text { and } \quad \delta_h^2(u v)=u_{2 h} \delta_h^2 v+2 \delta_h v \delta_h u_h+v \delta_h^2 u .
\end{equation*}
The following lemma iteration lemma will be useful.
\begin{lemma}{\label{iter}} Let $2 \leq q\leq p<\infty$ and $ 0<s<1$. Suppose that $u \in W_{\mathrm{loc}}^{1, p}\left(B_2(x_0)\right) \cap L_{s q}^{q-1}(\mathbb{R}^n)$ is a local weak solution of $-\Delta_p u+(-\Delta_q)^s u=f$ in $B_2(x_0)$. Let $f\in L^\infty_{\mathrm{loc}}(B_2(x_0))$ and
\begin{equation}{\label{connnn}}
    \|u\|_{L^{\infty}(B_1(x_0))} \leq 1, \quad \int_{\mathbb{R}^n\backslash B_1(x_0)} \frac{|u(y)|^{q-1}}{|y|^{n+sq}} d y \leq 1,\quad \|f\|_{L^\infty(B_1(x_0))}\leq 1.
\end{equation}
Let $0<h_0<{1}/{10}$ and $R$ be such that $4 h_0<R \leq 1-5 h_0$ and $\nabla u \in L^m(B_{R+4 h_0}(x_0))$ for some $m \geq p$. Then
\begin{equation*}
\sup _{0<|h|<h_0}\left\|\frac{\delta_h^2 u}{|h|^{1+\frac{1}{m+1}}}\right\|_{L^{m+1}(B_{R-4 h_0}(x_0))}^{m+1} \leq C\left(\int_{B_{R+4 h_0(x_0)}}|\nabla u|^m d x+1\right),
\end{equation*}
for some constant $C=Cn, h_0, p, q,m, s)>0$.
\end{lemma}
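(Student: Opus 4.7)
The strategy is the discrete differentiation (Nirenberg–difference quotient) technique adapted to the mixed local-nonlocal setting, in the spirit of Brasco–Lindgren–Schikorra for the fractional $p$-Laplacian. From higher summability of $\nabla u$ we extract improved Nikolskii-type integrability of the second differences $\delta_h^2 u$.

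\medskip\noindent\textbf{Step 1: Setup.} Fix $0<|h|<h_0$ and a cutoff $\eta\in C_c^\infty(B_{R+2h_0}(x_0))$ with $\eta\equiv 1$ on $B_{R-2h_0}(x_0)$ and $|\nabla\eta|\le C/h_0$. For a parameter $\beta\ge 2$ to be chosen (essentially $\beta=m-p+2$) put $J_\beta(t)=|t|^{\beta-2}t$. The admissible test function is
\begin{equation*}
\phi(x) \;=\; \delta_{-h}\bigl(\eta(x)^{pq}\,J_\beta(\delta_h u(x))\bigr),
\end{equation*}
which belongs to $W^{1,p}_0$ of a ball compactly contained in $B_2(x_0)$, so we may use it in the weak formulation of $-\Delta_p u+(-\Delta_q)^s u=f$.

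\medskip\noindent\textbf{Step 2: Discrete integration by parts.} Applying the identity $\int u\,\delta_{-h}v\,dx=-\int \delta_h u\cdot v\,dx$ to the local term and its fractional analogue $\iint F(x,y)\,\delta_{-h}G(x,y)\,d\mu=-\iint \delta_h F(x,y)\,G(x,y)\,d\mu$ (where the double difference acts simultaneously on $x$ and $y$) to the nonlocal term, the equation is recast as
\begin{equation*}
\int \delta_h\bigl(|\nabla u|^{p-2}\nabla u\bigr)\cdot\nabla\bigl(\eta^{pq}J_\beta(\delta_h u)\bigr)\,dx
+\iint \delta_h\bigl[|u(x)-u(y)|^{q-2}(u(x)-u(y))\bigr]\,\delta_h\bigl(\eta^{pq}J_\beta(\delta_h u)\bigr)(x,y)\,d\mu
=-\int f\,\phi\,dx.
\end{equation*}
The nonlocal summation produces a genuinely second-order discrete object (applying $\delta_h$ on both sides of the kernel), which is the source of the $\delta_h^2 u$ on the left-hand side.

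\medskip\noindent\textbf{Step 3: Monotonicity lower bounds.} On the local side, since $p\ge 2$, \cref{p case} combined with $J_\beta'(t)\gtrsim|t|^{\beta-2}$ gives
\begin{equation*}
\int \eta^{pq}\,|\delta_h u|^{\beta-2}\,\frac{|\delta_h\nabla u|^2}{(|\nabla u_h|+|\nabla u|)^{2-p}}\,dx
\end{equation*}
as the leading positive term. On the nonlocal side, the $q$-monotonicity of $t\mapsto |t|^{q-2}t$ (again $q\ge 2$) combined with the discrete chain rule yields a term of the form
\begin{equation*}
\iint \eta^{pq}\,\frac{\bigl|\delta_h^2 u(x)-\delta_h^2 u(y)\bigr|^{q}}{|x-y|^{n+sq}}\,(\text{weight in }\delta_h u)\,dx\,dy,
\end{equation*}
from which, after a Nikolskii-type interpolation in the scale $|h|$, one recovers control on $\|\delta_h^2 u/|h|^{1+1/(m+1)}\|_{L^{m+1}(B_{R-4h_0})}^{m+1}$.

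\medskip\noindent\textbf{Step 4: Controlling the right-hand side.} The cutoff error terms arising from $\nabla\eta^{pq}$ and from differences of $\eta$ between $x$ and $y$ are bounded, by Hölder and Young, by $C h_0^{-p}\int_{B_{R+4h_0}}|\nabla u|^m\,dx$. The contribution from outside the ball (the nonlocal tail) is handled by \cref{Ming} together with the normalization $\int_{\mathbb R^n\setminus B_1(x_0)}|u|^{q-1}|y|^{-(n+sq)}\,dy\le 1$, so it yields only additive constants depending on $h_0$. The inhomogeneous term is estimated by
\begin{equation*}
\Bigl|\int f\,\phi\,dx\Bigr|\le \|f\|_{L^\infty(B_1(x_0))}\,\|\phi\|_{L^1}
\le C\bigl(\|\nabla(\eta^{pq}J_\beta(\delta_h u))\|_{L^1}\bigr),
\end{equation*}
using $\|\delta_{-h}g\|_{L^1}\le|h|\,\|\nabla g\|_{L^1}$; this is absorbed via Young's inequality partly into the left-hand side and partly into $\int|\nabla u|^m+1$.

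\medskip\noindent\textbf{Step 5: Combination and conclusion.} Assembling the lower and upper bounds and invoking the elementary algebraic inequality $|a-b|^{m+1}\lesssim (|a|+|b|)^{m+1-2}|a-b|^2(|\nabla u_h|+|\nabla u|)^{p-2}$ to pass from the weighted $L^2$ estimate of $\delta_h\nabla u$ to an $L^{m+1}$ estimate of $\delta_h^2 u/|h|^{1+1/(m+1)}$ through the Gagliardo–Nirenberg/Nikolskii interpolation, we obtain the claimed bound.

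\medskip\noindent\textbf{Main obstacle.} The delicate point is the bookkeeping in the nonlocal piece: the double-difference structure produced by $\delta_h$ acting on $|u(x)-u(y)|^{q-2}(u(x)-u(y))$ must be rearranged so that the dominant positive contribution involves $\delta_h^2 u$ rather than a mixed first difference, and simultaneously the cutoff error must be uniformly controlled across the long-range interactions between $B_{R+2h_0}$ and its complement. Balancing these against the $p$-Laplacian contribution (whose natural scale is $(|\nabla u|)^{p-2}|\delta_h\nabla u|^2$) through the interpolation exponent $1+1/(m+1)$ is where all the parameters $p,q,s,m,h_0$ must be tracked carefully.
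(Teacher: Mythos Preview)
Your overall skeleton --- difference the equation in $h$, test with a power $J_\beta(\delta_h u)\eta^p$, use monotonicity, control errors --- is exactly the right one and matches the paper. But there is a genuine conceptual gap in where the $\delta_h^2 u$ control actually comes from, and following your Step~3 literally would not produce the conclusion.

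The second-difference estimate is driven by the \emph{local} $p$-Laplacian, not by the nonlocal term. After differencing and testing with $J_{\alpha+1}(\delta_h u)\eta^p/|h|^{1+\theta\alpha}$ (the paper's choice is $\alpha=m-p+2$ and a matching $\theta$), the coercive local contribution is
\[
\int_{B_R}\left|\nabla\Bigl(\frac{|\delta_h u|^{(\alpha-1)/p}\,\delta_h u\;\eta}{|h|^{(1+\theta\alpha)/p}}\Bigr)\right|^p dx,
\]
i.e.\ a uniform $W^{1,p}$ bound on a power of $\delta_h u$. One then applies the finite-difference characterization $\|\delta_h g\|_{L^p}\lesssim |h|\,\|\nabla g\|_{L^p}$ once more to this function, and the discrete Leibniz rule converts $\delta_h$ of a power of $\delta_h u$ into an $L^{m+1}$ bound on $\delta_h^2 u/|h|^{1+1/(m+1)}$. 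That is the mechanism; the ``algebraic inequality'' you write in Step~5 is not valid and does not replace this two-step argument.

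The nonlocal term is handled as a lower-order perturbation. Its positive monotonicity piece involves only \emph{first} differences,
\[
\iint\Bigl|J_{\frac{\alpha+q-1}{q}}(\delta_h u)(x)-J_{\frac{\alpha+q-1}{q}}(\delta_h u)(y)\Bigr|^q(\eta^p(x)+\eta^p(y))\,d\mu\ \ge 0,
\]
and is simply discarded; no term of the form $\iint|\delta_h^2 u(x)-\delta_h^2 u(y)|^q\ldots$ appears. What remains from the nonlocal part are the cutoff errors $|\eta^{p/2}(x)-\eta^{p/2}(y)|^2\cdot|u(x)-u(y)|^{q-2}$ and the far-field pieces, which the paper bounds (via Young with exponents $m/(q-2),\,m/(m-q+2)$ and a fractional Sobolev embedding) by $\int_{B_{R+4h_0}}|\nabla u|^m + \int_{B_R}|\delta_h u|^{\alpha m/(m-q+2)}/|h|^{(1+\theta\alpha)m/(m-q+2)}+C$. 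The latter is then controlled, at the end, by $\int|\nabla u|^m+1$ using $|\delta_h u|\lesssim |h|\,|\nabla u|$ and the choice of $\alpha,\theta$.
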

\begin{proof} Without loss of generality, one can assume $x_0=0$. Let $r=R-4 h_0$ and $\phi \in W^{1, p}(B_R)$ vanish outside $B_{\frac{R+r}{2}}$. Since $u$ is a local weak solution of $-\Delta_p u+(-\Delta_q)^s u=f$ in $B_2$, we have
\begin{equation}{\label{sub1}}
    \int_{B_R}|\nabla u|^{p-2} \nabla u \cdot\nabla \phi\, d x+ \int_{\mathbb{R}^n} \int_{\mathbb{R}^n}\frac{|u(x)-u(y)|^{q-2}(u(x)-u(y))(\phi(x)-\phi(y))}{|x-y|^{n+qs}} d x d y=\int_{B_R}f \phi\, dx .
\end{equation}
Let $h \in \mathbb{R}^n \backslash\{0\}$ be such that $|h|<h_0$. Choosing $\phi_{-h}$ as a test function and using a change of variable, we have
\begin{equation}{\label{sub2}}
    \int_{B_R}\left|\nabla u_h\right|^{p-2} \nabla u_h \cdot\nabla \phi\, d x  +  \int_{\mathbb{R}^n} \int_{\mathbb{R}^n}\frac{|u_h(x)-u_h(y)|^{q-2}(u_h(x)-u_h(y))(\phi(x)-\phi(y))}{|x-y|^{n+qs}} d x d y=\int_{B_R}f_h \phi\, dx.
\end{equation}
Subtracting \cref{sub1} with \cref{sub2} and dividing the resulting equation by $|h|$, we obtain
\begin{equation}{\label{hol3}}
    \begin{array}{l}
    \int_{B_R} \frac{(|\nabla u_h|^{p-2} \nabla u_h-|\nabla u|^{p-2} \nabla u)}{|h|} \cdot\nabla \phi \,d x 
+\int_{\mathbb{R}^n} \int_{\mathbb{R}^n} \frac{(J_q(u_h(x)-u_h(y))-J_q(u(x)-u(y)))}{|h|}(\phi(x)-\phi(y)) d \mu\smallskip\\=\int _{B_R}(f(x+h)-f(x))\phi\,dx,
\end{array}
\end{equation}
 $\forall\phi \in W^{1, p}(B_R)$ vanishing outside $B_{\frac{R+r}{2}}$. We have denoted $J_\beta(t)=|t|^{\beta-2}t, \beta >1, t\in \mathbb{R}$ and $d\mu=\frac{dxdy}{|x-y|^{n+qs}}$. Let $\eta$ be a nonnegative Lipschitz cut-off function such that
$
    \eta \equiv 1 \text { on } B_r, \eta \equiv 0 \text { on } \mathbb{R}^n\backslash B_{\frac{R+r}{2}}, |\nabla \eta| \leq \frac{C}{R-r}=\frac{C}{4 h_0},
$
for some constant $C=C(n)>0$. Suppose $\alpha \geq 1, \theta>0$ and testing \cref{hol3} with
$
    \phi=J_{\alpha+1}\left(\frac{u_h-u}{|h|^\theta}\right) \eta^p, 0<|h|<h_0,
$
we get
\begin{equation}{\label{mainholder}}
    I+J=F
\end{equation}
where
\begin{equation*}
\begin{array}{l}
     I=\int_{B_R} \frac{(|\nabla u_h|^{p-2} \nabla u_h-|\nabla u|^{p-2} \nabla u)}{|h|^{1+\theta \alpha}} \cdot\nabla(J_{\alpha+1}(u_h-u) \eta^p) d x,\smallskip\\J  =\int_{\mathbb{R}^n} \int_{\mathbb{R}^n} \frac{(J_q(u_h(x)-u_h(y))-J_q(u(x)-u(y)))}{|h|^{1+\theta \alpha}}
\times(J_{\alpha+1}(u_h(x)-u(x)) \eta^p(x)-J_{\alpha+1}(u_h(y)-u(y)) \eta^p(y)) d \mu 
\end{array}
\end{equation*}
and
\begin{equation*}
    F=\int _{B_R}\frac{f(x+h)-f(x)}{|h|^{1+\theta\alpha}}J_{\alpha+1}({u_h-u}) \eta^p\,dx.
\end{equation*}
Proceeding similarly like [\citealp{garain2023higher}, Proposition 4.1, Step 2] one gets
\begin{equation}{\label{II}}
I  \geq c \int_{B_R}\left|\nabla\left(\frac{|u_h-u|^{\frac{\alpha-1}{p}}(u_h-u) \eta}{|h|^{\frac{1+\theta \alpha}{p}}}\right)\right|^p d x-C \int_{B_R}|\nabla u_h|^md x 
 -C \int_{B_R}|\nabla u|^m d x-C \int_{B_R} \frac{|\delta_h u|^{\frac{\alpha m}{m-p+2}}}{|h|^{\frac{(1+\theta \alpha) m}{m-p+2}}} d x-C    
\end{equation}
for $c=c(p, \alpha)>0$ and $C=C(n, h_0, p, m, \alpha)>0$.\\
\textbf{Estimate of the nonlocal integral $J$:} First, we notice that
$
J=J_1+J_2+J_3,   $
where
\begin{equation*}
    \begin{array}{l}
         J_1  =\int_{B_R} \int_{B_R}\frac{(J_q(u_h(x)-u_h(y))-J_q(u(x)-u(y)))}{|h|^{1+\theta \alpha}}
(J_{\alpha+1}(u_h(x)-u(x)) \eta^p(x)-J_{\alpha+1}(u_h(y)-u(y)) \eta^p(y)) d \mu,\smallskip \\
J_2 =\int_{B_{\frac{R+r}{2}}} \int_{\mathbb{R}^n\backslash B_R} \frac{(J_q(u_h(x)-u_h(y))-J_q(u(x)-u(y)))}{|h|^{1+\theta \alpha}}
J_{\alpha+1}(u_h(x)-u(x)) \eta^p(x) d \mu 
    \end{array}
\end{equation*}
and
\begin{equation*}
    J_3=-\int_{\mathbb{R}^n \backslash B_R} \int_{B_{\frac{R+r}{2}}}\frac{(J_q(u_h(x)-u_h(y))-J_q(u(x)-u(y)))}{|h|^{1+\theta \alpha}}
J_{\alpha+1}(u_h(y)-u(y)) \eta^p(y) d \mu  .
\end{equation*}
Proceeding similarly like [\citealp{giacomonipq}, Proposition 3.9, Step 1], we get 
\begin{equation}{\label{rightht}}
\begin{array}{rcl}
      J_1&\geq&  c \int_{B_R} \int_{B_R}\left|\frac{J_{\frac{\alpha+q-1}{q}}(\delta_h u(x))}{|h|^{\frac{1+\theta \alpha}{q}}}-\frac{J_{\frac{\alpha+q-1}{q}}(\delta_h u(y))}{|h|^{\frac{1+\theta\alpha}{q}}}\right|^q(\eta^p(x)+\eta^p(y)) d \mu \smallskip \\&&-C \int_{B_R} \int_{B_R}\frac{|\delta_h u(x)|^{\alpha+1}+|\delta_h u(y)|^{\alpha+1}}{|h|^{1+\theta\alpha}}\left(|u_h(x)-u_h(y)|^{\frac{q-2}{2}}+|u(x)-u(y)|^{\frac{q-2}{2}}\right)^2|\eta^{\frac{p}{2}}(x)-\eta^{\frac{p}{2}}(y)|^2 d \mu.
\end{array}
\end{equation}
Noting that the first term on the right hand side is non-negative, we estimate the second term only. Using the bounds on $\eta$ and $|\nabla \eta|$, we deduce that
\begin{equation*}
J_{11}=\int_{B_R} \int_{B_R} \frac{|\delta_h u(x)|^{\alpha+1}}{|h|^{1+\theta\alpha}}|u(x)-u(y)|^{q-2}|\eta^{\frac{p}{2}}(x)-\eta^{\frac{p}{2}}(y)|^2 d \mu 
\leq \frac{C}{h_0^2} \int_{B_R} \int_{B_R} \frac{|u(x)-u(y)|^{q-2}}{|x-y|^{n+s q-2}} \frac{|\delta_h u(x)|^{\alpha+1}}{|h|^{1+\theta\alpha}} d x d y .  
\end{equation*}
For $q=2$, noticing the fact that $R<1$, we have
\begin{equation}{\label{J21}}
    J_{11}\leq \frac{C(n, s)}{h_0^2}\|u\|_{L^{\infty}(B_{R+h_0})} \int_{B_R} \frac{|\delta_h u(x)|^\alpha}{|h|^{1+\theta\alpha}} d x .
\end{equation} 
For $q>2$, take
$
0<\epsilon<\min \left\{q/2-1,1/s-1\right\} 
$ and use Young's inequality (with exponents $m /(q-2)$ and $m /(m-q+2)$ ), to have
\begin{equation}{\label{J22}}
    \begin{array}{rcl}
         J_{11}&\leq& \frac{C}{h_0^2} \int_{B_R} \int_{B_R} \frac{|u(x)-u(y)|^{q-2}}{|x-y|^{n+s q-2}} \frac{|\delta_h u(x)|^{\alpha+1}}{|h|^{1+\theta\alpha}} d x d y \smallskip\\&\leq & \frac{C}{h_0^2} \int_{B_R} \int_{B_R} \frac{|u(x)-u(y)|^{m}}{|x-y|^{n+ms \frac{q-2-\ep}{q-2}}} dx dy +
 \frac{C}{h_0^2} \int_{B_R} \int_{B_R}|x-y|^{-n+ \frac{m(2-2s-\ep s)}{m-q+2}}\frac{|\delta_h u(x)|^{\frac{(\alpha+1)m}{m-q+2}}}{|h|^{\frac{(1+\theta\alpha)m}{m-q+2}}} d x d y \smallskip\\&\leq & C[u]^m_{W^{\frac{s(q-2-\ep)}{q-2},m}(B_{R+h_0})}+C\|u\|_{L^\infty(B_{R+h_0)}}^{\frac{m}{m-q+2}}\int_{B_R}\frac{|\delta_h u(x)|^{\frac{\alpha m}{m-q+2}}}{|h|^{\frac{(1+\theta\alpha)m}{m-q+2}}} d x ,
    \end{array}
\end{equation}
where $C=C\left(n, s, q, h_0\right)>0$ is a constant (which depends inversely on $h_0$), and in the last line, we have used the fact that
$
\frac{m(2-2 s-\epsilon s)}{m-q+2}>0 .
$
For the first term, we use Sobolev embedding of \cref{embedding} to have
\begin{equation}{\label{J23}}
[u]^m_{W^{\frac{s(q-2-\ep)}{q-2},m}(B_{R+h_0})}\leq C\int_{B_{R+h_0}}|\nabla u|^m dx+C\|u\|_{L^{\infty}(B_{R+h_0})}^m .
\end{equation} A similar estimate gives 
\begin{equation}{\label{J24}}
\begin{array}{rcl}
    J_{12}&=&\int_{B_R} \int_{B_R} \frac{|\delta_h u(x)|^{\alpha+1}}{|h|^{1+\theta\alpha}}|u_h(x)-u_h(y)|^{q-2}|\eta^{\frac{p}{2}}(x)-\eta^{\frac{p}{2}}(y)|^2 d \mu\smallskip\\& \leq& C\int_{B_{R+4h_0}}|\nabla u|^m dx+C\|u\|_{L^{\infty}(B_{R+4h_0})}^m +C\|u\|_{L^\infty(B_{R+4h_0)}}^{\frac{m}{m-q+2}}\int_{B_R}\frac{|\delta_h u(x)|^{\frac{\alpha m}{m-q+2}}}{|h|^{\frac{(1+\theta\alpha)m}{m-q+2}}} d x .
\end{array}\end{equation}Combining \cref{J21,J22,J23,J24} and putting in \cref{rightht} together with Young's inequality, we obtain using \cref{connnn}
\begin{equation}{\label{J1final}}
    J_1\geq -C%K_1(u)
    \left[\int_{B_R}\frac{|\delta_h u(x)|^{\frac{\alpha m}{m-q+2}}}{|h|^{\frac{(1+\theta\alpha)m}{m-q+2}}} d x +\int_{B_{R+4h_0}}|\nabla u|^m dx+1\right]
\end{equation}
where $C=C(n, q, s, h_0)>0$ is a constant (which depends inversely on $h_0$). % and$ K_1(u)=K_1(u, m, q, R_0)=\max \left\{\|u\|_{L^{\infty}(B_{1})}^{\frac{m}{m-q+2}},\|u\|_{L^{\infty}(B_{1})}^m,\|u\|_{L^{\infty}(B_{1})},1\right\}>0 .$
Proceed now as [\citealp{giacomonipq}, Proposition 3.9, Step 3], to get
\begin{equation}{\label{F}}
    |J_2|+|J_3|+|F|\leq C\left(\|u\|^{q-1}_{L^{\infty}(B_{1})}+ \int_{\mathbb{R}^n\backslash B_1} \frac{|u(y)|^{q-1}}{|y|^{n+sq}} d y+\|f\|_{L^{\infty}(B_{1})}\right)\left[1+\int_{B_R}\frac{|\delta_h u(x)|^{\frac{\alpha m}{m-q+2}}}{|h|^{\frac{(1+\theta\alpha)m}{m-q+2}}} d x\right]
\end{equation}
Putting \cref{II,J1final,F} in \cref{mainholder}, we get
\begin{equation*}
    \begin{array}{l}
         \int_{B_R}\left|\nabla\left(\frac{|u_h-u|^{\frac{\alpha-1}{p}}(u_h-u) \eta}{|h|^{\frac{1+\theta \alpha}{p}}}\right)\right|^p d x\\\leq C\left( \int_{B_R}|\nabla u_h|^md x 
 + \int_{B_R}|\nabla u|^m d x+ \int_{B_R}\left| \frac{\delta_h u}{|h|^{\frac{(1+\theta \alpha)}{\alpha}}}\right|^{\frac{\alpha m}{m-p+2}}d x+1\right) \\   
\quad+C\left(  \int_{B_R}\left| \frac{\delta_h u}{|h|^{\frac{(1+\theta \alpha)}{\alpha}}}\right|^{\frac{\alpha m}{m-q+2}}d x+\int_{B_{R+4h_0}}|\nabla u|^m dx+1\right)+C\|f\|_{L^{\infty}(B_{1})}\left(1+\int_{B_R}\left| \frac{\delta_h u}{|h|^{\frac{(1+\theta \alpha)}{\alpha}}}\right|^{\frac{\alpha m}{m-q+2}}d x\right)\\\leq  C\left(\int_{B_R}|\nabla u_h|^md x 
 + \int_{B_R}|\nabla u|^m d x+\int_{B_{R+4h_0}}|\nabla u|^m dx+\int_{B_R}\left| \frac{\delta_h u}{|h|^{\frac{(1+\theta \alpha)}{\alpha}}}\right|^{\frac{\alpha m}{m-p+2}}d x+1\right),
    \end{array}
\end{equation*}
for some $C=C(n,h_0,p,q,m,s,\alpha)$. We have used Young's inequality with exponents $\frac{m-q+2}{m-p+2}$ and $\frac{m-q+2}{p-q}$ in the last line. One can now proceed as [\citealp{garain2023higher}, Proposition 4.1, Step 4 and 5] to get the desired result.
\end{proof}
\begin{lemma}{\label{semi}}{(Estimate of local seminorm)}
    Let $2 \leq q\leq p<\infty$ and $ 0<s<1$. Suppose that $u \in W_{\mathrm{loc}}^{1, p}(B_2(x_0)) \cap L_{s q}^{q-1}(\mathbb{R}^n)$ is a local weak solution of $-\Delta_p u+(-\Delta_q)^s u=f$ in $B_2(x_0)$. Let $f\in L^\infty_{\mathrm{loc}}(B_2(x_0))$ and
\begin{equation*}
    \|u\|_{L^{\infty}(B_1(x_0))} \leq 1, \quad \int_{\mathbb{R}^n\backslash B_1(x_0)} \frac{|u(y)|^{q-1}}{|y|^{n+sq}} d y \leq 1, \quad\|f\|_{L^\infty(B_1(x_0))}\leq 1.
\end{equation*}
Then 
\begin{equation*}
\int_{B_{\frac{7}{8}}\left(x_0\right)}|\nabla u|^p d x \leq C(n, p,q, s).%(1+\|f\|_{L^\infty(B_1(x_0))}) .    
\end{equation*}
\end{lemma}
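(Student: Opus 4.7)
The plan is to derive this bound as a direct corollary of the Caccioppoli estimate in \cref{caccioppoli}, with a suitable vertical shift of $u$ that turns the estimate for $(u-k)^+$ into a bound for $|\nabla u|^p$ itself on $B_{7/8}(x_0)$.

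First I would fix $k = -2$, so that by the normalization $\|u\|_{L^\infty(B_1(x_0))} \leq 1$, the function $w := (u-k)^+ = u + 2$ satisfies $1 \leq w \leq 3$ on $B_1(x_0)$ and $\nabla w = \nabla u$ there. Since $u$ is in particular a local weak subsolution, \cref{caccioppoli} applies. I would pick a cutoff $\psi \in C_c^\infty(B_{15/16}(x_0))$ with $\psi \equiv 1$ on $B_{7/8}(x_0)$, $0 \leq \psi \leq 1$, and $|\nabla \psi| \leq c(n)$. Applied with $r=1$, the left-hand side dominates $\int_{B_{7/8}(x_0)} |\nabla u|^p\,dx$, so the task reduces to bounding each of the terms on the right-hand side by a constant depending only on $n,p,q,s$.

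The term $\int_{B_1} w^p|\nabla\psi|^p\,dx$ is immediately controlled by $3^p \|\nabla\psi\|_\infty^p|B_1|$. The double integral containing $\max\{w(x),w(y)\}^q$ is bounded using $w \leq 3$ on $B_1(x_0)$ together with the elementary estimate $|\psi^{p/q}(x)-\psi^{p/q}(y)| \leq c(n,p,q)|x-y|$, reducing it to an integral of the form $\int\int_{B_1 \times B_1} |x-y|^{q-n-sq}\,dx\,dy$, which converges since $q - sq = q(1-s) > 0$. The term $r^{-sq}\int_{B_1} w^q\,dx$ is trivially bounded and the source term $\int_{B_1}|f|w\psi^p\,dx$ is controlled by $\|f\|_{L^\infty(B_1)} \cdot 3 \cdot |B_1| \leq C$ via the third normalization assumption.

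The only term requiring care is the nonlocal tail. I would exploit that $\operatorname{supp}\psi \subset B_{15/16}(x_0)$ and $y \in B_1(x_0)^c$ yields $|x-y| \geq 1/16$, and moreover $|x-y| \geq \tfrac12|y-x_0|$ for $|y-x_0| \geq 2$. Combined with the pointwise bound $w(y)^{q-1} \leq c_q(|u(y)|^{q-1}+1)$, valid since $w(y) = (u(y)+2)^+ \leq |u(y)|+2$ wherever it is nonzero, splitting the tail over $\{|y-x_0|\leq 2\}$ and $\{|y-x_0| > 2\}$ gives
\begin{equation*}
\int_{B_1(x_0)^c} \frac{w(y)^{q-1}}{|x-y|^{n+sq}}\,dy \leq C(n,s,q)\left(1 + \int_{B_1(x_0)^c} \frac{|u(y)|^{q-1}}{|y-x_0|^{n+sq}}\,dy\right) \leq C,
\end{equation*}
invoking the tail normalization. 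Multiplying by $w\psi^p$ and integrating over $B_1$ yields a constant bound on this contribution as well. Collecting the five bounds gives the asserted inequality $\int_{B_{7/8}(x_0)} |\nabla u|^p\,dx \leq C(n,p,q,s)$. I do not anticipate a genuine obstacle: each of the three normalization assumptions on $\|u\|_\infty$, on the tail, and on $\|f\|_\infty$ serves precisely to tame one family of terms on the right-hand side of the Caccioppoli inequality.
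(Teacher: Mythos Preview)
Your proof is correct and follows essentially the same route as the paper: apply the Caccioppoli inequality \cref{caccioppoli} on $B_1(x_0)$ with a cutoff $\psi$ that is $\equiv 1$ on $B_{7/8}$, and bound each right-hand side term using the three normalizations. The only difference is the choice of level: the paper takes $k=0$ and splits into $w=u^+$ (subsolution case) and $w=u^-$ (supersolution case), summing the two estimates, whereas you take $k=-2$ so that $w=u+2>0$ on $B_1$ and $\nabla w=\nabla u$ there in a single stroke. Your variant is slightly more economical, at the mild cost of a tail term involving $(u(y)+2)^+$ rather than $u(y)^+$, which you correctly handle via $w(y)^{q-1}\le c_q(|u(y)|^{q-1}+1)$ and the ratio bound $|y-x_0|/|x-y|\le 16$ for $x\in\operatorname{supp}\psi$, $y\in B_1^c$.
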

\begin{proof}
    Without loss of generality, we assume $x_0=0$. We only provide the proof for $w=u^{+}$, the proof of $u^{-}$ is similar. We apply \cref{caccioppoli} with $r=1, x_0=0, k=0$ and with $\psi \in$ $C_c^{\infty}(B_{\frac{8}{9}})$ such that $\psi=1$ on $B_{\frac{7}{8}}, 0 \leq \psi \leq 1$ and $|\nabla \psi| \leq C$ for some $C=C(n)>0$. By using the properties of $\psi$, this yields
\begin{equation*}
    \begin{array}{rcl}
         \int_{B_{\frac{7}{8}}}|\nabla u|^p d x & \leq& C(n, p,q,s)\left(\int_{B_1}|u|^p d x+\int_{B_1}|u|^q d x+\int_{B_1} \int_{B_1}\left(|u(x)|^p+|u(y)|^q\right)|x-y|^{q-s q-n} d x d y\right) \\
& &+C(n, p,q,s) \int_{\mathbb{R}^n\backslash B_1} \frac{|u(y)|^{q-1}}{|y|^{n+s q}} d y \int_{B_{\frac{8}{9}}}|u(x)| d x +\|f\|_{L^\infty(B_1)}\int_{B_1} w\, d x\smallskip\\
& \leq &C(n, p,q,s)(1+C(q, s)+\|f\|_{L^\infty(B_1)}).
    \end{array}
\end{equation*}
Hence, the result follows.
\end{proof}
\begin{remark}{\label{imp}}
\cref{iter} and \cref{semi} holds for the operator $-\Delta_p +A(-\Delta)^s_q$ for $0<A\leq 1$ which can be proved following the same lines.
\end{remark}
We are now ready to prove \cref{holder1}.
\subsection*{Proof of \cref{holder1}} We first observe that $u \in L_{\mathrm{loc }}^{\infty}(\Omega)$, by \cref{local boundedness}. We assume for simplicity that $x_0=0$, then we set $   \mathcal{M}_{R_0}=\|u\|_{L^{\infty}(B_{R_0})}+\operatorname{Tail}_{q-1}(u ; 0, R_0)+\|f\|_{L^\infty(B_{R_0})}+1>0 .$
It is sufficient to consider the rescaled functions
\begin{equation*}
u_{R_0}(x):=\frac{1}{\mathcal{M}_{R_0}} u(R_0x), \quad f_{R_0}(x):=\frac{1}{\mathcal{M}_{R_0}} f(R_0x), \quad \text { for } x \in B_2,    
\end{equation*}
and to show that $u_{R_0}$ satisfies the estimate
\begin{equation*}
\left[u_{R_0}\right]_{C^\sigma(B_{1 / 2})} \leq C .    
\end{equation*}
By scaling back, we would get the desired estimate. Note that by definition, the function $u_{R_0}$ is a local weak solution of $-\Delta_p u_{R_0}+R_0^{p-q s}\mathcal{M}_{R_0}^{q-p}(-\Delta_q)^s u_{R_0}=R_0^p\mathcal{M}_{R_0}^{2-p}f_{R_0}=\tilde{f}$ in $B_2$ and satisfies (by \cref{semi}; note that as $2\leq p$, so $\|\tilde{f}\|_{L^\infty(B_1)}\leq 1$)
\begin{equation*}
    \|u_{R_0}\|_{L^{\infty}(B_1)} \leq 1, \quad \int_{\mathbb{R}^n\backslash B_1} \frac{|u_{R_0}(y)|^{q-1}}{|y|^{n+sq}} d y \leq 1, \quad [u_{R_0}]_{W^{1,p}(B_{\frac{7}{8}})}\leq C(n,p,q,s).%(1+\|f\|_{L^\infty(B_{1})}).
\end{equation*}
By noting \cref{imp}, the rest of the proof follows similarly like [\citealp{garain2023higher}, Theorem 1.3].
\section{Improved H\"older regularity results}{\label{regu2}} We now relax the condition on the source term $f$ to be in $L^\infty_{\mathrm{loc}}(\Omega)$ and take $f\in L^n_{\mathrm{loc}}(\Omega)$. For a function $g\in W_{\mathrm{loc}}^{1, p}(\Omega) \cap L_{s q}^{q-1}(\mathbb{R}^n)$ and any $B_r(x_0)\subset\subset\Omega$, define 
\begin{equation*}
\begin{array}{c}
    (g)_{B_r(x_0)}=\fint_{B_r(x_0)} g \,dx ,\quad \operatorname{av}_m(g, B_r(x_0)):=\left(\fint_{B_r(x_0)}|g-(g)_{B_r}|^mdx\right)^{\frac{1}{m}},  m>0 \smallskip\\\text{ and } \quad\operatorname{snail}_\delta(r)\equiv \operatorname{snail}_\delta(g, B_r(x_0)):=r^{\frac{\delta}{q}}\left(r^s\int_{\mathbb{R}^n\backslash B_r(x_0)}\frac{|g(y)-(g)_{B_r}|^{q-1}}{|y-x_0|^{n+sq}}dy\right)^{\frac{1}{q-1}}, \delta \geq sq.
\end{array}
\end{equation*}
To prove H\"older regularity results for \cref{general}, we adopt the perturbation techniques used in \cite{MG}. The set-up used in \cite{MG} needs the solution to be in $W^{s,q}(\mathbb{R}^n)$, while we will prove the results for local solutions only. The following important lemma will be the most crucial for our case.
\begin{lemma}{\label{mgiter}}
    Let $B_t(x_0) \subset B_r(x_0)$ be two concentric balls, $q\geq 2, \delta \geq sq$ and $g \in W_{\mathrm{loc}}^{1, p}(\Omega) \cap L_{s q}^{q-1}(\mathbb{R}^n)$.\\
(a) There exists $c \equiv c(n, s, q)>0$ such that, whenever $0<t<r \leq 1$, it holds 
\begin{equation}{\label{snail}}
    \begin{array}{l}
         \operatorname{snail}_\delta(g, B_t(x_0))\leq c\left(\frac{t}{r}\right)^{\delta / q} \operatorname{snail}_\delta(g, B_{r}(x_0)) +c t^{\delta / q-s}\left( \int_t^{r}\left(\frac{t}{v}\right)^s \operatorname{av}_q(g, B_v(x_0)) \frac{d v}{v}+\left(\frac{t}{r}\right)^s \operatorname{av}_q(g, B_{r}(x_0))\right).
    \end{array}
\end{equation}
(b) With $m \geq 1$, if $v>0$ and $\theta \in(0,1)$ are such that $\theta r \leq v \leq r$, then
\begin{equation}{\label{av}}
    \operatorname{av}_{{m}}(g, B_v(x_0)) \leq 2 \theta^{-n / {m}} \operatorname{av}_{{m}}(g, B_{r}(x_0)).
\end{equation}
\end{lemma}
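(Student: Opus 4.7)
I would first dispatch part (b), which is a direct Minkowski computation: for $\theta r\le v\le r$, write $\operatorname{av}_m(g,B_v)\le (\fint_{B_v}|g-(g)_{B_r}|^m)^{1/m}+|(g)_{B_v}-(g)_{B_r}|$, and bound each term by $\theta^{-n/m}\operatorname{av}_m(g,B_r)$ (the first by enlarging the domain of integration, using $|B_v|\ge\theta^n|B_r|$; the second by Jensen), summing to the stated factor $2\theta^{-n/m}$.

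For (a), I would split the tail integral at the radius $r$. On $\mathbb{R}^n\setminus B_r$, the identity $(g)_{B_t}=(g)_{B_r}+[(g)_{B_t}-(g)_{B_r}]$ together with $|a+b|^{q-1}\le 2^{q-2}(|a|^{q-1}+|b|^{q-1})$ (valid for $q\ge 2$) produces a piece proportional to $r^{-s-\delta(q-1)/q}\operatorname{snail}_\delta(g,B_r)^{q-1}$ plus $c\, r^{-sq}|(g)_{B_r}-(g)_{B_t}|^{q-1}$; since $r^{-s}\le t^{-s}$, the first piece matches the $\operatorname{snail}_\delta$-term in \cref{snail}. On $B_r\setminus B_t$, the layer-cake identity $|y-x_0|^{-n-sq}=(n+sq)\int_{|y-x_0|}^\infty v^{-n-sq-1}\,dv$ combined with Fubini yields
$$\int_{B_r\setminus B_t}\frac{|g-(g)_{B_t}|^{q-1}}{|y-x_0|^{n+sq}}\,dy\le c\int_t^r v^{-sq-1}F(v)\,dv+c\, r^{-sq}F(r),$$
with $F(v):=\fint_{B_v}|g-(g)_{B_t}|^{q-1}$. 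The triangle inequality and Jensen then split $F(v)\le c\operatorname{av}_q(g,B_v)^{q-1}+c|(g)_{B_v}-(g)_{B_t}|^{q-1}$.

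The main obstacle is controlling $\int_t^r v^{-sq-1}\operatorname{av}_q(g,B_v)^{q-1}\,dv$ by a $(q-1)$-th power of $I_0:=\int_t^r(t/u)^s\operatorname{av}_q(g,B_u)\,du/u$. My key tool is the pointwise bound
$$(t/v)^s\operatorname{av}_q(g,B_v)\le c\, I_0+c\,(t/r)^s\operatorname{av}_q(g,B_r)\qquad\forall\, v\in[t,r],$$
which I would prove by using (b) twice: for $v\le r/2$, averaging over the dyadic window $[v,2v]\subset[t,r]$ and noting $(t/v)^s\le 2^s(t/u)^s$ together with the (b) comparison $\operatorname{av}_q(g,B_v)\le c\operatorname{av}_q(g,B_u)$ controls the left side by $I_0$; for $v\in[r/2,r]$, (b) gives $\operatorname{av}_q(g,B_v)\le c\operatorname{av}_q(g,B_r)$ and $(t/v)^s\le 2^s(t/r)^s$. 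Raising to the $(q-1)$-th power and noting $v^{-sq-1}(v/t)^{s(q-1)}=t^{-s(q-1)}v^{-s-1}$, whose $dv$-integral over $[t,r]$ is at most $t^{-s}/s$, produces the desired $t^{-sq}$ prefactor. The difference piece $|(g)_{B_v}-(g)_{B_t}|^{q-1}$ is handled by the dyadic telescoping bound $|(g)_{B_v}-(g)_{B_t}|\le c\int_t^v\operatorname{av}_q(g,B_u)\,du/u\le c(v/t)^s I_0$ (again via (b)), which integrates against the same weight $v^{-s-1}$ to give $c\, t^{-sq}I_0^{q-1}$. Collecting all four contributions, dividing by $t^{\delta(q-1)/q+s}$, and taking $(q-1)$-th roots (subadditive since $q\ge 2$) yields the claim.
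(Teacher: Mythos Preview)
Your argument is correct and ultimately rests on the same ingredients as the paper's: the splitting of the tail at radius $r$, the telescoping bound $|(g)_{B_v}-(g)_{B_t}|\le c\int_t^v\operatorname{av}_q(B_u)\,du/u$, and the condition $q\ge2$ (used both for $|a+b|^{q-1}\le c(|a|^{q-1}+|b|^{q-1})$ and for the subadditivity of $(\cdot)^{1/(q-1)}$). The difference lies in how you handle the annular piece $\int_{B_r\setminus B_t}|g-(g)_{B_t}|^{q-1}|y-x_0|^{-n-sq}\,dy$. The paper decomposes this into dyadic shells $B_{\lambda^{-i-1}t}\setminus B_{\lambda^{-i}t}$, pulls the resulting sum outside the $(q-1)$-th root by subadditivity, applies Jensen and a telescoping estimate on each shell, and then uses a discrete Fubini to exchange the two sums and recover the integral $\int_t^r(t/v)^s\operatorname{av}_q(B_v)\,dv/v$. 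You instead use the layer-cake identity for $|y-x_0|^{-n-sq}$ and Fubini to obtain a continuous representation $\int_t^r v^{-sq-1}F(v)\,dv$, and then avoid any Fubini-type exchange altogether by establishing the \emph{uniform} bound $(t/v)^s\operatorname{av}_q(B_v)\le c\,J$ for all $v\in[t,r]$ (via part~(b), averaging over a dyadic window $[v,2v]$ when $v\le r/2$). This reduces the remaining integration to the elementary computation $\int_t^r v^{-s-1}\,dv\le t^{-s}/s$.

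Your route is a legitimate continuous variant of the paper's discrete argument; it trades the discrete Fubini step for a pointwise supremum bound, which makes the bookkeeping somewhat lighter (no dyadic indices in the $T_2$ estimate). One minor slip: at the end you say ``dividing by $t^{\delta(q-1)/q+s}$'' when you mean multiplying---recall $\operatorname{snail}_\delta(t)^{q-1}=t^{\delta(q-1)/q+s}\int_{\mathbb{R}^n\setminus B_t}\ldots$---but the algebra you describe is otherwise correct.
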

\begin{proof}
    In what follows, all the balls will be centred at $x_0$. Let us first recall the standard property
\begin{equation}{\label{av2}}
    \left(\fint_{B_r}|g-(g)_{B_r}|^m d x\right)^{1 / m} \leq 2\left(\fint_{B_r}|g-k|^md x\right)^{1 / m}
\end{equation}
that holds $\forall k \in \mathbb{R}$ and $m \geq 1$; from this \cref{av} follows immediately. To prove \cref{snail}, let $B_t \subset B_{r}$, we then use
$
    d \lambda_{x_0}(\mathbb{R}^n \backslash B_t)=c(n,s,q) t^{-s q}, d \lambda_{x_0}(x):=\frac{d x}{|x-x_0|^{n+sq}}
$
and split
\begin{equation*}
    \begin{array}{rcl}
\operatorname{snail}_\delta(t)& \leq & c\left(\frac{t}{r}\right)^{\delta / q}r^{\delta / q} \left(t^s\int_{\mathbb{R}^n\backslash B_r}\frac{|g(y)-(g)_{B_t}|^{q-1}}{|y-x_0|^{n+sq}}dy\right)^{\frac{1}{q-1}}+ct^{\delta/q}\left(t^s\int_{B_r\backslash B_t}\frac{|g(y)-(g)_{B_t}|^{q-1}}{|y-x_0|^{n+sq}}dy\right)^{\frac{1}{q-1}}
\\&\leq&c\left(\frac{t}{r}\right)^{\delta / q} \operatorname{snail}_\delta(r) +c t^{\delta /q-s}\left(\frac{t}{r}\right)^s|(g)_{B_t}-(g)_{B_r}|+ct^{\delta/q}\left(t^s\int_{B_r\backslash B_t}\frac{|g(y)-(g)_{B_t}|^{q-1}}{|y-x_0|^{n+sq}}dy\right)^{\frac{1}{q-1}}\\
&= & c\left(\frac{t}{r}\right)^{\delta / q} \operatorname{snail}_\delta(r)+c T_1+c T_2    ,     
    \end{array}
\end{equation*}
where $c \equiv c(n, s, q)$. If $r/4 \leq t < r$, standard manipulations based on \cref{av} gives $T_1 + T_2 \leq c t^{\delta / q-s}\left(\frac{t}{r}\right)^s \operatorname{av}_q(g, B_{r})$ with $c \equiv c(n,s,q)$. We can therefore assume that $t < r/4$. Then there exists
$\lambda \in(1/4, 1/2)$ and $\kappa\in \mathbb{N}$, $\kappa\geq 2$ so that $t = \lambda^\kappa r$. We now use triangle and Hölder’s
inequalities, and estimate, using \cref{av,av2} repeatedly
\begin{equation}{\label{T1}}
    \begin{array}{rcl}
         T_1 & \leq& t^{\delta / q-s}\left(\frac{t}{r}\right)^s\left(|(g)_{B_{\lambda r}}-(g)_{B_r}|+|(g)_{B_{\lambda r}}-(g)_{B_{\lambda^\kappa r}}|\right) \\
& \leq &c t^{\delta / q-s}\left(\frac{t}{r}\right)^s \left(\operatorname{av}_q(r)+\sum_{i=1}^{\kappa-1}|(g)_{B_{\lambda^i r}}-(g)_{B_{\lambda^{i+1} r}}| \right)\\
& \leq &c t^{\delta / q-s}\left(\frac{t}{r}\right)^s \left(\operatorname{av}_q(r)+\sum_{i=1}^{\kappa-1}\left(\fint_{B_{\lambda^i r}}|g(x)-(g)_{B_{\lambda^i r}}|^q d x\right)^{1 / q} \right)\\
& \leq& c t^{\delta / q-s}\left(\frac{t}{r}\right)^s \left(\operatorname{av}_q(r)+ \sum_{i=1}^{\kappa-1} \int_{\lambda^i r}^{\lambda^{i-1} r} \operatorname{av}_q(\lambda^i r) \frac{d v}{v}\right) \\
& \leq& c t^{\delta / q-s}\left(\frac{t}{r}\right)^s \left(\operatorname{av}_q(r)+\sum_{i=1}^{\kappa-1} \int_{\lambda^i r}^{\lambda^{i-1} r} \operatorname{av}_q(v) \frac{d v}{v} \right)\\&\leq &\leq c t^{\delta / q-s}\left(\frac{t}{r}\right)^s \left(\operatorname{av}_q(r)+\int_t^{r} \operatorname{av}_q(v) \frac{d v}{v}\right),
    \end{array}
\end{equation}
with $c \equiv c(n, s, q)$. For $T_2$, we rewrite $r=\lambda^{-\kappa} t$ and estimate, by telescoping and Jensen's inequality
\begin{equation}{\label{jensen}}
    \left(\fint_{B_{\lambda^{-i}t}}|g(x)-(g)_{B_t}|^q d x\right)^{1 / q} \leq 2^{n / q+1} \sum_{m=0}^i \operatorname{av}_q(\lambda^{-m} t),
\end{equation}
for $0 \leq i \leq k$. Then, using \cref{av2}, \cref{jensen} and the discrete Fubini theorem, % (so that one can use Minkowski inequality)
we get by noting $0<\lambda<1$ and $q-1\geq 1$,
\begin{equation}{\label{T2}}
    \begin{array}{rcl}
      T_2 &\leq & c t^{\delta / q}\left(t^s\sum_{i=0}^{\kappa-1} \lambda^{i s q}(\lambda^{-i} t)^{-n}t^{-sq} \int_{B_{\lambda^{-i-1}t} \backslash B_{\lambda^{-i}t}}|g(x)-(g)_{B_t}|^{q-1} d x\right)^{\frac{1}{q-1}} \\
& \leq&c t^{\delta / q-s}\left(\sum_{i=0}^{\kappa-1} \lambda^{i s q} \fint_{B_{\lambda^{-i-1}t} }|g(x)-(g)_{B_t}|^{q-1} d x\right)^{\frac{1}{q-1}} \\& \leq&c t^{\delta / q-s}\sum_{i=0}^{\kappa-1} \left(\lambda^{i s q} \fint_{B_{\lambda^{-i-1}t} }|g(x)-(g)_{B_t}|^{q-1} d x\right)^{\frac{1}{q-1}}\\& \leq&c t^{\delta / q-s}\sum_{i=0}^{\kappa-1} \lambda^{\frac{isq}{q-1}} \left(\fint_{B_{\lambda^{-i-1}t} }|g(x)-(g)_{B_t}|^{q} d x\right)^{\frac{1}{q}}\\
& \leq &c t^{\delta / q-s} \sum_{i=0}^\kappa \lambda^{i s} \sum_{m=0}^i \operatorname{av}_q(\lambda^{-m} t)\\& =& c t^{\delta / q-s} \sum_{m=0}^\kappa \operatorname{av}_q(\lambda^{-m} t) \sum_{i=m}^\kappa \lambda^{i s}
\\
& \leq& c t^{\delta / q-s} \sum_{m=0}^\kappa \lambda^{ms} \operatorname{av}_q(\lambda^{-m} t) \\
& \leq &c t^{\delta / q-s} \sum_{m=0}^{\kappa-1} \int_{\lambda^{-m}t}^{\lambda^{-m-1} t} \lambda^{ms} \operatorname{av}_q(v) \frac{d v}{v}+c t^{\delta / q-s}\left(\frac{t}{r}\right)^s \operatorname{av}_q(r) \\
& \leq& c t^{\delta / q-s} \sum_{m=0}^{\kappa-1} \int_{\lambda^{-m}t}^{\lambda^{-m-1} t}\left(\frac{t}{v}\right)^s \operatorname{av}_q(v) \frac{d v}{v}+c t^{\delta / q-s}\left(\frac{t}{r}\right)^s \operatorname{av}_q(r) \\
& \leq& c t^{\delta / q-s} \int_t^{r}\left(\frac{t}{v}\right)^s \operatorname{av}_q(v) \frac{d v}{v}+c t^{\delta / q-s}\left(\frac{t}{r}\right)^s \operatorname{av}_q(r),
   \end{array}
\end{equation}
for $c \equiv c(n, s, q)$. Merging the \cref{T1,T2} we obtain \cref{snail}.
\end{proof}
From hereafter, we take $u$ as a local weak solution to $\mathcal{L}u=f$ in $\Omega$. This assumption holds until the end of H\"older regularity results. Before Proceeding further, we note \cref{caccremark} and take $k=(u)_{B_r}$ in \cref{caccioppoli} and set $\psi\in C_c^{\infty}(B_r)$ such that $0\leq \psi\leq 1$ in $B_r$, $\psi\equiv 1$ in $B_{r/2}$, $\psi\equiv 0$ outside $B_{3r/4}$ and $|\nabla \psi|\leq {C}/{r} $ for some constant $C>0$. Then \cref{cacc} gives
\begin{equation}{\label{caccimproved}}
     \begin{array}{l}
         \fint_{B_{r}} \psi^p|\nabla u|^p d x+\fint_{B_{r/2}} \int_{B_{r/2}}\frac{|u(x)-u(y)|^q }{|x-y|^{n+qs}}dxdy \\
\leq C\left(r^{-p}\fint_{B_r} |u-(u)_{B_r}|^p d x+\fint_{B_r} \int_{B_r} \max \{|u(x)-(u)_{B_r}|, |u(y)-(u)_{B_r}|\}^q|\psi^{p/q}(x)-\psi^{p/q}(y)|^q d \mu\right. \smallskip\\
\left.\quad+\fint_{B_r}\left(\int_{\mathbb{R}^n \backslash B_r} \frac{|u(y)-(u)_{B_r}|^{q-1}}{|x-y|^{n+ sq}} d y \right) |u(x)-(u)_{B_r}|\psi^p d x\right.\smallskip\\\left.\quad+r^{-sq}\fint_{B_r}|u-(u)_{B_r}|^q dx+\fint_{B_r}|f||u-(u)_{B_r}|\psi^p dx\right)\smallskip\\= I+II+III+IV+V,
    \end{array}
\end{equation}
for some $C\equiv C(n,p,q,s,\Lambda)$.\smallskip\\
\textbf{Estimate of $II$:} Noting the properties of $\psi$, one gets
\begin{equation}{\label{2}}
    \begin{array}{l}
         \quad\fint_{B_r} \int_{B_r} \max \{|u(x)-(u)_{B_r}|, |u(y)-(u)_{B_r}|\}^q|\psi^{p/q}(x)-\psi^{p/q}(y)|^q d \mu\\\leq Cr^{-q}\fint_{B_r}\int_{B_r}\frac{\max \{|u(x)-(u)_{B_r}|, |u(y)-(u)_{B_r}|\}^q}{|x-y|^{n+sq-q}}dxdy\leq Cr^{-sq}\fint_{B_r}|u-(u)_{B_r}|^q dx,
    \end{array}
\end{equation}
for some $C\equiv C(n,p,q,s)$.\\
\textbf{Estimate of $III$:} Note that $x\in B_{3r/4}$
and $y\in \mathbb{R}^n\backslash B_r$ implies $1\leq \frac{|y-x_0|}{|x-y|}\leq 4$ and then recalling $\psi$ is supported in $B_{3r/4}$ we get using Young's inequality
\begin{equation}{\label{3}}
    \begin{array}{l}
         \quad\fint_{B_r}\left(\int_{\mathbb{R}^n \backslash B_r} \frac{|u(y)-(u)_{B_r}|^{q-1}}{|x-y|^{n+ sq}} d y \right) |u(x)-(u)_{B_r}|\psi^p d x\smallskip\\\leq C\int_{\mathbb{R}^n \backslash B_r} \frac{|u(y)-(u)_{B_r}|^{q-1}}{|y-x_0|^{n+ sq}} d y\fint_{B_r}|u(x)-(u)_{B_r}|d x
       \smallskip \\ \leq  Cr^s\int_{\mathbb{R}^n \backslash B_r} \frac{|u(y)-(u)_{B_r}|^{q-1}}{|y-x_0|^{n+ sq}} d y\times r^{-s}\left(\fint_{B_r}|u(x)-(u)_{B_r}|^qd x\right)^{1/q}\smallskip\\\leq Cr^{-sq}\fint_{B_r}|u-(u)_{B_r}|^q dx+C\left(r^s\int_{\mathbb{R}^n \backslash B_r} \frac{|u(y)-(u)_{B_r}|^{q-1}}{|y-x_0|^{n+ sq}} d y\right)^{\frac{q}{q-1}},
    \end{array}
\end{equation}
for some $C\equiv C(n,p,q,s)$.\\
\textbf{Estimate of $V$:}
As $f\in L^n_{\mathrm{loc}}(\Omega)$ and $0\leq \psi\leq 1$, so
\begin{equation}{\label{4}}
    \begin{array}{rcl}
         \fint_{B_r}|f||u-(u)_{B_r}|\psi^p dx&\leq &C\|f\|_{L^n(B_r)}\left(\fint_{B_r}|\psi ^p(u-(u)_{B_r})|^{p^*}dx\right)^{1/p^*}\\&\leq &C\|f\|_{L^n(B_r)}\left(\fint_{B_r}|\nabla(\psi ^p(u-(u)_{B_r}))|^{p}dx\right)^{1/p}\\&\leq &C\|f\|_{L^n(B_r)}\left[\left(\fint_{B_r}(p\psi^{p-1}|\nabla\psi||u-(u)_{B_r}|)^{p}dx\right)^{1/p}+\left(\fint_{B_r}\psi^{p^2}|\nabla u|^{p}dx\right)^{1/p}\right]\\&\leq &C\|f\|_{L^n(B_r)}r^{-1}\left(\fint_{B_r}|u-(u)_{B_r}|^{p}dx\right)^{1/p}+C\|f\|_{L^n(B_r)}\left(\fint_{B_r}\psi^p|\nabla u|^{p}dx\right)^{1/p}\\&\leq & C\|f\|_{L^n(B_r)}^{\frac{p}{p-1}}+Cr^{-p}\fint_{B_r}|u-(u)_{B_r}|^{p}dx+C\|f\|_{L^n(B_r)}^{\frac{p}{p-1}}+\frac{1}{2}\fint_{B_r}\psi^p|\nabla u|^{p}dx,
    \end{array}
\end{equation}
for some $C\equiv C(n,p)$. Finally for $r<1$, it holds 
\begin{equation}{\label{ccp}}
    r^{-sq}[\operatorname{av}_q(u,B_r(x_0))]^q\leq cr^{(1-s)q}(r^{-p}[\operatorname{av}_p(u,B_r(x_0))]^p)^{q/p}\leq c(r^{-p}[\operatorname{av}_p(u,B_r(x_0))]^p+1).
\end{equation}
In view of \cref{ccp}, merging \cref{2,3,4} and using in \cref{caccimproved}, we get 
\begin{equation}{\label{caccimprovedfinal}}
     \begin{array}{l}
         \fint_{B_{r/2}(x_0)}|\nabla u|^p d x+\fint_{B_{r/2}(x_0)} \int_{B_{r/2}(x_0)}\frac{|u(x)-u(y)|^q }{|x-y|^{n+qs}}dxdy \smallskip\\
\leq C\left(r^{-p}[\operatorname{av}_p(u,B_r(x_0))]^p+r^{-\delta}[\operatorname{snail}_\delta(u, B_r(x_0))]^q+\|f\|_{L^n(B_r)}^{\frac{p}{p-1}}+1\right).
    \end{array}
\end{equation}
We now define $h \in u+W_0^{1 , p}(B_{r / 4}(x_0))$ as the (unique) solution to
\begin{equation}{\label{h}}
   -\operatorname{div} A(x_0,\nabla h(x))=0 \text { in } B_{r / 4}(x_0), \quad h=u \text { on } \partial B_{r / 4}(x_0).
\end{equation}
The function $h$ solves the Euler-Lagrange equation
\begin{equation}{\label{heu}}
    \int_{B_{r/ 4}(x_0)} A(x_0,\nabla h) \cdot \nabla \varphi \,d x=0 \quad \text { for every } \varphi \in W_0^{1, p}(B_{r / 4}) .
\end{equation}
The following lemma includes some crucial properties satisfied by $h$ that we will eventually pass to $u$. 
\begin{lemma}{\label{proph}} Let $h$ be a solution to \cref{h}. Then there exists constants $\alpha_0\in(0,1)$, and $C,c>0$ such that
\begin{equation}{\label{h1}}
    \fint_{B_{r / 4}(x_0)}(|\nabla h|^2+\mu^2)^{p / 2} d x \leq C  \fint_{B_{r / 4}(x_0)}(|\nabla u|^2+\mu^2)^{p / 2} d x,
\end{equation}
\begin{equation}{\label{h2}}
    \|h\|_{L^{\infty}(B_{r/ 4}(x_0))} \leq\|u\|_{L^{\infty}(B_{r/ 4}(x_0))}, \quad \underset{B_{r/ 4}(x_0)}{\operatorname{osc}} h\leq  \underset{B_{r/ 4}(x_0)}{\operatorname{osc}} u,
\end{equation}
and 
\begin{equation}{\label{h3}}
    \|\nabla h\|_{L^{\infty}(B_{r/ 8}(x_0))}^p \leq c \fint_{B_{r/ 4}(x_0)}(|\nabla h|^2+\mu^2)^{p / 2} dx, \quad \underset{B_{t}(x_0)}{\operatorname{osc}}\nabla h\leq c\left(\frac{t}{r}\right)^{\alpha_0}\left( \fint_{B_{r / 4}(x_0)}(|\nabla h|^2+\mu^2)^{p / 2} d x\right)^{1/p}
\end{equation}
holds for all $t\in (0,r/8]$.
\end{lemma}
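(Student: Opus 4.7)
The plan is to establish the three estimates via standard techniques for quasilinear equations with frozen coefficients, exploiting that $A(x_0,\cdot)$ in \cref{heu} is a vector field independent of the spatial variable, fulfilling exactly the $p$-growth, monotonicity and coercivity structure of \cref{growth1}.

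For \cref{h1}, the key observation is that $h-u \in W_0^{1,p}(B_{r/4}(x_0))$ and hence is admissible in \cref{heu}. Testing and rearranging yields
\begin{equation*}
\int_{B_{r/4}(x_0)} A(x_0,\nabla h)\cdot\nabla h\,dx = \int_{B_{r/4}(x_0)} A(x_0,\nabla h)\cdot\nabla u\,dx.
\end{equation*}
The monotonicity condition in \cref{growth1}, integrated along the ray $t\mapsto tz$, yields the pointwise coercivity $A(x_0,z)\cdot z \gtrsim (|z|^2+\mu^2)^{p/2}-\mu^p$, while the growth bound gives $|A(x_0,z)|\lesssim (|z|^2+\mu^2)^{(p-1)/2}$. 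Estimating the right-hand side by Young's inequality with exponents $p/(p-1)$ and $p$, absorbing the part carrying $\nabla h$ into the left, and using that $\mu\in[0,1]$, I obtain \cref{h1}.

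For \cref{h2} I would use truncation. Setting $M:=\|u\|_{L^\infty(B_{r/4}(x_0))}$, the function $(h-M)^+$ lies in $W_0^{1,p}(B_{r/4}(x_0))$ because $h=u\leq M$ on $\partial B_{r/4}(x_0)$. Testing \cref{heu} with $(h-M)^+$ and invoking coercivity forces $\nabla h=0$ a.e.\ on $\{h>M\}$, so $h\leq M$; the lower bound is analogous. For the oscillation inequality the same argument, run with the truncation levels $\sup_{B_{r/4}(x_0)}u$ and $\inf_{B_{r/4}(x_0)}u$ in place of $\pm M$, gives $\osc_{B_{r/4}(x_0)} h \leq \osc_{B_{r/4}(x_0)} u$.

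Finally, \cref{h3} follows from the classical regularity theory for homogeneous quasilinear equations of $(p,\mu)$-type with frozen coefficients, as developed by Uhlenbeck, Uraltseva, DiBenedetto, Tolksdorf and Lieberman. The interior Lipschitz estimate is obtained from a sup-bound for $|\nabla h|$ via Moser iteration on the differentiated equation, while the second inequality in \cref{h3}, with a universal exponent $\alpha_0 = \alpha_0(n,p,\Lambda) \in (0,1)$, is the corresponding interior gradient Hölder regularity. The main obstacle I anticipate is not conceptual but bibliographical: one must pin down an explicit quantitative statement of the $C^{1,\alpha_0}$ estimate that is valid uniformly in $\mu\in[0,1]$ and expressed in terms of the scale-invariant quantity $\fint (|\nabla h|^2+\mu^2)^{p/2}\,dx$; once this reference is fixed the estimate in \cref{h3} is direct.
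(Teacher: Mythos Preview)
Your proposal is correct and follows essentially the same approach as the paper: test \cref{heu} with $u-h$ and use coercivity/growth for \cref{h1}, invoke the maximum principle (your truncation argument makes this explicit) for \cref{h2}, and cite classical $C^{1,\alpha_0}$ interior regularity for homogeneous $p$-Laplace type equations for \cref{h3}. The only minor difference is that for \cref{h1} the paper splits into the cases $p\ge 2$ and $1<p<2$ and uses H\"older's inequality, whereas your Young-inequality absorption handles both ranges at once; the paper's reference for \cref{h3} is \cite{manfredi}.
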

\begin{proof}
    We only prove \cref{h1}, as \cref{h2} follows from the standard maximum principle of the elliptic operator $h\mapsto A(x_0, \nabla h)$ and \cref{h3} follows from [\citealp{manfredi}, Theorem 1, Theorem 2]. To prove \cref{h1}, we test \cref{heu} with $u-h \in W_0^{1,p}(B_{r/4}({x}_0))$ and use \cref{growth1} to get
\begin{equation*}
    \int_{B_{r/4}({x}_0)} A(x_0,\nabla h) \cdot \nabla h\, d x=\int_{B_{r/4}({x}_0)} A(x_0,\nabla h) \cdot \nabla u \,d x\leq \Lambda \int_{B_{r/4}({x}_0)}(|\nabla h|^2+\mu^2)^{\frac{p-2}{2}}|\nabla h || \nabla u| d x .
\end{equation*}
Using again hypothesis \cref{growth1}, we see that $A({x}_{0}, z) \cdot z \geq \min \left\{1, \frac{1}{p-1}\right\} \Lambda^{-1}(|z|^2+\mu^2)^{\frac{p-2}{2}}|z|^2$ for every $z \in \mathbb{R}^n$, so that
\begin{equation*}
   \int_{B_{r/4}({x}_0)} A(x_0,\nabla h) \cdot \nabla h\, d x  \geq \frac{{\Lambda}^{-1}}{p} \int_{B_{r/4}({x}_0)}(|\nabla h|^2+\mu^2)^{\frac{p-2}{2}}|\nabla h |^2d x .
\end{equation*}
Thus,
\begin{equation}{\label{h5}}
    \int_{B_{r/4}({x}_0)}(|\nabla h|^2+\mu^2)^{\frac{p-2}{2}}|\nabla h |^2d x \leq p\Lambda^2\int_{B_{r/4}({x}_0)}(|\nabla h|^2+\mu^2)^{\frac{p-2}{2}}|\nabla h || \nabla u| d x .
\end{equation}
Now, if $p \geq 2$ this yields
\begin{equation*}
    \int_{B_{r/4}({x}_0)}|\nabla h|^p dx\leq p {\Lambda}^2 \int_{B_{r/4}({x}_0)}(|\nabla h|^2+\mu^2)^{\frac{p-1}{2}}(|\nabla u|^2+\mu^2)^{\frac{1}{2}} d x.
\end{equation*}
which immediately gives \cref{h1} after an application of Holder's inequality. If $p \in(1,2)$, we also exploit Hölder's inequality along with the fact that\begin{equation*}
    \frac{t^{\frac{p}{p-1}}}{(t^2+\mu^2)^{\frac{(2-p) p}{2(p-1)}}} \leq(t^2+\mu^2)^{\frac{p-2}{2}} t^2, \quad \text { for all } t \geq 0
\end{equation*}
to deduce from \cref{h5} that
\begin{equation*}
    \begin{array}{rcl}
      \int_{B_{r/4}({x}_0)}(|\nabla h|^2+\mu^2)^{\frac{p-2}{2}}|\nabla h |^2d x    &  \leq &p {\Lambda}^2\left(\int_{B_{r/4}({x}_0)} \frac{|\nabla h|^{\frac{p}{p-1}}}{(|\nabla h|^2+\mu^2)^{\frac{(2-p) p}{2(p-1)}}} d x\right)^{\frac{p-1}{p}}\left(\int_{B_{r/4}({x}_0)}|\nabla u|^p d x\right)^{\frac{1}{p}} 
        \smallskip\\&\leq &p {\Lambda}^2\left( \int_{B_{r/4}({x}_0)}(|\nabla h|^2+\mu^2)^{\frac{p-2}{2}}|\nabla h |^2d x \right)^{\frac{p-1}{p}}\left(\int_{B_{r/4}({x}_0)}|\nabla u|^p d x\right)^{\frac{1}{p}} .
    \end{array}
\end{equation*}
This gives
\begin{equation*}
     \int_{B_{r/4}({x}_0)}(|\nabla h|^2+\mu^2)^{\frac{p-2}{2}}|\nabla h |^2d x   \leq2^p\Lambda^{2p}\int_{B_{r/4}({x}_0)}|\nabla u|^p d x\leq2^p\Lambda^{2p}\int_{B_{r/4}({x}_0)}(|\nabla u|^2+\mu^2)^{p/2} d x
\end{equation*}
which, together with the trivial estimate
\begin{equation*}
    \int_{B_{r/4}({x}_0)}(|\nabla h|^2+\mu^2)^{\frac{p-2}{2}}\mu^2d x   \leq\int_{B_{r/4}({x}_0)}\mu^p d x\leq\int_{B_{r/4}({x}_0)}(|\nabla u|^2+\mu^2)^{p/2} d x
\end{equation*}
readily yields \cref{h1}.
\end{proof}
Next, we give the difference estimate to pass the regularity of $h$ to $u$.
\begin{lemma}{\label{differenec}}
    Let $h \in u+W_0^{1, p}(B_{r / 4}(x_0)), 0<r<1$ be the solution of \cref{h}. There exists $\sigma_0 \equiv \sigma_0(p, s, q,\bar{\alpha}) \in$ $(0,1)$ such that
\begin{equation}{\label{differenceestimate}}
    \fint_{B_{r / 4}}|u-h|^p dx \leq c r^{\theta\sigma_0}\left[\fint_{B_r}|u-(u)_{B_r}|^pdx+r^\delta\left(r^s\int_{\mathbb{R}^n \backslash B_r} \frac{|u(y)-(u)_{B_r}|^{q-1}}{|y-x_0|^{n+ sq}} d y\right)^{\frac{q}{q-1}}+r^{p-\theta}(\|f\|_{L^n(B_r)}^{\frac{p}{p-1}}+1)\right],
\end{equation}
holds for every $\theta\in (0,1)$ where $\delta \in(sq,p)$, $c\equiv c(n,p,q,s,\Lambda)$.
\end{lemma}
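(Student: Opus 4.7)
The plan is to test the difference of the weak formulations of $\mathcal{L}u=f$ and $-\operatorname{div}A(x_0,\nabla h)=0$ against $u-h$, which belongs to $W^{1,p}_0(B_{r/4}(x_0))$ once extended by zero outside $B_{r/4}(x_0)$, and to bound the three resulting error sources so that the monotonicity in \cref{growth1} controls $\int|\nabla(u-h)|^p\,dx$. After subtraction, the local bulk integrand is $A(x,\nabla u)-A(x_0,\nabla h)$, which I split as $[A(x,\nabla u)-A(x,\nabla h)]+[A(x,\nabla h)-A(x_0,\nabla h)]$. The first piece, paired with $\nabla(u-h)$, is bounded below by $c\int|\nabla(u-h)|^p\,dx$ by the third line of \cref{growth1} (directly when $p\geq 2$ and through a standard Young manipulation when $1<p<2$, exactly as in the proof of \cref{proph}). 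For the second piece I use the $\bar{\alpha}$-continuity in the second line of \cref{growth1} to extract a factor $r^{\bar{\alpha}}$ against a $(p-1)$-moment of $|\nabla h|$; the $L^\infty$ bound in \cref{h3} together with \cref{h1} transfers this to $\fint_{B_{r/4}}(|\nabla u|^2+\mu^2)^{p/2}\,dx$, which the Caccioppoli estimate \cref{caccimprovedfinal} then exchanges for the three structural quantities appearing on the right-hand side of the lemma.

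Next I would treat the nonlocal form $\mathcal{N}$; since $u-h\equiv 0$ off $B_{r/4}$, it splits into a symmetric integral over $B_{r/4}\times B_{r/4}$ and a cross integral over $B_{r/4}\times(\mathbb{R}^n\setminus B_{r/4})$. For the symmetric part, the $q$-growth of $\Phi$ from \cref{growth3} together with H\"older's inequality reduces it to a product of the $W^{s,q}$-seminorm of $u$ over $B_{r/4}$, controlled by the fractional part of \cref{caccimprovedfinal}, and that of $u-h$, controlled via \cref{fracemb} (applicable since $u-h\in W^{1,p}_0(B_{r/4})$), producing a net fractional gain of $r^{p(1-s)}$. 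For the cross part I isolate the far contribution over $\mathbb{R}^n\setminus B_r$, which after shifting to $u-(u)_{B_r}$ produces exactly the $\operatorname{snail}_\delta$-type tail in the statement, while the annular contribution from $B_r\setminus B_{r/4}$ is controlled by H\"older inside $B_r$ and absorbed into $\fint_{B_r}|u-(u)_{B_r}|^p\,dx$. The forcing term $\int f(u-h)\,dx$ is handled by H\"older together with the Sobolev embedding $W^{1,p}_0(B_{r/4})\hookrightarrow L^{n/(n-1)}(B_{r/4})$, and Young's inequality absorbs a fraction of $\int|\nabla(u-h)|^p$ into the left-hand side, leaving a contribution proportional to $\|f\|_{L^n(B_r)}^{p/(p-1)}$.

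Collecting these estimates and applying the Poincar\'e inequality on $u-h\in W^{1,p}_0(B_{r/4})$ to pass from $\int|\nabla(u-h)|^p$ to $\int|u-h|^p$ (with a gain of $r^p$), I would obtain the direct bound
\[
\fint_{B_{r/4}}|u-h|^p\,dx \leq c\,r^{\sigma_0}\!\left[\fint_{B_r}|u-(u)_{B_r}|^p\,dx+r^\delta\Bigl(r^s\!\!\int_{\mathbb{R}^n\setminus B_r}\tfrac{|u(y)-(u)_{B_r}|^{q-1}}{|y-x_0|^{n+sq}}\,dy\Bigr)^{\!\frac{q}{q-1}}\right] + c\,r^{p}\bigl(\|f\|_{L^n(B_r)}^{p/(p-1)}+1\bigr),
\]
with $\sigma_0\in(0,1)$ depending on $n,p,q,s,\bar{\alpha}$ through the minimum of $\bar{\alpha}p$, $p(1-s)$ and $p-\delta$. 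The parameter $\theta\in(0,1)$ in the statement is then extracted by weakening this inequality via the trivial bounds $r^{\sigma_0}\leq r^{\theta\sigma_0}$ on the oscillation and tail terms and $r^{p}=r^{\theta\sigma_0}\cdot r^{p-\theta\sigma_0}\leq r^{\theta\sigma_0}\cdot r^{p-\theta}$ on the $f$-term (both valid since $r\leq 1$ and $\sigma_0<1$). The main obstacle will be the bookkeeping in the nonlocal piece: because $\Phi$ has $q$-growth while $u-h$ lies in $W^{1,p}_0$ with $p\geq q$ and typically $p\neq q$, pairing the two seminorms requires a careful distribution of H\"older exponents so that the fractional gain from \cref{fracemb}, the snail exponent $\delta$, and the $\bar{\alpha}$-continuity gain from the local piece all combine into a single positive power $r^{\sigma_0}$ in front of the data terms.
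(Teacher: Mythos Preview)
Your proposal follows essentially the same strategy as the paper: test the two equations against $u-h\in W^{1,p}_0(B_{r/4})$, use the strict monotonicity of $z\mapsto A(\cdot,z)$ to produce $\fint|V_\mu(\nabla u)-V_\mu(\nabla h)|^2$ on the left, bound the $\bar{\alpha}$-continuity defect, the nonlocal form, and the forcing separately, feed everything through the Caccioppoli inequality \cref{caccimprovedfinal} and the energy comparison \cref{h1}, and finish with Poincar\'e. The only structural difference is that you split $A(x,\nabla u)-A(x_0,\nabla h)$ as $[A(x,\nabla u)-A(x,\nabla h)]+[A(x,\nabla h)-A(x_0,\nabla h)]$ (monotonicity at the running point $x$, continuity defect on $\nabla h$), whereas the paper splits it as $[A(x_0,\nabla u)-A(x_0,\nabla h)]+[A(x_0,\nabla u)-A(x,\nabla u)]$ (monotonicity at the frozen point $x_0$, continuity defect on $\nabla u$); both are legitimate and lead to the same estimate after invoking \cref{h1}.

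Two small technical points to fix. First, the $L^\infty$ bound on $\nabla h$ in \cref{h3} is only asserted on $B_{r/8}$, not on $B_{r/4}$ where your integral lives; you do not actually need it, since H\"older plus \cref{h1} alone already controls the continuity-defect term by $r^{\bar{\alpha}}\operatorname{ccp}(r)$, exactly as in the paper's estimate of (I). Second, the gain from the symmetric nonlocal piece via \cref{fracemb} is $r^{1-s}$, not $r^{p(1-s)}$; this affects your claimed list of exponents feeding into $\sigma_0$, but not the fact that some positive power survives. Finally, the paper does not isolate an intermediate bound of the clean form you wrote but instead reaches \cref{diff2} (a sum of terms $r^{\bar\alpha\min\{1,p/2\}}\operatorname{ccp}(r)$, $r^{1-s}\operatorname{ccp}(r)^{1-1/q+1/p}$, $\|f\|_{L^n}\operatorname{ccp}(r)^{1/p}$) and then quotes \cite{MG} for the Young-type manipulations that extract the factor $r^{\theta\sigma_0}$; your route of absorbing $\epsilon\fint|\nabla(u-h)|^p$ into the monotonicity left-hand side before estimating is an equally valid way to arrive at the same conclusion.
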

\begin{proof}
Consider the vector field $V_\mu: \mathbb{R}^n \rightarrow \mathbb{R}^n$, defined by
$
    V_\mu(z):=(|z|^2+\mu^2)^{(p-2) / 4} z,
$
whenever $z \in \mathbb{R}^n$, where $p \in(1, \infty)$ and $\mu \in[0,1]$. It follows that
\begin{equation}{\label{g1}}
|V_\mu(z_1)-V_\mu(z_2)| \approx(|z_1|^2+|z_2|^2+\mu^2)^{(p-2) / 4}|z_1-z_2|,    
\end{equation}
where the equivalence holds up to constants depending only on $n, p$. A standard consequence of \cref{growth1}$_3$ is the following strict monotonicity inequality:
\begin{equation}{\label{g2}}
    |V_\mu(z_1)-V_\mu(z_2)|^2 \leq c(A(x,z_2)-A(x,z_1)) \cdot(z_2-z_1)
\end{equation}
holds for all $x\in \Omega$ whenever $z_1, z_2 \in \mathbb{R}^n$, where $c \equiv c(n, p, \Lambda)$. The last two inequalities are, however, based on the following one
\begin{equation*}
\int_0^1(|z_1+\lambda(z_2-z_1)|^2+\mu^2)^{t / 2} d\lambda \approx_{n, t}(|z_1|^2+|z_2|^2+\mu^2)^{t / 2}    
\end{equation*}
that holds whenever $t>-1$ and $z_1, z_2 \in \mathbb{R}^n$ are such that $|z_1|+|z_2|+\mu>0$. Noting [\citealp{MG}, Section 2.5], we now choose $u-h \in W_0^{1,p}(B_{r/4}({x}_0))$ as a test function in the weak formulation of both $u$ and $v$ and use \cref{g2} to get
\begin{equation}{\label{diff1}}
    \begin{array}{rcl}
     \fint_{B_{r / 4}}|V_\mu(\nabla u)-V_\mu(\nabla h)|^2 dx &\leq &c\fint_{B_{r/ 4}}(A(x_0,\nabla u)-A(x_0,\nabla h)) \cdot(\nabla u-\nabla h)dx\\&%\stackrel{\cref{heu}}{=} 
     =&c \fint_{B_{r / 4}} A(x_0,\nabla u)\cdot (\nabla u-\nabla h)dx\\&=&c\fint_{B_{r / 4}} (A(x_0,\nabla u)-A(x,\nabla u))\cdot (\nabla u-\nabla h)dx+ c \fint_{B_{r / 4}} f (u-h)d x\\&&-c \fint_{B_{r/ 2}} \int_{B_{r / 2}}\frac{|u(x)-u(y)|^{q-2}(u(x)-u(y))((u-h)(x)-(u-h)(y))}{|x-y|^{n+sq}}dxdy \\
&&-2 c \fint_{B_{r / 2}}\int_{\mathbb{R}^n \backslash B_{r/ 2}} \frac{|u(x)-u(y)|^{q-2}(u(x)-u(y)) (u-h)(x)}{|x-y|^{n+sq}}dydx \\
&=&(\mathrm{I})+(\mathrm{II})+(\mathrm{III})+(\mathrm{IV}),
    \end{array}
\end{equation}where $c \equiv c(n, p, \Lambda)$. In view of [\citealp{MG}, Lemma 4.2], it is enough to estimate $(\mathrm{I})$ and $(\mathrm{IV})$. Using \cref{growth1}$_2$, we get 
\begin{equation}{\label{di1}}
    \begin{array}{rcl}
       (\mathrm{I})&=&  \fint_{B_{r / 4}} (A(x_0,\nabla u)-A(x,\nabla u))\cdot (\nabla u-\nabla h)dx\leq C r^{\bar{\alpha}}\fint_{B_{r / 4}} (|\nabla u|^2+\mu^2)^{\frac{p-1}{2}}|\nabla u-\nabla h|dx\\&\leq& Cr^{\bar{\alpha}}\left(\fint_{B_{r / 4}} (|\nabla u|^2+\mu^2)^{\frac{p}{2}}dx\right)^{\frac{p-1}{p}}\left(\fint_{B_{r / 4}}|\nabla u-\nabla h|^p\right)^{\frac{1}{p}}\\&\stackrel{\cref{caccimprovedfinal},\cref{h1}}{\leq}& Cr^{\bar{\alpha}}\left(r^{-p}[\operatorname{av}_p(u,B_r(x_0))]^p+r^{-\delta}[\operatorname{snail}_\delta(u, B_r(x_0))]^q+\|f\|_{L^n(B_r)}^{\frac{p}{p-1}}+1\right),
    \end{array}
\end{equation}
for some $C\equiv C(n,p,q,s,\Lambda)$. For $(\mathrm{IV})$ we note that we can replace $u$ by $u - (u)_{B_{r/2}}$ and use that $x \in  B_{r/4}, y \in\mathbb{R}^n\backslash B_{r/2}$ imply $|y - x_0|/|x - y| \leq  2$. Recalling that $u-h$ is supported in $B_{r/4}$, we then have using \cref{av,av2,snail} 
\begin{equation}{\label{di}}
    \begin{array}{l}
         (\mathrm{IV})\leq  C\fint_{B_{r / 2}}\int_{\mathbb{R}^n \backslash B_{r/ 2}}  \frac{\operatorname{max}\{|u(x)-(u)_{B_{r/2}}|^{q-1},|u(y)-(u)_{B_{r/2}}|^{q-1}\}|u(x)-h(x)|}{|x-y|^{n+sq}}dydx \\\leq C\fint_{B_{r / 2}}\int_{\mathbb{R}^n \backslash B_{r/ 2}}  \frac{\operatorname{max}\{|u(x)-(u)_{B_{r/2}}|^{q-1},|u(y)-(u)_{B_{r/2}}|^{q-1}\}|u(x)-h(x)|}{|y-x_0|^{n+sq}}dydx,\\
          \leq Cr^{-sq}\fint_{B_{r / 2}}|u(x)-(u)_{B_{r/2}}|^{q-1}|u(x)-h(x)|dx\\+C\int_{\mathbb{R}^n \backslash B_{r/ 2}}  \frac{|u(y)-(u)_{B_{r/2}}|^{q-1}}{|y-x_0|^{n+sq}}dy\times\fint_{B_{r / 2}}|u(x)-h(x)|dx\\\leq C\left[ r^{-sq}\left(\fint_{B_{r / 2}}|u(x)-(u)_{B_{r/2}}|^{q}\right)^{\frac{q-1}{q}}+\int_{\mathbb{R}^n \backslash B_{r/ 2}}  \frac{|u(y)-(u)_{B_{r/2}}|^{q-1}}{|y-x_0|^{n+sq}}dy\right]\\\times\left(\fint_{B_{r / 2}}|u(x)-h(x)|^qdx\right)^{1/q}\\\leq C\left[ \left(r^{-sq}\fint_{B_{r / 2}}|u(x)-(u)_{B_{r/2}}|^{q}\right)^{\frac{q-1}{q}}+r^s\int_{\mathbb{R}^n \backslash B_{r/ 2}}  \frac{|u(y)-(u)_{B_{r/2}}|^{q-1}}{|y-x_0|^{n+sq}}dy\right]\\\times\left(r^{-sq}\fint_{B_{r / 2}}|u(x)-h(x)|^qdx\right)^{1/q}\\\stackrel{\cref{ccp}}{\leq} C\left([r^{-p}\operatorname{av}_p(u,B_r(x_0))]^p+r^{-\delta}[\operatorname{snail}_\delta(u, B_r(x_0))]^q+1\right)^{\frac{q-1}{q}}\left(r^{-sq}\fint_{B_{r / 2}}|u(x)-h(x)|^qdx\right)^{1/q},
    \end{array}
\end{equation}
for some $C\equiv C(n,p,q,s,\Lambda)$. Now by Sobolev embedding \cref{sobolev embedding 2}, one has

\begin{equation*}
    \begin{array}{c}
         \left(r^{-sq}\fint_{B_{r / 2}}|u-h|^qdx\right)^{1/q}\leq Cr^{(1-s)}\left(r^{-p}\fint_{B_{r / 2}}|u-h|^pdx\right)^{1/p}\leq Cr^{(1-s)}\left(\fint_{B_{r / 2}}|\nabla u-\nabla h|^pdx\right)^{1/p}\\\stackrel{\cref{caccimprovedfinal},\cref{h1}}{\leq}Cr^{(1-s)}\left(r^{-p}[\operatorname{av}_p(u,B_r(x_0))]^p+r^{-\delta}[\operatorname{snail}_\delta(u, B_r(x_0))]^q+\|f\|_{L^n(B_r)}^{\frac{p}{p-1}}+1\right)^{1/p},
    \end{array}
\end{equation*}
for some $C\equiv C(n,p,q,s,\Lambda)$. This along with \cref{di} gives
\begin{equation}{\label{di2}}
     (\mathrm{IV})\leq C r^{1-s}\left(r^{-p}[\operatorname{av}_p(u,B_r(x_0))]^p+r^{-\delta}[\operatorname{snail}_\delta(u, B_r(x_0))]^q+\|f\|_{L^n(B_r)}^{\frac{p}{p-1}}+1\right)^{1-1/q+1/p},
\end{equation}
for some $C\equiv C(n,p,q,s,\Lambda)$. Denoting $\operatorname{ccp}(r)=r^{-p}[\operatorname{av}_p(u,B_r(x_0))]^p+r^{-\delta}[\operatorname{snail}_\delta(u, B_r(x_0))]^q+\|f\|_{L^n(B_r)}^{\frac{p}{p-1}}+1$ and then using \cref{di1}, \cref{di2} along with the estimates of other terms from [\citealp{MG}, Lemma 4.2], we get from \cref{diff1}
\begin{equation}{\label{diff2}}
    \begin{array}{l}
     \fint_{B_{r / 4}}|V_\mu(\nabla u)-V_\mu(\nabla h)|^2 dx\leq C r^{\bar{\alpha}\operatorname{min}\{1,p/2\}} \operatorname{ccp}(r)+Cr^{1-s}\operatorname{ccp}(r)^{1-1/q+1/p}+C\|f\|_{L^n(B_r)}\operatorname{ccp}(r)^{1/p},
     \end{array}
     \end{equation}
     for some $C\equiv C(n,p,q,s,\Lambda)$. As $r<1$ and $p>\delta$, so for each $\theta\in(0,1)$,
     \begin{equation*}
         r^p\operatorname{ccp}(r)\leq \left[\fint_{B_r}|u-(u)_{B_r}|^pdx+r^\delta\left(r^s\int_{\mathbb{R}^n \backslash B_r} \frac{|u(y)-(u)_{B_r}|^{q-1}}{|y-x_0|^{n+ sq}} d y\right)^{\frac{q}{q-1}}+r^{p-\theta}(\|f\|_{L^n(B_r)}^{\frac{p}{p-1}}+1)\right].
     \end{equation*}
     Using this, one can now estimate from \cref{diff2} like [\citealp{MG}, Lemma 4.2, Pages 29-30], to get \begin{equation*}
    \begin{array}{l}
     \fint_{B_{r / 4}}|u-h|^p dx\leq C r^{\bar{\alpha}\operatorname{min}\{1,p/2\}} r^p\operatorname{ccp}(r)\\\qquad\qquad\qquad+Cr^{\theta\sigma}  \left[\fint_{B_r}|u-(u)_{B_r}|^pdx+r^\delta\left(r^s\int_{\mathbb{R}^n \backslash B_r} \frac{|u(y)-(u)_{B_r}|^{q-1}}{|y-x_0|^{n+ sq}} d y\right)^{\frac{q}{q-1}}+r^{p-\theta}(\|f\|_{L^n(B_r)}^{\frac{p}{p-1}}+1)\right],
     \end{array}
     \end{equation*}
     for some $C\equiv C(n,p,q,s,\Lambda)$. Here $\sigma$ is given by [\citealp{MG}, Lemma 4.2]. \cref{differenceestimate} now holds with $\sigma_0=\operatorname{min}\{\sigma,\bar{\alpha},p\bar{\alpha}/2\}.$
\end{proof}
We are now ready to prove \cref{holder2}.
\subsection*{Proof of \cref{holder2}}
(a) For $2\leq q\leq p$, we have already proved \cref{mgiter} which shows the same decay estimate as of [\citealp{MG}, Lemma 3.1] holds for our $\operatorname{snail}$ term. Also \cref{caccimprovedfinal,differenceestimate} implies that the same Caccioppoli inequality and difference estimate holds like [\citealp{MG}, lemma 4.1, Lemma 4.2] for the local solutions involving our $\operatorname{snail}$ term. Further, regularity estimates for $h$ have been obtained in \cref{proph}. One can thus follow [\citealp{MG}, Subsection 4.3, Pages 30-34] and end up with ([\citealp{MG}, Equation (4.40)])
\begin{equation}{\label{holfinal}}
    \begin{array}{l}
         \left[\fint_{B_r}|u-(u)_{B_r}|^pdx+r^\delta\left(r^s\int_{\mathbb{R}^n \backslash B_r} \frac{|u(y)-(u)_{B_r}|^{q-1}}{|y-x_0|^{n+ sq}} d y\right)^{\frac{q}{q-1}}+r^{p-\theta}(\|f\|_{L^n(B_r)}^{\frac{p}{p-1}}+1)\right]
         \\\leq C\left(\frac{r}{\tilde{r}}\right)^{p\eta} 
         \left[\fint_{B_{\tilde{r}}}
         |u-(u)_{B_{\tilde{r}}}|^pdx
         +\tilde{r}^\delta\left(\tilde{r}^s\int_{\mathbb{R}^n \backslash B_{\tilde{r}}} \frac{|u(y)-(u)_{B_{\tilde{r}}}|^{q-1}}{|y-x_0|^{n+ sq}} d y\right)^{\frac{q}{q-1}}+\tilde{r}^{p\eta}(\|f\|_{L^n(B_{\tilde{r}})}^{\frac{p}{p-1}}+1)\right] 
    \end{array}
\end{equation}
for each $\eta\in (0,1)$ and for all $0<r\leq \tilde{r}\leq r_*\equiv r_*(n,p,q,s,\bar{\alpha},\Lambda,\eta)$, where $ C\equiv C(n,p,q,s,\Lambda,\eta)$. Further estimating
\begin{equation}{\label{estisnail}}
    \begin{array}{l}
       \quad  \fint_{B_{r_*}}|u-(u)_{B_{r_*}}|^pdx+{r_*}^\delta\left({r_*}^s\int_{\mathbb{R}^n \backslash B_{r_*}} \frac{|u(y)-(u)_{B_{r_*}}|^{q-1}}{|y-x_0|^{n+ sq}} d y\right)^{\frac{q}{q-1}}\\\leq \frac{C}{{r_*}^n}\|u\|^p_{L^p(B_{r_*})}+Cr_*^\delta \left(r_*^{-s{(q-1)}}\operatorname{Tail}_{q-1}(u;x_0,r_*)^{q-1}+r_*^{-s{(q-1)}}\left(\fint_{B_{r_*}}|u|\right)^{q-1}\right)^{\frac{q}{q-1}}\\\leq \frac{C}{{r_*}^n}\|u\|^p_{L^p(B_{r_*})}+Cr_*^{\delta-sq}r_*^{-nq/p}\left(\operatorname{Tail}_{q-1}(u;x_0,r_*)^q+\|u\|^q_{L^p(B_{r_*})}\right),
    \end{array}
\end{equation}
holds for some $ C\equiv C(n,p,q,s)$. Therefore, in view of \cref{holfinal}, we have proved that for any $\eta\in(0,1)$, $\exists \,r_*\in(0,1)$ depending on $n,p,q,s,\bar{\alpha},\Lambda\text{ and }\eta$ such that 
\begin{equation}{\label{finalholder}}
     \fint_{B_r}|u-(u)_{B_r}|^pdx\leq C\left(\frac{r}{{r_*}}\right)^{p\eta}
\end{equation}
holds with \begin{equation}{\label{data}}
    C\equiv C(n,s,p,q,\bar{\alpha},\Lambda, \eta,\|u\|_{L^p(B_{{r}_*})},\|f\|_{L^n(B_{{r}_*})}, Tail_{q-1}(u;x_0,r_*))\equiv C(\operatorname{data}_h(\eta),\eta)\geq 1
\end{equation} whenever $B_r\subset\subset \Omega$ and $r\leq r_*$. \cref{finalholder,data} proves the first part of \cref{holder2} along with the desired estimate via the classical Campanato-Meyers integral characterization of H\"older continuity.\smallskip\\(b) For this part, we note [\citealp{MG}, Remark 3, Remark 7], i.e. their set-up allows solutions to be in $W^{1,p}_{\mathrm{loc}}(\Omega)$ with the property that $\int_{\mathbb{R}^n} \frac{|u(x)|^q }{(1+|x|^{n+sq})}dx<\infty$ (this is easy to check from [\citealp{MG}, Equation 4.40]). 
Since $u\geq 0$ is bounded and for some $\theta\geq 1$, $u^\theta\in W_0^{1,p}(\Omega)$ i.e. $u^\theta\in L^q(\mathbb{R}^n)$, therefore for some $C>0$, it holds 
\begin{equation*}
    \int_{\mathbb{R}^n} \frac{u(x)^q }{(1+|x|^{n+sq})}dx\leq C\left(\int_{\mathbb{R}^n} \frac{u(x)^{\theta q}}{(1+|x|^{n+sq})}dx\right)^{1/\theta}=C\left(\int_{\Omega} \frac{u(x)^{\theta q} }{(1+|x|^{n+sq})}dx\right)^{1/\theta}<\infty.
\end{equation*} Also, \cref{caccremark} and estimate of $\mathrm{(I)}$ in \cref{differenec} implies we can take the local operator as $\mathcal{L}_L u(x)=\mathcal{L}_L^A u(x):=-\operatorname{div} A(x, \nabla u(x))$ in the proofs of \cite{MG}. Hence, the local H\"older regularity of $u$ follows from \cite{MG}.
\section{Higher H\"older regularity results for local solutions}{\label{regu3}}
Using the almost Lipschitz regularity % of weak solutions 
obtained in \cref{regu2}, we now show that gradients of local weak solutions of $\mathcal{L}u=f$ in $\Omega$ are also H\"older continuous, provided $f\in L^d_{\mathrm{loc}}(\Omega)$ for some $d>n$. In what follows, we take arbitrary open sets $\Omega_0 \subset\subset \Omega_1 \subset\subset \Omega$, and denote $\bar{d}:=\operatorname{min}\{\operatorname{dist}(\Omega_0, \Omega_1),
\operatorname{dist}(\Omega_1, \Omega), 1\}$. We consider $B_{r} \equiv B_{r}(x_0) \subset\subset \Omega_1$ with $x_0 \in \Omega_0$ and $0<r \leq \bar{d} / 4$ and all the balls will be centred at $x_0$. Moreover, $\eta, \lambda$ will be real numbers satisfying $\eta\in(s,1)$ and $\lambda>0$; their precise values will be prescribed later depending on the context. We shall often use \cref{holder2} in the form $\|u\|_{C^{0, \eta}(\Omega_1)} \leq c \equiv c(\operatorname{data}(\eta),\eta,\bar{d})$, ($\operatorname{data}_h$ is given in \cref{data}) for every $\eta<1$.
\begin{lemma}{\label{higher1}}
Under the assumptions on \cref{holder3} and $2\leq q\leq p<\infty$,\\
(a) If $s<\eta<1$, then there exists $c \equiv c( \operatorname{data}_h(\eta), \bar{d}, \eta)>0$ such that
\begin{equation}{\label{higherhol1}}
    \fint_{B_{r/ 2}} \int_{B_{r/ 2}} \frac{|u(x)-u(y)|^q}{|x-y|^{n+s q}} dx d y \leq c r^{(\eta-s) q}.
\end{equation}
(b) There exists $c \equiv c( \operatorname{data}_h, \bar{d})>0$, such that whenever $0<t \leq r$, it holds
\begin{equation}{\label{higherholder2}}
   t^{-\delta}[\operatorname{snail}(u, B_t(x_0))]^q = \left(t^s\int_{\mathbb{R}^n \backslash B_t} \frac{|u(y)-(u)_{B_t}|^{q-1}}{|y-x_0|^{n+ sq}} d y\right)^{\frac{q}{q-1}} \leq c.
\end{equation}
(c) If $\lambda>0$ then there exists $c \equiv c( \operatorname{data}_h(\lambda), \bar{d}, \lambda)>0$ such that it holds
\begin{equation}{\label{higherholder3}}
    \fint_{B_{r / 2}}(|\nabla u|^2+\mu^2)^{p / 2} d x \leq c r^{-p \lambda}.
\end{equation}
\end{lemma}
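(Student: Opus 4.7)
The plan is to deduce all three estimates from the H\"older continuity $u\in C^{0,\eta}_{\mathrm{loc}}(\Omega)$ for every $\eta\in(0,1)$ just established in \cref{holder2}, combined with the snail decay \cref{mgiter}, the Caccioppoli inequality \cref{caccimprovedfinal}, and the observation that $d>n$ makes $\|f\|_{L^n(B_r)}$ suitably small. For part (a), I would fix $\eta\in(s,1)$, insert the pointwise bound $|u(x)-u(y)|\leq [u]_{C^{0,\eta}(\Omega_1)}|x-y|^\eta$ into the integrand, and use that $(\eta-s)q>0$ makes $|x-y|^{(\eta-s)q-n}$ locally integrable, so a direct computation gives $\fint_{B_{r/2}}\int_{B_{r/2}}|x-y|^{(\eta-s)q-n}dx\,dy\leq c\,r^{(\eta-s)q}$, yielding (a).

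For part (b), I would apply \cref{snail} with any fixed $\delta\geq sq$ (note that $t^{-\delta}[\operatorname{snail}_\delta(u,B_t)]^q$ does not actually depend on $\delta$, so this is merely convenient bookkeeping). The first term $c(t/r)^{\delta/q}\operatorname{snail}_\delta(u,B_r)$ contributes at most $c\,t^{\delta/q}$, because $r$ is a fixed scale in $(0,\bar d/4]$ and $\operatorname{snail}_\delta(u,B_r)$ is finite thanks to $u\in L^{q-1}_{sq}(\mathbb{R}^n)$ together with local $L^q$ control of $u$. For the remaining integral terms I would use $\operatorname{av}_q(u,B_v)\leq c\,v^\eta$ (coming from H\"older continuity with some auxiliary $\eta\in(s,1)$) to estimate
\[
t^{\delta/q-s}\int_t^r\bigl(t/v\bigr)^s v^\eta\,\frac{dv}{v} \;\leq\; \frac{c\,t^{\delta/q}\,r^{\eta-s}}{\eta-s}\;\leq\; c\,t^{\delta/q},
\]
using $r\leq 1$ and $\eta>s$; the analogous boundary term $(t/r)^s\operatorname{av}_q(u,B_r)$ is handled identically. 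Raising to the $q$-th power and dividing by $t^\delta$ produces (b).

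For part (c), I would apply \cref{caccimprovedfinal} on $B_r$. H\"older continuity gives $r^{-p}[\operatorname{av}_p(u,B_r)]^p\leq c\,r^{p(\eta-1)}$; choosing $\eta=1-\lambda$ when $\lambda\in(0,1-s)$ (and replacing $\lambda$ by $\min\{\lambda,(1-s)/2\}$ otherwise, since $r\leq 1$ makes $\lambda\mapsto r^{-p\lambda}$ monotone) produces the desired factor $r^{-p\lambda}$. The snail contribution is $O(1)$ by part (b) and hence $\leq c\,r^{-p\lambda}$, while $\|f\|_{L^n(B_r)}\leq c\,|B_r|^{1/n-1/d}\|f\|_{L^d(\Omega_1)}\leq c$ because $d>n$. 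Since $\mu\in[0,1]$, one has $(|\nabla u|^2+\mu^2)^{p/2}\leq c(|\nabla u|^p+1)$, so the bound on $\fint_{B_{r/2}}|\nabla u|^p\,dx$ transfers to the stated quantity.

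The main obstacle is part (b): one must track the exact scaling in both $t$ and $r$ so that the tail-type quantity $\operatorname{snail}_\delta(u,B_t)$ scales precisely as $t^{\delta/q}$, and must insist on $\eta>s$ to ensure $\int_t^r v^{\eta-s-1}\,dv$ remains bounded uniformly in $t$ (a weaker choice would introduce a divergent logarithmic or polynomial factor in $1/t$). Once (b) is in place, parts (a) and (c) follow essentially by direct computation from H\"older continuity and the Caccioppoli estimate.
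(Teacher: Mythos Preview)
Parts (a) and (c) match the paper's argument. For (b), the paper takes a slightly different route: it decomposes the tail integral directly, using the \emph{fixed} scale $\bar d$ (not $r$) as the reference radius, splitting $|u(y)-(u)_{B_t}|$ through $u(x_0)$ and $(u)_{B_{\bar d}}$ and bounding each piece by H\"older continuity together with a telescoping estimate on $|(u)_{B_{\bar d}}-(u)_{B_t}|$. Your route via \cref{snail} is equivalent in spirit and more modular, but watch the choice of outer radius: applying \cref{snail} between $t$ and the variable $r$ leaves, after dividing by $t^{\delta/q}$, the term $r^{-\delta/q}\operatorname{snail}_\delta(u,B_r)$, which is exactly the quantity (b) asserts is bounded, evaluated at $t=r$. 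Merely citing that this is finite for the specific $r$ therefore produces a constant depending on $r$, contrary to the stated dependence $c\equiv c(\operatorname{data}_h,\bar d)$. The fix is immediate: take the outer radius in \cref{snail} to be the fixed scale $\bar d/4$ instead of $r$ (or, equivalently, observe that since the target in (b) does not depend on $r$, it suffices to run your argument with $r=\bar d/4$). With that adjustment your integral bounds go through verbatim and the argument is correct.
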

\begin{proof}
    Estimate \cref{higherhol1} follows from \cref{holder2}. To prove \cref{higherholder2}, we notice from the proof of \cref{mgiter} (the part estimate of $T_1$), that 
    \begin{equation}{\label{oneresult}}
        |(u)_{B_{\bar{d}}}-(u)_{B_{t}}|\leq c\int_t^{\bar{d}} \left(\fint_{B_v}|u(x)-(u)_{B_v}|^pd x  \right)^{1/p}\frac{d v}{v} 
+ c\left(\fint_{B_{\bar{d}}}|u(x)-(u)_{B_{\bar{d}}}|^p d x \right)^{1/p}\leq c\int_t^{\bar{d}} v^{\eta-1}dv+c\bar{d}^{\eta}\leq c \bar{d}^{\eta}, 
    \end{equation}
    where $c \equiv c(\operatorname{data}_h(\eta), d, \eta)$. Using \cref{oneresult}, we estimate as follows:
    \begin{equation*}
    \begin{array}{l}
      t^{-\delta}[\operatorname{snail}(u, B_t(x_0))]^q \leq C\left(t^s\int_{\mathbb{R}^n \backslash B_{\bar{d}}} \frac{|u(y)-(u)_{B_{\bar{d}}}|^{q-1}}{|y-x_0|^{n+ sq}} d y\right)^{\frac{q}{q-1}} +C\left(t^s\int_{\mathbb{R}^n \backslash B_{\bar{d}}} \frac{  |(u)_{B_{\bar{d}}}-(u)_{B_{t}}|^{q-1}}{|y-x_0|^{n+ sq}} d y\right)^{\frac{q}{q-1}} \\\qquad\qquad\qquad\qquad\qquad+C\left(t^s\int_{B_{\bar{d}} \backslash B_t} \frac{|u(y)-u(x_0)|^{q-1}}{|y-x_0|^{n+ sq}} d y\right)^{\frac{q}{q-1}}+C\left(t^s\int_{B_{\bar{d}}\backslash B_{t}} \frac{|u(x_0)-(u)_{B_{t}}|^{q-1}}{|y-x_0|^{n+ sq}} d y\right)^{\frac{q}{q-1}}\smallskip\\\qquad\qquad\qquad\qquad\quad\stackrel{\cref{estisnail}}{\leq} C(\bar{d}^{-sq}\operatorname{Tail}_{q-1}(u;x_0,\bar{d})^q+\bar{d}^{-n-sq}\|u\|_{L^q(\Omega_1)})+C(t^s\bar{d}^{-sq}|(u)_{B_{\bar{d}}}-(u)_{B_{t}}|^{q-1})^{\frac{q}{q-1}}\smallskip\\\qquad\qquad\qquad\qquad\qquad+C\left(\int_{B_{\bar{d}} \backslash B_t} \frac{|y-x_0|^{\eta(q-1)}}{|y-x_0|^{n+ s(q-1)}} d y\right)^{\frac{q}{q-1}}+C\left(t^s\int_{\mathbb{R}^n\backslash B_{t}} \frac{t^{\eta(q-1)}}{|y-x_0|^{n+ sq}} d y\right)^{\frac{q}{q-1}}\smallskip\\\qquad\qquad\qquad\qquad\quad\leq C\bar{d}^{-n-sq}+C(\bar{d}^{-s(q-1)}\bar{d}^{\eta(q-1)})^{\frac{q}{q-1}}+C(\bar{d}^{(\eta-s)(q-1)})^{\frac{q}{q-1}}+C(t^{(\eta-s)(q-1)})^{\frac{q}{q-1}}\smallskip\\\qquad\qquad\qquad\qquad\quad\leq C\bar{d}^{-n-sq}+ C\bar{d}^{(\eta-s)q}+ Ct^{(\eta-s)q}\leq C,
        \end{array}
    \end{equation*}
    where $C \equiv C( \operatorname{data}_h, \bar{d},\eta)$, which gives \cref{higherholder2} if we choose $\eta=(1+s)/2$. Finally, to prove \cref{higherholder3}, we use \cref{caccimprovedfinal} and estimate the various terms as
\begin{equation*}
    r^{-p}[\operatorname{av}_p(r)]^p\leq c r^{p(\eta-1)},
\end{equation*}
holds with $c \equiv c( \operatorname{data}_h, \bar{d},\eta)$, by \cref{holder2}. By \cref{higherholder2} we instead have
\begin{equation*}
 r^{-\delta}[\operatorname{snail}(r)]^q+\|f\|_{L^n(B_r)}^{p /(p-1)}+1 \leq c \leq c r^{p(\eta-1)} .
\end{equation*}
Choosing $\eta$ such that $1-\eta \leq \lambda$, we arrive at \cref{higherholder3}.
\end{proof}
\begin{lemma}{\label{higher2}}
   Let $2\leq q\leq p<\infty$.  If $h \in u+W_0^{1, p}(B_{r/ 4}(x_0))$ the solution of \cref{h}, then
\begin{equation}{\label{higherholder4}}
    \fint_{B_{r/ 4}(x_0)}|\nabla u-\nabla h|^p d x \leq c r^{\sigma_2 p}
\end{equation}
holds where $\sigma_2 \equiv \sigma_2(n, p, s, q, \bar{d}) \in(0,1)$ and $c \equiv c(\operatorname{data}_h, \bar{d},\|f\|_{L^d(\Omega_1)})$.
\end{lemma}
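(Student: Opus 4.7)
The strategy mimics \cref{differenec} but exploits the sharper a priori bounds from \cref{higher1} to upgrade the control on $|u-h|$ to one on $|\nabla u-\nabla h|$. Starting from the strict monotonicity inequality \cref{g2} tested with $u-h\in W_0^{1,p}(B_{r/4}(x_0))$, we arrive at the same decomposition
\begin{equation*}
\fint_{B_{r/4}}|V_\mu(\nabla u)-V_\mu(\nabla h)|^2\,dx\leq c\,[(\mathrm{I})+(\mathrm{II})+(\mathrm{III})+(\mathrm{IV})]
\end{equation*}
that appeared in \cref{diff1}. Because $p\geq q\geq 2$, the elementary inequality encoded in \cref{g1} yields $|V_\mu(\nabla u)-V_\mu(\nabla h)|^2\gtrsim |\nabla u-\nabla h|^p$, so it suffices to show that the right-hand side above is bounded by $c\,r^{\sigma_2 p}$ for some $\sigma_2\in(0,1)$ with constant depending only on $\operatorname{data}_h,\bar d,\|f\|_{L^d(\Omega_1)}$.

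The key idea is to revisit every estimate of \cref{differenec} and feed in \cref{higher1}. Fix $\eta\in(s,1)$ close to $1$ and $\lambda>0$ small (to be chosen). By \cref{higherholder3} we have $\operatorname{ccp}(r):=r^{-p}[\operatorname{av}_p(u,B_r)]^p+r^{-\delta}[\operatorname{snail}_\delta(u,B_r)]^q+\|f\|_{L^n(B_r)}^{p/(p-1)}+1\lesssim r^{-p\lambda}$, since \cref{higherholder2} makes the snail term bounded and the $f$-contribution is estimated separately below. Reusing the bound from \cref{di1}, the local perturbation satisfies $(\mathrm{I})\lesssim r^{\bar{\alpha}}\operatorname{ccp}(r)\lesssim r^{\bar{\alpha}-p\lambda}$. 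For the purely nonlocal terms $(\mathrm{II})$ and $(\mathrm{III})$, following the corresponding estimates in \cite[Lemma~4.2]{MG} and substituting \cref{higherhol1,higherholder2}, one obtains contributions of the form $r^{(\eta-s)q-p\lambda\tau}$ for suitable $\tau\in(0,1]$. For $(\mathrm{IV})$, the chain of inequalities that led to \cref{di2} combined with $\operatorname{ccp}(r)\lesssim r^{-p\lambda}$ gives $(\mathrm{IV})\lesssim r^{1-s-p\lambda\vartheta}$ with $\vartheta=1-\tfrac{1}{q}+\tfrac{1}{p}$.

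The genuinely new ingredient is the treatment of the source term, which exploits the stronger integrability $f\in L^d_{\mathrm{loc}}(\Omega)$ with $d>n$. Hölder's inequality on $B_r$ yields
\begin{equation*}
\|f\|_{L^n(B_r)}\leq |B_r|^{1/n-1/d}\|f\|_{L^d(B_r)}\lesssim r^{1-n/d}\|f\|_{L^d(\Omega_1)},
\end{equation*}
so the source contribution from \cref{diff2} is bounded by $c\,r^{1-n/d}\operatorname{ccp}(r)^{1/p}\lesssim r^{1-n/d-\lambda}$, which is a strictly positive power of $r$ whenever $\lambda<1-n/d$. All four exponents $\bar{\alpha}-p\lambda$, $(\eta-s)q-p\lambda\tau$, $1-s-p\lambda\vartheta$ and $1-n/d-\lambda$ can be made simultaneously positive: first fix $\eta$ close enough to $1$ (with $\eta>s$), then choose $\lambda>0$ smaller than each of the explicit thresholds, which depend only on $n,p,s,q,\bar{\alpha},d$. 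Taking $\sigma_2$ to be $1/p$ times the minimum of these positive exponents and combining with the vector-field lower bound yields \cref{higherholder4}.

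The principal obstacle is purely bookkeeping: one must perform the choices of $\eta$ and $\lambda$ in the correct order since the constant furnished by \cref{higher1} blows up as $\eta\uparrow 1$ or $\lambda\downarrow 0$. Fixing $\eta,\lambda$ first based solely on $n,p,s,q,\bar{\alpha},d$ (so that $\sigma_2$ is universal in the advertised sense) and only then invoking \cref{higher1} keeps the resulting constant in the quantitative form $c\equiv c(\operatorname{data}_h,\bar d,\|f\|_{L^d(\Omega_1)})$. No further subtlety arises, and the argument parallels \cite[Lemma~4.3]{MG} with the $L^d$ scaling of $f$ providing the missing room.
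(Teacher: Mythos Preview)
Your proposal is essentially correct and follows the same overall strategy as the paper: test with $u-h$, decompose into the four terms $(\mathrm{I})$--$(\mathrm{IV})$ of \cref{diff1}, and feed in the sharper bounds from \cref{higher1}. There is a cosmetic mislabeling in your write-up: in the decomposition \cref{diff1}, the term $(\mathrm{II})$ is the source term $\fint f(u-h)$, not a nonlocal term, so your ``nonlocal terms $(\mathrm{II})$ and $(\mathrm{III})$'' should read $(\mathrm{III})$ and $(\mathrm{IV})$, and the source contribution you treat separately is precisely $(\mathrm{II})$.

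The one substantive difference lies in the treatment of the far nonlocal term $(\mathrm{IV})$. You recycle the estimate \cref{di2} from the proof of \cref{differenec}, obtaining $(\mathrm{IV})\lesssim r^{1-s}\operatorname{ccp}(r)^{1-1/q+1/p}\lesssim r^{1-s-p\lambda\vartheta}$. The paper instead brings in an additional ingredient: it uses the local H\"older continuity of $u$ together with Maz'ya--Wiener boundary regularity for $h$ to get the $L^\infty$ bound $\|u-h\|_{L^\infty(B_{r/4})}\leq Cr^\eta$ (see \cref{MW}), and combines this with \cref{higherholder2} to obtain the cleaner estimate $(\mathrm{IV})\lesssim r^{\eta-s}$ in \cref{high4}. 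Your shortcut is perfectly legitimate and arguably tidier, since it avoids invoking an external regularity fact and simply reuses what was already proved; the price is a slightly smaller (but still strictly positive) exponent. Both routes yield \cref{higherholder4} once $\eta$ and $\lambda$ are fixed depending only on $n,p,s,q,\bar\alpha,d$.
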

\begin{proof} 
We go back to \cref{differenec}, estimate \cref{diff1}, and adopting the notation introduced there, we improve the estimates for the terms $\mathrm{(I)}-\mathrm{(IV)}$. As in \cref{di1}, we find using \cref{higherholder3}
\begin{equation}{\label{high1}}
\begin{array}{l}
     |(\mathrm{I})| \leq Cr^{\bar{\alpha}}\left(\fint_{B_{r / 4}} (|\nabla u|^2+\mu^2)^{\frac{p}{2}}dx\right)^{\frac{p-1}{p}}\left(\fint_{B_{r / 4}}|\nabla u-\nabla h|^p\right)^{\frac{1}{p}}\stackrel{\cref{h1}}{\leq} C_\lambda r^{\bar{\alpha}-p\lambda},
\end{array}  
\end{equation}
for some $C_\lambda >0$. Next, we estimate by Sobolev inequality
\begin{equation}{\label{high2}}
\begin{array}{rcl}
     |(\mathrm{II})| {\leq} C\|f\|_{L^n(B_{r / 4})}\left(\fint_{B_{r/ 4}}|\nabla  u-\nabla h|^{p } d x\right)^{1 / p} 
&\stackrel{\cref{h1}}{\leq}& C\|f\|_{L^n(B_{r / 4})}\left(\fint_{B_{r/ 4}}(|\nabla  u|^2+\mu^2)^{p / 2} d x\right)^{1 / p} 
 \\&\stackrel{\cref{higherholder3}}{\leq}& C_\lambda\|f\|_{L^d(B_{r / 4})} r^{1-n / d-\lambda} ,  
\end{array} 
\end{equation}
for some $C_\lambda>0$. Now we use H\"older inequality and \cref{fracemb} and note that $u-h\equiv 0$ outside $B_{r/4}(x_0)$,  to estimate $\mathrm{(III)}$ as
\begin{equation}{\label{high3}}
    \begin{array}{rcl}
         |(\mathrm{III})| &\leq & C\left(\fint_{B_{r / 2}} \int_{B_{r / 2}} \frac{|u(x)-u(y)|^q}{|x-y|^{n+s q}} d x d y\right)^{\frac{q-1}{q}}\left(\fint_{B_{r / 2}} \int_{B_{r / 2}} \frac{|(u-h)(x)-(u-h)(y)|^q}{|x-y|^{n+s q}} d x d y\right)^{\frac{1}{q}}\\& \stackrel{\cref{higherhol1}}{\leq} &C r^{(\eta-s)(q-1)}r^{1-s} \left(\fint_{B_{r/ 4}}|\nabla u-\nabla h|^p dx\right)^{1/ p}\\
& \stackrel{\cref{h1}}{\leq}& C r^{(\eta-s)(q-1)}r^{1-s} \left(\fint_{B_{r/ 4}}(|\nabla u|^2+\mu^2)^{p/2} dx\right)^{1/ p}  \stackrel{\cref{higherholder3}}{\leq}C_\lambda r^{(\eta-s)(q-1)+1-s-\lambda} ,
    \end{array}
\end{equation}
whenever $s<\eta<1$ and $\lambda>0$. As $u$ is locally H\"older continuous, by Maz'ya-Wiener boundary regularity theory, $h$ is continuous on $\bar{B}_{r/4}$ and therefore
\begin{equation}{\label{MW}}
    \|u-h\|_{L^\infty(B_{r/4})}\stackrel{\cref{h2}}{\leq}2 \underset{\partial B_{r/4}}{\operatorname{osc} }\, u\leq 4[u]_{C^{0, \eta}(B_{r/4})}r^\eta\leq Cr^\eta ,
\end{equation} where $C \equiv C(\operatorname{data}_h,\bar{d},\eta) $. To estimate $\mathrm{(IV)}$ we restart from the third line of \cref{di}, and using also \cref{higherholder2,MW}, we easily find
\begin{equation}{\label{high4}}
    \begin{array}{l}
       |(\mathrm{IV})|   \leq C\left[ r^{-sq}\left(\fint_{B_{r / 2}}|u(x)-(u)_{B_{r/2}}|^{q}\right)^{\frac{q-1}{q}}+\int_{\mathbb{R}^n \backslash B_{r/ 2}}  \frac{|u(y)-(u)_{B_{r/2}}|^{q-1}}{|y-x_0|^{n+sq}}dy\right]\left(\fint_{B_{r / 2}}|u(x)-h(x)|^qdx\right)^{1/q}\\\qquad\stackrel{\cref{higherholder2}}{\leq} (r^{\eta(q-1)-sq}+r^{-s}) \|u-h\|_{L^\infty(B_{r/4})}\leq  C(r^{\eta(q-1)-sq}+r^{-s}) r^\eta\leq C_\eta r^{\eta-s}.
    \end{array}
\end{equation}
Merging \cref{high1,high2,high3,high4}, we get from \cref{diff1}
\begin{equation}{\label{high5}}
      \fint_{B_{r / 4}}|V_\mu(\nabla u)-V_\mu(\nabla h)|^2 dx\leq C_{\eta,\lambda}(r^{\bar{\alpha}-p\lambda}+r^{1-n / d-\lambda} +r^{(\eta-s)(q-1)-\lambda}+r^{\eta-s}),
\end{equation}
for every $\eta\in(s,1)$ and $\lambda>0$. We then choose $\eta, \lambda$ such that
\begin{equation*}
    \eta:=\frac{1+s}{2}, \quad 0<\lambda =\min \left\{\frac{\bar{\alpha}}{2p}, \frac{(1-s)(q-1)}{4}, \frac{1}{2}\left(1-\frac{n}{d}\right)\right\}
\end{equation*}
and put in \cref{high5} to get
\begin{equation}{\label{high6}}
\left\{\begin{array}{l}
   \fint_{B_{r / 4}}|V_\mu(\nabla u)-V_\mu(\nabla h)|^2 dx \leq C r^{\sigma_1 p} \\
\sigma_1:=\frac{1}{p} \min \left\{\frac{\bar{\alpha}}{2},\frac{1}{2}\left(1-\frac{n}{d}\right), \frac{(1-s)(q-1)}{4}, \frac{1-s}{2}\right\}>0
\end{array}\right.
\end{equation}
for $C \equiv C(\operatorname{data}_h, \bar{d},\|f\|_{L^d(\Omega_1)})$. By H\"older inequality, \cref{high6,g1} gives \cref{higherholder4} with $\sigma_2=\sigma_1$ as $p\geq 2$.
\end{proof}
\subsection*{Proof of \cref{holder3}}(a) Once \cref{higherholder4} is established, we can conclude the local Hölder continuity of $\nabla u$ by a classical comparison argument (see for instance \cite{manfredi}). We briefly include it here for the sake of completeness. For every $0<t\leq r/8$, we estimate using \cref{av}
\begin{equation*}
    \begin{array}{l}
\fint_{B_t}|\nabla u-(\nabla u)_{B_t}|^p d x\leq    c(\underset{B_t}{\operatorname{osc}}\, \nabla h)^p+c\left(\frac{r}{t}\right)^n \fint_{B_{r / 4}}|\nabla u-\nabla h|^p d x \\\qquad\qquad\qquad\stackrel{\cref{h3},\cref{higherholder4}}{\leq} c\left(\frac{t}{r}\right)^{\alpha_0 p} \fint_{B_{r / 4}}(|\nabla h|^2+\mu^2)^{p / 2} d x+c\left(\frac{r}{t}\right)^n r^{\sigma_2 p}\stackrel{\cref{h1},\cref{higherholder3}}{\leq} c\left(\frac{t}{r}\right)^{\alpha_0 p}r^{-\lambda p}+c\left(\frac{r}{t}\right)^n r^{\sigma_2 p},
    \end{array}
\end{equation*}
with $c \equiv c( \operatorname{data}_h, \bar{d},\lambda, \|f\|_{L^d(\Omega_1)})$. In the above inequality, we take $t=\frac{1}{8}r^{\frac{2n+\sigma_2 p}{2 n}}$ and choose $\lambda:=\frac{\sigma_2 p \alpha_0}{4 n}$ in \cref{higherholder3}. We conclude with
\begin{equation*}
\fint_{B_t}|\nabla u-(\nabla u)_{B_t}|^p d x\leq c t^{\bar{\eta} p}, \quad \bar{\eta}:=\min \left\{\frac{n {\sigma}_2}{2 n+{\sigma}_2p}, \frac{\sigma_2 \alpha_0p}{2 \sigma_2 p+4 n}\right\}    \end{equation*}
where $c \equiv c( \operatorname{data}_h, \bar{d}, \|f\|_{L^d(\Omega_1)})$. It holds whenever $B_t \subset\subset \Omega$ is a ball centred in $\Omega_0$, with $t \leq \bar{d}^{1+\sigma_2 p /(2 n)}/c(n,p) $. As $\Omega_0 \subset\subset \Omega_1 \subset\subset \Omega$ are arbitrary, this implies the local $C^{1, \bar{\eta}}$-regularity of $u$ in $\Omega$, via the classical Campanato-Meyers' integral characterization of Hölder continuity together with the estimate for $[\nabla u]_{0, \bar{\eta}; \Omega_0}$, and the proof is complete.
\\(b) Note that (as already described in the proof of \cref{holder2}, part (b)) $u$ is positive and $  \int_{\mathbb{R}^n} \frac{u(x)^q }{(1+|x|^{n+sq})}dx<\infty$ implies $ \int_{\mathbb{R}^n\backslash B_r} \frac{u(x)^q }{|x-x_0|^{n+sq}}dx<\infty, \forall B_r(x_0)\subset\subset\Omega$ and hence $\int_{\mathbb{R}^n\backslash B_r} \frac{|u(x)-(u)_{B_r}|^q }{|x-x_0|^{n+sq}}dx<\infty, \forall B_r(x_0)\subset\subset\Omega$. Thus lemma 6.1 of \cite{MG} follows. In view of the proof of \cref{higher2}, estimate of $\mathrm(I)$ and choice of $\lambda, \sigma_1$ it is clear that [\citealp{MG}, Lemma 6.2] holds for the choice of operator $\mathcal{L}$, with $\sigma_1$ given by \cref{high6}. Therefore the gradient regularity follows from \cite{MG}.
\section{Up to the boundary H\"older regularity}{\label{regu4}}
We now proceed with the proof of \cref{holder4}. The local $C^{1,\gamma}$ (for some $\gamma\in(0,1)$) regularity follows from \cite{MG}. We show the boundary regularity under the conditions of \cref{holder4}. To do this, it is convenient to locally straighten the boundary around any given point $x_0 \in \partial \Omega$. Without loss of generality (by translation) we can assume that $x_0 \in \{x_n = 0\}$ and that $\Omega$ touches $\{x_n = 0\}$ tangentially, so that its normal at $x_0$ is $e_n$, where $\{e_i\}_{i\leq n}$ is the standard basis of $\mathbb{R}^n$. As $\partial \Omega$ is $C^2$, following the argument of [\citealp{MG}, Section 5] (and mostly adopting its notation), we see that there exists a global $C^{1, \bar{\alpha}}$-diffeomorphism $\mathcal{T}$ of $\mathbb{R}^n$ such that
\begin{equation*}
    \mathcal{T}(x_0)=x_0, \quad B_{2r_0}^{+}(x_0) \subset \mathcal{T}(\Omega_{3 r_0}(x_0)) \subset B_{4 r_0}^{+}(x_0),\quad
\Gamma_{2r_0}(x_0) \subset \mathcal{T}(\partial \Omega \cap B_{3 r_0}(x_0)) \subset \Gamma_{4 r_0}(x_0),
\end{equation*}
for some small radius $r_0 \in(0,1]$. Here $ B_{r_0}^{+}(x_0)=B_r(x_0)\cap \{x\in \mathbb{R}^n:x_n>0\}, \,\Omega_r(x_0)=\Omega \cap B_r(x_0)$ and $\Gamma_r(x_0)=B_r(x_0) \cap\{x_n=0\}$. We write $\mathcal{S}:=\mathcal{T}^{-1}$ and $\mathbf{c}:=|\mathcal{J}_{\mathcal{S}}|$, with $\mathcal{J}_{\mathcal{S}}$ denoting the Jacobian determinant of the inverse $\mathcal{S}$. Let $\widetilde{\Omega}:=\mathcal{T}(\Omega), \tilde{g}:=g \circ \mathcal{S}, \tilde{f}:=\mathbf{c}(f \circ \mathcal{S})$, and $\tilde{u}:=u \circ \mathcal{S} .$ The diffeomorphism $\mathcal{T}$ leaves all the structural conditions on $\mathcal{L}$ unchanged and the same is true for $f$ and $u$ given in \cref{holder4}. Moreover, $\tilde{u}\in W_0^{1,p}(\widetilde{\Omega})$ is a weak solution of
  \begin{equation}{\label{p3}}
    -\operatorname{div} \tilde{A}(\cdot, \nabla \tilde{u}) +\widetilde{Q}_N \tilde{u}  =\tilde{f} \text{ in }\widetilde{\Omega}; \quad  \tilde{u}=0\text{ in }\mathbb{R}^n\backslash\widetilde{\Omega},
    \end{equation}
where
\begin{equation*}
\widetilde{A}(x, z):=\mathbf{c}(x) A(\mathcal{S}(x), z(D \mathcal{T} \circ \mathcal{S})(x))(D \mathcal{T} \circ \mathcal{S})(x)^T    
\end{equation*}
and
\begin{equation*}
    \widetilde{Q}_N u(x):= \text { P.V. } \int_{\mathbb{R}^n} \phi(u(x)-u(y)) \widetilde{K}(x, y) d y,
\end{equation*}
with
\begin{equation*}
    \widetilde{K}(x, y):=\mathbf{c}(x) \mathbf{c}(y) \frac{B(\mathcal{S}(x), \mathcal{S}(y))}{|\mathcal{S}(x)-\mathcal{S}(y)|^{n+s q}} .
\end{equation*}
From assumptions \cref{growth1,growth2,growth3} and the regularity of $\mathcal{T}$, we infer that
\begin{equation}{\label{growth4}}
    \begin{cases}0<\widetilde{\Lambda}^{-1} \leq \mathbf{c}(x) \leq \widetilde{\Lambda} & \text { for all } x \in \mathbb{R}^n, \\ |\mathbf{c}(x)-\mathbf{c}(y)| \leq \widetilde{\Lambda}|x-y|^{\bar{\alpha}} & \text { for all } x, y \in B_{r_0}(x_0), \\ \widetilde{K}(x, y)=\widetilde{K}(y, x) & \text { for a.e. } x, y \in \mathbb{R}^n, \\ \frac{\widetilde{\Lambda}^{-1}}{|x-y|^{n+s q}} \leq \widetilde{K}(x, y) \leq \frac{\widetilde{\Lambda}}{|x-y|^{n+s q}} & \text { for a.e. } x, y \in \mathbb{R}^n ,\end{cases}
\end{equation}
and that the $p$-growth and coercivity conditions are preserved, namely
\begin{equation}{\label{growth5}}
\begin{cases}|\widetilde{A}(x, z)|+|z||\partial_z \widetilde{A}(x, z)| \leq \widetilde{\Lambda}(|z|^2+\mu^2)^{\frac{p-2}{2}}|z| & \text { for all } x \in B_{r_0}^{+}(x_0), \\ |\widetilde{A}(x, z)-\widetilde{A}(y, z)| \leq \widetilde{\Lambda}(|z|^2+\mu^2)^{\frac{p-1}{2}}|x-y|^{\bar{\alpha}} & \text { for all } x, y \in B_{r_0}^{+}(x_0) \\ \langle\partial_z \widetilde{A}(x, z) \xi, \xi\rangle \geq \widetilde{\Lambda}^{-1}(|z|^2+\mu^2)^{\frac{p-2}{2}}|\xi|^2 & \text { for all } x \in B_{r_0}^{+}(x_0), \xi \in \mathbb{R}^n,\end{cases}
\end{equation}
for every $z \in \mathbb{R}^n \backslash\{0\}$ and for some constant $\widetilde{\Lambda} \geq 1$ depending only on $n, p, \bar{\alpha}, \Lambda$, and $\Omega$.
The conditions on $f$ and $u$ in \cref{holder4} now reads as 
\begin{equation}{\label{growth6}}
    0\leq \tilde{f}(x)\leq cx_n^{-\sigma}, \quad 0\leq \tilde{u}(x)\leq M_\ep x_n^\ep,\, \quad \text{ for some }\ep\in[\sigma,1),\quad \text{ for all } x \in B_{r_0}^{+}(x_0).
\end{equation}
We take any point $\tilde{x}_0 \in \Gamma_{r_0 / 2}(x_0)$, radius $\varrho \in\left(0, \frac{r_0}{4}\right]$ , and upper balls $B_\varrho\equiv B^+_\varrho(\tilde{x}_0)\subset B^+_{r_0}({x}_0)$. Unless otherwise stated, all the upper balls will be centred at $\tilde{x}_0 $, and $\tilde{x}_0 $ will be a fixed but generic point as specified.
\begin{lemma}{\label{boundarycacc}}{(Boundary Caccioppoli type inequality)}
Under the assumptions of \cref{holder4}, the inequality
\begin{equation}{\label{bddcaccio}}
    \begin{array}{l}
        \quad \fint_{B_{\varrho/ 2}^{+}(\tilde{x}_0)}(|\nabla \tilde{u}|^2+\mu^2)^{p / 2} d x+ \fint_{B_{\varrho / 2}(\tilde{x}_0)} \int_{B_{\varrho/ 2}(\tilde{x}_0)} \frac{|\tilde{u}(x)-\tilde{u}(y)|^q}{|x-y|^{n+s q}} d x d y\smallskip\\\leq  c\left(\varrho^{-p} \fint_{B_\varrho^{+}(\tilde{x}_0)}|\tilde{u}|^p d x+\int_{\mathbb{R}^n \backslash B_\varrho(\tilde{x}_0)} \frac{|\tilde{u}(y)-(\tilde{u})_{B_\varrho(\tilde{x}_0)}|^q}{|y-\tilde{x}_0|^{n+s q}} d y 
         %+\varrho^{\ep-\sigma}
         +1\right)
    \end{array}
\end{equation}
holds with $c \equiv c(n,p,s,q,\Lambda, \sigma,\Omega)$.
\end{lemma}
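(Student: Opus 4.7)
The plan is to adapt the interior Caccioppoli inequality of \cref{caccioppoli} (whose extension to the general operator $\mathcal L$ is noted in \cref{caccremark}) to a boundary point on the flattened domain, exploiting that $\tilde u$ extended by zero outside $\widetilde\Omega$ belongs to $W^{1,p}(\mathbb{R}^n)$. First I would fix a radial cutoff $\psi\in C_c^\infty(B_\varrho(\tilde x_0))$ with $0\leq\psi\leq 1$, $\psi\equiv 1$ on $B_{\varrho/2}(\tilde x_0)$ and $|\nabla\psi|\leq C/\varrho$, and test the weak formulation of \cref{p3} with $\phi=\tilde u\psi^p$; this is admissible because the zero extension of $\tilde u\psi^p$ belongs to $W_0^{1,p}(\widetilde\Omega)$ with compact support in $B_\varrho(\tilde x_0)$. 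The local term is then handled via \cref{growth5} and Young's inequality exactly as in the interior case, yielding $\int \psi^p(|\nabla\tilde u|^2+\mu^2)^{p/2}\,dx$ on the left together with error $C\varrho^{-p}\int_{B_\varrho^+}|\tilde u|^p\,dx+C\varrho^n$ on the right.

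For the nonlocal term I would follow the decomposition leading to \cref{30}: the good contribution is the nonlocal seminorm over $B_{\varrho/2}\times B_{\varrho/2}$, and the error is the sum of the cutoff-difference integral, an interior remainder $C\varrho^{-sq}\int_{B_\varrho^+}\tilde u^q\,dx$, and the tail-coupling term $C\int_{B_\varrho^+}\tilde u\,\psi^p\int_{\mathbb{R}^n\setminus B_\varrho}|\tilde u(y)|^{q-1}|x-y|^{-n-sq}\,dy\,dx$. The source term is controlled using \cref{growth6}: since $\tilde f(x)\tilde u(x)\leq CM_\ep x_n^{\ep-\sigma}\leq CM_\ep$ (because $\ep\geq\sigma$ and $x_n\leq\varrho\leq 1$ on $B_\varrho^+$), one has $\int\tilde f\,\tilde u\,\psi^p\,dx\leq CM_\ep\varrho^n$, which is absorbed into the final $+1$ after normalization.

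The terms of type $\varrho^{-sq}\int_{B_\varrho^+}\tilde u^q\,dx$ are absorbed into $\varrho^{-p}\int_{B_\varrho^+}|\tilde u|^p\,dx$ via the pointwise bound $\tilde u\leq M$ (so $\tilde u^q\leq M^{q-p}\tilde u^p$) combined with $\varrho^{-sq}\leq\varrho^{-p}$ for $\varrho\leq 1$ (since $sq\leq q\leq p$). For the tail coupling I would first use $|y-\tilde x_0|\leq 4|x-y|$ for $x\in\supp\psi$ and $y\notin B_\varrho$ to replace the kernel by $|y-\tilde x_0|^{-n-sq}$, factor $\int_{B_\varrho^+}\tilde u\,\psi^p\leq CM\varrho^n$ out, and split $|\tilde u(y)|^{q-1}\leq C|\tilde u(y)-(\tilde u)_{B_\varrho}|^{q-1}+C|(\tilde u)_{B_\varrho}|^{q-1}$. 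The constant piece contributes $CM^q\varrho^{n-sq}\leq C$; for the first piece, H\"older's inequality in $y$ with exponents $q/(q-1)$ and $q$ gives
\begin{equation*}
\int_{\mathbb{R}^n\setminus B_\varrho}\frac{|\tilde u(y)-(\tilde u)_{B_\varrho}|^{q-1}}{|y-\tilde x_0|^{n+sq}}\,dy\leq C\varrho^{-s}\left(\int_{\mathbb{R}^n\setminus B_\varrho}\frac{|\tilde u(y)-(\tilde u)_{B_\varrho}|^{q}}{|y-\tilde x_0|^{n+sq}}\,dy\right)^{\frac{q-1}{q}},
\end{equation*}
and a subsequent Young's inequality with the same conjugate exponents converts the product into the desired $q$-power tail on the right plus a bounded constant. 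Dividing through by $|B_{\varrho/2}|$ then delivers \cref{bddcaccio}.

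The main obstacle is precisely this conversion of the $(q-1)$-power tail, which the nonlocal structure naturally produces, into the $q$-power tail with the average $(\tilde u)_{B_\varrho}$ subtracted that appears on the right-hand side of \cref{bddcaccio}. Overcoming it requires the simultaneous use of the pointwise boundary-decay estimate from \cref{growth6}, the global $L^\infty$ bound $\tilde u\leq M$ that permits trading $\tilde u^q$ against $\tilde u^p$ and allows pulling $\tilde u$ out of the boundary integral, and the H\"older--Young chain described above; the remaining new ingredient, the source-term bound, is comparatively straightforward thanks to the assumption $\ep\geq\sigma$.
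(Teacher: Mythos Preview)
Your overall plan matches the paper's: test \cref{p3} with $\tilde u\,\psi^p$, split into local, near--nonlocal, far--nonlocal and source contributions, and estimate each. The paper carries this out with a two--radii cutoff $\varrho/2\le\tau_1<\tau_2\le\varrho$ and then invokes the iteration contained in \cite[Lemma~5.1]{MG}; its only new computation is exactly the bound $\int\eta^p\tilde f\tilde u\le c\varrho^{n+\ep-\sigma}$ that you also isolate.

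There are, however, two concrete errors in your estimates that do not go away by themselves. First, the inequality ``$\tilde u^q\le M^{q-p}\tilde u^p$'' is false for $q\le p$: when $\tilde u$ is small, $\tilde u^{q-p}\to\infty$, so the inequality points the wrong way. Second, in the tail--coupling, after using $\int_{B_\varrho^+}\tilde u\,\psi^p\le CM\varrho^n$ and $|(\tilde u)_{B_\varrho}|\le M$ you obtain $CM^q\varrho^{n-sq}$ in the \emph{unnormalized} inequality; when you divide by $|B_{\varrho/2}|\sim\varrho^n$ at the end this becomes $CM^q\varrho^{-sq}$, which is not bounded and hence cannot be absorbed into the ``$+1$''. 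Both mistakes stem from trading integral averages for the crude $L^\infty$ bound too early.

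The fixes are simple and keep your single--cutoff strategy intact. For the interior remainder and the cutoff--difference term, use H\"older and Young instead of the false pointwise bound:
\[
\varrho^{-sq}\fint_{B_\varrho^+}\tilde u^q
\;\le\;\varrho^{-sq}\Big(\fint_{B_\varrho^+}\tilde u^p\Big)^{q/p}
\;=\;\big(\varrho^{-sp}\fint_{B_\varrho^+}\tilde u^p\big)^{q/p}
\;\le\;C\Big(\varrho^{-p}\fint_{B_\varrho^+}\tilde u^p+1\Big),
\]
since $\varrho\le1$ and $sp\le p$. For the constant piece of the tail, keep the average: $|(\tilde u)_{B_\varrho}|\le\fint_{B_\varrho}\tilde u$ and $\fint_{B_\varrho^+}\tilde u\,\psi^p\le C\fint_{B_\varrho}\tilde u$, so that piece is bounded by $C\varrho^{-sq}(\fint_{B_\varrho}\tilde u)^q$, which again falls under the display above. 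For the oscillating piece, your H\"older step (which is \cref{Ming}) followed by Young with exponents $q$ and $q/(q-1)$ then gives exactly $C\int_{\mathbb R^n\setminus B_\varrho}\frac{|\tilde u(y)-(\tilde u)_{B_\varrho}|^q}{|y-\tilde x_0|^{n+sq}}dy$ plus a term of the same form as above. With these corrections the direct argument closes and yields \cref{bddcaccio}.
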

\begin{proof}
    Let us fix parameters $\varrho / 2 \leq \tau_1<\tau_2 \leq \varrho$, a function $\eta \in C_0^1(B_{\tau_2})$ such that $\chi_{B_{\tau_1}} \leq\eta \leq \chi_{B_{(3 \tau_2+\tau_1) / 4}}$ and $|\nabla \eta| \lesssim 1 /(\tau_2-\tau_1)$. Consider $\tilde{\varphi}:=\eta^p\tilde{u}$. By definition, $\tilde{\varphi}$ vanishes outside $B_{\tau_2}^{+} \subset B_{\varrho}^{+} \subset B_{r_0}^{+}(x_0)$, so that $\tilde{\varphi} \in W_0^{1,p}(B_{\varrho}^{+})$. Taking $\tilde{\varphi}$ as a test function and noting [\citealp{MG}, Section 2.5], we find
\begin{equation*}
    \begin{array}{rcl}
         0&=&\int_{B_\varrho^{+}} \eta^p \left[ \tilde{A}(x, \nabla\tilde{u}) \cdot \nabla \tilde{u}-\tilde{f} \tilde{u}\right] d x+p\int_{B_\varrho^{+}}\tilde{u}\, \eta^{p-1}  \tilde{A}(x, \nabla \tilde{u}) \cdot \nabla \eta \,dx\smallskip\\&&+\int_{B_{\tau_2}} \int_{B_{\tau_2}}|\tilde{u}(x)-\tilde{u}(y)|^{q-2}(\tilde{u}(x)-\tilde{u}(y)) \cdot(\eta^p(x) \tilde{u}(x)-\eta^p(y) \tilde{u}(y)) \tilde{K}(x, y) dx d y\smallskip\\&&+2 \int_{\mathbb{R}^n \backslash B_{\tau_2}} \int_{B_{\tau_2}}|\tilde{u}(x)-\tilde{u}(y)|^{q-2}(\tilde{u}(x)-\tilde{u}(y)) \eta^p(x) \tilde{u}(x) \tilde{K}(x, y) d x d y \\
& =:&(\mathrm{I})+(\mathrm{II})+(\mathrm{IIII})+(\mathrm{IV}).
    \end{array}
\end{equation*}
In view of [\citealp{MG}, Lemma 5.1], it is enough to estimate the term involving $\tilde{f}$ only. As $0<\sigma<1$, $0\leq \eta\leq 1$ and $\varrho<1$, we have by \cref{growth4,growth6}
\begin{equation*}
    \int_{B_\varrho^{+}} \eta^p \tilde{f} \tilde{u}\, dx \leq c \int_{B_\varrho^{+}}x_n^{\ep-\sigma}dx\leq c \varrho^{n+\ep-\sigma}\leq c,
\end{equation*}
for some $c\equiv c(\sigma,n)$. Proof of \cref{bddcaccio} now follows from [\citealp{MG}, Lemma 5.1] (taking $\tilde{g}\equiv 0$ there).
\end{proof}
Consider now the boundary $p$-harmonic function $\tilde{h} \in W_{\tilde{u}}^{1, p}(B_{\varrho / 4}^{+}(\tilde{x}_0))$ of the homogeneous Dirichlet problem
\begin{equation}{\label{boundarypharmonic}}
    \begin{cases}-\operatorname{div} \widetilde{A}(\tilde{x}_0, \nabla\tilde{h})=0 & \text { in } B_{\varrho / 4}^{+}(\tilde{x}_0), \\ \tilde{h}=\tilde{u} & \text { on } \partial B_{\varrho / 4}^{+}(\tilde{x}_0)\end{cases}
\end{equation}
In the next lemma, which is a boundary version of \cref{proph}, we collect some useful properties of $\tilde{h}$, which are essentially all contained in [\citealp{garymlib}, Lemma 5]. The existence and uniqueness of $\tilde{h}$ is classical and is mentioned, for instance, in \cite{garymlib}. 
\begin{lemma}{\label{proph2}}
    Let $\tilde{h}$ be the solution of problem \cref{boundarypharmonic}. Then, there exist constants $\tilde{\alpha}_0 \in(0,1)$ and $C>0$ such that,
    \begin{equation}{\label{bdh1}}
    \fint_{B_{\varrho/ 4}^{+}(\tilde{x}_0)}(|\nabla \tilde{h}|^2+\mu^2)^{p / 2} d x \leq C  \fint_{B_{\varrho/ 4}^{+}(\tilde{x}_0)}(|\nabla \tilde{u}|^2+\mu^2)^{p / 2} d x,
\end{equation}
\begin{equation}{\label{bdh2}}
    \|\tilde{h}\|_{L^{\infty}(B^+_{\varrho/ 4}(\tilde{x}_0))} \leq\|\tilde{u}\|_{L^{\infty}(B^+_{\varrho/ 4}(\tilde{x}_0))}, \quad \underset{B^+_{\varrho/ 4}(\tilde{x}_0)}{\operatorname{osc}} \tilde{h}\leq  \underset{B^+_{\varrho/ 4}(\tilde{x}_0)}{\operatorname{osc}} \tilde{u},
\end{equation}
\begin{equation}{\label{bdh3}}
    \fint_{B_{t}^{+}(\tilde{x}_0)}(|\nabla \tilde{h}|^2+\mu^2)^{p / 2} d x\leq c \left(\frac{t}{\varrho}\right)^{-n+b}  \fint_{B_{\varrho/ 4}^{+}(\tilde{x}_0)}(|\nabla \tilde{h}|^2+\mu^2)^{p / 2} d x
\end{equation}
and 
\begin{equation}{\label{bdh4}}
   \underset{B^+_{t}(\tilde{x}_0)}{\operatorname{osc}}\nabla \tilde{h}\leq c\left(\frac{t}{\varrho}\right)^{\tilde{\alpha}_0}\left( \fint_{B^+_{\varrho / 4}(\tilde{x}_0)}(|\nabla \tilde{h}|^2+\mu^2)^{p / 2} d x\right)^{1/p}
\end{equation}
holds for all $t\in (0,\varrho/8]$ with $c\equiv c(n,p,q,s,\Lambda)$ and $b$ such that $0\leq b<n$.
\end{lemma}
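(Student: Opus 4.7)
The plan is to mirror the argument of \cref{proph}, with the key new input being that the boundary portion of $\partial B_{\varrho/4}^{+}(\tilde{x}_0)$ lying in $\{x_n=0\}$ is flat and that $\tilde{h}=\tilde{u}=0$ on it. To see this vanishing, recall that $\tilde{x}_0\in\Gamma_{r_0/2}(x_0)\subset\{x_n=0\}$ and that $\tilde{u}=u\circ\mathcal{S}$ is zero on $\{x_n=0\}\cap B_{r_0}(x_0)$, since $\mathcal{T}$ maps $\partial\Omega\cap B_{3r_0}(x_0)$ into $\Gamma_{4r_0}(x_0)$ and $u\equiv 0$ on $\mathbb{R}^n\setminus\Omega$. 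Thus we are dealing with a frozen-coefficient equation on a half ball with zero Dirichlet data on the flat side, and all the structural conditions \cref{growth5} remain in force.

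For \cref{bdh1} the argument is essentially identical to the derivation of \cref{h1}: test the weak formulation of \cref{boundarypharmonic} with the admissible function $\tilde{u}-\tilde{h}\in W_0^{1,p}(B_{\varrho/4}^{+}(\tilde{x}_0))$, invoke the coercivity bound $\widetilde{A}(\tilde{x}_0,z)\cdot z\gtrsim (|z|^2+\mu^2)^{(p-2)/2}|z|^2$ coming from \cref{growth5}$_3$ together with the upper bound \cref{growth5}$_1$, and then close the inequality through H\"older's and Young's inequalities, splitting the cases $p\geq 2$ and $1<p<2$ exactly as in the interior proof. For \cref{bdh2}, one tests the equation for $\tilde{h}$ with the truncations $(\tilde{h}-M)^{+}$ and $(-\tilde{h}-M)^{+}$, where $M=\|\tilde{u}\|_{L^\infty(\partial B_{\varrho/4}^{+}(\tilde{x}_0))}$; these lie in $W_0^{1,p}(B_{\varrho/4}^{+}(\tilde{x}_0))$ precisely because $\tilde{h}=\tilde{u}$ on $\partial B_{\varrho/4}^{+}(\tilde{x}_0)$, and the strict monotonicity of the frozen vector field $\widetilde{A}(\tilde{x}_0,\cdot)$ forces them to vanish identically. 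This yields both the $L^\infty$-bound and the oscillation comparison for $\tilde{h}$ against $\tilde{u}$.

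The two remaining estimates \cref{bdh3} and \cref{bdh4} amount to the boundary $C^{1,\tilde{\alpha}_0}$-regularity of $\tilde{h}$. Since the operator $-\operatorname{div}\widetilde{A}(\tilde{x}_0,\nabla\cdot)$ has frozen coefficients satisfying the standard $p$-growth and coercivity \cref{growth5}, and $\tilde{h}\equiv 0$ on the flat portion of $\partial B_{\varrho/4}^{+}(\tilde{x}_0)$, we can invoke the classical Lieberman-type boundary estimate [\citealp{garymlib}, Lemma 5] for $p$-Laplace-type equations on half balls with flat zero-Dirichlet boundary. This simultaneously delivers the oscillation decay \cref{bdh4} and the sup estimate $\|\nabla\tilde{h}\|_{L^\infty(B_{\varrho/8}^{+}(\tilde{x}_0))}^p\leq c\fint_{B_{\varrho/4}^{+}(\tilde{x}_0)}(|\nabla\tilde{h}|^2+\mu^2)^{p/2}\,dx$. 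The energy growth estimate \cref{bdh3} for $t\in(0,\varrho/8]$ then follows at once from this gradient $L^\infty$ bound (or, weakly, by the trivial inclusion $B_t^{+}\subset B_{\varrho/4}^{+}$ which gives $b=0$). The chief obstacle, if one attempted a self-contained derivation, would be this boundary Lieberman-type regularity for degenerate/singular quasilinear equations on half balls; we simply appeal to \cite{garymlib} rather than reprove it.
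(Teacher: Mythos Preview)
Your proposal is correct and matches the paper's own proof almost exactly: the paper also refers \cref{bdh1} back to the argument for \cref{h1}, attributes \cref{bdh2} to the weak maximum principle, and cites [\citealp{garymlib}, Lemma~5] for \cref{bdh4}. The only minor difference is that for \cref{bdh3} the paper simply points to [\citealp{MG}, Section~5.7], whereas you derive it from the boundary $L^\infty$-gradient bound coming from [\citealp{garymlib}, Lemma~5]; your route is slightly more self-contained and in fact yields the estimate with a constant right-hand side (equivalently, any $b\in[0,n)$ works trivially).
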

\begin{proof}
The proof of \cref{bdh1} follows similarly to the proof of \cref{h1}. The proof of \cref{bdh2} follows immediately from the weak maximum principle. Proof of \cref{bdh3} is given in [\citealp{MG}, section 5.7] and finally the proof of \cref{bdh4} is established in [\citealp{garymlib}, lemma 5].
\end{proof}
\begin{lemma}{\label{differenecboundary}}
  Let the conditions of \cref{holder4} are satisfied and $\tilde{h} \in W_{\tilde{u}}^{1, p}(B_{\varrho / 4}^{+}(\tilde{x}_0))$ be the solution of \cref{boundarypharmonic}. There exists $\tilde{\sigma}_0 \equiv \tilde{\sigma}_0(p, s, q,\bar{\alpha}) \in$ $(0,1)$ such that
\begin{equation}{\label{differenceestimateboundary}}
    \fint_{B^+_{\varrho / 4}(\tilde{x}_0)}|\tilde{u}-\tilde{h}|^p dx \leq c \varrho^{\theta\tilde{\sigma}_0}\left[\fint_{B^+_\varrho(\tilde{x}_0)}|\tilde{u}|^pdx+\varrho^\delta\int_{\mathbb{R}^n \backslash B_\varrho(\tilde{x}_0)} \frac{|\tilde{u}(y)-(\tilde{u})_{B_\varrho(\tilde{x}_0)}|^{q}}{|y-\tilde{x}_0|^{n+ sq}} d y+\varrho^{p-\theta}%+\varrho^{\frac{p(\delta-sq)}{p-q}}
    \right],
\end{equation}
holds for every $\theta\in (0,1)$ where $\delta \in(sq,p)$, $c\equiv c(n,p,q,s,\sigma,\Lambda)$.
\end{lemma}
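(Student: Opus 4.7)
The plan is to mirror the interior difference estimate \cref{differenec} in the half-ball $B^+_{\varrho/4}(\tilde{x}_0)$. The comparison function $\tilde{u}-\tilde{h}$ lies in $W^{1,p}_0(B^+_{\varrho/4}(\tilde{x}_0))$ and, crucially, its trace vanishes both on the round portion of $\partial B^+_{\varrho/4}(\tilde{x}_0)$ (where $\tilde{h}=\tilde{u}$ by construction) and on the flat piece $\Gamma\cap\overline{B^+_{\varrho/4}(\tilde{x}_0)}$ (where $\tilde{u}\equiv 0$ since $\tilde{u}\in W^{1,p}_0(\widetilde{\Omega})$). Testing the weak formulations of \cref{p3} and \cref{boundarypharmonic} with $\tilde{u}-\tilde{h}$ and subtracting, the strong monotonicity inequality \cref{g2} yields
\begin{equation*}
\fint_{B^+_{\varrho/4}(\tilde{x}_0)}|V_\mu(\nabla\tilde{u})-V_\mu(\nabla\tilde{h})|^2\,dx \lesssim (\mathrm{I})+(\mathrm{II})+(\mathrm{III})+(\mathrm{IV}),
\end{equation*}
where $(\mathrm{I})$ is the freezing defect $\fint(\widetilde{A}(\tilde{x}_0,\nabla\tilde{u})-\widetilde{A}(x,\nabla\tilde{u}))\cdot\nabla(\tilde{u}-\tilde{h})\,dx$, $(\mathrm{II})=\fint\tilde{f}(\tilde{u}-\tilde{h})\,dx$ is the source contribution, and $(\mathrm{III})$, $(\mathrm{IV})$ are the nonlocal terms on $B_{\varrho/2}(\tilde{x}_0)\times B_{\varrho/2}(\tilde{x}_0)$ and $B_{\varrho/2}(\tilde{x}_0)\times(\mathbb{R}^n\setminus B_{\varrho/2}(\tilde{x}_0))$ respectively.

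The estimates for $(\mathrm{I})$, $(\mathrm{III})$ and $(\mathrm{IV})$ follow the interior proof of \cref{differenec} \emph{mutatis mutandis}, the only systematic substitutions being the use of the boundary Caccioppoli inequality \cref{bddcaccio} in place of \cref{caccimprovedfinal}, and of the boundary comparison estimates \cref{bdh1}--\cref{bdh2} in place of \cref{h1}--\cref{h2}; the fractional Sobolev--Poincar\'e embedding \cref{fracemb} continues to apply to the zero-extension of $\tilde{u}-\tilde{h}$. Writing
\begin{equation*}
\operatorname{ccp}(\varrho):=\varrho^{-p}\fint_{B^+_\varrho(\tilde{x}_0)}|\tilde{u}|^p\,dx + \int_{\mathbb{R}^n\setminus B_\varrho(\tilde{x}_0)}\frac{|\tilde{u}(y)-(\tilde{u})_{B_\varrho(\tilde{x}_0)}|^q}{|y-\tilde{x}_0|^{n+sq}}\,dy + 1,
\end{equation*}
the H\"older continuity $\cref{growth5}_2$ gives $(\mathrm{I})\lesssim\varrho^{\bar{\alpha}\min\{1,p/2\}}\operatorname{ccp}(\varrho)$, while the nonlocal pieces are controlled by $(\mathrm{III})+(\mathrm{IV})\lesssim\varrho^{1-s}\operatorname{ccp}(\varrho)^{1-1/q+1/p}$.

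The main obstacle is the treatment of $(\mathrm{II})$: unlike in the interior setting, $\tilde{f}$ need not lie in $L^n_{\mathrm{loc}}$ since, by \cref{growth6}, it may blow up of order $\sigma<1$ near $\Gamma$. The decisive observation is that the vanishing of $\tilde{u}-\tilde{h}$ on $\Gamma$ enables Hardy's inequality,
\begin{equation*}
\int_{B^+_{\varrho/4}(\tilde{x}_0)}\frac{|\tilde{u}-\tilde{h}|^p}{x_n^p}\,dx \leq c(n,p)\int_{B^+_{\varrho/4}(\tilde{x}_0)}|\nabla(\tilde{u}-\tilde{h})|^p\,dx.
\end{equation*}
Factoring $\tilde{f}(\tilde{u}-\tilde{h})\leq c\,x_n^{1-\sigma}\cdot|\tilde{u}-\tilde{h}|/x_n$ and applying H\"older with exponents $p'$ and $p$, followed by Hardy, the boundary Caccioppoli \cref{bddcaccio} and \cref{bdh1}, we obtain
\begin{equation*}
(\mathrm{II})\leq c(\sigma)\,\varrho^{1-\sigma}\operatorname{ccp}(\varrho)^{1/p}.
\end{equation*}
Since $1-\sigma>0$, Young's inequality absorbs this term into the main contributions at the price of a $\sigma$-dependent constant; the decay exponent $\tilde{\sigma}_0$ itself remains dictated by $\bar{\alpha}, s, p, q$ alone.

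Collecting the four bounds, multiplying by $\varrho^p$, using $\delta<p$ and $\varrho\leq 1$ to split $\varrho^p\operatorname{ccp}(\varrho)$ into the three summands appearing on the right-hand side of \cref{differenceestimateboundary} at the cost of a factor $\varrho^{-\theta}$ (exactly as in the closing lines of \cref{differenec}), and converting the $V_\mu$-bound into an $L^p$-bound on $\tilde{u}-\tilde{h}$ via the Poincar\'e inequality together with the pointwise inequality relating $|z_1-z_2|^p$ to $|V_\mu(z_1)-V_\mu(z_2)|^2$ (elementary for $p\geq 2$; for $p\in(1,2)$ the Hölder trick from \cite{MG} is used), we obtain \cref{differenceestimateboundary} with $\tilde{\sigma}_0:=\min\{\sigma_0,\bar{\alpha},p\bar{\alpha}/2\}$, where $\sigma_0$ is the interior exponent produced by \cref{differenec}.
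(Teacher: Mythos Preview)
Your proposal is correct and tracks the paper's argument closely for the setup and for the terms $(\mathrm{I})$, $(\mathrm{III})$, $(\mathrm{IV})$; the paper likewise defers those to the boundary analogue in [\citealp{MG}, Lemma~5.2]. The genuine difference is your treatment of the source term $(\mathrm{II})$. The paper does not invoke Hardy's inequality: it uses the maximum principle \cref{bdh2} together with the barrier hypothesis $\tilde{u}\le M_\epsilon x_n^{\epsilon}$ from \cref{growth6} (a standing assumption of \cref{holder4}) to get $\|\tilde{u}-\tilde{h}\|_{L^\infty(B^+_{\varrho/4})}\le 2\|\tilde{u}\|_{L^\infty(B^+_{\varrho/4})}\le c\varrho^{\epsilon}$, and then simply integrates $x_n^{-\sigma}$, obtaining $(\mathrm{II})\le c\varrho^{\epsilon-\sigma}\le c$. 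This is shorter and feeds directly into the constant term of the bracket. Your Hardy-based bound $(\mathrm{II})\le c(\sigma)\varrho^{1-\sigma}\operatorname{ccp}(\varrho)^{1/p}$ is also valid (the one-dimensional Hardy in the $x_n$-direction applies with a scale-independent constant since $\tilde{u}-\tilde{h}$ vanishes on the flat boundary) and mirrors the interior handling of the $L^n$ source term; it has the aesthetic advantage that at this step you do not explicitly call on the barrier assumption, though that assumption is still implicitly present through the boundary Caccioppoli \cref{bddcaccio} you use for $\operatorname{ccp}(\varrho)$. Either route delivers the same exponent $\tilde{\sigma}_0$.
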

\begin{proof}
    Similarly like \cref{differenec}, noting [\citealp{MG}, Section 2.5], we take the test function $\tilde{u}-\tilde{h}\in W_0^{1,p}(B_{\varrho/4}^+(x_0))$ in the equations of $\tilde{u}$ and $\tilde{h}$ and subtract to get 
    \begin{equation}{\label{diffbdd1}}
    \begin{array}{rcl}
     \fint_{B^+_{\varrho / 4}}|V_\mu(\nabla \tilde{u})-V_\mu(\nabla \tilde{h})|^2 dx &\leq &c\fint_{B^+_{\varrho/ 4}}(\tilde{A}(\tilde{x}_0, \nabla\tilde{u})-\tilde{A}(\tilde{x}_0, \nabla\tilde{h})) \cdot(\nabla \tilde{u}-\nabla \tilde{h})dx\\&%\stackrel{\cref{heu}}{=} 
     =&c \fint_{B^+_{\varrho/ 4}}\tilde{A}(\tilde{x}_0, \nabla\tilde{u}) \cdot(\nabla \tilde{u}-\nabla \tilde{h})dx\\&=&c\fint_{B^+_{\varrho / 4}} (\tilde{A}(\tilde{x}_0, \nabla\tilde{u})-\tilde{A}(x, \nabla\tilde{u})) \cdot(\nabla \tilde{u}-\nabla \tilde{h})dx+ c \fint_{B^+_{\varrho / 4}} \tilde{f} (\tilde{u}-\tilde{h})d x\\&&-\frac{c}{|B_\varrho^+|}\int_{\mathbb{R}^n}\int_{\mathbb{R}^n}|\tilde{u}(x)-\tilde{u}(y)|^{q-2}(v)((\tilde{u}-\tilde{h})(x)-(\tilde{u}-\tilde{h})(y))\tilde{K}(x, y) d x d y%-c \fint_{B_{r/ 2}} \int_{B_{r / 2}}\frac{|u(x)-u(y)|^{q-2}(u(x)-u(y))((u-h)(x)-(u-h)(y))}{|x-y|^{n+sq}}dxdy \\&&-2 c \fint_{B_{r / 2}}\int_{\mathbb{R}^n \backslash B_{r/ 2}} \frac{|u(x)-u(y)|^{q-2}(u(x)-u(y)) (u-h)(x)}{|x-y|^{n+sq}}dydx
    \smallskip \\
&=&(\mathrm{I})+(\mathrm{II})+(\mathrm{III}),
    \end{array}
\end{equation}where $c \equiv c(n, p, \tilde{\Lambda})$. In view of [\citealp{MG}, Lemma 5.2] and our growth conditions \cref{growth4,growth5}, we only estimate the second term. As $0<\sigma<1$, we have using \cref{growth6} and \cref{bdh2}
\begin{equation*}{\label{ff}}
     \fint_{B^+_{\varrho / 4}} |\tilde{f} (\tilde{u}-\tilde{h})|d x
    \leq 2 \fint_{B^+_{\varrho / 4}} |\tilde{f}| \|\tilde{u}\|_{L^\infty(B^+_{\varrho / 4})}d x\leq c\varrho^\ep \fint_{B^+_{\varrho / 4}}x_n^{-\sigma}dx\leq c\varrho^{\ep-\sigma}\leq c,
\end{equation*}
for some $c\equiv c(n,\sigma)$. Proof of \cref{differenceestimateboundary} now follows from [\citealp{MG}, Lemma 5.2].
\end{proof}
Once \cref{boundarycacc} and \cref{differenecboundary} are established, then using the growth conditions \cref{growth4}, \cref{growth5} and \cref{proph2}, one can follow [\citealp{MG}, Section 5.6], to establish the following global H\"older regularity of solutions $u$ satisfying conditions of \cref{holder4}.
\begin{theorem}{\label{globalholder1}}
Under the conditions of \cref{holder4}, $u\in C^{0,\eta}(\mathbb{R}^n)$ for all $\eta\in(0,1)$ and $[u]_{0,\eta,\mathbb{R}^n}\leq   c(\operatorname{data})\equiv c(n,p,s,\eta,q,\Lambda, \sigma,\Omega)$.
\end{theorem}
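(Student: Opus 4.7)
The plan is to transfer the boundary Caccioppoli estimate (\cref{boundarycacc}), the comparison estimate with the frozen $p$-harmonic $\tilde{h}$ (\cref{differenecboundary}), and the decay properties of $\tilde{h}$ (\cref{proph2}) into a boundary Campanato--Morrey type excess decay for $\tilde{u}$, and then combine it with the interior result \cref{holder2} to conclude global $C^{0,\eta}$ regularity. I first fix a boundary point $x_0\in\partial\Omega$, straighten $\partial\Omega$ locally via the diffeomorphism $\mathcal{T}$ and work on upper half-balls $B^+_\varrho(\tilde{x}_0)$ with $\tilde{x}_0\in\Gamma_{r_0/2}(x_0)$. The key quantity to iterate is the boundary excess
\begin{equation*}
E(\tilde{u};\varrho):=\fint_{B^+_\varrho(\tilde{x}_0)}|\tilde{u}|^p\,dx+\varrho^{\delta}\int_{\mathbb{R}^n\setminus B_\varrho(\tilde{x}_0)}\frac{|\tilde{u}(y)-(\tilde{u})_{B_\varrho(\tilde{x}_0)}|^q}{|y-\tilde{x}_0|^{n+sq}}\,dy+\varrho^{p-\theta},
\end{equation*}
for a suitably chosen $\delta\in(sq,p)$ and small $\theta\in(0,1)$. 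Note that the zero extension of $\tilde u$ through $\{x_n=0\}$ forces the ``oscillation'' factor at the boundary to coincide up to constants with $\fint_{B^+_\varrho} |\tilde{u}|^p\,dx$, and the growth $0\le\tilde u(x)\le M_\ep x_n^\ep$ from \cref{growth6} plays the role of the Poincaré-type bound (giving $\fint_{B^+_\varrho}|\tilde u|^p\,dx\le C\varrho^{p\ep}$).

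Next I carry out a one-step comparison: write $\tilde{u}=\tilde{h}+(\tilde{u}-\tilde{h})$, with $\tilde{h}$ solving \cref{boundarypharmonic} on $B^+_{\varrho/4}(\tilde{x}_0)$. For any $\tau\in(0,1/8)$, the boundary excess decay of $\tilde{h}$ following from \cref{bdh3} combined with the triangle inequality and with the standard comparison argument of the Campanato--Meyers type yields
\begin{equation*}
\fint_{B^+_{\tau\varrho}(\tilde{x}_0)}|\tilde{u}|^p\,dx\le c\,\tau^{p\,\bar{b}}\fint_{B^+_{\varrho/4}(\tilde{x}_0)}|\tilde{u}|^p\,dx+\frac{c}{\tau^n}\fint_{B^+_{\varrho/4}(\tilde{x}_0)}|\tilde{u}-\tilde{h}|^p\,dx,
\end{equation*}
for some $\bar{b}\in(0,1)$ (chosen below via \cref{bdh3}). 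Using \cref{differenceestimateboundary} to bound the last term by $c\,\varrho^{\theta\tilde{\sigma}_0}\,E(\tilde u;\varrho)$, and controlling the nonlocal tail on $B_{\tau\varrho}(\tilde{x}_0)$ through the same kind of telescoping/snail estimates employed in \cref{mgiter}, I obtain a decay inequality of the form
\begin{equation*}
E(\tilde{u};\tau\varrho)\le \bigl(c\,\tau^{p\,\bar{b}}+c(\tau)\,\varrho^{\theta\tilde{\sigma}_0}\bigr)E(\tilde{u};\varrho)+c(\tau)\,(\tau\varrho)^{p-\theta},
\end{equation*}
valid for all $\varrho\le \bar{\varrho}_0$.

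Given any target exponent $\eta\in(0,1)$, I pick first $\theta\in(0,1)$ so large that $p-\theta>p\eta$, then $\tau$ so small that $c\,\tau^{p\,\bar{b}}\le \tfrac12\tau^{p\eta}$, and finally $\bar{\varrho}_0$ so small that $c(\tau)\,\varrho^{\theta\tilde{\sigma}_0}\le \tfrac12\tau^{p\eta}$ whenever $\varrho\le \bar{\varrho}_0$. A standard iteration lemma then yields $E(\tilde{u};\varrho)\le C\varrho^{p\eta}$ for all $\varrho\le\bar{\varrho}_0$, hence
\begin{equation*}
\fint_{B^+_\varrho(\tilde{x}_0)}|\tilde{u}|^p\,dx\le C\varrho^{p\eta}\qquad\text{for every }\tilde{x}_0\in\Gamma_{r_0/2}(x_0),\;\varrho\le\bar{\varrho}_0,
\end{equation*}
with constant depending only on $n,p,q,s,\Lambda,\bar{\alpha},\sigma,\eta,\Omega$. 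Undoing the diffeomorphism $\mathcal{T}$, this translates (via the classical Campanato characterization adapted to boundary points where $u\equiv 0$ across $\partial\Omega$) into the boundary Hölder estimate $|u(x)-u(x_0)|\le C|x-x_0|^\eta$ for all $x_0\in\partial\Omega$ and $x$ in a neighbourhood of $x_0$ in $\mathbb{R}^n$; outside $\Omega$ this is trivial since $u\equiv 0$.

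Finally, I combine this boundary regularity with the interior regularity from \cref{holder2}, whose constant depends on the already-controlled quantities $\|u\|_{L^p(\Omega)}$, $\|f\|_{L^n(\Omega)}$ (finite since $\sigma<1$) and the full tail $\mathrm{Tail}_{q-1}(u;x,R_0)$ (finite since $u\equiv 0$ outside $\Omega$). A standard chaining argument over pairs $x,y\in\mathbb{R}^n$ -- splitting into the cases (a) both points in $\mathbb{R}^n\setminus\Omega$, (b) both at interior distance larger than $|x-y|$, and (c) at least one point close to $\partial\Omega$, where the boundary excess decay is used through the nearest boundary projection -- yields the global estimate $[u]_{C^{0,\eta}(\mathbb{R}^n)}\le c$ with the claimed dependence. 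The principal obstacle I expect is the delicate tuning between $\theta$, $\tau$ and $\bar{\varrho}_0$ in the boundary iteration together with the tail telescoping: the nonlocal term produced at the scale $\tau\varrho$ must be reabsorbed in a quantity of the form $E(\tilde{u};\tau\varrho)$ with a factor strictly smaller than $\tau^{p\eta}$, which is exactly the mechanism that caps $\eta$ only by $1$ and not by any smaller number.
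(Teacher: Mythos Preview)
Your proposal is correct and follows essentially the same approach as the paper, which, once \cref{boundarycacc}, \cref{proph2} and \cref{differenecboundary} are established, simply refers to the boundary excess-decay iteration of [\citealp{MG}, Section 5.6]. One minor slip: $\|f\|_{L^n(\Omega)}$ is in general \emph{not} finite (since $\int_\Omega d^{-n\sigma}=\infty$ whenever $n\sigma\ge 1$), but this is harmless because \cref{holder2} only requires $f\in L^n_{\mathrm{loc}}(\Omega)$, which holds as $f$ is locally bounded away from $\partial\Omega$; in your chaining argument the interior estimate must therefore be applied on a fixed compactly contained subdomain.
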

To prove \cref{holder4}, we proceed with the ideas of [\citealp{MG}, Section 6] and prove the following lemmas.
\begin{lemma}{\label{gradhol1}}Under the conditions of \cref{holder4}, $\tilde{u}$ (solution of \cref{p3}) satisfies
\begin{equation}{\label{grad1}}
    \fint_{B_{t}(\tilde{x}_0)} \int_{B_t(\tilde{x}_0)} \frac{|\tilde{u}(x)-\tilde{u}(y)|^q}{|x-y|^{n+s q}} d x d y \leq C_\eta t^{(\eta-s) q} \quad \text { for all } \eta \in(s, 1) \text { and } t \in\left(0, \frac{r_0}{4}\right) \text {, } 
\end{equation}
\begin{equation}{\label{grad2}}
    \int_{\mathbb{R}^n \backslash B_t(\tilde{x}_0)} \frac{|\tilde{u}(y)-(\tilde{u})_{B_t(\tilde{x}_0)}|^q}{|y-\tilde{x}_0|^{n+s q}} d y \leq C \quad \text { for all } t \in\left(0, \frac{r_0}{4}\right),
\end{equation}
\begin{equation}{\label{grad3}}
     \fint_{B^+_{\varrho / 2}(\tilde{x}_0)}(|\nabla \tilde{u}|^2+\mu^2)^{p / 2} d x \leq C_\lambda \varrho^{-\lambda p} \quad \text { for all } \lambda>0 \text { and } \varrho \in\left(0, \frac{r_0}{4}\right) \text {. }
\end{equation}
The constant $C_\eta$ depends on $\eta$, while $C_\lambda$ depends on $\lambda$.
\end{lemma}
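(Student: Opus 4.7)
The plan is to mirror the proof strategy of Lemma~\ref{higher1} (the interior counterpart), replacing the interior H\"older continuity supplied by \cref{holder2} with the global H\"older estimate from \cref{globalholder1}. The crucial extra ingredient, not available in the interior case, is that every boundary point $\tilde{x}_0 \in \Gamma_{r_0/2}(x_0) \subset \partial\widetilde{\Omega}$ satisfies $\tilde{u}(\tilde{x}_0) = 0$; this converts H\"older continuity into pointwise decay of $\tilde{u}$ and of its averages, and is what ultimately kills the singular $\varrho^{-p}$ prefactor in the boundary Caccioppoli bound. Throughout, fix any $\eta \in (0,1)$ and let $C_\eta$ denote the constant in \cref{globalholder1}, so that $|\tilde{u}(x) - \tilde{u}(y)| \leq C_\eta |x-y|^\eta$ for all $x,y \in \mathbb{R}^n$.

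For \cref{grad1}, take $\eta \in (s,1)$ and insert the global H\"older bound directly into the Gagliardo integrand to obtain $|\tilde{u}(x)-\tilde{u}(y)|^q |x-y|^{-n-sq} \leq C_\eta^q |x-y|^{(\eta-s)q - n}$. Since $(\eta-s)q > 0$, a standard radial computation on $B_t(\tilde{x}_0) \times B_t(\tilde{x}_0)$ yields $\fint_{B_t}\int_{B_t}|x-y|^{(\eta-s)q-n}\,dx\,dy \leq C t^{(\eta-s)q}$, which is exactly the claim.

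For \cref{grad2}, use $\tilde{u}(\tilde{x}_0) = 0$ together with global H\"older regularity to get $|(\tilde{u})_{B_t(\tilde{x}_0)}| \leq C_\eta t^\eta$, so that $|\tilde{u}(y) - (\tilde{u})_{B_t(\tilde{x}_0)}| \leq C_\eta |y-\tilde{x}_0|^\eta + C_\eta t^\eta \leq 2 C_\eta |y-\tilde{x}_0|^\eta$ for $y \in \mathbb{R}^n \setminus B_t(\tilde{x}_0)$. Split the tail into $B_{r_0}(\tilde{x}_0) \setminus B_t(\tilde{x}_0)$ and $\mathbb{R}^n \setminus B_{r_0}(\tilde{x}_0)$. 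On the inner annulus the integrand is controlled by $|y-\tilde{x}_0|^{(\eta - s)q - n}$ with $\eta > s$, yielding a $t$-independent finite integral. On the far region, $\tilde{u}$ is bounded by $\|\tilde{u}\|_{L^\infty(\mathbb{R}^n)} \leq C$ and $\int_{\mathbb{R}^n \setminus B_{r_0}}|y-\tilde{x}_0|^{-n-sq}\,dy \leq Cr_0^{-sq}$, bounded uniformly in $t$.

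For \cref{grad3}, apply the boundary Caccioppoli estimate \cref{bddcaccio} to $\tilde{u}$ on $B_\varrho^+(\tilde{x}_0)$. The tail term is already controlled by \cref{grad2}. For the $L^p$ average of $\tilde{u}$, use once more that $\tilde{u}(\tilde{x}_0) = 0$ combined with global H\"older regularity with exponent $\eta$ close to $1$: $|\tilde{u}(x)| \leq C_\eta |x-\tilde{x}_0|^\eta \leq C_\eta \varrho^\eta$ for $x \in B_\varrho^+(\tilde{x}_0)$, whence $\varrho^{-p}\fint_{B_\varrho^+}|\tilde{u}|^p\,dx \leq C_\eta \varrho^{(\eta-1)p}$. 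Given $\lambda > 0$, pick $\eta \in (s,1)$ with $1 - \eta \leq \lambda$ and conclude. The only delicate point is the bookkeeping of the three competing exponents in $\eta$, $\lambda$, and $s$; all other steps are direct consequences of the already established \cref{globalholder1} and \cref{boundarycacc}, so no new operator-specific work is needed.
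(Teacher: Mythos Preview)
Your proof is correct and follows essentially the same approach as the paper: all three estimates are reduced to the global H\"older regularity of \cref{globalholder1}, the vanishing $\tilde{u}(\tilde{x}_0)=0$, and (for \cref{grad3}) the boundary Caccioppoli inequality \cref{bddcaccio}. The only cosmetic difference is in \cref{grad2}: the paper splits via the triangle inequality through $\tilde{u}(\tilde{x}_0)$ into three pieces, whereas you directly bound $|\tilde{u}(y)-(\tilde{u})_{B_t}|\leq 2C_\eta|y-\tilde{x}_0|^\eta$ on $\mathbb{R}^n\setminus B_t$ before splitting into near and far regions---both routes are equivalent.
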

\begin{proof}
\cref{grad1} follows from \cref{globalholder1}. To establish \cref{grad2}, we estimate as
\begin{equation*}
    \begin{array}{l}
         \int_{\mathbb{R}^n \backslash B_t(\tilde{x}_0)} \frac{|\tilde{u}(y)-(\tilde{u})_{B_t(\tilde{x}_0)}|^q}{|y-\tilde{x}_0|^{n+s q}} d y \leq     \int_{\mathbb{R}^n \backslash B_{r_0}(\tilde{x}_0)} \frac{|\tilde{u}(y)-(\tilde{u})_{B_t(\tilde{x}_0)}|^q}{|y-\tilde{x}_0|^{n+s q}} d y +2^{q-1} \int_{B_{r_0}(\tilde{x}_0) \backslash B_t(\tilde{x}_0)} \frac{|\tilde{u}(y)-\tilde{u}(\tilde{x}_0)|^q}{|y-\tilde{x}_0|^{n+s q}} d y \smallskip\\\qquad\qquad\qquad\qquad\qquad\qquad\qquad+2^{q-1}|\tilde{u}(\tilde{x}_0)-(\tilde{u})_{B_t(\tilde{x}_0)}|^q \int_{B_{r_0}(\tilde{x}_0) \backslash B_t(\tilde{x}_0)} \frac{d y}{|y-\tilde{x}_0|^{n+s q}} \smallskip\\\leq C\left[r_0^{-s q}\|\tilde{u}\|_{L^{\infty}(\mathbb{R}^n)}^q+[\tilde{u}]_{C^\eta(B_{r_0}(\tilde{x}_0))}^q\left(\int_{B_{r_0}(\tilde{x}_0)} \frac{d y}{|y-\tilde{x}_0|^{n+(s-\eta) q}}+t^{\eta q} \int_{\mathbb{R}^n \backslash B_t(\tilde{x}_0)} \frac{d y}{|y-\tilde{x}_0|^{n+s q}}\right)\right] \smallskip\\\leq C_\eta(r_0^{-s q}+r_0^{(\eta-s) q}+t^{(\eta-s) q}),
    \end{array}
\end{equation*}
for every $\eta \in(s, 1)$ and some constant $C_\eta>0$ depending on $\eta$ and $\operatorname{data}$ as mentioned in \cref{globalholder1}. By choosing $\beta=(1+s) / 2$, we find the desired inequality \cref{grad2}.
Finally, to prove \cref{grad3}, we recall the boundary Caccioppoli inequality \cref{boundarycacc}. Given \cref{grad2}, we only need to estimate the first term on the right hand side of \cref{bddcaccio}. Noting that $\tilde{u}(\tilde{x}_0)=0$, we have 
\begin{equation*}
    \varrho^{-p} \fint_{B_\varrho^{+}(\tilde{x}_0)}|\tilde{u}|^p d x=\varrho^{-p} \fint_{B_\varrho^{+}(\tilde{x}_0)}|\tilde{u}(x)-\tilde{u}(\tilde{x}_0)|^p d x\leq C\varrho^{(\eta-1)p},
\end{equation*}
for some $C>0$ depending on the parameters mentioned in \cref{globalholder1}. This along with \cref{bddcaccio,grad2} gives
\begin{equation*}
     \fint_{B^+_{\varrho / 2}(\tilde{x}_0)}(|\nabla \tilde{u}|^2+\mu^2)^{p / 2} d x\leq C\varrho^{(\eta-1)p},
\end{equation*}
as $\varrho<1$. Choosing $\eta$ such that $1-\eta\leq \lambda$, we get \cref{grad3}.
\end{proof}
\begin{remark}{\label{oscila}}
In view of \cref{bdh2} and \cref{globalholder1}, it holds
\begin{equation*}
  \|\tilde{u}-\tilde{h}\|_{L^\infty(B^+_{\varrho/ 4}(\tilde{x}_0))}\leq    \underset{B^+_{\varrho/ 4}(\tilde{x}_0)}{\operatorname{osc}} \tilde{h}+ \underset{B^+_{\varrho/ 4}(\tilde{x}_0)}{\operatorname{osc}} \tilde{u}\leq 2 \underset{B^+_{\varrho/ 4}(\tilde{x}_0)}{\operatorname{osc}} \tilde{u}\leq 2[\tilde{u}]_{C^\eta(B^+_{\varrho/ 4}(\tilde{x}_0))}\varrho^\eta\leq C_\eta\varrho^\eta
\end{equation*}
for all $\eta\in(0,1)$
\end{remark}
\begin{lemma}{\label{gradhol2}}
Let $\tilde{u}$ and $\tilde{h}$ be solutions of \cref{p3} and \cref{boundarypharmonic} respectively. Then under the condition of \cref{holder4}, there exists constants $C>0$ and $\bar{\sigma}\in(0,1)$ such that 
    \begin{equation}{\label{gradientboundaryregu}}
          \fint_{B^+_{\varrho / 4}(\tilde{x}_0)}|\nabla\tilde{u}-\nabla\tilde{h}|^p dx\leq C\varrho^{\bar{\sigma}p}.
    \end{equation}
\end{lemma}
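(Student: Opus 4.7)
The strategy is to mirror the interior argument of \cref{higher2}, but starting from the monotonicity identity already derived in the proof of \cref{differenecboundary}, namely
\begin{equation*}
\fint_{B^+_{\varrho/4}(\tilde x_0)}|V_\mu(\nabla \tilde u)-V_\mu(\nabla \tilde h)|^2\,dx \le c\bigl[(\mathrm{I})+(\mathrm{II})+(\mathrm{III})\bigr],
\end{equation*}
and replacing every crude bound used there by the sharpened ones supplied by \cref{gradhol1}. I would treat each of the three contributions separately.

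For $(\mathrm{I})$, using the Hölder continuity \cref{growth5}$_2$ of $\tilde A$ in the $x$-variable, Hölder's inequality, \cref{bdh1} to pass from $\nabla\tilde h$ to $\nabla\tilde u$, and finally \cref{grad3}, one gets $|(\mathrm{I})|\le C_\lambda\varrho^{\bar\alpha-p\lambda}$ for every $\lambda>0$. For $(\mathrm{II})$, the bound $\tilde f\le cx_n^{-\sigma}$ from \cref{growth6} combined with $\fint_{B^+_{\varrho/4}(\tilde x_0)}x_n^{-\sigma}\,dx\lesssim \varrho^{-\sigma}$ (which holds because $\tilde x_0\in\Gamma_{r_0/2}$) and the pointwise estimate $\|\tilde u-\tilde h\|_{L^\infty(B^+_{\varrho/4})}\le C_\eta\varrho^\eta$ from \cref{oscila} yields $|(\mathrm{II})|\le C_\eta\varrho^{\eta-\sigma}$, a positive power of $\varrho$ as soon as $\eta>\sigma$. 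For $(\mathrm{III})$, I would split the double integral into a diagonal part over $B_{\varrho/2}\times B_{\varrho/2}$ and a tail part over $B_{\varrho/2}\times(\mathbb{R}^n\setminus B_{\varrho/2})$, exploiting that $\tilde u-\tilde h$ is supported in $B^+_{\varrho/4}$. The diagonal part is controlled by Hölder's inequality in $L^q$, the fractional seminorm estimate \cref{grad1}, the fractional Sobolev embedding \cref{fracemb} applied to $\tilde u-\tilde h$, and \cref{bdh1} together with \cref{grad3}, producing a bound $\lesssim_\lambda \varrho^{(\eta-s)(q-1)+1-s-\lambda}$; the tail part follows the scheme used for $(\mathrm{IV})$ in \cref{higher2}, relying on the inequality $|y-\tilde x_0|\le 2|x-y|$ valid for $x\in B^+_{\varrho/4}$, $y\in\mathbb{R}^n\setminus B_{\varrho/2}$, the tail bound \cref{grad2}, and the $L^\infty$ estimate from \cref{oscila}, giving $\lesssim_\eta\varrho^{\eta-s}$.

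Combining these four estimates and choosing $\eta\in(s,1)$ sufficiently close to $1$ so that $\eta>\sigma$ (possible because $\sigma<1$), together with $\lambda>0$ small enough, one forces every exponent appearing on the right to be strictly positive; this delivers
\begin{equation*}
\fint_{B^+_{\varrho/4}(\tilde x_0)}|V_\mu(\nabla\tilde u)-V_\mu(\nabla\tilde h)|^2\,dx\le C\varrho^{\sigma_1 p}
\end{equation*}
for some $\sigma_1\equiv\sigma_1(n,p,q,s,\bar\alpha,\sigma,\Omega)\in(0,1)$. When $p\ge 2$ the pointwise inequality $|V_\mu(z_1)-V_\mu(z_2)|^2\gtrsim|z_1-z_2|^p$ transfers this immediately into the claim \cref{gradientboundaryregu} with $\bar\sigma=\sigma_1$; for $p\in(1,2)$ one uses Hölder's inequality together with \cref{g1}, \cref{bdh1}, and \cref{grad3} to absorb the unfavourable negative power, at the cost of shrinking $\sigma_1$ to a smaller $\bar\sigma\in(0,1)$. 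Undoing the diffeomorphism $\mathcal{T}$ transfers the estimate back to $u$ and $h$.

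The main obstacle is the handling of the source term $(\mathrm{II})$: because $\tilde f$ genuinely blows up along $\Gamma_{r_0}$, the $L^n$-type estimate used in the interior counterpart \cref{higher2} is not at our disposal. The decisive ingredient is that $\tilde u$ and $\tilde h$ both vanish on the flat part of $\partial B^+_{\varrho/4}$, so the global Hölder regularity supplied by \cref{globalholder1} gives $\|\tilde u-\tilde h\|_{L^\infty}\le C_\eta\varrho^\eta$ for every $\eta<1$; this decay is precisely what compensates the $\varrho^{-\sigma}$ blow-up of the average of the singular weight $x_n^{-\sigma}$, and the compensation is only possible because the hypothesis $\sigma<1$ is strict, which is also what forces the range of admissible singularities in \cref{holder4}.
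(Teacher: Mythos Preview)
Your proposal is correct and follows essentially the same route as the paper: start from the monotonicity identity \cref{diffbdd1}, sharpen the estimates of $(\mathrm{I})$, $(\mathrm{II})$, $(\mathrm{III})$ via \cref{gradhol1} and \cref{oscila}, choose parameters to make all exponents positive, and then split the conclusion according to $p\ge 2$ or $p\in(1,2)$. The only cosmetic differences are that the paper uses two separate Hölder exponents $\tilde\eta\in(\sigma,1)$ and $\eta\in(s,1)$ (you collapse them into one), and records for the diagonal part the slightly weaker bound $\varrho^{(\eta-s)(q-1)-\lambda}$ rather than your $\varrho^{(\eta-s)(q-1)+1-s-\lambda}$; neither affects the argument.
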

\begin{proof}First note that, by \cref{bdh1} and \cref{grad3}, it holds
\begin{equation}{\label{gra}}
    \fint_{B_{\varrho / 4}^{+}(\tilde{x}_0)}|\nabla\tilde{u}-\nabla\tilde{h}|^p d x \leq C_\lambda \varrho^{-\lambda p},
\end{equation}
for every $\lambda>0$ and for some constant $C_\lambda>0$ depending also on $\lambda$. Now we estimate different terms present in the right hand side of \cref{diffbdd1}. The first term can be estimated like the homonym term in \cref{higher2} using \cref{growth5,bdh1,grad3,gra} and yields 
\begin{equation}{\label{last1}}
    \fint_{B^+_{\varrho / 4}(\tilde{x}_0)} (\tilde{A}(\tilde{x}_0, \nabla\tilde{u})-\tilde{A}(x, \nabla\tilde{u})) \cdot(\nabla \tilde{u}-\nabla \tilde{h})dx\leq C_\lambda \varrho^{\bar{\alpha}-\lambda p},
\end{equation}
for every $\lambda>0$. For, the second term, we use \cref{growth6} and \cref{oscila} to get
\begin{equation}{\label{last2}}
    \fint_{B^+_{\varrho / 4}(\tilde{x}_0)} \tilde{f} |(\tilde{u}-\tilde{h})|d x
   \leq C \|\tilde{u}-\tilde{h}\|_{L^\infty(B^+_{\varrho/ 4}(\tilde{x}_0))} \fint_{B^+_{\varrho / 4}(\tilde{x}_0)} \tilde{f} dx\leq C_{\tilde{\eta}}\varrho^{\tilde{\eta}}\fint_{B^+_{\varrho / 4}(\tilde{x}_0)} x_n^{-\sigma} dx\leq C_{\tilde{\eta},\sigma}\varrho^{\tilde{\eta}-\sigma},
\end{equation}for every $\tilde{\eta}\in(\sigma, 1)$. the third term can be broken as $|\mathrm{(III)}|=J_1+J_2$ where
\begin{equation*}
J_1:=\fint_{B_{\varrho / 2}(\tilde{x}_0)} \int_{B_{\varrho / 2}(\tilde{x}_0)}\frac{|\tilde{u}(x)-\tilde{u}(y)|^{q-1}|(\tilde{u}-\tilde{h})(x)-(\tilde{u}-\tilde{h})(y)|}{|x-y|^{n+s q}} d x d y,\end{equation*} and \begin{equation*}
J_2:=\int_{\mathbb{R}^n \backslash B_{\varrho/ 2}(\tilde{x}_0)}\left(\fint_{B_{\varrho / 2}(\tilde{x}_0)} \frac{|\tilde{u}(x)-\tilde{u}(y)|^{q-1}|(\tilde{u}-\tilde{h})(x)|}{|x-y|^{n+s q}} d x\right) d y .
\end{equation*}
The term $J_1$ can be estimated as the term $\mathrm{(III)}$ in \cref{higher2} using \cref{fracemb}, \cref{grad1}, \cref{gra} and yields
\begin{equation}{\label{last3}}
    J_1\leq C_{\eta,\lambda} \varrho^{(\eta-s)(q-1)-\lambda},
\end{equation}
for every $\eta\in(s,1)$ and $\lambda>0$. 
Finally, we estimate $J_2$. Since $\tilde{u}-\tilde{h}$ is supported in $B_{\varrho / 4}^{+}(\tilde{x}_0)$ and it holds
$
    {|y-\tilde{x}_0|}/{|x-y|} \leq 2 \text { for every } x \in B_{\varrho / 4}^{+}(\tilde{x}_0) \text { and } y \in \mathbb{R}^n \backslash B_{\varrho / 2}(\tilde{x}_0)
$
Therefore, we have
\begin{equation}{\label{I_4}}
    \begin{array}{rcl}
         J_2&\leq & C\int_{\mathbb{R}^n \backslash B_{\varrho / 2}(\tilde{x}_0)}\left(\fint_{B_{\varrho/ 4}^{+}(\tilde{x}_0)} \frac{\left(| \tilde{u}(x)-(\tilde{u})_{B_{\varrho / 2}(\tilde{x}_0)}|^{q-1}+| \tilde{u}(y)-(\tilde{u})_{B_{\varrho / 2}(\tilde{x}_0)}|^{q-1}\right)|\tilde{u}(x)-\tilde{h}(x)|}{|y-\tilde{x}_0|^{n+s q}} d x\right) d y\smallskip\\&\leq &C \varrho^{-s q} \fint_{B_{\varrho / 4}^{+}(\tilde{x}_0)}|\tilde{u}(x)-(\tilde{u})_{B_{\varrho / 2}(\tilde{x}_0)}|^{q-1}|\tilde{u}(x)-\tilde{h}(x)| d x \smallskip\\&&+C\left(\int_{\mathbb{R}^n \backslash B_{\varrho / 2}(\tilde{x}_0)} \frac{|\tilde{u}(y)-(\tilde{u})_{B_{\varrho/ 2}(\tilde{x}_0)}|^{q-1}}{|y-\tilde{x}_0|^{n+s q}} d y\right)\left(\fint_{B_{\varrho / 4}^{+}(\tilde{x}_0)}|\tilde{u}(x)-\tilde{h}(x)| d x\right).
    \end{array}
\end{equation}
By Hölder's inequality, \cref{globalholder1} and \cref{oscila} we obtain
\begin{equation*}
    \begin{array}{l}
\quad\varrho^{-s q} \fint_{B_{\varrho / 4}^{+}(\tilde{x}_0)}|\tilde{u}(x)-(\tilde{u})_{B_{\varrho / 2}(\tilde{x}_0)}|^{q-1}|\tilde{u}(x)-\tilde{h}(x)| d x \\
 \leq C \varrho^{-s q}\left(\fint_{B_{\varrho / 4}^{+}(\tilde{x}_0)}|\tilde{u}(x)-(\tilde{u})_{B_{\varrho / 2}(\tilde{x}_0)}|^{q}\right)^{\frac{q-1}{q}}\left(\fint_{B_{\varrho / 4}^{+}(\tilde{x}_0)}|\tilde{u}(x)-\tilde{h}(x)|^q d x\right)^{\frac{1}{q}}\smallskip \\
 \leq C \varrho^{-s q+\eta(q-1)}[\tilde{u}]_{C^\eta(B_{\varrho / 2}(\tilde{x}_0))}^{q-1}\|\tilde{u}(x)-\tilde{h}(x)\|_{L^{\infty}(B_{\varrho/ 4}^{+}(\tilde{x}_0))} \leq C_\eta \varrho^{(\eta-s) q},
\end{array}
\end{equation*}
whereas, by Hölder's inequality, \cref{Ming}, \cref{grad2} and \cref{oscila}, we get
\begin{equation*}
    \begin{array}{l}
         \quad\left(\int_{\mathbb{R}^n \backslash B_{\varrho / 2}(\tilde{x}_0)} \frac{|\tilde{u}(y)-(\tilde{u})_{B_{\varrho/ 2}(\tilde{x}_0)}|^{q-1}}{|y-\tilde{x}_0|^{n+s q}} d y\right)\left(\fint_{B_{\varrho / 4}^{+}(\tilde{x}_0)}|\tilde{u}(x)-\tilde{h}(x)| d x\right)\smallskip\\\leq C \varrho^{-s}\left(\int_{\mathbb{R}^n \backslash B_{\varrho / 2}(\tilde{x}_0)} \frac{|\tilde{u}(y)-(\tilde{u})_{B_{\varrho/ 2}(\tilde{x}_0)}|^{q}}{|y-\tilde{x}_0|^{n+s q}} d yd y\right)^{\frac{q-1}{q}}\|\tilde{u}(x)-\tilde{h}(x)\|_{L^{\infty}(B_{\varrho / 4}^{+}(\tilde{x}_0))} \leq C_\eta\varrho^{\eta-s}.
    \end{array}
\end{equation*}
Merging the estimates in the last two displays and using in \cref{I_4}, we get 
\begin{equation}{\label{last4}}
    J_2\leq C_\eta \varrho^{\eta-s},
\end{equation}
for every $\eta\in(s,1)$. We now merge \cref{last1,last2,last3,last4} to get from \cref{diffbdd1}
\begin{equation}{\label{last5}}
     \fint_{B^+_{\varrho / 4}(\tilde{x}_0)}|V_\mu(\nabla \tilde{u})-V_\mu(\nabla \tilde{h})|^2 dx\leq C_{\eta,\tilde{\eta},\lambda,\sigma}(\varrho^{\bar{\alpha}-\lambda p}+\varrho^{\tilde{\eta}-\sigma}+ \varrho^{(\eta-s)(q-1)-\lambda}+\varrho^{\eta-s}),
\end{equation}
for every $\eta \in(s, 1)$, $\tilde{\eta}\in(\sigma,1)$ and $\lambda>0$. We now choose the constants $\lambda$, $\tilde{\eta}$ and $\eta$ as follows:
\begin{equation*}
    \eta:=\frac{1+s}{2}, \quad \tilde{\eta}=\frac{1+\sigma}{2} \quad \text { and } \quad \lambda:=\min \left\{\frac{\bar{\alpha}}{2 p},  \frac{(1-s)(q-1)}{4}\right\},
\end{equation*}
so that \cref{last5} becomes just
\begin{equation}{\label{last6}}
     \fint_{B^+_{\varrho / 4}(\tilde{x}_0)}|V_\mu(\nabla \tilde{u})-V_\mu(\nabla \tilde{h})|^2 dx\leq C_\sigma\varrho^{\sigma_0 p},
\end{equation}
with $\sigma_0:=\frac{1}{p} \min \left\{\frac{\bar{\alpha}}{2}, \frac{(1-s)(q-1)}{4}, \frac{1-s}{2},\frac{1-\sigma}{2}\right\}$.
We are now in position to conclude, using \cref{last6} in combination with \cref{g1}. When $p \geq 2$, estimate \cref{gradientboundaryregu} follows immediately with $\bar{\sigma}=\sigma_0$. On the other hand, when $p \in(1,2)$ we estimate using \cref{grad3,bdh1}, obtaining
\begin{equation*}
\begin{array}{rcl}
    \fint_{B_{\varrho / 4}^{+}(\tilde{x}_0)}|\nabla \tilde{u}-\nabla\tilde{h}|^p d x &\leq& C\left( \fint_{B^+_{\varrho / 4}(\tilde{x}_0)}|V_\mu(\nabla \tilde{u})-V_\mu(\nabla \tilde{h})|^2 dx\right)^{p/2}\left(\fint_{B_{\varrho / 4}^{+}(\tilde{x}_0)}(|\nabla \tilde{u}|^2+|\nabla\tilde{h}|^2+\mu^2)^{p/2}\right)^{\frac{2-p}{2}}\smallskip\\&\leq& C_\lambda \varrho^{\frac{\sigma_0 p-(2-p) \lambda}{2} p} ,
    \end{array}
\end{equation*}
 for every $\lambda>0$. Therefore, by choosing $\lambda:=\frac{\sigma_0 p}{2(2-p)}$, we obtain the desired estimate \cref{gradientboundaryregu} with $\bar{\sigma}=\frac{\sigma_0 p}{4}$.
\end{proof}
\subsection*{Proof of \cref{holder4}} Once \cref{gradientboundaryregu} is established, one can follow the same calculations of the proof of \cref{holder3} using \cref{av2,bdh1,bdh4,grad3}. Let $t \in\left(0, \frac{\varrho}{8}\right]$, with $\varrho \in(0, \frac{r_0}{4}]$. For every $\tilde{x}_0 \in \Gamma_{r_0 / 2}$, we have
\begin{equation*}
    \fint_{B_t^{+}(\tilde{x}_0)}|\nabla \tilde{u}-(\nabla \tilde{u})_{B_t^{+}(\tilde{x}_0)}|^p d x\leq C_{\sigma,\lambda}\left\{\left(\frac{\varrho}{t}\right)^n \varrho^{\bar{\sigma} p}+\left(\frac{t}{\varrho}\right)^{\bar{\alpha}_0 p} \varrho^{-\lambda p}\right\},
\end{equation*}
for every $\lambda>0$. By choosing $t:=\frac{\varrho^{1+\frac{\bar{\sigma}p}{2 n}}}{8}$ and $\lambda:=\frac{\bar{\alpha}_0 \bar{\sigma} p}{4 n}$, we then obtain
\begin{equation*}
    \fint_{B_t^{+}(\tilde{x}_0)}|\nabla \tilde{u}-(\nabla \tilde{u})_{B_t^{+}(\tilde{x}_0)}|^p d x\leq C t^{\gamma p} \quad \text { for every } t \in\left(0, \varrho_0\right),
\end{equation*}
with $\varrho_0:=\frac{1}{8}\left(\frac{r_0}{4}\right)^{1+\frac{\bar{\sigma}p}{2 n}}$ and $\gamma:=\min \left\{\frac{n \bar{\sigma}}{2 n+\bar{\sigma} p}, \frac{\bar{\alpha}_0 \bar{\sigma} p}{2(2 n+\bar{\sigma} p)}\right\}$. This concludes up to relabeling $t$ as $\varrho$
\begin{equation}{\label{finallast}}
\sup _{\tilde{x}_0 \in \Gamma_{r_0 / 2}}  \fint_{B_\varrho^{+}(\tilde{x}_0)}|\nabla \tilde{u}-(\nabla \tilde{u})_{B_\varrho^{+}(\tilde{x}_0)}|^p d x \leq C \varrho^{\gamma p}, \quad \forall \varrho \in(0, \varrho_0), \text { for some }\varrho_0\in(0,1).
\end{equation}
By combining the interior Campanato estimate of [\citealp{MG}, Theorem 3, Theorem 5] and the boundary estimate \cref{finallast}, the result follows via a standard covering argument and Campanato's characterization of Hölder continuity.
\subsection*{Proof of \cref{prtb}}
 As for $p\geq n$, embedding results imply $u\in L^t(\Omega)$ for all $t\in[1,\infty)$ and hence $f\in L^d(\Omega)$ for some $d>n$, then the gradient regularity follows from [\citealp{antonini}, Theorem 1.1]. Therefore, without loss of generality, we assume $p<n$. The idea is to show $u\in L^\infty(\Omega)$. By [\citealp{MG}, Proposition 2.1], it is enough to show that $f\in L^t(\Omega)$ for some $t>n/p$. Our method is by ideas found in \cite{Guedda}.
\smallskip \\\textbf{Step 1:} For $k>0$ and $t \geq p$ we define two $C^1$ functions $h$ and $\phi$ on $\mathbb{R}$ by
\begin{equation}{\label{hhh}}
    h(r)= \begin{cases}\operatorname{sign}(r)|r|^{t / p} & \text { for }|r| \leq k, \\ \operatorname{sign}(r)\{(t / p) k^{t / p-1}|r|+(1-t / p) k^{t / p}\} & \text { for }|r|>k,\end{cases}
\end{equation}
and \begin{equation}{\label{phii}}
    \phi(r)=\int_0^r\left(h^{\prime}(\nu)\right)^p d\nu.
\end{equation} Clearly $\phi$ is non-decreasing as $(h^{\prime})^p$ is non-negative. As $\phi^{\prime}$ is bounded, $\phi(u) \in W_0^{1, p}(\Omega)$ whenever $u\in W^{1,p}_0(\Omega)$. Let $v\in W^{1,p}_0(\Omega)$ be any weak solution to 
\begin{equation*}
        -\Delta_p v+(-\Delta)^s_q v+K(x)|v|^{p-2}v=F(x) \text{ in }\Omega; \quad v=0\text{ in }\mathbb{R}^n\backslash\Omega,
    \end{equation*} 
    where $K,F\in L^{n/p}(\Omega)$. We show $v\in L^t(\Omega)$ for every $t\in [1,\infty)$. Taking $\phi(v)$ as a test function in the weak formulation, we get
\begin{equation}{\label{coroo1}}
\begin{array}{l}
       \int_\Omega|\nabla v|^p(h^{\prime}(v))^p dx+\int_{\mathbb{R}^{n}}\int_{\mathbb{R}^{n}}\frac{|v(x)-v(y)|^{q-2}(v(x)-v(y))(\phi(v(x))-\phi(v(y)))}{|x-y|^{n+sq}}dxdy\\\qquad+\int_\Omega K \phi(v) v|v|^{p-2}dx =\int_\Omega F \phi(v)dx  .
\end{array}
\end{equation}
By [\citealp{parini}, Lemma A.2]
\begin{equation*}
    \int_{\mathbb{R}^{n}} \int_{\mathbb{R}^{n}}\frac{|v(x)-v(y)|^{q-2}(v(x)-v(y))(\phi(v(x))-\phi(v(y)))}{|x-y|^{n+sq}}dxdy\geq   \int_{\mathbb{R}^{n}}\int_{\mathbb{R}^{n}}\frac{|G(u(x))-G(v(y))|^{q}}{|x-y|^{n+sq}}dxdy\geq 0,
\end{equation*}
where $G(t)=\int_0^t\phi^\prime(\tau)^{\frac{1}{q}}d\tau$. Therefore, \cref{coroo1} implies 
\begin{equation}{\label{hol72}}
\begin{array}{l}
       \int_\Omega|\nabla v|^p(h^{\prime}(v))^p dx+\int_\Omega K \phi(v) v|v|^{p-2}dx \leq\int_\Omega F \phi(v)dx  .
\end{array}
\end{equation}
For $m>0$ set $\Omega_m=\{x \in \Omega:|K(x)|>m\}$. We then have
\begin{equation}{\label{hol73}}
\left|\int_\Omega K \phi(v) v|v|^{p-2}dx\right|\leq m  \int_{\Omega \backslash \Omega_m}|v|^{p-1}|\phi(v)|d x +\|K \|_{L^{n/ p}(\Omega_m)}\left(\int_\Omega| | v|^{p-1} \phi(v)|^{n/(n-p)} d x\right)^{\frac{n-p}{n}}   . 
\end{equation}
Moreover $\int_\Omega|\nabla v|^p(h^{\prime}(v))^p d x=\int_\Omega|\nabla(h(v))|^p dx \geqq S\left(\int_\Omega|h(v)|^{p^*} d x\right)^{p/{p *}}$. 
From the definition of $h$ and $\phi$ there exists $C$ independent of $k$ such that
\begin{equation}{\label{hol74}}
    ||r|^{p-1} \phi(r)| \leq C|h(r)|^p, \quad|\phi(r)| \leq C|h(r)|^{p(t+1-p) / t},
\end{equation}
for any $r$. If we choose $m$ such that $\|K\|_{L^{n / p}(\Omega_m)} \leq \frac{S}{2 C}$, we obtain from \cref{hol72} using \cref{hol73,hol74}
\begin{equation}{\label{hol75}}
    \frac{S}{2}\left(\int_\Omega|h(v)|^{p^*} d x\right)^{p/{p *}} \leq C m \int_\Omega|h(v)|^p \mathrm{~d} x+C \|F\|_{L^{n / p}(\Omega)}\left(\int_\Omega|h(v)|^{p^*(t+1-p) / t} d x\right)^{p / p^*}
\end{equation}
and
\begin{equation}{\label{hol76}}
\left(\int_\Omega|h(v)|^{p^*(t+1-p) / t} d x\right)^{p / p^*} \leq|\Omega|^{p(p-1) / p^*t}\left(\int_\Omega|h(v)|^{p^*} d x\right)^{(t+1-p) p / t p^*} .    
\end{equation}
If we assume now that $v \in L^t(\Omega)$ then $v\in L^{t n/(n-p)}(\Omega)$ (by \cref{hol75,hol76}) and
\begin{equation*}
    \frac{S}{2}\|v\|_{L^{tn /(n-p)}(\Omega)}^t \leq C m\|v\|_{L^t(\Omega)}^t+C|\Omega|^{p(p-1) / p^* t}\|F\|_{L^{n/ p}(\Omega)}\|v\|_{L^{tn/ n-p}(\Omega)}^{t+1-p} .
\end{equation*}
With Young's inequality, we obtain
\begin{equation*}
   \|v\|_{L^{tn /(n-p)}(\Omega)} \leq C_1\|v\|_{L^t(\Omega)}+C_2\|F\|_{L^{n /p}(\Omega)}^{\frac{1}{p-1}},
\end{equation*}
where $C_1$ and $C_2$ depend on $m, t, n, p$ and $|\Omega|$. Define now
\begin{equation*}
    t_0=p, \quad t_l=\left(\frac{n}{n-p}\right)^{l} p, \quad l \in \mathbb{N}^*
\end{equation*}
and get $v \in L^{t_l}(\Omega)$ for any $l \in \mathbb{N}^*$. Therefore $v\in L^t(\Omega)$ for all $t\in[1,\infty)$.
\smallskip\\\textbf{Step 2:} Set $
    K(x)=\operatorname{sign}(u) f(x, u) /(1+|u|^{p-1}) .$ Then $K$ satisfies
\begin{equation*}
    |K(x)| \leq C_0(1+|u|^{p^*-1}) /(1+|u|^{p-1}) \leq M_1|u|^{p^*-p}+M_2,
\end{equation*}
$M_1, M_2$ being constants. As $p^*-p=p^2 /(n-p)$ and $u \in L^{p^*}(\Omega)$ we deduce that $K \in L^{n/ p}(\Omega)$. Then \cref{unif} can be written in the form\begin{equation*}
    -\Delta_p u+(-\Delta )^s_q u=K(x)|u|^{p-2} u+\operatorname{sign}(u) K(x) .
\end{equation*}
By step 1, we deduce that $u \in \bigcap_{1 \leq t<+\infty} L^t(\Omega)$. Hence $K|u|^{p-2} u+\operatorname{sign}(u) K \in \bigcap_{1 \leq t<+\infty} L^t(\Omega)$, and from [\citealp{MG}, proposition 2.1] we deduce that $u \in L^{\infty}(\Omega)$. Finally by [\citealp{antonini}, Theorem 1.1] we get the $C^{1, \gamma}$ regularity of $u$ in $\overline{\Omega}$.
\section{Singular problems}{\label{singu}}
We now turn to singular problems. For this, first we consider the general problem for $1<q\leq p<\infty$
\begin{equation}{\label{problemg}}
\begin{array}{c}
\mathcal{L}u=f(x)u^{-\delta}\text { in } \Omega, u>0 \text{ in } \Omega,u=0  \text { in }\mathbb{R}^n \backslash \Omega;
\end{array}
\end{equation}
where $\mathcal{L}$ is given by \cref{general} and satisfies the growth conditions \cref{growth1,growth2,growth3,grrrr} with $\bar{\alpha}=1,\mu=0$ in \cref{growth1}, $\delta>0$ and $f$ satisfies the condition \cref{condf} for some $\beta\geq 0$. We show the existence of weak solutions and then take the particular operator $\mathcal{L}u=-\Delta_pu+(-\Delta)_q^s u$ to prove \cref{mainexis} to \cref{holder6}. As a result of \cref{growth1} it holds (see [\citealp{MG}, section 2.3]) for some $c>0$
\begin{equation}{\label{growth8}}
    |z|^p\leq cA(x,z)\cdot z
\end{equation}
for all $x\in\Omega$ and $z\in \mathbb{R}^n$. We follow the approximation method in \cite{orsina}. To this end, we investigate the approximating problem
\begin{equation}{\label{approximated}}
    -\operatorname{div} A(x, \nabla u(x))  +\text { P.V. } \int_{\mathbb{R}^n} \Phi(u(x)-u(y)) \frac{B(x, y)}{|x-y|^{n+s q}} d y = f_\ep(x)(u^{+}(x)+\ep)^{-\delta} \text { in } \Omega,\quad u=0 \text { on } \mathbb{R}^n\backslash\Omega,
\end{equation}
where \begin{equation*}
    f_\epsilon(x):= \begin{cases}\left(f(x)^{\frac{-1}{\beta}}+\epsilon ^\frac{p-1+\delta}{p-\beta}\right)^{-\beta} & \text { if } f(x)>0 \\ 0 & \text { otherwise. }\end{cases}
\end{equation*}
It is easy to observe that, for $\beta<p$, the function $f_\epsilon$ increases as $\epsilon \downarrow 0$ and $f_\epsilon \leq f$ for all $\epsilon>0$.
\begin{lemma}{\label{aproexist}}
  (a)  For every $\ep>0$, problem \cref{approximated} admits a positive weak solution $u_\ep \in W_0^{1, p}(\Omega) \cap L^{\infty}(\Omega)$.
(b) For $\beta<p$, the $u_\ep$ increases as $\epsilon \downarrow 0$ and $u_\ep$ is unique for every $\ep>0$.
(c) For every $\Omega^{\prime} \subset\subset \Omega$, there exists a positive constant $C_{\Omega^{\prime}}$ such that $u_\ep \geq C_{\Omega^{\prime}}>0$ in $\Omega^{\prime},\,\forall\ep\in(0,1]$. More specifically, $u_\ep \geq Cd(x,\partial\Omega)>0$ in $\Omega^{\prime},\, \forall\ep\in(0,1]$.
\end{lemma}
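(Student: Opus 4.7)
The plan is to apply Schauder's fixed point theorem in $L^p(\Omega)$. For $v\in L^p(\Omega)$, define $T(v):=u$ as the unique weak solution in $W_0^{1,p}(\Omega)$ of $\mathcal{L}u=F_\ep(v):=f_\ep(v^{+}+\ep)^{-\delta}$ in $\Omega$ with $u=0$ outside $\Omega$. Since $\beta<p$, the regularization $f_\ep$ is bounded by $\ep^{-\beta(p-1+\delta)/(p-\beta)}$ pointwise, hence $F_\ep(v)$ is controlled in $L^\infty(\Omega)$ by an $\ep$-dependent constant independent of $v$. Existence and uniqueness of $T(v)$ then follow from \cref{2.13} applied to the strictly monotone, coercive, demicontinuous operator $\mathcal{L}\colon W_0^{1,p}(\Omega)\to W_0^{1,p}(\Omega)^{*}$. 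Testing with $u$ itself and using the coercivity \cref{growth8} combined with \cref{poooin} yields $\|T(v)\|_{W_0^{1,p}}\leq C(\ep)$ uniformly in $v$. The compact embedding $W_0^{1,p}\hookrightarrow L^p$ (\cref{Sobolev embedding}) together with dominated convergence applied to $F_\ep$ makes $T\colon L^p\to L^p$ continuous and compact, so Schauder produces a fixed point $u_\ep$. Testing the fixed-point equation with $-u_\ep^{-}\in W_0^{1,p}$ and invoking \cref{growth1,growth2,growth3} along with the usual monotonicity of the fractional $q$-term (as used in the proof of \cref{caccioppoli}) forces $u_\ep^{-}\equiv 0$, so $u_\ep\geq 0$. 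The source $f_\ep(u_\ep+\ep)^{-\delta}$ is then in $L^\infty(\Omega)$, and \cref{local boundedness} combined with a Moser iteration up to the boundary (made possible by $u_\ep\in W_0^{1,p}$) gives $u_\ep\in L^\infty(\Omega)$.

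\textbf{Part (b): monotonicity and uniqueness.} For $0<\ep_1<\ep_2\leq 1$, set $u_i:=u_{\ep_i}$ and subtract the two equations, testing with $w:=(u_2-u_1)^{+}\in W_0^{1,p}(\Omega)$. The monotonicity of $A$ in its second argument (\cref{growth1}) and the standard monotonicity of the fractional $q$-operator (as exploited in \cref{caccioppoli} and \cite{parini}) force the left side to be non-negative. On $\{u_2>u_1\}$ one has $u_2+\ep_2>u_1+\ep_1\geq \ep_1>0$, so $(u_2+\ep_2)^{-\delta}<(u_1+\ep_1)^{-\delta}$, and moreover $f_{\ep_2}\leq f_{\ep_1}$ pointwise; hence the right side is $\leq 0$. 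Thus $w\equiv 0$ and $u_2\leq u_1$, proving monotonicity. The identical argument applied to two solutions $u,v$ at a fixed $\ep$ with $w=(u-v)^{+}$ yields uniqueness, since the decrease of $t\mapsto(t+\ep)^{-\delta}$ alone makes the right side non-positive on $\{u>v\}$.

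\textbf{Part (c): strict positivity.} By monotonicity from (b), $u_\ep\geq u_1$ for all $\ep\in(0,1]$, so it suffices to show $u_1\geq C\,d(\cdot,\partial\Omega)$ in $\Omega$. With $M:=\|u_1\|_\infty$, the condition \cref{condf} gives
\begin{equation*}
\mathcal{L}u_1 \;=\; f_1(u_1+1)^{-\delta} \;\geq\; (M+1)^{-\delta} f_1 \;\geq\; c_0\,\chi_{\Omega_\rho\setminus\Omega_{\rho/2}} \;=:\; h,
\end{equation*}
with $h\geq 0$ non-trivial. The plan is to construct a subsolution $\underline{u}\in W_0^{1,p}(\Omega)$ satisfying $\mathcal{L}\underline{u}\leq h$ in $\Omega$ together with $\underline{u}\geq C\,d(\cdot,\partial\Omega)$, and then apply a weak comparison principle (an adaptation of the argument in (b), valid since both $u_1$ and $\underline{u}$ belong to $W_0^{1,p}(\Omega)$) to conclude $u_1\geq\underline{u}$. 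The candidate $\underline{u}$ can be realized either as $\sigma$ times the first eigenfunction of $-\Delta_p+(-\Delta)^s_q$, whose $C^{1,\gamma}(\overline\Omega)$ regularity and consequent linear boundary decay follow from \cref{holder4}, or as the torsion-type solution of $\mathcal{L}w=h$ together with the Hopf boundary lemma for the mixed operator available under the reinforced regularity \cref{grrrr} on $B$ and $\Phi$, in the spirit of \cite{aroranodea} and \cite{giacomonipq}. The main obstacle is precisely this Hopf-type step: the classical radial-barrier construction must be adapted to the $(p,q)$ mixed local--nonlocal operator $\mathcal{L}$, and it is this adaptation that motivates imposing the extra hypotheses \cref{grrrr} at the outset of the singular section.
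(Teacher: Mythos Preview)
Your proposal is essentially correct, but it diverges from the paper in two places worth noting.

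\textbf{Part (a).} The paper does not use Schauder. Instead it builds a single nonlinear operator $J_\ep:W_0^{1,p}(\Omega)\to W_0^{1,p}(\Omega)^*$ that already contains the term $-\int_\Omega f_\ep(v^{+}+\ep)^{-\delta}\phi$, checks that $J_\ep$ is coercive, demicontinuous and monotone (the last because $t\mapsto (t^{+}+\ep)^{-\delta}$ is nonincreasing), and invokes the surjectivity theorem \cref{2.13} directly. Your Schauder route is a legitimate alternative and, since $f_\ep$ is uniformly bounded, the compactness and continuity of the solution map are easy; the trade--off is that you need an extra fixed--point layer, while the paper handles existence in one stroke but must verify monotonicity of the full operator. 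Non-negativity and $L^\infty$ are then obtained the same way in both approaches.

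\textbf{Part (b).} Your argument coincides with the paper's.

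\textbf{Part (c).} Here you take an unnecessary detour. The paper does not build a separate subsolution: since $f_1(u_1+1)^{-\delta}\in L^\infty(\Omega)$, \cref{holder4} gives $u_1\in C^{1,\gamma}(\overline\Omega)$ outright, and then the strong maximum principle and Hopf lemma of \cite{antonini} (Proposition~6.1 and Corollary~1.3), valid for the general operator $\mathcal{L}$ under \cref{growth1}--\cref{growth3} and \cref{grrrr}, yield $u_1>0$ in $\Omega$ and $u_1\ge Cd$ directly. So the ``main obstacle'' you flag---adapting Hopf to the mixed $(p,q)$ operator---is already available in the literature the paper cites, and there is no need for an auxiliary eigenfunction or torsion function. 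Your barrier construction would work, but it re-derives by comparison what can be read off from $u_1$ itself.
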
 
\begin{proof}
Let $V=W_0^{1, p}(\Omega)$ with the norm $\|u\|=\|\nabla u\|_{L^{ p}(\Omega)}
$ and let $V^{\prime}$ be the dual of $V$. Note [\citealp{MG}, Section 2.5] and for every $\ep>0$, define $T: V \rightarrow V^{\prime}$ by
\begin{equation*}
\begin{array}{rcl}
    \langle T(v), \phi\rangle&=&\int_\Omega A(x, \nabla v(x)) \cdot \nabla \phi(x) d x+\int_{\mathbb{R}^n} \int_{\mathbb{R}^n} \frac{|v(x)-v(y)|^{q-2}(v(x)-v(y))(\phi(x)-\phi(y)) }{|x-y|^{n+sq}}d xdy\\&&-\int_\Omega f_\ep(x)(v^{+}(x)+\ep)^{-\delta} \phi(x) d x,
    \end{array}
\end{equation*}for every $v, \phi \in V$. By \cref{growth1}, $T$ is \textbf{well defined}. Using \cref{growth1}, Hölder's inequality, \cref{embedding2} and \cref{Sobolev embedding},
\begin{equation*}
    |\langle T(v), \phi\rangle|  \leq C \int_{\Omega}|\nabla v|^{p-1}|\nabla \phi| d x+C[v]_{s, q}^{q-1}[\phi]_{s, q}+C(\ep)\int_{\Omega}|\phi| d x \leq C(\|v\|^{p-1}\|\phi\|+[v]_{s, q}^{q-1}\|\phi\|+\|\phi\|) \leq C|\phi|,
\end{equation*}
for some constant $C$ depending on $\ep$. Now by Hölder's inequality, \cref{growth8} and \cref{Sobolev embedding}, we obtain
\begin{equation*}
    \langle T(v), v\rangle  =\int_\Omega A(x, \nabla v) \cdot \nabla v \,d x+\int_{\mathbb{R}^n} \int_{\mathbb{R}^n} \frac{|v(x)-v(y)|^{q} }{|x-y|^{n+sq}}d xdy-\int_\Omega f_\ep(v^{+}(x)+\ep)^{-\delta} v \,d x \geq C_1\|v\|^p-C\|v\| .
\end{equation*}
Since $1<p<\infty$, we may conclude that $T$ is \textbf{coercive}. We now show $T$ is demicontinuous. Let $v_k \rightarrow v$ in the norm of $V$ as $k \rightarrow \infty$. Then $\nabla v_k \rightarrow \nabla v$ in $L^p(\Omega)$ as $k \rightarrow \infty$. Therefore, up to a subsequence, still denoted by $v_k$, we have $v_k(x) \rightarrow v(x)$ and $\nabla v_k(x) \rightarrow \nabla v(x)$ pointwise for almost every $x \in \Omega$. Since the function $A(x, \cdot)$ is continuous in the second variable, we have
$
A(x, \nabla v_k) \rightarrow A(x, \nabla v)
$
pointwise for almost every $x \in \Omega$ as $k \rightarrow \infty$. By \cref{growth1} and the norm boundedness of the sequence $\{v_k\}_{k \in \mathbb{N}}$, we obtain
\begin{equation*}
    \|A(x, \nabla v_k)\|_{L^{p^{\prime}}(\Omega)}^{p^{\prime}}=\int_{\Omega}|A(x, \nabla v_k)|^{p^{\prime}} d x \leq C\int_{\Omega}|\nabla v_k(x)|^p d x \leq C,
\end{equation*}
for some positive constant $C$ independent of $k$. Therefore, up to a subsequence
$
A(x, \nabla v_k(x))\rightharpoonup A(x, \nabla v(x))
$
weakly in $L^{p^\prime}(\Omega)$ as $k \rightarrow \infty$, and since the weak limit is independent of the choice of the subsequence, the above weak convergence actually holds for the original sequence. As $\phi \in V$ implies that $\nabla \phi \in L^p(\Omega)$, by the weak convergence, we obtain
\begin{equation}{\label{harpoo}}
    \lim _{k \rightarrow \infty} \int_\Omega A(x, \nabla v_k(x)) \cdot \nabla \phi(x) d x=\int_\Omega A(x, \nabla v(x)) \cdot \nabla \phi(x) d x .
\end{equation}
Since $\{v_k\}_{k \in \mathbb{N}}$ is a bounded sequence in $V$ and $q\leq p$, \cref{embedding2} implies that $\left\{\frac{|v_k(x)-v_k(y)|^{q-2}(v_k(x)-v_k(y))}{|x-y|^{\frac{(n+sq)(q-1)}{q}}}\right\}$ is a bounded sequence in $L^{q^{\prime}}(\mathbb{R}^n \times \mathbb{R}^n)$. Since $\phi \in V$, we have
$
(\phi(x)-\phi(y)) |x-y|^{-\frac{n+sq}{q}} \in L^q(\mathbb{R}^n\times \mathbb{R}^n).
$
Therefore, by the weak convergence, we have
\begin{equation}{\label{harpoo1}}
\begin{array}{c}
      \lim _{k \rightarrow \infty} \int_{\mathbb{R}^n} \int_{\mathbb{R}^n} \frac{|v_k(x)-v_k(y)|^{q-2}(v_k(x)-v_k(y))(\phi(x)-\phi(y)) }{|x-y|^{n+sq}}d xdy\\=\int_{\mathbb{R}^n} \int_{\mathbb{R}^n}  \frac{|v(x)-v(y)|^{q-2}(v(x)-v(y))(\phi(x)-\phi(y)) }{|x-y|^{n+sq}}d xdy.
\end{array}
\end{equation}
Finally, as $v_k(x) \rightarrow v(x)$ pointwise almost everywhere in $\Omega$ and
\begin{equation*}
    \left|f_\ep(x)(v_k^{+}(x)+\ep)^{-\delta} \phi(x)-f_\ep(x)(v^{+}(x)+\ep)^{-\delta} \phi(x)\right| \leq C(\ep)|\phi(x)|,
\end{equation*}
for every $\phi \in V$. By the dominated convergence theorem, we have
\begin{equation}{\label{harpoo2}}
    \lim _{k \rightarrow \infty} \int_{\Omega} f_\ep(x)(v_k^{+}(x)+\ep)^{-\delta} \phi(x)=\int_{\Omega}f_\ep(x)(v^{+}(x)+\ep)^{-\delta} \phi(x),
\end{equation}
for every $\phi \in V$ and $\ep>0$. Merging \cref{harpoo,harpoo1,harpoo2} it follows that
\begin{equation*}
    \lim _{k \rightarrow \infty}\left(T\left(v_k\right), \phi\right\rangle=\langle T(v), \phi\rangle,
\end{equation*}
for every $\phi \in V$, and hence $T$ is \textbf{demicontinuous}. Finally, we show the monotonicity of $T$. Let $v_1, v_2 \in V$. Using \cref{p case} and \cref{g2} it is easy to observe that
\begin{equation*}
    \left\langle T(v_1)-T(v_2), v_1-v_2\right\rangle\geq -\int_{{\Omega}} f_\ep(x)\left((v_1^{+}(x)+\ep)^{-\delta}-(v_2^{+}(x)+\ep)^{-\delta}\right)(v_1(x)-v_2(x)) d x.
\end{equation*}
One can easily check that the integral present on the right is nonpositive. This shows that $T$ is \textbf{monotone}. As a result, by [\citealp{ciarlet}, Theorem 9.14], we conclude $T$ is surjective. Hence for every $\ep>0$, there exists $u_\ep \in V$ such that
\begin{equation*}
    \int_\Omega A(x, \nabla u_\ep) \cdot \nabla \phi\, d x+\int_{\mathbb{R}^n} \int_{\mathbb{R}^n} \frac{|u_\ep(x)-u_\ep(y)|^{q-2}(u_\ep(x)-u_\ep(y))(\phi(x)-\phi(y)) }{|x-y|^{n+sq}}d xdy=\int_\Omega f_\ep(u_\ep^{+}+\ep)^{-\delta} \phi\,d x,
\end{equation*}
for every $\phi \in V$. This shows that \cref{approximated} has a weak solution $u_\ep \in W_0^{1, p}(\Omega)$ for every $\ep>0$. Using \cref{growth8} and proceeding along the lines of the proof of [\citealp{garain}, Lemma 3.1], it follows that $u_\ep\in L^{\infty}(\Omega)$ for every $\ep>0$. Now let
$    g_\ep(x)=f_\ep(x)(u_\ep^+(x)+\ep)^{-\delta}.
$
Choosing $\phi=\min \{u_\ep, 0\}=(u_\ep)_-$ as a test function and using the non-negativity of $g_\ep$, we have by \cref{growth8}
\begin{equation*}
     C\int_\Omega|\nabla(u_\ep)_-)|^p d x+\int_{\mathbb{R}^n} \int_{\mathbb{R}^n}  \frac{|u_\ep(x)-u_\ep(y)|^{q-2}(u_\ep(x)-u_\ep(y))((u_\ep(x))_{-}-(u_\ep(y))_-) }{|x-y|^{n+sq}}d xdy=\int_\Omega g_\ep (u_\ep)_{-} d x \leq 0 .
\end{equation*}
Proceeding similarly to the proof of the estimate [\citealp{garain}, (3.13)], the second integral above is nonnegative. Hence
\begin{equation*}
    \int_\Omega|\nabla(u_\ep)_-)|^p d x=0,
\end{equation*}
which gives $u_\ep\geq 0$ in $\Omega$. \\
To show the monotonicity, let $0<\ep^\prime\leq \ep$, we choose $\phi=(u_\ep-u_{\ep^\prime})^{+}$ as a test function in \cref{approximated} to have
\begin{equation}{\label{harpoo4}}
    \begin{array}{l}
         \int_\Omega(A(x, \nabla u_\ep(x))-A(x, \nabla u_{\ep^\prime}(x))) \cdot \nabla(u_\ep(x)-u_{\ep^\prime}(x))^{+} d x\smallskip\\+\int_{\mathbb{R}^n} \int_{\mathbb{R}^n}(\mathcal{A} (u_\ep(x, y))-\mathcal{A}(u_{\ep^\prime}(x, y)))((u_\ep(x)-u_{\ep^\prime}(x))^{+}-(u_\ep(y)-u_{\ep^\prime}(y))^{+}) d \mu\smallskip\\=\int_\Omega(f_\ep(x)(u_\ep(x)+\ep)^{-\delta}-f_{\ep^\prime}(x)(u_{\ep^\prime}(x)+\ep^\prime)^{-\delta})(u_\ep(x)-u_{\ep^\prime}(x))^{+} d x \smallskip\\
\leq \int_\Omega f_{\ep^\prime}(x)((u_\ep(x)+\ep)^{-\delta}-(u_{\ep^\prime}(x)+\ep^\prime)^{-\delta})(u_\ep(x)-u_{\ep^\prime}(x))^{+} d x 
 \leq 0,
    \end{array}
\end{equation}
where $\mathcal{A}(g(x,y))=|g(x)-g(y)|^{q-2}(g(x)-g(y))$, $d\mu=\frac{dxdy}{|x-y|^{n+sq}}$ and the last inequality above follows using $f_\ep \leq f_{\ep^\prime}$ for $\beta<p$. Following the same arguments from the proof of [\citealp{FractionalEigenvalues}, Lemma 9], we obtain 
\begin{equation}{\label{harpoo5}}
    \int_{\mathbb{R}^n} \int_{\mathbb{R}^n}(\mathcal{A} (u_\ep(x, y))-\mathcal{A}(u_{\ep^\prime}(x, y)))((u_\ep(x)-u_{\ep^\prime}(x))^{+}-(u_\ep(y)-u_{\ep^\prime}(y))^{+}) d \mu\geq 0.
\end{equation}
Merging \cref{harpoo4,harpoo5}, one gets
\begin{equation*}
      \int_\Omega(A(x, \nabla u_\ep(x))-A(x, \nabla u_{\ep^\prime}(x))) \cdot \nabla(u_\ep(x)-u_{\ep^\prime}(x))^{+} d x\leq 0.
\end{equation*}
Thus, by \cref{g1,g2}, we may conclude that
$
    \nabla u_\ep=\nabla u_{\ep^\prime} \text { in }\{x \in \Omega: u_\ep(x)>u_{\ep^\prime}(x)\} .
$
Hence, we have $u_{\ep^\prime} \geq u_\ep$ in $\Omega$. The uniqueness of $u_\ep$ follows similarly. Finally, by \cref{holder4} (note that the condition on $u$ to be less than distance function is not needed if $f$ is bounded in \cref{holder4}), it follows, $u_1\in C^{1,\gamma}(\overline{\Omega})$ for some $\gamma\in(0,1)$. Since $g_1 \neq 0$ in $\Omega$, by the strong maximum principle [\citealp{antonini}, Proposition 6.1], it holds $u_1>0$. Therefore, for every $\Omega^{\prime} \subset\subset \Omega$, there exists a positive constant $C=C(\Omega^{\prime})$ such that $u_1 \geq C(\Omega^{\prime})>0$ in $\Omega^{\prime}$. Using the monotonicity property, we get $u_\ep \geq u_1$ in $\Omega$, and thus, for every $\Omega^{\prime} \subset\subset \Omega$, there exists a positive constant $C(\Omega^{\prime})$ (independent of $\ep$ ) such that $u_\ep \geq C(\Omega^{\prime})>0$ in $\Omega^{\prime}$. In fact, by [\citealp{antonini}, Corollary 1.3], it holds $u_\ep \geq u_1\geq Cd(x,\partial\Omega)>0$ in $\Omega^{\prime},\, \forall\ep\in(0,1]$, where $C>0$ is a constant independent of $\ep$.
\end{proof}
From now on, we will denote the distance of $x\in\Omega$ from the boundary $\partial\Omega$ by $d(x)$ and in short, we write $d$.
\begin{lemma}{\label{boundedinspace}} Let $\beta\in [0,p)$. The sequence $\{u_\ep\}$ is bounded in $W_0^{1,p}(\Omega)$ if $\beta+\delta\leq 1$, and the sequence $\{u_\ep^\theta\}$ is bounded in $W_0^{1, p}(\Omega)$ if $\beta+\delta>1$, for some $\theta>1$. Also in the later case $\{u_\ep\}$ is uniformly bounded in $W^{1.p}_{\mathrm{loc}}(\Omega)$.
\end{lemma}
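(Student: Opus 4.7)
The plan is to test \cref{approximated} with an appropriate power of $u_\ep$ and close the resulting energy inequality using the Hardy inequality $\|v/d\|_p\leq C\|\nabla v\|_p$ for $v\in W_0^{1,p}(\Omega)$, the Sobolev embedding, and H\"older. The nonlocal term is discarded throughout: whenever the test function is a non-decreasing function of $u_\ep$, the double integral against $\Phi$ is non-negative since $\Phi$ is odd and non-decreasing. The Hopf-type lower bound $u_\ep\geq c\,d(x,\partial\Omega)$ from \cref{aproexist}(c) is a standing input that makes all the integrals below finite.

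\textbf{Case $\beta+\delta\leq 1$.} Since $\beta\geq 0$, this regime forces $\delta\leq 1$. Testing with $\phi=u_\ep$ and using \cref{growth8} together with $(u_\ep+\ep)^{-\delta}u_\ep\leq u_\ep^{1-\delta}$ gives
\begin{equation*}
    \int_\Omega |\nabla u_\ep|^p\,dx\leq C\int_\Omega d^{-\beta}u_\ep^{1-\delta}\,dx=C\int_\Omega d^{1-\beta-\delta}(u_\ep/d)^{1-\delta}\,dx.
\end{equation*}
The exponent $1-\beta-\delta\geq 0$, so the prefactor $d^{1-\beta-\delta}$ is bounded. H\"older with exponents $p/(1-\delta)$ and $p/(p-1+\delta)$, followed by Hardy, majorises the right side by $C\|\nabla u_\ep\|_{L^p(\Omega)}^{1-\delta}$, and absorption (using $p-1+\delta>0$) yields the uniform $W_0^{1,p}$-bound. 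The borderline $\delta=1$ forces $\beta=0$ and is a direct estimate.

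\textbf{Case $\beta+\delta>1$.} Here I would fix
\begin{equation*}
    \theta>\frac{p-1+\delta}{p-\beta},
\end{equation*}
which under $\beta+\delta>1$ is strictly greater than $1$. The test function $\phi=u_\ep^{(\theta-1)p+1}$ lies in $W_0^{1,p}(\Omega)$ because $u_\ep\in W_0^{1,p}\cap L^\infty$ and the exponent is $\geq 1$. The local part of the weak formulation contributes, via \cref{growth8}, a positive multiple of $\int_\Omega|\nabla u_\ep^\theta|^p\,dx$; the nonlocal part is non-negative by the monotonicity argument. Using $(u_\ep+\ep)^{-\delta}u_\ep^{(\theta-1)p+1}\leq u_\ep^{(\theta-1)p+1-\delta}$ and $f_\ep\leq c_2 d^{-\beta}$ leads to
\begin{equation*}
    \int_\Omega|\nabla u_\ep^\theta|^p\,dx\leq C\int_\Omega\left(\frac{u_\ep^\theta}{d}\right)^{\beta}u_\ep^{\theta(p-\beta)-(p-1+\delta)}\,dx.
\end{equation*}
The choice of $\theta$ makes the residual $u_\ep$-exponent non-negative. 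Applying H\"older with exponents $p/\beta$ and $p/(p-\beta)$ (the case $\beta=0$ being simpler), Hardy on $u_\ep^\theta/d$, and Sobolev on $u_\ep^\theta$ — the required exponent is at most $p\leq p^*$ — collapses the right-hand side to $C\|\nabla u_\ep^\theta\|_{L^p(\Omega)}^{p-(p-1+\delta)/\theta}$. Since $(p-1+\delta)/\theta>0$, absorption gives the uniform bound on $u_\ep^\theta$ in $W_0^{1,p}(\Omega)$.

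\textbf{Local bound and main obstacle.} The bound above yields $u_\ep^\theta\in L^{p^*}(\Omega)$ uniformly, hence $u_\ep\in L^p(\Omega)$ uniformly. For any $\Omega'\subset\subset\Omega$ and a cutoff $\eta\in C_c^\infty(\Omega)$ with $\eta\equiv 1$ on $\Omega'$, \cref{aproexist}(c) together with $f\in L^\infty_{\mathrm{loc}}(\Omega)$ shows $f_\ep(u_\ep+\ep)^{-\delta}$ is uniformly bounded on $\mathrm{supp}(\eta)$; testing with $\eta^p u_\ep$ and running the Caccioppoli-type argument of \cref{caccioppoli}, combined with the global $L^p$-bound, delivers $\|u_\ep\|_{W^{1,p}(\Omega')}\leq C(\Omega')$. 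The delicate part is the second case: a single $\theta$ must simultaneously satisfy (i) $\theta(p-\beta)-(p-1+\delta)\geq 0$, (ii) the Sobolev exponent staying below $p^*$, and (iii) the exponent of $\|\nabla u_\ep^\theta\|_{L^p}$ on the right-hand side being strictly below $p$. The algebraic surprise is that (iii) reduces to $p-(p-1+\delta)/\theta<p$, which is automatic under $\delta>0$ for every $\theta>0$, so the only genuine restriction comes from (i) and the scheme closes; this exponent is the precursor of the threshold $\Lambda=(p-1)(p-1+\delta)/(p(p-\beta))$ that reappears in \cref{optimalregu}.
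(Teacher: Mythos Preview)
Your argument is correct and follows the paper's scheme: test \cref{approximated} with $u_\ep$ (resp.\ $u_\ep^{(\theta-1)p+1}$), drop the nonlocal term by monotonicity (the paper invokes \cite{parini}, Lemma~A.2, which is exactly your observation), apply Hardy, and absorb. The first case and the local $W^{1,p}_{\mathrm{loc}}$ bound are handled identically.

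The one substantive difference is the H\"older split in the case $\beta+\delta>1$. The paper writes $d^{-\beta}u_\ep^{m-\delta}=d^{-\beta+(m-\delta)/\theta}(u_\ep^\theta/d)^{(m-\delta)/\theta}$ and pushes \emph{all} of $u_\ep$ into the Hardy factor, leaving a residual $d$-power whose integrability forces $\theta>\Lambda=(p-1)(p-1+\delta)/(p(p-\beta))$. You instead write $d^{-\beta}u_\ep^{m-\delta}=(u_\ep^\theta/d)^{\beta}u_\ep^{\theta(p-\beta)-(p-1+\delta)}$ and push \emph{all} of $d^{-\beta}$ into Hardy, leaving a residual $u_\ep$-power whose non-negativity forces $\theta>(p-1+\delta)/(p-\beta)=\tfrac{p}{p-1}\Lambda$. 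Both splits produce the same absorption exponent $p-(p-1+\delta)/\theta$, so both close; your admissible $\theta$-range is strictly smaller, which is irrelevant for the present lemma (``for some $\theta>1$'') but means your split would not directly deliver the sharp threshold $\Lambda$ claimed in \cref{optimalregu}. Your closing remark identifying your constraint as ``the precursor of $\Lambda$'' is therefore slightly off: it is $\tfrac{p}{p-1}\Lambda$, and to recover $\Lambda$ itself one needs the paper's split.
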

\begin{proof} For the case $\beta+\delta\leq 1$, testing the weak formulation of the \cref{approximated} by $u_\ep$, and using \cref{growth2,growth3,growth8,condf} together with the Hardy's inequality, we get
\begin{equation*}
    \begin{array}{rcl}
\|u_\ep\|^p_{W^{1,p}_0(\Omega)}&\leq &c\int_\Omega A(x, \nabla u_\ep) \cdot \nabla u_\ep\, d x+c\int_{\mathbb{R}^n} \int_{\mathbb{R}^n} \frac{|u_\ep(x)-u_\ep(y)|^{q}}{|x-y|^{n+sq}}d xdy\smallskip\\&=&c\int_\Omega f_\ep(u_\ep+\ep)^{-\delta} u_\ep\,d x \leq C\int_{\Omega} d^{1-\beta-\delta}\left(\frac{u_\ep}{d}\right)^{1-\delta} \leq C\|u_\ep\|_{W_0^{1,p}(\Omega)}^{1-\delta} \text {, }
    \end{array}
\end{equation*}
where $C$ is independent of $\ep$. We have also used the fact that $0\leq f_\ep\leq f$ for every $\ep>0$. For the remaining case, we note [\citealp{MG}, Section 2.5] and test the weak formulation of the problem by $u_\ep^{m}$, where $m=p(\theta-1)+1$ for some $\theta \geq 1$ (to be specified later) to get
\begin{equation*}
    \begin{array}{l}
\|u_\ep^\theta\|^p_{W^{1,p}_0(\Omega)}%&=&\theta^p\int_\Omega u_\ep^{p(\theta-1)}|\nabla u_\ep|^{p}dx\smallskip\\&\leq &c\int_\Omega u_\ep^{p(\theta-1)}A(x,\nabla u_\ep)\cdot\nabla u \,dx\smallskip\\
\leq c\int_\Omega A(x, \nabla u_\ep) \cdot \nabla u_\ep^{m}\, d x
+c\int_{\mathbb{R}^{2n}} {\mathcal{A}u_\ep(x,y)(u_\ep^{m}(x)-u^{m}_\ep(y))}d\mu
=c\int_\Omega f_\ep(u_\ep+\ep)^{-\delta} u_\ep^{m}d x,
    \end{array}
\end{equation*}
where $\mathcal{A}(v(x,y))=|v(x)-v(y)|^{q-2}(v(x)-v (y))$, $d\mu=\frac{dxdy}{|x-y|^{n+sq}}$. We now use [\citealp{parini}, Lemma A.2] for
$
g(t):=t^{p(\theta-1)+1},$ $G(t):=\int_0^t g^{\prime}(\tau)^{1 / q} d \tau,
$ to get that the nonlocal term 
\begin{equation*}
\begin{array}{l}
     \int_{\mathbb{R}^n} \int_{\mathbb{R}^n} \frac{|u_\ep(x)-u_\ep(y)|^{q-2}(u_\ep(x)-u_\ep(y))(u_\ep^{p(\theta-1)+1}(x)-u^{p(\theta-1)+1}_\ep(y))}{|x-y|^{n+sq}}d xdy\\\geq \int_{\mathbb{R}^{n}}\int_{\mathbb{R}^{n}}\frac{|G(u_\ep(x))-G(u_\ep(y))|^{q}}{|x-y|^{n+sq}}dxdy\geq 0,
\end{array}
\end{equation*}
which implies, on account of Hölder's and Hardy's inequalities, 
\begin{equation*}
   \|u_\ep^\theta\|^p_{W^{1,p}_0(\Omega)}\leq C \int_{\Omega} d^{-\beta+\frac{p(\theta-1)+1-\delta}{\theta}}\left(\frac{u_\ep^\theta}{d}\right)^{\frac{p(\theta-1)+1-\delta}{\theta}} \leq C\|u_\ep^\theta\|_{W_0^{1,p}(\Omega)}^{\frac{p(\theta-1)+1-\delta}{\theta}} \text {, }
\end{equation*}
where
\begin{equation}{\label{thet}}
    \theta>\max \left\{1, \frac{(p+\delta-1)(p-1 )}{p(p -\beta)}, \frac{p+\delta-1}{p}\right\}
\end{equation}
and $C>0$ is independent of $\ep$. This concludes that the sequence $\{u_\ep^\theta\}$ is bounded in $W_0^{1, p}(\Omega)$. Follow now the arguments of [\citealp{garain}, Lemma 3.7] (taking $\delta+p-1/p=\theta$ there) to get $\{u_\ep\}$ is uniformly bounded in $W^{1.p}_{\mathrm{loc}}(\Omega)$.
\end{proof}
\begin{remark}{\label{u_0}} Let $\beta\in [0,p)$. In view of \cref{boundedinspace}, we can say that up to a subsequence $u_\epsilon \rightharpoonup u$ (or $u_\epsilon^\theta \rightharpoonup u^\theta$) weakly in $W^{1,p}_0(\Omega)$ and strongly in $L^p(\Omega)$ if $\beta+\delta\leq 1$ (or $\beta+\delta> 1$), as $\epsilon \rightarrow 0$, for some $u \in W^{1,p}_0(\Omega)$ (or $u^\theta \in W^{1,p}_0(\Omega)$). Also in each case $u_\ep\rightharpoonup u$ weakly in $W^{1,p}_{\mathrm{loc}}(\Omega)$. As $\{u_\ep\}$ is monotone, we can treat $u$ as the pointwise supremum of $\{u_\ep\}$ and hence $u\geq u_\ep\geq 0$, for every $\ep>0$. Further for any $\Omega^\prime\subset\subset\Omega$, there exists $C_{\Omega^\prime}>0$ such that $u(x)\geq u_\ep(x)\geq C_{\Omega^\prime}d(x)>0$ in $\Omega^\prime$ for all $\ep\in(0,1]$.
\end{remark}
We now give the result regarding the convergence of gradients of $u_\varepsilon$ to the gradient of $u$ a.e. in $\Omega$.
\begin{lemma}{\label{gradconv}}
Let $\beta\in [0,p)$. Take $u$ of \cref{u_0}. Then up to a subsequence, $\nabla u_\varepsilon\rightarrow\nabla u$ pointwise a.e. in $\Omega$.
\end{lemma}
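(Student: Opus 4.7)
The plan is to adapt the classical Boccardo--Murat monotonicity trick to the present mixed local--nonlocal singular setting. The key reduction is the following: once we show that for every $\Omega'\subset\subset\Omega$,
\[
\lim_{\varepsilon\to 0}\int_{\Omega'}\bigl[A(x,\nabla u_\varepsilon)-A(x,\nabla u)\bigr]\cdot(\nabla u_\varepsilon-\nabla u)\,dx=0,
\]
the strict monotonicity encoded in the third line of \cref{growth1} (equivalently \cref{g1}--\cref{g2}) forces, up to a subsequence, the nonnegative integrand to vanish pointwise a.e.\ on $\Omega'$; a standard manipulation distinguishing the cases $p\ge 2$ and $1<p<2$ then yields $\nabla u_\varepsilon\to\nabla u$ a.e.\ on $\Omega'$, and an exhaustion of $\Omega$ by compacts gives the lemma.

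To produce that limit, fix $\psi\in C_c^\infty(\Omega)$ with $0\le\psi\le 1$ and $\psi\equiv 1$ on $\Omega'$, and test the weak formulation of \cref{approximated} against $\varphi_\varepsilon:=\psi(u-u_\varepsilon)$. This is admissible by \cref{boundedinspace} and \cref{u_0}, and nonnegative because $u_\varepsilon\uparrow u$ in view of \cref{aproexist}(b). Using the Leibniz expansion
\[
\int A(x,\nabla u_\varepsilon)\cdot\nabla\varphi_\varepsilon\,dx=\int \psi\,A(x,\nabla u_\varepsilon)\cdot\nabla(u-u_\varepsilon)\,dx+\int (u-u_\varepsilon)A(x,\nabla u_\varepsilon)\cdot\nabla\psi\,dx,
\]
the second summand is negligible, since $A(x,\nabla u_\varepsilon)$ is bounded in $L^{p'}(\mathrm{supp}\,\psi)$ by \cref{growth1} together with \cref{boundedinspace}, while $u-u_\varepsilon\to 0$ strongly in $L^p_{\mathrm{loc}}(\Omega)$. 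The singular right-hand side $\int f_\varepsilon(u_\varepsilon+\varepsilon)^{-\delta}\psi(u-u_\varepsilon)\,dx$ also tends to zero: on $\mathrm{supp}\,\psi$ one has $d(x)\ge c>0$, so $f_\varepsilon\le f\le Cd^{-\beta}$ is uniformly bounded, and $u_\varepsilon\ge Cd(x)\ge c'>0$ uniformly by \cref{aproexist}(c); dominated convergence then applies.

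The delicate step is the nonlocal term
\[
N_\varepsilon:=\iint_{\mathbb{R}^{2n}}\Phi(u_\varepsilon(x)-u_\varepsilon(y))\bigl[\varphi_\varepsilon(x)-\varphi_\varepsilon(y)\bigr]\frac{B(x,y)}{|x-y|^{n+sq}}\,dx\,dy,
\]
which does not localize to $\mathrm{supp}\,\psi$. The strategy is to exploit the monotonicity $[\Phi(a)-\Phi(b)](a-b)\ge 0$ from \cref{growth3} to compare the $u_\varepsilon$-quadratic form with its $u$-counterpart, so that $N_\varepsilon$ splits as a manifestly nonnegative piece plus commutator errors generated by $\psi$. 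The commutator in turn decomposes into a diagonal part on $\mathrm{supp}\,\psi\times\mathrm{supp}\,\psi$, controlled by the uniform Caccioppoli estimate of \cref{caccremark} and the strong $L^q_{\mathrm{loc}}$ convergence $u_\varepsilon\to u$, together with two symmetric off-diagonal (tail) pieces, handled via the uniform bound $u_\varepsilon\le u\in L^{q-1}_{sq}(\mathbb{R}^n)$ (which holds since $u$ is supported in $\overline{\Omega}$) combined with a Vitali-type equi-integrability argument. This off-diagonal estimate is the main obstacle: the long-range $(s,q)$-fractional interaction between $\mathrm{supp}\,\psi$ and its complement must be shown to vanish in the limit without losing sign, requiring a careful combination of weak convergence of $u_\varepsilon$ in $W^{s,q}_{\mathrm{loc}}$ with pointwise decay of the tail kernel.

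Assembling the three contributions produces $\int\psi\,A(x,\nabla u_\varepsilon)\cdot(\nabla u_\varepsilon-\nabla u)\,dx\to 0$; since $\psi\,A(x,\nabla u)\in L^{p'}_{\mathrm{loc}}(\Omega)$ and $\nabla u_\varepsilon\rightharpoonup\nabla u$ weakly in $L^p_{\mathrm{loc}}(\Omega)$ by \cref{u_0}, the analogous integral with $A(x,\nabla u)$ in place of $A(x,\nabla u_\varepsilon)$ also vanishes in the limit. Subtracting yields the identity in the opening reduction, and the conclusion follows.
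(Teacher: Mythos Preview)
Your overall strategy matches the paper's: test \cref{approximated} against a cutoff times the difference $u-u_\varepsilon$, show the Leibniz error and the singular right-hand side vanish on $\operatorname{supp}\psi$, control the nonlocal cross terms by monotonicity of $\Phi$ plus a Vitali argument, and finish with the strict monotonicity of $A$. The paper runs the same scheme with the truncated test function $\phi_K T_\mu(u_\varepsilon-u)$ and then applies the $e_\varepsilon^\gamma$ Boccardo--Murat trick to remove the parameter $\mu$; your untruncated version also works, and in fact shortcuts the last step.

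There is, however, a sign slip in your description that makes the argument as written fail. With the test function $\varphi_\varepsilon=\psi(u-u_\varepsilon)$ the equation yields
\[
\int_\Omega\psi\,A(x,\nabla u_\varepsilon)\cdot(\nabla u_\varepsilon-\nabla u)\,dx
=N_\varepsilon+E_\varepsilon-R_\varepsilon,
\]
so you need $\limsup_{\varepsilon\to 0}N_\varepsilon\le 0$. In the natural decomposition the dominant piece is
\[
\iint\psi(x)\bigl[\Phi(u_\varepsilon(x)-u_\varepsilon(y))-\Phi(u(x)-u(y))\bigr]\bigl[(u-u_\varepsilon)(x)-(u-u_\varepsilon)(y)\bigr]\frac{B(x,y)}{|x-y|^{n+sq}}\,dx\,dy,
\]
which by $[\Phi(a)-\Phi(b)](a-b)\ge 0$ is \emph{nonpositive}, not nonnegative. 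This is exactly the sign you need, but stating it as ``nonnegative'' would give $\liminf N_\varepsilon\ge 0$, the wrong inequality. Relatedly, you only obtain $\limsup\int\psi A(x,\nabla u_\varepsilon)\cdot(\nabla u_\varepsilon-\nabla u)\,dx\le 0$, not convergence to $0$; the opening reduction then follows because the full expression $\int\psi[A(x,\nabla u_\varepsilon)-A(x,\nabla u)]\cdot(\nabla u_\varepsilon-\nabla u)$ is nonnegative. Fix these two points and your proof goes through.
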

\begin{proof}
   Let us take a compact $K \subset \Omega$ and consider a function $\phi_K \in C_c^\infty(\Omega)$ such that $\operatorname{supp} \phi_K=\omega$, $0 \leq \phi_K \leq 1$ in $\Omega$ and $\phi_K \equiv 1$ in $K$. Now for $\mu>0$, we define the truncated function $T_\mu: \mathbb{R} \rightarrow \mathbb{R}$ by
\begin{equation*}
T_\mu(t)= \begin{cases}t, & \text { if }|t| \leq \mu, \\ \mu \frac{t}{|t|}, & \text { if }|t|>\mu .\end{cases}
\end{equation*}
Now choose $\psi_\epsilon=\phi_K T_\mu((u_\epsilon-u)) \in W_0^{1, p}(\Omega)$ as a test function in \cref{approximated} and note [\citealp{MG}, Section 2.5], to get
\begin{equation}{\label{estii}}
    I+J=R,
\end{equation}
where
\begin{equation*}
I=\int_{\Omega}A(x,\nabla u_\ep) \cdot \nabla \psi_\epsilon\, d x, J=\int_{\mathbb{R}^{2n}}\frac{|u_\epsilon(x)-u_\epsilon(y)|^{q-2}(u_\epsilon(x)-u_\epsilon(y))(\psi_\epsilon(x)-\psi_\epsilon(y)) }{|x-y|^{n+sq}}d x d y ,
R=\int_{\Omega} \frac{f_\ep \psi_\epsilon}{(u_\epsilon+\epsilon)^\delta}  d x .
\end{equation*}
\textbf{Estimate of $I$:} We have
\begin{equation}{\label{esti1}}
    \begin{array}{rcl}
I&= & \int_{\Omega}A(x,\nabla u_\ep) \cdot \nabla \psi_\epsilon\, d x\smallskip \\
&= & \int_{\Omega} \phi_K(A(x,\nabla u_\ep)-A(x,\nabla u) )\cdot \nabla T_\mu((u_\ep-u)) d x +\int_{\Omega} \phi_KA(x,\nabla u) \cdot \nabla T_\mu((u_\ep-u))  d x\smallskip\\&&+\int_{\Omega} T_\mu((u_\ep-u)) A(x,\nabla u_\ep)\cdot\nabla \phi_K d x 
=:  I_1+I_2+I_3 .    
    \end{array}
\end{equation}
\textbf{Estimate of $I_2$:} As $u_\ep\rightharpoonup u$ weakly in $W^{1,p}_{\mathrm{loc}}(\Omega)$ we get $T_\mu((u_\ep-u)) \rightharpoonup 0$ weakly in $W_{\mathrm{loc }}^{1, p}(\Omega)$. As a consequence using \cref{growth1}, we get
\begin{equation}{\label{esti2}}
    \lim _{\ep \rightarrow 0} I_2= \lim _{\ep \rightarrow 0}\int_{\Omega} \phi_KA(x,\nabla u) \cdot \nabla T_\mu((u_\ep-u))  d x=0 .
\end{equation}
\textbf{Estimate of $I_3$:} For $\omega=\operatorname{supp} \phi_K$, by Hölder's inequality and \cref{growth1}, we use the uniform boundedness of $\{u_\ep\}$ in $W^{1, p}(\omega)$, to have
\begin{equation*}
    \begin{array}{rcl}
         \left|I_3\right|&=&\left|\int_{\Omega} T_\mu((u_\ep-u)) A(x,\nabla u_\ep)\cdot\nabla \phi_K d x\right|\leq C\int_{\Omega}| \nabla \phi_K| | u_\ep-u| | \nabla u_\ep|^{p-1} d x \smallskip\\
& \leq&\|\nabla \phi_K\|_{L^{\infty}(\omega)}\left(\int_\omega|\nabla u_\ep|^p d x\right)^{\frac{p-1}{p}}\|u_\ep-u\|_{L^p(\omega)} \leq C\|u_\ep-u\|_{L^p(\omega)} ,
    \end{array}
\end{equation*}
for some constant $C>0$, independent of $\ep$. As $u_\ep \to u$ strongly in $L^p_{\mathrm{loc}}(\Omega)$ so
\begin{equation}{\label{esti3}}
    \lim _{\ep\rightarrow 0} I_3=\lim _{\ep\rightarrow 0}\int_{\Omega} T_\mu((u_\ep-u)) A(x,\nabla u_\ep)\cdot\nabla \phi_K d x=0.
\end{equation}
Therefore, merging \cref{esti1}, \cref{esti2} and \cref{esti3} we obtain
\begin{equation}{\label{esti4}}
    \limsup _{\ep \rightarrow 0} I=\limsup _{\ep \rightarrow 0} I_1=\limsup _{\ep \rightarrow 0} \int_{\Omega} \phi_K(A(x,\nabla u_\ep)-A(x,\nabla u) )\cdot \nabla T_\mu((u_\ep-u)) d x.
\end{equation}
\textbf{Estimate of $J$:}
Denoting by $\mathcal{A}g(x,y)=|g(x)-g(y)|^{q-2}(g(x)-g(y))$ and $d\nu=\frac{dxdy}{|x-y|^{n+sq}}$, we can write $J$ as:
\begin{equation}{\label{esti5}}
    \begin{array}{rcl}
         J &= & \int_{\mathbb{R}^n} \int_{\mathbb{R}^n} \mathcal{A} u_\epsilon(x, y)(\psi_\epsilon(x)-\psi_\epsilon(y)) d \nu\\
&= & \int_{\mathbb{R}^n} \int_{\mathbb{R}^n} \phi_K(x)(\mathcal{A} u_\epsilon(x, y)-\mathcal{A} u(x, y))(T_\mu((u_\epsilon-u)(x))-T_\mu((u_\epsilon-u)(y))) d \nu \smallskip\\\
&& +\int_{\mathbb{R}^n} \int_{\mathbb{R}^n} T_\mu((u_\epsilon-u)(y)) \mathcal{A} u_\epsilon(x, y)(\phi_K(x)-\phi_K(y)) d \nu \\
&& +\int_{\mathbb{R}^n} \int_{\mathbb{R}^n} T_\mu((u_\epsilon-u)(y)) \mathcal{A} u(x, y)(\phi_K(y)-\phi_K(x)) d \nu \\
&& +\int_{\mathbb{R}^n} \int_{\mathbb{R}^n} \mathcal{A} u(x, y)(\phi_K(x) T_\mu((u_\epsilon-u)(x))-\phi_K(y) T_\nu((u_\epsilon-u)(y))) d \nu:= J_1+J_2+J_3+J_4 .
    \end{array}
\end{equation}
Using the monotonicity of the map $t\mapsto |t|^{q-2}t$ for $t\in\mathbb{R}^n$, one can estimate $J_1$ similarly as the homonym term in [\citealp{garain}, Theorem A.1] to get
\begin{equation}{\label{esti6}}
    J_1=\int_{\mathbb{R}^n} \int_{\mathbb{R}^n} \phi_K(x)(\mathcal{A} u_\epsilon(x, y)-\mathcal{A} u(x, y))(T_\mu((u_\epsilon-u)(x))-T_\mu((u_\epsilon-u)(y))) d \nu\geq 0.
\end{equation} Let us now set
$
\omega=\operatorname{supp} \phi_K, \omega^{\prime}=\mathbb{R}^{2 n} \backslash\left(\omega^c \times \omega^c\right),
$
and as $\phi_K \in C_c^\infty(\Omega)$, for every $\eta>0$, there exists a compact set $\mathcal{K}=\mathcal{K}(\eta) \subset \mathbb{R}^{2 n}$ such that
\begin{equation}{\label{A22}}
    \left(\int_{\mathbb{R}^{2 n} \backslash \mathcal{K}} \frac{|\phi_K(x)-\phi_K(y)|^q}{|x-y|^{n+qs}} d x d y\right)^{\frac{1}{q}} \leq \frac{\eta}{2} .
\end{equation}
Then, applying Hölder's inequality, we obtain
\begin{equation}{\label{esti7}}
\begin{array}{l}
      \quad\int_{\mathbb{R}^{2 n} \backslash \mathcal{K}} T_\mu((u_\epsilon-u)(y)) \mathcal{A} u_\epsilon(x, y)(\phi_K(x)-\phi_K(y)) d \nu\\\leq\left(\int_{\omega^{\prime} \backslash \mathcal{K}} \frac{|u_\ep(x)-u_\ep(y)|^{q}}{|x-y|^{n+sq}} d x d y\right)^{\frac{q-1}{q}}\left(\int_{\omega^{\prime} \backslash \mathcal{K}} \frac{|T_\mu((u_\ep-u)(y))|^{q}|\phi_K(x)-\phi_K(y)|^{q}}{|x-y|^{n+sq}} d x d y\right)^{\frac{1}{q}} \\\leq \mu\left(\int_{\omega^{\prime} \backslash \mathcal{K}} \frac{|u_\ep(x)-u_\ep(y)|^{q}}{|x-y|^{n+sq}} d x d y\right)^{\frac{q-1}{q}}\left(\int_{\omega^{\prime} \backslash \mathcal{K}} \frac{|\phi_K(x)-\phi_K(y)|^{q}}{|x-y|^{n+sq}} d x d y\right)^{\frac{1}{q}} .
\end{array}
\end{equation}
If $\beta+\delta>1$, note \cref{aproexist}$_{(c)}$, the condition \cref{thet} on $\theta$ and choose $m=\frac{p+\delta-1}{p-\beta}>1$ in \cref{algebraic2}, to obtain
\begin{equation}{\label{esti8}}
    \frac{|u_\ep(x)-u_\ep(y)|^q}{|x-y|^{n+ sq}} \leq C \frac{\left|u_\ep(x)^{\frac{\delta+p-1}{p-\beta}}-u_\ep(y)^{\frac{\delta+p-1}{p-\beta}}\right|^q}{|x-y|^{n+sq}}\quad \text{ for almost all } x,y \in \omega^\prime.
\end{equation}
Note \cref{boundedinspace} and \cref{esti8}, so that both for $\beta+\delta\leq 1$ and $\beta+\delta>1$, we have from \cref{esti7}, for some $\mathcal{K}=\mathcal{K}(\eta)$
\begin{equation}{\label{esti9}}
    \int_{\mathbb{R}^{2 n} \backslash \mathcal{K}} T_\mu((u_\epsilon-u)(y)) \mathcal{A} u_\epsilon(x, y)(\phi_K(x)-\phi_K(y)) d \nu\leq C\left(\int_{\omega^{\prime} \backslash \mathcal{K}} \frac{|\phi_K(x)-\phi_K(y)|^{q}}{|x-y|^{n+sq}} d x d y\right)^{\frac{1}{q}}\stackrel{\cref{A22}}{\leq }\frac{\eta}{2}.
\end{equation}
For $\ep>0$, let us set $
g_\ep(x, y)=T_\mu((u_\ep-u)(y)) \mathcal{A} u_\ep(x, y)(\phi_K(x)-\phi_K(y)) .$ We show that the functions $g_\ep$ have uniformly absolutely continuous integrals. Let $E \subset \mathcal{K}$ be an arbitrary measurable set. Then, proceeding analogously to \cref{esti7,esti8,esti9}, for some positive constant $C$, independent of $\ep$, we have
\begin{equation}{\label{esti10}}
    \left|\int_E g_\ep(x, y) d \nu\right| \leq C\left(\int_E \frac{|\phi_K(x)-\phi_K(y)|^q}{|x-y|^{n+sq}} d x d y\right)^{\frac{1}{q}} .
\end{equation}
Thus, $\int_E g_\ep(x, y) d \nu \rightarrow 0$ uniformly on $\ep$ if $|E| \rightarrow 0$. Also $g_\ep \rightarrow 0$ pointwise in $\mathbb{R}^{2 n}$. Hence, by Vitali's convergence theorem, it follows that there exists a $\ep_0>0$ such that for all $\ep\leq \ep_0$, it holds
\begin{equation}{\label{esti11}}
    \int_{ \mathcal{K}} T_\mu((u_\epsilon-u)(y)) \mathcal{A} u_\epsilon(x, y)(\phi_K(x)-\phi_K(y)) d \nu\leq \frac{\eta}{2}.
\end{equation}Hence from \cref{esti9,esti11}, it holds
\begin{equation}{\label{esti12}}
\lim _{\ep \rightarrow 0} J_2=\lim _{\ep \rightarrow 0}\int_{\mathbb{R}^n} \int_{\mathbb{R}^n} T_\mu((u_\epsilon-u)(y)) \mathcal{A} u_\epsilon(x, y)(\phi_K(x)-\phi_K(y)) d \nu=0   .
\end{equation}
Recalling \cref{u_0}, we can proceed similarly like $J_2$ and end with
\begin{equation}{\label{esti13}}
\lim _{\ep \rightarrow 0} J_3=\lim _{\ep \rightarrow 0}\int_{\mathbb{R}^n} \int_{\mathbb{R}^n} T_\mu((u_\epsilon-u)(y)) \mathcal{A} u(x, y)(\phi_K(x)-\phi_K(y)) d \nu=0    .
\end{equation}
Now set $v_\ep=\phi_K T_\mu((u_\ep-u))$. Noting \cref{u_0}, by a similar estimate like \cref{esti8}, we have for $\beta+\delta>1$, for every $\eta>0$, there exists a compact set $\mathcal{K}=\mathcal{K}(\eta) \subset \mathbb{R}^{2 n}$ such that
\begin{equation}{\label{esti15}}
\left(\int_{\omega^\prime \backslash \mathcal{K}} \frac{|u(x)-u(y)|^q}{|x-y|^{n+sq}} d x d y\right)^{\frac{1}{q}} \leq C\left(\int_{\omega^\prime\backslash \mathcal{K}} \frac{|u(x)^{\frac{p+\delta-1}{p-\beta}}-u(y)^{\frac{p+\delta-1}{p-\beta}}|^q}{|x-y|^{n+ sq}} d x d y\right)^{\frac{1}{q}} \leq \frac{\eta}{2}.    
\end{equation}
\cref{esti15} clearly holds for $\beta+\delta \leq 1$ as in that case $u\in W^{1,q}_0(\Omega) $. Using \cref{esti15} and the uniform boundedness of $v_\ep$ in $W^{1,q}_0(\Omega)$, we get for both the cases $\beta+\delta\leq 1$ and $\beta+\delta >1$
\begin{equation}{\label{esti16}}
    \int_{\omega^\prime\backslash\mathcal{K}} \mathcal{A} u(x, y)(v_\ep(x)-v_\ep(y)) d \nu \leq \left(\int_{\omega^{\prime} \backslash \mathcal{K}} \frac{|u(x)-u(y)|^q}{|x-y|^{n+sq}} d x d y\right)^{\frac{q-1}{q}}\left(\int_{\omega^{\prime} \backslash \mathcal{K}} \frac{|v_\ep(x)-v_\ep(y)|^q}{|x-y|^{n+ sq}} d x d y\right)^{\frac{1}{q}} \leq \frac{\eta}{2}.
\end{equation}
Finally, proceeding analogously to \cref{esti10}, \cref{esti11} for any given $\eta>0$, there exists $\ep_0$ such that, if $\ep \leq \ep_0$, we get
\begin{equation}{\label{esti17}}
    \int_{\mathcal{K}} \mathcal{A} u(x, y)(v_\ep(x)-v_\ep(y)) d \nu \leq \frac{\eta}{2} .
\end{equation}
Merging \cref{esti16,esti17} one has
\begin{equation}{\label{esti18}}
\lim _{\ep \rightarrow 0} J_4=\lim _{\ep \rightarrow 0}\int_{\mathbb{R}^n} \int_{\mathbb{R}^n} \mathcal{A} u(x, y)(\phi_K(x) T_\mu((u_\epsilon-u)(x))-\phi_K(y) T_\nu((u_\epsilon-u)(y))) d \nu=0.    
\end{equation}
Now, employing the estimates \cref{esti6,esti12,esti13,esti18} in \cref{esti5} we obtain
\begin{equation}{\label{esti19}}
    \lim _{\ep \rightarrow 0} J \geq 0.
\end{equation}
\textbf{Estimate for $R$:} Recalling that $\operatorname{supp} \phi_K=\omega$, and $f\in L^\infty_{\mathrm{loc}}(\Omega)$ by \cref{aproexist}, there exists a constant $C(\omega)>0$, independent of $\ep$, such that $u \geq C(\omega)>0$ in $\omega$. Hence, we have
\begin{equation}{\label{esti20}}
    R=\int_{\Omega} \frac{f_\ep}{(u_\ep+\ep)^\delta} \psi_\ep d x \leq \frac{\|f\|_{L^1(\omega)}}{C(\omega)^\delta} \mu .
\end{equation}
Therefore, for every fixed $\mu>0$, using the estimates \cref{esti4,esti19,esti20} in \cref{estii}, we obtain
\begin{equation}{\label{esti21}}
    \limsup _{\ep \rightarrow 0} \int_{K} (A(x,\nabla u_\ep)-A(x,\nabla u) )\cdot \nabla T_\mu((u_\ep-u)) d x\leq C\mu,
\end{equation}
for some constant $C=C(\omega,\|f\|_{L^\infty(\omega)})>0$. Let us define the function $e_\ep(x)=\langle A(x,\nabla u_\ep)-A(x,\nabla u), \nabla(u_\ep-u)\rangle$
and note by \cref{g1,g2} we have $e_\ep\geq 0$ in $\Omega$. We divide the compact set $K$ by
\begin{equation*}
    E_\ep^\mu=\{x \in K:|(u_\ep-u)(x)| \leq \mu\}, F_\ep^\mu=\{x \in K:|(u_\ep-u)(x)|>\mu\} .
\end{equation*}
Let $\gamma \in(0,1)$ be fixed. Then, from Hölder's inequality,
\begin{equation}{\label{esti22}}
    \int_K e_\ep^\gamma d x=\int_{E_\ep^\mu} e_\ep^\gamma d x+\int_{F_\ep^\mu} e_\ep^\gamma d x \leq\left(\int_{E_\ep^\mu} e_\ep d x\right)^\gamma|E_\ep^\mu|^{1-\gamma}+\left(\int_{F_\ep^\mu} e_\ep d x\right)^\gamma|F_\ep^\mu|^{1-\gamma} .
\end{equation}
Now, since $\{u_\ep\}$ is uniformly bounded in $W^{1, p}(K)$, the sequence $\{e_\ep\}$ is uniformly bounded in $L^1(K)$ (by \cref{growth1}). Furthermore, $\lim _{\ep \rightarrow 0}|F_\ep^\mu|=0$. Hence, from \cref{esti21,esti22}, we have
\begin{equation}{\label{esti23}}
    \lim \sup _{\ep \rightarrow 0} \int_K e_\ep^\gamma d x \leq \lim \sup _{\ep \rightarrow 0}\left(\int_{E_\ep^\mu} e_\ep d x\right)^\gamma|E_\ep^\mu|^{1-\gamma} \leq(C \mu)^\gamma|\Omega|^{1-\gamma} .
\end{equation}
Letting $\mu \rightarrow 0$ in \cref{esti23}, the sequence $\{e_\ep^\gamma\}$ converges to $0$ strongly in $L^1(K)$. Therefore, using a sequence of compact sets $K$, up to a subsequence, one gets $
e_\ep(x) \rightarrow 0 \text { almost everywhere in } \Omega,
$ which along with \cref{g1,g2} gives $
\nabla u_n(x) \rightarrow \nabla u(x) \text { for almost every } x \in \Omega $.
\end{proof}
\begin{theorem}{\label{exis4.2}}
    If $\beta\in [0,p)$, then $u$ given in \cref{u_0} is a weak solution to \cref{problemg} in the sense of \cref{mainweaksol}.
\end{theorem}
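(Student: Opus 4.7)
The plan is to pass to the limit as $\epsilon\to 0$ in the weak formulation of the approximated problem \cref{approximated}, using the convergence results already established. Let $u$ be the pointwise limit of $\{u_\epsilon\}$ constructed in \cref{u_0}. I will verify each of the three requirements in \cref{mainweaksol}: (a) $u>0$ in $\Omega$ and $f u^{-\delta}\in L^1_{\mathrm{loc}}(\Omega)$, (b) the integral inequality (with equality), and (c) the boundary condition $u^\gamma\in W_0^{1,p}(\Omega)$ for some $\gamma\geq 1$.

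First, condition (c) is immediate from \cref{boundedinspace}: take $\gamma=1$ when $\beta+\delta\leq 1$ and $\gamma=\theta$ (given by \cref{thet}) when $\beta+\delta>1$. Positivity of $u$ on compact subsets and the estimate $f u^{-\delta}\in L^1_{\mathrm{loc}}(\Omega)$ both follow from \cref{u_0}: for any $K\subset\subset\Omega$ there is a constant $C_K>0$ with $u\geq C_K d(x)>0$ on $K$, and since $f\in L^\infty_{\mathrm{loc}}(\Omega)$, the product $fu^{-\delta}$ is bounded on $K$.

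For condition (b), fix $\phi\in C_c^\infty(\Omega)$ with $\phi\geq 0$ and set $K:=\mathrm{supp}(\phi)\subset\subset\Omega$. For the local term, \cref{gradconv} gives $\nabla u_\epsilon\to\nabla u$ a.e.\ in $\Omega$, so by continuity of $A(x,\cdot)$ the fields $A(x,\nabla u_\epsilon)\to A(x,\nabla u)$ a.e.; the $p$-growth in \cref{growth1} combined with the uniform bound of $\{u_\epsilon\}$ in $W^{1,p}_{\mathrm{loc}}(\Omega)$ yields uniform $L^{p'}_{\mathrm{loc}}(\Omega)$-boundedness of $\{A(x,\nabla u_\epsilon)\}$, hence weak convergence in $L^{p'}(K;\mathbb{R}^n)$ (the weak limit is identified by the a.e.\ limit via Vitali). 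Testing against $\nabla\phi\in L^p(\Omega)$ gives convergence of the local term. For the right-hand side, on $K$ we have the uniform lower bound $u_\epsilon\geq C_K d(x)>0$, so $f_\epsilon(u_\epsilon+\epsilon)^{-\delta}\phi$ is dominated by $\|f\|_{L^\infty(K)}(C_K d(x))^{-\delta}\phi\in L^1(\Omega)$. Combined with $f_\epsilon\to f$ and $u_\epsilon\to u$ a.e., the dominated convergence theorem delivers $\int_\Omega f_\epsilon(u_\epsilon+\epsilon)^{-\delta}\phi\,dx\to\int_\Omega fu^{-\delta}\phi\,dx$.

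The main obstacle is the passage to the limit in the nonlocal term, which must be handled via a Vitali-type argument mirroring the proof of \cref{gradconv}. The key observation is that $\Phi(u_\epsilon(x)-u_\epsilon(y))B(x,y)/|x-y|^{n+sq}$ can be written as the product of $|u_\epsilon(x)-u_\epsilon(y)|^{q-1}/|x-y|^{(n+sq)(q-1)/q}$ and $(\phi(x)-\phi(y))/|x-y|^{(n+sq)/q}$ (up to bounded kernel $B$ and structure $\Phi$). The first factor is uniformly bounded in $L^{q'}(\mathbb{R}^n\times\mathbb{R}^n)$: on the set $\omega'=\mathbb{R}^{2n}\setminus(K^c\times K^c)$ this comes from $\{u_\epsilon^\theta\}\subset W_0^{1,p}(\Omega)$ via \cref{algebraic2} (as in \cref{esti8}), and the second factor is in $L^q(\omega')$ by \cref{fracemb} applied to $\phi$. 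Pointwise a.e.\ convergence of $u_\epsilon\to u$ together with equi-integrability (proved as in \cref{esti7,esti9,esti10,esti11}) allows the application of Vitali's convergence theorem, yielding convergence of the nonlocal term. Combining the three limits gives equality in \cref{weakine1} for all non-negative $\phi\in C_c^\infty(\Omega)$, and density together with linearity extends this to all admissible test functions, completing the proof.
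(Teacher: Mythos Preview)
Your proposal is correct and follows essentially the same route as the paper: verify the boundary condition via \cref{boundedinspace}, positivity and local integrability via \cref{u_0}, and pass to the limit term-by-term using \cref{gradconv}, the growth bounds \cref{growth1}, and dominated convergence for the right-hand side. The only cosmetic difference is in the nonlocal term: the paper splits into cases---when $\beta+\delta\le 1$ it uses directly that $\{u_\epsilon\}$ is bounded in $W_0^{1,p}(\Omega)$ to get weak $L^{q'}(\mathbb{R}^n\times\mathbb{R}^n)$ convergence of the fractional quotient, and when $\beta+\delta>1$ it defers to \cite[Theorem 3.6]{scase}---whereas you run a single Vitali argument on $\omega'$ modeled on \cref{esti7,esti9,esti10,esti11}, invoking \cref{algebraic2} to convert the uniform $W_0^{1,p}$ bound on $u_\epsilon^\theta$ into the needed $L^{q'}(\omega')$ control; both are valid and amount to the same underlying estimate.
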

\begin{proof}
  First let  $\beta+\delta\leq 1$. Then by \cref{boundedinspace} the sequence $\{u_\ep\}$ is uniformly bounded in $W_0^{1, p}(\Omega)$. Hence by \cref{growth1}, $A(x,\nabla u_\ep)$ is uniformly bounded in $L^{p^\prime}(\Omega)$. Therefore, using \cref{gradconv} and the continuity of $A$ in the second variable, for every $\phi \in C_c^\infty(\Omega)$, we have
\begin{equation}{\label{solu1}}
    \lim _{\ep \rightarrow 0} \int_{\Omega}A(x,\nabla u_\ep) \cdot \nabla \phi \,d x=\int_{\Omega}A(x,\nabla u) \cdot \nabla \phi\, d x .
\end{equation}
Since $\phi \in C_c^\infty(\Omega)$ and $\{u_\ep\}$ is uniformly bounded in $W_0^{1, p}(\Omega)$, by \cref{embedding2}, $\frac{|u_\ep(x)-u_\ep(y)|^{q-2}(u_\ep(x)-u_\ep(y))}{|x-y|^{\frac{n+sq}{q^{\prime}}}} \in L^{q^{\prime}}(\mathbb{R}^n \times \mathbb{R}^n),$ is uniformly bounded and also note
$\frac{\phi(x)-\phi(y)}{|x-y|^{\frac{n+sq}{q}}} \in L^q(\mathbb{R}^n\times \mathbb{R}^n) .
$ Denoting $d\mu=\frac{dxdy}{|x-y|^{n+qs}}$, by weak convergence, we thus have for $\mathcal{A}g(x,y)=|g(x)-g(y)|^{q-2}(g(x)-g(y))$, 
\begin{equation}{\label{solu2}}
    \lim _{\ep \rightarrow 0} \int_{\mathbb{R}^n} \int_{\mathbb{R}^n} \mathcal{A} u_\ep(x, y)(\phi(x)-\phi(y)) d \mu=\int_{\mathbb{R}^n} \int_{\mathbb{R}^n} \mathcal{A} u(x, y)(\phi(x)-\phi(y)) d \mu .
\end{equation}
By \cref{aproexist}, $u_\ep\geq C(\omega)>0$ on supp $\phi=\omega$ for some constant $C(\omega)>0$, independent of $\ep$. Therefore, for every $\phi \in C_c^\infty(\Omega)$, we have
\begin{equation*}
    \left|{f_\ep}{(u_\ep+\ep)^{-\delta}} \phi\right| \leq {\|f\|_{L^{\infty}(\omega)}}{C(\omega)^{-\delta}}|\phi|\quad \text { in } \Omega .
\end{equation*}
By the Lebesgue dominated convergence theorem, we have 
\begin{equation}{\label{solu3}}
    \lim _{\ep \rightarrow 0} \int_{\Omega} {f_\ep}{(u_\ep+\ep)^{-\delta}} \phi\, d x=\int_{\Omega}{f}{u^{-\delta}} \phi\, d x .
\end{equation}
Combining \cref{solu1,solu2,solu3} we conclude for $\beta+\delta \leq 1$. Note that $u$ satisfies the boundary condition in the sense of \cref{behaveu}. For the case $\beta+\delta>1$, by \cref{boundedinspace} the sequence $\{u_\ep^{\frac{p+\delta-1}{p-\beta}}\}$ is uniformly bounded in $W_0^{1, p}(\Omega)$. Moreover, the sequence $\{u_\ep\}$ is uniformly bounded in $W_{\mathrm{loc }}^{1, p}(\Omega)$. Hence $u \in W_{\mathrm{loc}}^{1, p}(\Omega)$. Now following the lines of the proof for the above case, for every $\phi \in C_c^\infty(\Omega)$, we obtain
\begin{equation}{\label{solu4}}
     \lim _{\ep \rightarrow 0} \int_{\Omega}A(x,\nabla u_\ep) \cdot \nabla \phi \,d x=\int_{\Omega}A(x,\nabla u) \cdot \nabla \phi\, d x ,\quad  \lim _{\ep \rightarrow 0} \int_{\Omega} {f_\ep}{(u_\ep+\ep)^{-\delta}} \phi\, d x=\int_{\Omega} {f}{u^{-\delta}} \phi\, d x.
\end{equation}
For the nonlocal part, following the same arguments as in [\citealp{scase}, Theorem 3.6], for every $\phi \in C_c^\infty(\Omega)$, we have
\begin{equation}{\label{solu5}}
    \lim _{\ep\rightarrow 0} \int_{\mathbb{R}^n} \int_{\mathbb{R}^n} \mathcal{A} u_\ep(x, y)(\phi(x)-\phi(y)) d \mu=\int_{\mathbb{R}^n} \int_{\mathbb{R}^n} \mathcal{A} u(x, y)(\phi(x)-\phi(y)) d \mu .
\end{equation}
We get the desired result by merging \cref{solu4,solu5}. Note that $u^\theta\in W^{1,p}_0(\Omega)$ for some $\theta\geq 1$ implies $u$ satisfies the boundary behavior in the sense of \cref{behaveu} (see for instance [\citealp{nachr}, Theorem 2.10]). Also note that we have used [\citealp{MG}, Section 2.5] at every step where required. 
\end{proof}
\subsection{\textbf{Uniqueness: Proof of \cref{uni}}} We prove the result for the general equation as given in \cref{problemg}. \smallskip\\\textbf{Step 1} Let $V=W_0^{1,p}(\Omega)$ with the norm $\|u\|=\|\nabla u\|_{L^p(\Omega)}$ and let $V^\prime$ be its dual. Define for all $m>0$, the truncated function $g_m:\mathbb{R}\to\mathbb{R}^+$ by \begin{equation*}
   g_m(t):= \begin{cases}\min\{t^{-\delta},m\}&\text{ if } t>0 \\ m & \text { otherwise. }\end{cases} \end{equation*}Consider the non-empty closed and convex set
$\mathbb{K}:=\{\phi \in V: 0 \leq \phi \leq v \text { a.e. in } \Omega\}, 
$ where $v$ is the given supersolution to \cref{problem} (or \cref{problemg}). Under the assumptions on $f$ and applying \cref{Sobolev embedding} one can define the operator $J_m: V \rightarrow V^\prime$ for every $u, \psi \in V$ by
\begin{equation*}
    \langle J_m(u), \psi\rangle:=\int_{\Omega} A(x, \nabla u) \cdot \nabla \psi \,d x+\int_{\mathbb{R}^n} \int_{\mathbb{R}^n} \frac{|u(x)-u(y)|^{q-2}(u(x)-u(y))(\psi(x)-\psi(y)) }{|x-y|^{n+sq}}d xdy-\int_{\Omega} f g_m(u) \psi\, d x .
\end{equation*}
First, we observe that $J_m$ is well-defined. For this, similarly like \cref{aproexist}, we use \cref{growth1}, and H\"older inequality for the first two terms and for the third term see by Hardy inequality that 
\begin{equation}{\label{unii}}
   \left| \int_{\Omega} f g_m(u) \psi\, d x \right|\leq c\int_{\Omega} d^{-\beta}|\psi|dx=c\int_{\Omega} d^{1-\beta}\frac{|\psi|}{d}dx\leq C\left(\int_{\Omega} d^{\frac{(1-\beta)p}{p-1}}dx\right)^{\frac{p-1}{p}}\left(\int_{\Omega}|\nabla |\psi||^pdx)\right)^{\frac{1}{p}}<\infty,
\end{equation}
for all $\psi\in V$ provided ${(1-\beta)p}/{(p-1)}>-1$ i.e. $\beta<2-{1}/{p}$. Now for any $u\in V$ and $\theta \in(0,1)$, to be chosen
\begin{equation}{\label{uuni1}}
    \int_{\Omega} d^{-\beta} u\, d x \leq\left(\int_{\Omega}\left(\frac{|u|}{d}\right)^p\right)^{\frac{1-\theta}{p}}\left(\int_{\Omega} |u|^r\right)^{\frac{\theta}{r}}\left(\int_{\Omega} d^{(1-\beta-\theta) l}\right)^{\frac{1}{l}} \leq C\|u\|^{1-\theta}\left(\int_{\Omega} |u|^r\right)^{\frac{\theta}{r}}\left(\int_{\Omega} d^{(1-\beta-\theta) l}\right)^{\frac{1}{l}},%<\infty,
\end{equation}
where $r<p^*$, if $p<n, \frac{1-\theta}{p}+\frac{\theta}{r}+\frac{1}{l}=1$ and in the last inequality, we used Hardy inequality. Here finiteness requires $(1-\beta-\theta) l>-1$, which is equivalent to $\theta<\frac{2 p r-p r \beta-r}{p r-r+p}$. Due to the fact that $\beta<2-\frac{1}{p}$ (note also that $p>1>\theta$) and by \cref{uuni1}, it is easy to deduce similarly like \cref{aproexist} that $J_m$ is coercive. Finally noting 
\begin{equation*}
    \left|f(x)g_m(u_k) \psi(x)-f(x)g(u) \psi(x)\right| \leq 2m|f||\psi|,
\end{equation*}
for every sequence $\{u_k\}$ in $V$ converging to $u$ in the norm of $V$, and using \cref{unii}, one can follow the same arguments as in \cref{aproexist}, to get $J_m$ is demicontinuous. Also, by \cref{g1,g2}, it is easy to get that $J_m$ strictly monotone (also see \cref{aproexist}). As a consequence of \cref{2.13}, there exists a unique $z \in \mathbb{K}$ such that for every $\psi \in z+(W^{1,p}_0(\Omega)\cap L_c^\infty(\Omega))$ with $\psi\in \mathbb{K}$, one has
\begin{equation}{\label{uuni2}}\begin{array}{c}
       \int_{\Omega} A(x, \nabla z) \cdot \nabla(\psi-z) d x +\int_{\mathbb{R}^n} \int_{\mathbb{R}^n} \mathcal{A}z(x,y)((\psi-z)(x)-(\psi-z)(y))d\mu\geq \int_{\Omega} f(x) g_m(z)(\psi-z) d x ,
\end{array}
\end{equation}
where $\mathcal{A}u(x,y)=|u(x)-u(y)|^{q-2}(u(x)-u(y)),\, d\mu=\frac{dxdy}{|x-y|^{n+sq}}$ and $ L_c^\infty(\Omega)$ is the set of all bounded functions with compact support in $\Omega$. Let us now consider a real valued function $g \in C_c^{\infty}(\mathbb{R})$ such that $0 \leq g \leq 1, g \equiv 1$ in $[-1,1]$ and $g \equiv 0$ in $(-\infty,-2] \cup[2, \infty)$. Define the function $\phi_h:=g\left(\frac{z}{h}\right) \phi$ and $\phi_{h, t}:=\min \left\{z+t \phi_h, v\right\}$ with $h \geq 1$ and $t>0$ for a given non-negative $\phi \in C_c^\infty(\Omega)$. Clearly, $\phi_{h, t} \in z+(W^{1,p}_0(\Omega)\cap L_c^\infty(\Omega))$. Then by \cref{uuni2},
\begin{equation}{\label{uuni3}}\begin{array}{c}
       \int_{\Omega} A(x, \nabla z) \cdot \nabla(\phi_{h, t}-z) d x +\int_{\mathbb{R}^n} \int_{\mathbb{R}^n} \mathcal{A}z(x,y)((\phi_{h, t}-z)(x)-(\phi_{h, t}-z)(y))d\mu\geq \int_{\Omega} f(x) g_m(z)(\phi_{h, t}-z) d x ,
\end{array}
\end{equation}
Define,
\begin{equation*}
    \begin{array}{rcl}
         I: & =&c\int_{\Omega}|\nabla(\phi_{h, t}-z)|^2(|\nabla \phi_{h, t}|+|\nabla z|)^{p-2} d x \\
& &+c\int_{\mathbb{R}^n} \int_{\mathbb{R}^n} \frac{(|\phi_{h, t}(x)-\phi_{h, t}(y)|+|z(x)-z(y)|^{q-2}((\phi_{h, t}-z)(x)-(\phi_{h, t}-z)(y))^2}{|x-y|^{n+sq}} d x d y,
    \end{array}
\end{equation*}
where $c$ is given by \cref{g1,g2}. By \cref{uuni3}, we now have
\begin{equation*}
\begin{array}{rcl}
     I &\leq & \int_{\Omega}(A(x, \nabla  \phi_{h, t})-A(x, \nabla z)) \cdot\nabla(\phi_{h, t}-z) d x \smallskip\\
& &+\int_{\mathbb{R}^n} \int_{\mathbb{R}^n}(\mathcal{A} \phi_{h, t}(x, y)-\mathcal{A} z(x, y))((\phi_{h, t}-z)(x)-(\phi_{h, t}-z)(y)) d \mu \smallskip\\
&\leq & \int_{\Omega}A(x, \nabla  \phi_{h, t}) \cdot\nabla(\phi_{h, t}-z) d x+\int_{\mathbb{R}^n} \int_{\mathbb{R}^2} \mathcal{A} \phi_{h, t}(x, y)((\phi_{h, t}-z)(x)-(\phi_{h, t}-z)(y)) d \mu \\
&& -\int_{\Omega} f(x) g_m(z)(\phi_{h, t}-z) d x .
\end{array}
\end{equation*}Therefore,
\begin{equation}{\label{uuni6}}
    \begin{array}{l}
      \quad  I-\int_{\Omega} f(x)(g_m(\phi_{h, t})-g_m(z))(\phi_{h, t}-z) d x\leq \int_{\Omega}(A(x, \nabla  \phi_{h, t}) \cdot\nabla(\phi_{h, t}-z) d x
        \smallskip\\\quad+\int_{\mathbb{R}^n} \int_{\mathbb{R}^n} \mathcal{A} \phi_{h, t}(x, y)((\phi_{h, t}-z)(x)-(\phi_{h, t}-z)(y)) d \mu -\int_{\Omega} f(x) g_m(\phi_{h, t})(\phi_{h, t}-z) d x 
        \smallskip
        \\
        =
        \left(\int_{\Omega} A(x,\nabla \phi_{h, t})\cdot \nabla(\phi_{h, t}-z-t \phi_h)d x
        +\int_{\mathbb{R}^n} \int_{\mathbb{R}^n} h(x, y) d x d y
        -\int_{\Omega} f(x) g_m(\phi_{h, t})(\phi_{h, t}-z-t \phi_h) d x\right) 
        \smallskip\\
\quad+t\left(\int_{\Omega}A (x,\nabla \phi_{h,t} )\cdot \nabla \phi_h\,d x+\int_{\mathbb{R}^n} \int_{\mathbb{R}^n} \mathcal{A} \phi_{h, t}(x, y)(\phi_h(x)-\phi_h(y)) d \mu-\int_{\Omega} f(x)g_m(\phi_{h, t}) \phi_h d x\right)
    \end{array}
\end{equation}
where $h(x, y)=\frac{\mathcal{A} \phi_{h, t}(x, y)((\phi_{h, t}-z-t \phi_h)(x)-(\phi_{h, t}-z-t \phi_h)(y))}{|x-y|^{n+sq}} .
$ \\Set $\Omega=S_v\cup S_v^c$, where $S_v := \{x \in \Omega : \phi_{h,t}(x) = v(x)\}$ and $S_v^
c:= \Omega\backslash S_v$. Observe that $A(x,\nabla \phi_{h, t})\cdot \nabla(\phi_{h, t}-z-t \phi_h)= A(x,\nabla v)\cdot \nabla(\phi_{h, t}-z-t \phi_h) = 0$ on $S_v^
c$ and $A(x,\nabla \phi_{h, t})\cdot \nabla(\phi_{h, t}-z-t \phi_h)= A(x,\nabla v)\cdot \nabla(\phi_{h, t}-z-t \phi_h) $ on $S_v$. Let us denote by $h_v(x, y)=\frac{\mathcal{A} v(x, y)((\phi_{h, t}-z-t \phi_h)(x)-(\phi_{h, t}-z-t \phi_h)(y))}{|x-y|^{n+sq}} .
$ Now following the exact arguments as in the proof of [\citealp{scase}, Lemma 4.1], we obtain
\begin{equation*}
    \int_{\Omega} A(x,\nabla \phi_{h, t})\cdot \nabla(\phi_{h, t}-z-t \phi_h) d x=\int_{\Omega}A(x,\nabla v)\cdot \nabla(\phi_{h, t}-z-t \phi_h)  d x,\quad  \iint_{\mathbb{R}^{2n}}  h(x, y) d x d y=\iint_{\mathbb{R}^{2n}} h_v(x, y) d x d y .
\end{equation*}
Noting the above property along with the fact that $v$ is a weak supersolution of \cref{problemg}, we choose $(z+t \phi_h-\phi_{h, t})\in W^{1,p}_0(\Omega)\cap L_c^\infty(\Omega)$ as a test function and using the definition of $g_m$ and $\phi_{h,t}$, 
%\begin{equation*}
 %   \phi_{h, t}= \begin{cases}v & \text { on } S_v%:=\left\{x \in \Omega: z(x)+t \phi_h(x) \geq v(x)\right\}
 %   , \\ z+t \phi_h & \text { on } S_v^c,\end{cases}
%\end{equation*}
we obtain
\begin{equation*}
    \begin{array}{c}
          \int_{\Omega} A(x,\nabla \phi_{h, t})\cdot \nabla(\phi_{h, t}-z-t \phi_h) d x+\int_{\mathbb{R}^{2n}}  h(x, y) d x d y-\int_{\Omega} f(x) g_m(\phi_{h, t})(\phi_{h, t}-z-t \phi_h) d x\smallskip\\=\int_{\Omega}A(x,\nabla v)\cdot \nabla(\phi_{h, t}-z-t \phi_h)  d x+\int_{\mathbb{R}^{2n}} h_v(x, y)-\int_{\Omega} f(x) g_m(\phi_{h, t})(\phi_{h, t}-z-t \phi_h) d x\leq 0.
    \end{array}
\end{equation*}
Using the above fact in \cref{uuni6} with the observation that $I \geq 0$ and $\phi_{h, t}-z \leq t \phi_h$, we have
\begin{equation}{\label{uuni7}}
\begin{array}{l}
    \int_{\Omega}A (x,\nabla \phi_{h,t} )\cdot \nabla \phi_h\,d x+\int_{\mathbb{R}^{n}} \int_{\mathbb{R}^{n}} \mathcal{A} \phi_{h, t}(x, y)(\phi_h(x)-\phi_h(y)) d \mu-\int_{\Omega} f(x)g_m(\phi_{h, t}) \phi_h dx\smallskip\\\geq-\int_{\Omega} f(x)|g_m(\phi_{h, t}) -g_m(z)|\phi_h dx.
    \end{array}
\end{equation}
Note now $z \in W_0^{1, p}(\Omega)$ and recall the definition of $\phi_{h, t}$, and use the Lebesgue dominated convergence theorem, letting $t \rightarrow 0$ in \cref{uuni7}, to obtain
\begin{equation*}
    \int_{\Omega}A(x,\nabla z)\cdot \nabla \phi_h d x+\int_{\mathbb{R}^n} \int_{\mathbb{R}^n} \mathcal{A} z(x, y)(\phi_h(x)-\phi_h(y)) d \mu \geq \int_{\Omega} f(x) g_m(z) \phi_h d x .
\end{equation*}
Finally, passing the limit as $h \rightarrow \infty$ in the above inequality, one gets
\begin{equation}{\label{uni8}}
    \int_{\Omega}A(x,\nabla z)\cdot \nabla \phi\, d x+\int_{\mathbb{R}^n} \int_{\mathbb{R}^n} \mathcal{A} z(x, y)(\phi(x)-\phi(y)) d \mu \geq \int_{\Omega} f(x) g_m(z) \phi \,d x ,
\end{equation}
for every non-negative $\phi\in C_c^\infty(\Omega)$.
\smallskip\\\textbf{Step 2} Let $m>0$ and $\epsilon=2 m^{-\frac{1}{\delta}}$. Since $u \leq 0$ on $\partial \Omega$, for $z \in \mathbb{K}$, we have $(u-z-\epsilon)^{+} \in W_0^{1, p}(\Omega)$. For $\eta>0$, by standard density arguments, it follows from \cref{uni8} that
\begin{equation}{\label{uuni9}}
    \begin{array}{c}
          \int_{\Omega}A(x,\nabla z)\cdot\nabla T_\eta((u-z-\epsilon)^{+})  d x+\int_{\mathbb{R}^n} \int_{\mathbb{R}^n} \mathcal{A} z(x, y)(T_\eta((u-z-\epsilon)^{+})(x)-T_\eta((u-z-\epsilon)^{+})(y)) d \mu \smallskip\\\geq \int_{\Omega} f(x) g_m(z) T_\eta((u-z-\epsilon)^{+}) d x.
    \end{array}
\end{equation}
Since $(u-z-\epsilon)^{+} \in W_0^{1, p}(\Omega)$, $\exists$ a sequence $\phi_k \in C_c^{\infty}(\Omega)$ such that $\phi_k \rightarrow(u-z-\epsilon)^{+}$ strongly in $W_0^{1, p}(\Omega)$. We define  $\psi_{k, \eta}:=T_\eta(\min \{(u-z-\epsilon)^{+}, \phi_k^{+}\}) \in W_0^{1, p}(\Omega) \cap L_c^{\infty}(\Omega) $.
Since $u$ is a weak subsolution of \cref{problemg}, we have
\begin{equation*}
    \int_{\Omega}A(x,\nabla u)\cdot \nabla \psi_{k, \eta} d x+\int_{\mathbb{R}^n} \int_{\mathbb{R}^n} \mathcal{A} u(x, y)(\psi_{k, \eta}(x)-\psi_{k, \eta}(y)) d \mu \leq \int_{\Omega} f(x) u^{-\delta} \psi_{k, \eta} d x ,
\end{equation*}
Consequently, using dominated convergence theorem and Fatou's lemma, we can pass the limit as $k \rightarrow \infty$ to get
\begin{equation}{\label{uuni10}}
    \begin{array}{c}
          \int_{\Omega}A(x,\nabla u)\cdot\nabla T_\eta((u-z-\epsilon)^{+})  d x+\int_{\mathbb{R}^n} \int_{\mathbb{R}^n} \mathcal{A} u(x, y)(T_\eta((u-z-\epsilon)^{+})(x)-T_\eta((u-z-\epsilon)^{+})(y)) d \mu \smallskip\\\leq \int_{\Omega}f(x) u^{-\delta}  T_\eta((u-z-\epsilon)^{+}) d x .
    \end{array}
\end{equation}
Following the proof of [\citealp{scase}, Theorem 4.2] we have
\begin{equation*}
    \mathcal{A} u(x, y)(T_\eta((u-z-\epsilon)^{+}(x))-T_\eta((u-z-\epsilon)^{+}(x))) =\mathcal{A} u(x, y)((u-z)(x)-(u-z)(y)) H(x, y),
\end{equation*}
\begin{equation*}
    \mathcal{A} z(x, y)(T_\eta((u-z-\epsilon)^{+}(x))-T_\eta((u-z-\epsilon)^{+}(x))) =\mathcal{A} z(x, y)((u-z)(x)-(u-z)(y)) H(x, y)
\end{equation*}
with
\begin{equation*}
    H(x, y):=\frac{T_\eta((u-z-\epsilon)^{+}(x))-T_\eta((u-z-\epsilon)^{+}(y))}{(u-z)(x)-(u-z)(y)},
\end{equation*}
where $(u-z)(x)-(u-z)(y) \neq 0$. Subtracting \cref{uuni9} from \cref{uuni10} and then, using the above two displays along with the non-negativity of $H(x,y)$ and \cref{p case} (or \cref{g1,g2}), we have
\begin{equation*}
    \begin{array}{l}
      \quad C \int_{\Omega}|\nabla T_\eta((u-z-\epsilon)^{+})|^2(|\nabla u|+|\nabla z|)^{p-2} d x \smallskip\\  \quad+C \int_{\mathbb{R}^n} \int_{\mathbb{R}^n} \frac{(|u(x)-u(y)|+|z(x)-z(y)|)^{q-2}((u-z)(x)-(u-z)(y))^2}{|x-y|^{n+sq}} H(x, y) d x d y \smallskip\\
      \leq \int_{\Omega}(A(x,\nabla u)-A(x,\nabla z))\cdot \nabla T_\eta((u-z-\epsilon)^{+}) d x\smallskip \\
 \quad+\int_{\mathbb{R}^n} \int_{\mathbb{R}^n}(\mathcal{A} u(x, y)-\mathcal{A} z(x, y))(T_\eta((u-z-\epsilon)^{+}(x))-T_\eta((u-z-\epsilon)^{+}(y))) d \mu 
   \end{array}
\end{equation*}
\begin{equation*}
    \begin{array}{l}\leq \int_{\Omega} f(x)({u^{-\delta}}-g_m(z)) T_\eta((u-z-\epsilon)^{+}) d x \smallskip\\
 \leq \int_{\Omega} f(x)(g_m(u)-g_m(z)) T_\eta((u-z-\epsilon)^{+}) d x \leq 0.
    \end{array}
\end{equation*}
In the final estimate above, we have used $\epsilon>m^{-\frac{1}{\delta}}$, the definition of $g_m$ and the fact that $u \geq \epsilon$ in the support of $(u-z-\epsilon)^{+}$. Therefore, using the nonnegativity of the integrand, letting $\eta \rightarrow \infty$, the above estimate yields
\begin{equation*}
\int_{\Omega}|\nabla((u-z-\epsilon)^{+})|^2(|\nabla u|+|\nabla z|)^{p-2} d x=0 .    
\end{equation*}
As a consequence, $u \leq z+2 m^{-\frac{1}{\delta}} \leq v+2 m^{-\frac{1}{\delta}}$. Letting $m \rightarrow \infty$, we get $u \leq v$ in $\Omega$.
\subsection{\textbf{Existence: Proof of \cref{mainexis}}} In view of \cref{exis4.2} and \cref{u_0}, we deduced the existence of a weak solution to \cref{problem} in the sense of \cref{mainweaksol}. Further for any $\Omega^\prime\subset\subset\Omega$, there exists $C_{\Omega^\prime}>0$ such that $u(x)\geq C_{\Omega^\prime}d(x)>0$ in $\Omega^\prime$. Now note that such a solution of \cref{problem} is indeed a local weak solution in the sense of \cref{11}, which easily follows via density arguments. Using now \cref{holder2} (or \cref{holder1}), it only remains to verify that $u\in \mathcal{C}_{d_{\beta, \delta}}$ where $\mathcal{C}_{d_{\beta, \delta}}$ is defined in \cref{conicalshell}.
\smallskip\\\textbf{Case 1} $\beta+\delta>1$ with $\frac{p-\beta}{p+\delta-1}\in (0,s)$\\Denote $\tau=\frac{p-\beta}{p+\delta-1}$. Clearly $\tau\in(0,1)$. We have assumed $\tau\in (0,s)$. Let $\eta \in(0,1)$ be a constant to be specified later. Define the function ${v}_\ep(x)=\eta \underline{w}_\rho(x)$, where $\rho>0$ is such that
\begin{equation*}
\underline{w}_\rho(x)= \begin{cases}((d_e(x)+\ep^{\frac{1}{\tau}})^+)^{\tau}-\ep & \text { if } x \in \Omega \cup\left(\Omega^c\right)_\rho, \\ -\ep & \text { otherwise, }\end{cases}    
\end{equation*}
where \begin{equation*}
d_e(x)= \begin{cases}\operatorname{dist}(x,\partial\Omega) & \text { if } x \in \Omega; \\ -\operatorname{dist}(x,\partial\Omega) & \text { if }x\in (\Omega^c)_\rho; \\-\rho &\text{ otherwise,}\end{cases}   \end{equation*}
and $(\Omega^c)_\rho=\left\{x \in \Omega^c: \operatorname{dist}(x, \partial \Omega)<\rho\right\}$. Choose $\rho$ small enough so that $\nabla d\in L^\infty(\Omega_\rho)$ and there exists $M>0$ such that $|\Delta d|\leq M$ in $\Omega_\rho$. Therefore, for $\psi \in C_c^{\infty}(\Omega_\rho)$ with $\psi \geq 0$, and applying [\citealp{aroranodea}, Theorem 3.3], we obtain upon relabeling $\rho$ if needed, for $\ep$ sufficiently small
\begin{equation}{\label{proof1}}
    \begin{array}{rcl}
         \int_{\mathbb{R}^{n}}\int_{\mathbb{R}^n} \frac{|v_\ep(x)-v_\ep(y)|^{q-2}(v_\ep(x)-v_\ep(y))(\psi(x)-\psi(y))}{|x-y|^{n+sq}} d x d y & \leq &C \eta^{q-1} \int_{\Omega_\rho}(d(x)+\ep^{\frac{1}{\tau}})^{\tau(q-1)-sq} \psi \,d x\smallskip\\&=&C \eta^{q-1} \int_{\Omega_\rho}(d(x)+\ep^{\frac{1}{\tau}})^{(\tau-s)(q-1)-s} \psi\, d x\smallskip\\&\leq &C \eta^{q-1} \int_{\Omega_\rho}(d(x)+\ep^{\frac{1}{\tau}})^{(\tau-1)(p-1)-1} \psi \,d x,
    \end{array}
\end{equation}
where $C>0$ is independent of $\ep$, while for the local term we have $\nabla v_\ep=\eta\tau(d(x)+\ep^{\frac{1}{\tau}})^{\tau-1} \nabla d \text { in } \Omega_{\rho} .$ As $|\nabla d|=1$,
\begin{equation}{\label{proof2}}
    \begin{array}{rcl}
         -\int_{\Omega_\rho} \Delta_p v_\ep \psi\,d x & =& (\tau\eta)^{p-1} \int_{\Omega_\rho}(d(x)+\ep^{\frac{1}{\tau}})^{(\tau-1)(p-1)} \nabla d \cdot\nabla \psi\,d x \smallskip\\
& =&(\tau\eta)^{p-1} \int_{\Omega_\rho}\left[-\Delta d (d(x)+\ep^{\frac{1}{\tau}})^{(\tau-1)(p-1)} \psi+(p-1)(1-\tau)(d(x)+\ep^{\frac{1}{\tau}})^{(\tau-1)(p-1)-1}\psi\,\right]d x
\smallskip\\
& \leq &(\tau\eta)^{p-1} \int_{\Omega_\rho}\left[M (d(x)+\ep^{\frac{1}{\tau}})^{(\tau-1)(p-1)} \psi+(p-1)(1-\tau)(d(x)+\ep^{\frac{1}{\tau}})^{(\tau-1)(p-1)-1}\psi\,\right]d x\smallskip\\&\leq &(\tau\eta)^{p-1} \int_{\Omega_\rho}(d(x)+\ep^{\frac{1}{\tau}})^{(\tau-1)(p-1)-1}\psi\,dx. \end{array}
\end{equation}
Merging \cref{proof1,proof2} we get
\begin{equation*}
    \int_{\Omega_\rho}(-\Delta_{p} v_\ep+(-\Delta)_q^s v_\ep) \psi\, d x \leq C(\eta^{p-1}+\eta^{q-1}) \int_{\Omega_\rho}(d(x)+\ep^{\frac{1}{\tau}})^{-\delta\tau-\beta}\psi\,dx.
\end{equation*}
Using \cref{condf} and definition of $f_\ep$, we deduce that
\begin{equation}{\label{proof3}}
    -\Delta_{p} v_\ep+(-\Delta)_q^s v_\ep\leq C(\eta^{p-1}+\eta^{q-1}) (d(x)+\ep^{\frac{1}{\tau}})^{-\delta\tau}f_\ep \quad\text{ weakly in } \Omega_\rho.
\end{equation}
Next, we observe that $(v_\ep+\ep)^{-\delta}=(\eta(d(x)+\ep^{\frac{1}{\tau}})^\tau+(1-\eta) \ep)^{-\delta}$ for $ x \in \Omega_{\rho}$ and distinguish the following cases:\\
(1) If $\eta(d(x)+\ep^{\frac{1}{\tau}})^\tau\geq(1-\eta) \ep$ for $x \in \Omega_\rho$, then 
\begin{equation*}
   (v_\ep+\ep)^{-\delta}\geq (2 \eta)^{-\delta}(d(x)+\ep^{\frac{1}{\tau}})^{-\delta\tau}.
\end{equation*}
Thus, from \cref{proof3} for sufficiently small $\eta>0$ (independent of $\ep$), we can infer that:
\begin{equation*}
      -\Delta_{p} v_\ep+(-\Delta)_q^s v_\ep\leq C(\eta^{p-1}+\eta^{q-1}) (d(x)+\ep^{\frac{1}{\tau}})^{-\delta\tau}f_\ep \leq (2 \eta)^{-\delta}(d(x)+\ep^{\frac{1}{\tau}})^{-\delta\tau}f_\ep\leq(v_\ep+\ep)^{-\delta}f_\ep. \end{equation*}
(2) If $\eta(d(x)+\ep^{\frac{1}{\tau}})^\tau\leq(1-\eta) \ep$ for $x\in \Omega_\rho$, we have
\begin{equation*}
   (v_\ep+\ep)^{-\delta}\geq 2^{-\delta}(1-\eta)^{-\delta}\ep^{-\delta}.
\end{equation*}
We choose $\eta>0$ sufficiently small and independent of $\ep$ such that
\begin{equation*}
      -\Delta_{p} v_\ep+(-\Delta)_q^s v_\ep\leq C(\eta^{p-1}+\eta^{q-1}) (d(x)+\ep^{\frac{1}{\tau}})^{-\delta\tau}f_\ep \leq (\eta^{p-1}+\eta^{q-1}) \ep^{-\delta}f_\ep\leq 2^{-\delta}(1-\eta)^{-\delta}\ep^{-\delta}f_\ep\leq(v_\ep+\ep)^{-\delta}f_\ep. \end{equation*}
In both cases, we choose $\eta>0$ sufficiently small and independent of $\ep$, such that
\begin{equation*}
      -\Delta_{p} v_\ep+(-\Delta)_q^s v_\ep\leq(v_\ep+\ep)^{-\delta}f_\ep \quad\text{ weakly in } \Omega_\rho\end{equation*}
As $u_\ep\geq c$ independent of $\ep$ in $\Omega\backslash\Omega_\rho$, we choose $\eta>0$ small enough and independent of $\ep$ such that the following inequality holds:
\begin{equation*}
    v_\ep(x)\leq \eta(\operatorname{diam}(\Omega)+1)^\tau\leq C_\rho\leq u_1(x)\leq u_\ep(x)\quad \text{ in }\Omega\backslash\Omega_\rho.
\end{equation*}
Now $u_\ep\in W^{1,p}_0(\Omega)\cap L^\infty(\mathbb{R}^n)$ for each $\ep>0$. Note by [\citealp{aroranodea}, Theorem 3.3] that $v_\ep\in \overline{W}^{s,q}(\Omega_\rho)$ where \begin{equation*}
    \overline{W}^{s,q}(\Omega)=\left\{u\in L^q_{\mathrm{loc}}(\mathbb{R}^n):\exists K\text{ s.t. } \Omega\subset\subset K, \|u\|_{W^{s,q}(K)}+\int_{\mathbb{R}^n}\frac{|u(x)|^{q-1}}{(1+|x|)^{n+sq}}dx<\infty\right\}.\end{equation*} Therefore by weak comparison principle, see for instance [\citealp{antonini}, Proposition 4.1] (also see the way in \cref{aproexist} where the monotonicity of $\{u_\ep\}$ was shown), $v_\ep(x) \leq u_\ep(x)$ in $\Omega_\rho$ i.e.
\begin{equation}{\label{proof4}}
    \eta((d(x)+\ep^{\frac{1}{\tau}})^{\tau}-\ep) \leq u_\ep\quad\text{ in }\Omega.
\end{equation}
Next, we will prove the upper bound for $u_\epsilon$, for this consider $w_\ep=\Gamma \bar{w}_\rho$, where\begin{equation*}
\bar{w}_\rho(x)= \begin{cases}((d_e(x)+\ep^{\frac{1}{\tau}})^+)^{\tau} & \text { if } x \in \Omega \cup\left(\Omega^c\right)_\rho, \\ 0 & \text { otherwise, }\end{cases}\end{equation*} and $\Gamma\geq 1$ is a constant to be specified later. Proceeding as \cref{proof2}, for $\psi \in C_c^{\infty}(\Omega_\rho)$ with $\psi \geq 0$, we obtain
\begin{equation}{\label{proof5}}
    \begin{array}{rcl}
         -\int_{\Omega_\rho} \Delta_p w_\ep \psi\,d x %& =& (\Gamma\tau)^{p-1} \int_{\Omega_\rho}(d(x)+\ep^{\frac{1}{\tau}})^{(\tau-1)(p-1)} \nabla d \cdot\nabla \psi\,d x %\smallskip\\
         & =&(\Gamma\tau)^{p-1} \int_{\Omega_\rho}\left[-\Delta d (d(x)+\ep^{\frac{1}{\tau}})^{(\tau-1)(p-1)} \psi+(p-1)(1-\tau)(d(x)+\ep^{\frac{1}{\tau}})^{(\tau-1)(p-1)-1}\psi\,\right]d x
\smallskip\\
& \geq &(\Gamma\tau)^{p-1} \int_{\Omega_\rho}\left[-M (d(x)+\ep^{\frac{1}{\tau}})^{(\tau-1)(p-1)} \psi+(p-1)(1-\tau)(d(x)+\ep^{\frac{1}{\tau}})^{(\tau-1)(p-1)-1}\psi\,\right]d x.%\smallskip\\&\leq &C\eta^{p-1} \int_{\Omega_\rho}(d(x)+\ep^{\frac{1}{\tau}})^{(\tau-1)(p-1)-1}\psi\,dx. 
\end{array}
\end{equation}
On a similar note as before, we have used [\citealp{aroranodea}, Theorem 3.3] to get
\begin{equation}{\label{proof6}}
         \int_{\mathbb{R}^{2n}} \frac{|w_\ep(x)-w_\ep(y)|^{q-2}(w_\ep(x)-w_\ep(y))(\psi(x)-\psi(y))}{|x-y|^{n+sq}} d x d y  \geq %C \Gamma^{q-1} \int_{\Omega_\rho}(d(x)+\ep^{\frac{1}{\tau}})^{\tau(q-1)-sq} \psi\, d x\smallskip\\&=&
         C \Gamma^{q-1} \int_{\Omega_\rho}(d(x)+\ep^{\frac{1}{\tau}})^{(\tau-s)(q-1)-s} \psi \,d x\geq 0.%\smallskip\\&\leq &C \eta^{q-1} \int_{\Omega_\rho}(d(x)+\ep^{\frac{1}{\tau}})^{(\tau-1)(p-1)-1} \psi \,d x,
\end{equation}
Merging \cref{proof5} \cref{proof6} and if necessary by reducing $\rho$ further, we may assume that there exists $C>0$ such that
\begin{equation*}
    (p-1)(1-\tau)(d(x)+\epsilon^{\frac{1}{\tau}})^{(\tau-1)(p-1)-1}-M(d(x)+\epsilon^{\frac{1}{\tau}})^{(\tau-1)(p-1)} \geq C(d(x)+\epsilon^{\frac{1}{\tau}})^{(\tau-1)(p-1)-1} \text { in } \Omega_{\rho}.
\end{equation*}Therefore,
\begin{equation*}
    \int_{\Omega_{\rho}}(-\Delta_p w_\epsilon \psi+(-\Delta)^s_q w_\epsilon) \psi \,dx\geq C(\Gamma \tau)^{p-1} \\
\int_{\Omega_{\rho}}(d(x)+\epsilon^{\frac{1}{\tau}})^{(\tau-1)(p-1)-1} \psi\,dx.
\end{equation*}
Noting that $w_\ep\in \overline{W}^{s,q}(\Omega_\rho)$, by [\citealp{aroranodea}, Theorem 3.3] and taking into account \cref{condf}, we obtain
\begin{equation}{\label{proof8}}
(-\Delta_p w_\epsilon+(\Delta)^s_q w_\epsilon) \geq C_1(\Gamma \tau)^{p-1}(d(x)+\epsilon^{\frac{1}{\tau}})^{-\delta \tau}{f_\epsilon(x)} \geq C_1(\Gamma \tau)^{p-1}\Gamma^{\delta}w_\ep^{-\delta}{f_\epsilon(x)} \quad \text { weakly in } \Omega_{\rho} .    
\end{equation}
By using the lower estimate of $u_\epsilon$ by $v_\epsilon$, we obtain
\begin{equation*}
    f_\epsilon(x)(u_\epsilon+\epsilon)^{-\delta} \leq f(x)(v_\epsilon+\epsilon)^{-\delta} \leq f(x) \eta^{-\delta} d^{-\tau \delta} .
\end{equation*}
Since $f \in L_{\mathrm{loc}}^{\infty}(\Omega)$, we observe that $f(x) \eta^{-\delta} d^{-\tau \delta} \in L_{\mathrm{loc}}^{\infty}(\Omega)$. Next, we claim that $u_\epsilon$ is bounded, independently of $\epsilon$, in $\Omega \backslash \Omega_{\rho}$. Let $\{B_{\rho / 4}(x_i)\}_{i=1, \ldots, m}$ be a finite covering of $\overline{\Omega \backslash \Omega_{\rho}}$ such that $\overline{\Omega \backslash \Omega_{\rho}} \subset \cup_{i=1}^m B_{\rho / 4}(x_i) \subset \Omega \backslash \Omega_{\rho / 2} \subset\subset \Omega .$ Therefore, by $L^{\infty}$ estimate of \cref{local boundedness}, we get
\begin{equation}{\label{proof7}}
\|u_\epsilon\|_{L^{\infty}(B_{\rho / 4}(x_i))}  \leq C\left(\fint_{B_{\rho / 2}(x_i)} u_{\varepsilon}^p d x\right)^{1 / p}+ T_{q-1}(u_\epsilon ; x_i, \frac{\rho}{4})^{(q-1) /(p-1)}+C+\|f_{ \epsilon}(u_\epsilon+\epsilon)^{-\delta}\|_{L^{\infty}(B_{\rho/ 2})}^{1 /(p-1)},    \end{equation}where $C=C(n, p, q, s)>0$ is a constant. We see that the last term of \cref{proof7} is independent of $\epsilon$. For the other terms, we have from \cref{boundedinspace} that $\{u_\epsilon^\theta\}_{\varepsilon}$ is bounded in $W_0^{1, p}(\Omega)$ for some $\theta \geq 1$. Therefore,
\begin{equation*}
    \fint_{B_{\rho / 2}(x_i)} u_{\varepsilon}^p d x\leq C(1+\|u_\epsilon^\theta\|_{L^p(\Omega)}) \leq C(1+\|u_\epsilon^\theta\|_{W_0^{1, p}(\Omega)}) \leq C .
\end{equation*}
Noting that $u_\epsilon=0$ in $\mathbb{R}^n\backslash \Omega$, we obtain using H\"older inequality
\begin{equation*}
    T_{q-1}(u_\epsilon ; x_i, \frac{\rho}{4})^{q-1}\leq C \rho^{sq} \int_{\Omega \backslash B_{\rho / 4}(x_i)} \frac{|u_\epsilon(x)|^{q-1}}{\rho^{n+s q}} d x \leq C \rho^{-n}\|u_{\varepsilon}^\theta\|_{L^{p}(\Omega)}^{\frac{q-1}{\theta}} \leq C. 
\end{equation*}
Hence, \cref{proof7} implies that the sequence $\{\|u_\epsilon\|_{L^{\infty}(\Omega \backslash \Omega_\rho)}\}_\epsilon$ is uniformly bounded with respect to $\epsilon$, say $\|u_\epsilon\|_{L^{\infty}(\Omega \backslash \Omega_{\rho})} \leq K$. Now, we choose $\Gamma$ sufficiently large independent of $\epsilon$  such that $C_1(\Gamma \tau)^{p-1}\Gamma^{\delta}\geq 1$ in \cref{proof8} and
\begin{equation*}
    w_\epsilon=\Gamma(d+\epsilon^{1 / \tau})^\tau\geq \Gamma d(x)^\tau\geq \rho^\tau\Gamma \geq K \geq u_\epsilon \quad \text { in } \Omega \backslash \Omega_{\rho}.
\end{equation*} Then, by comparison principle in $\Omega_\rho$ and the above inequality, we get \begin{equation}{\label{proof9}}
    u_\epsilon \leq \Gamma(d(x)+\ep^{\frac{1}{\tau}})^{\tau}\quad\text{ in }\Omega.
\end{equation}
Taking $\ep\to 0$ in \cref{proof9,proof4}, we get the desired bound.\smallskip\\
\textbf{Case 2} $\beta+\delta>1$ with $\frac{p-\beta}{p+\delta-1}\in [s,1) $ and $\beta\neq p-q^\prime s(p+\delta-1)$\\
The lower bound is already proved in \cref{aproexist} and see also \cref{u_0}. To show the upper bound, we take $w_\ep=\Gamma\bar{w}_\rho$. Proceeding similarly as \cref{proof5,proof8} and using [\citealp{giacomonipq}, Lemma 3.12] one gets for some constant $C$
\begin{equation*}
    -\Delta_{p} w_\ep+(-\Delta)_q^s w_\ep\geq C(\Gamma\tau)^{p-1}(d(x)+\ep^{\frac{1}{\tau}})^{-\delta\tau-\beta}+\Gamma^{q-1}h \quad\text{ weakly in } \Omega_\rho,
\end{equation*}
for some $h\in L^\infty(\Omega_\rho)$ and $h$ is independent of $\ep$. Note that 
\begin{equation*}{\label{proof10}}
  \Gamma^{q-1}  \|h\|_{L^\infty(\Omega_\rho)}\leq \frac{C}{2}\Gamma^{p-1}(d(x)+\ep^{\frac{1}{\tau}})^{-\delta\tau-\beta},
\end{equation*} for sufficiently small $\ep$ and $\rho$ which may depend on $h$ also. Therefore by \cref{condf} it holds 
\begin{equation}{\label{proof12}}
    -\Delta_{p} w_\ep+(-\Delta)_q^s w_\ep\geq C\Gamma^{p-1} (d(x)+\ep^{\frac{1}{\tau}})^{-\delta\tau-\beta}-\Gamma^{q-1}\|h \|_{L^\infty(\Omega_\rho)}\geq \frac{C}{2}C_1\Gamma^{p-1} (d(x)+\ep^{\frac{1}{\tau}})^{-\delta\tau}f_\ep\quad\text{ in } \Omega_\rho.
\end{equation} Furthermore, in $\Omega_{\rho}$, we have
\begin{equation*}
\Gamma^{p-1} \frac{C}{2}C_1 (d(x)+\ep^{\frac{1}{\tau}})^{-\delta\tau}\geq(\Gamma(d(x)+\epsilon^{\frac{1}{\tau}})^\tau)^{-\delta} \geq(\Gamma(d(x)+\epsilon^{\frac{1}{\tau}})^\tau+\ep)^{-\delta} =(\Gamma \bar{w}_\rho+\epsilon)^{-\delta}=(w_\ep+\ep)^{-\delta},    
\end{equation*}
provided $\Gamma^{p-1+\delta}{C}C_1 \geq 2$. Thus, from \cref{proof12}, we obtain
\begin{equation*}
      -\Delta_{p} w_\ep+(-\Delta)_q^s w_\ep\geq f_\ep(w_\ep+\ep)^{-\delta} \quad \text{ weakly in }\Omega_\rho.
\end{equation*}
On account of \cref{condf} and the lower bound of $u_\ep$ inside $\Omega\backslash\Omega_\rho$, we note that
\begin{equation*}
    f_\ep(u_\ep+\ep)^{-\delta}\leq fu_\ep^{-\delta}\leq C_\rho^{-\delta}d(x)^{-\beta}\leq  C_\rho^{-\delta}\rho^{-\beta}\quad \text{ in }\Omega\backslash\Omega_\rho.
\end{equation*}
The rest follows proceeding similarly to the previous case.  \smallskip\\
\textbf{Case 3} $\beta+\delta>1$ with $\beta= p-q^\prime s(p+\delta-1)$\\ The lower bound follows from \cref{aproexist}. To show the upper bound, similarly to case 2 above, we fix $\Gamma>1$ (to be specified later) and choose $\beta_1 \in(\beta, p )$. Then $\beta_1+\delta>1>0$ and $\beta_1 \neq p -q^{\prime} s(p-1+\delta)$. Thus, using [\citealp{aroranodea}, Theorem 3.3] and [\citealp{giacomonipq}, Lemma 3.12], for $\tau_1=\frac{ p-\beta_1}{p-1+\delta} \in(0, 1)$, we have
\begin{equation*}
-\Delta_p(\Gamma \bar{w}_\rho)+(-\Delta)_q^{s}(\Gamma \bar{w}_\rho)\geq \Gamma^{p-1}\frac{C}{2}(d(x)+\ep^{\frac{1}{\tau_1}})^{-\beta_1-\delta\tau_1}\geq \frac{C}{2}C_1\Gamma^{p-1} (d(x)+\ep^{\frac{1}{\tau_1}})^{-\beta_1+\beta}(d(x)+\ep^{\frac{1}{\tau_1}})^{-\delta\tau_1}f_\ep,
\end{equation*}
weakly in $\Omega_{\rho}$, for sufficiently small $\epsilon$. Furthermore, in $\Omega_{\rho}$, we have
\begin{equation*}
\begin{array}{rcl}
    \frac{C}{2}C_1\Gamma^{p-1} (d(x)+\ep^{\frac{1}{\tau_1}})^{-\beta_1+\beta}(d(x)+\ep^{\frac{1}{\tau_1}})^{-\delta\tau_1}&\geq& \frac{C}{2}C_1\Gamma^{p-1} (\rho+1)^{-\beta_1+\beta}(d(x)+\ep^{\frac{1}{\tau_1}})^{-\delta\tau_1}\\&\geq &(\Gamma(d(x)+\ep^{\frac{1}{\tau_1}})^{\tau_1})^{-\delta}\geq (\Gamma\bar{w}_\rho+\ep)^{-\delta},
    \end{array}
\end{equation*}
provided $\Gamma^{p-1+\delta} {C}C_1(\rho+1)^{-\beta_1+\beta} \geq 2$. Then, proceeding similarly to case 1,2 above, we obtain
$
u_\epsilon \leq \Gamma \bar{w}_\rho$ in $ \Omega$, for sufficiently small $\epsilon$. Passing to the limit as $\epsilon \rightarrow 0$, we get $u(x) \leq \Gamma d(x)^{\frac{p-\beta_1}{p-1+\delta} }$ in $ \Omega$, for all $\beta_1\in(\beta,p)$.\smallskip\\
\textbf{Case 4} $\beta+\delta\leq 1$\\ Note that the lower bound has already been obtained in \cref{aproexist}. To show the upper bound, we fix $\bar{\beta}>0$ and $\sigma>0$ such that $\bar{\beta}=(1-\delta) +\sigma(p-1+\delta)>(1-\delta) \geq \beta
$ We further impose the conditions $\sigma<\frac{p-1+p \delta }{p(p-1+\delta)}$ (this implies $\bar{\beta}<2-\frac{1}{p}$ ) and $\sigma \neq 1-q^{\prime} s$ (this implies $\bar{\beta} \neq p -q^{\prime} s(p-1+\delta)$). Thanks to \cref{condf}, we can choose $m \geq 1$ such that $f\leq m\tilde{f} $ in $\Omega$ where $\tilde{f}\in L^\infty_{\mathrm{loc}}(\Omega)$ satisfies for some constants $C_1,C_2$ the inequality $C_1d(x)^{-\bar{\beta}}\leq \tilde{f}(x)\leq C_2d(x)^{-\bar{\beta}}$ in $\Omega_\rho$. Let $v\in W_{\mathrm {loc}}^{1, p}(\Omega)$ be the solution (as obtained in \cref{exis4.2}) to
\begin{equation*}
-\Delta_p v+(-\Delta)_q^{s} v  =m \tilde{f} v^{-\delta}\text{ in }\Omega, \quad  v>0 \text { in } \Omega,   \quad v=0\text{ in }\mathbb{R}^n\backslash\Omega. 
\end{equation*}
By choice of $m$, we see that $u$ is a sub-solution to the above problem. Since $\bar{\beta}<2-\frac{1}{p}$, by applying the weak comparison principle as in \cref{uni}, and by case 2 above, we get
\begin{equation*}
u(x) \leq v(x) \leq \Gamma d(x)^{\frac{p-\bar{\beta}}{p+\delta-1}}=\Gamma d(x)^{1-\sigma} \quad \text { in } \Omega     .
\end{equation*}
Therefore, for any $\bar{\sigma} \in(0, 1)$, we can choose $\sigma \in(0, 1-\bar{\sigma})$ satisfying all the assumptions above. This completes the proof of the theorem.
\subsection{\textbf{Optimal Sobolev regularity: Proof of \cref{optimalregu}}} Define $\tau=\frac{(p-\beta)}{(p+\delta -1)}$, when $\beta\neq p-q^\prime s(p+\delta-1)$. % and $\tau=\frac{p-\beta_1}{p+\delta-1}$ when $\beta=p-q^\prime s(p+\delta-1)$, for any $\beta_1\in(\beta,p)$. 
For $\beta+\delta>1$, let $u_\epsilon$ be the weak solution of \cref{approximated} with $\mathcal{L}=-\Delta_p+(-\Delta)^s_q$. Then, using the boundary behavior of the approximating sequence $u_\epsilon$ as obtained in \cref{mainexis} and taking $\phi=u_\epsilon$ as a test function in the weak formulation of \cref{approximated}, we obtain
\begin{equation*}
\|u_\epsilon\|_{W_0^{1, p}(\Omega)}^p\leq\int_{\Omega} f_\ep u_\epsilon^{1-\delta} d x \leq C_1 \int_{\Omega} d^{(1-\delta) \tau-\beta}(x) d x \leq C  ,  
\end{equation*}
if $(1-\delta)( p-\beta)>(\beta-1)(p+\delta-1) \Leftrightarrow p(\delta-1)+\beta p<(p+\delta-1) \Leftrightarrow \Lambda<1$.
Similarly, by taking $\phi=u_\epsilon^{p(\theta-1)+1}$ as a test function, we obtain for $\theta>\Lambda>1$
\begin{equation*}
\|u^\theta_\epsilon\|_{W_0^{1, p}(\Omega)}^p\leq C\int_{\Omega} f_\ep u_\epsilon^{(\theta-1)(p-1)+\theta-\delta} d x\leq C \int_{\Omega} d^{(\theta p-(p-1+\delta)) \tau-\beta}(x) d x \leq C.
\end{equation*}
Now, by passing limits $\epsilon \rightarrow 0$, we get the solution $u \in W_0^{1, p}(\Omega)$ if $\Lambda<1$ and $u^\theta \in W_0^{1, p}(\Omega)$ if $\theta>\Lambda \geq 1$. %Similar proof holds for the case $\beta= p-q^\prime s(p+\delta-1)$ defining $\tau=\frac{p-\beta_1}{p+\delta -1}$ for $\beta_1\in(\beta,p)$
\\The only if statement for the case $p<\beta+s(p+\delta-1)$ follows from the Hardy inequality and the boundary behavior of the weak solution. Precisely, if $\Lambda \geq 1$, then $u \notin W_0^{1, p}(\Omega)$. Indeed, we have
\begin{equation*}
   \|u_\epsilon\|_{W_0^{1, p}(\Omega)}^p \geq C \int_{\Omega}\left|\frac{u(x)}{d(x)}\right|^p d x \geq C \int_{\Omega} d^{p(\tau-1)}(x) d x=\infty .
\end{equation*} In the same way, if $\theta \in[1, \Lambda]$, then
\begin{equation*}
   \|u^\theta_\epsilon\|_{W_0^{1, p}(\Omega)}^p \geq C \int_{\Omega}\left|\frac{u^\theta(x)}{d(x)}\right|^p d x \geq C \int_{\Omega} d^{p(\theta\tau-1)}(x) d x=\infty , 
\end{equation*}
and we deduce $u^\theta \notin W_0^{1, p}(\Omega)$.
\subsection{\textbf{H\"older regularity: Proof of \cref{holder6}}}
For $\beta+\delta\leq 1$, we have $u\in W^{1,p}_0(\Omega)$. For every $\phi \in C_c^\infty(\Omega)$, $u$ satisfies
\begin{equation*}
    \int_\Omega |\nabla u|^{p-2}\nabla u\cdot\nabla \phi\,dx+\int_{\mathbb{R}^n}\int_{\mathbb{R}^n}\frac{|u(x)-u(y)|^{q-2}(u(x)-u(y))(\phi(x)-\phi(y))}{|x-y|^{n+sq}}dxdy=\int_\Omega fu^{-\delta}\phi\,dx.
\end{equation*}
Now for every $\psi \in W^{1,p}_0(\Omega)$, there exists a sequence of function $0 \leq \psi_k \in C_c^\infty(\Omega) \rightarrow|\psi|$ strongly in $W^{1,p}_0(\Omega)$ as $k \rightarrow \infty$ and pointwise almost everywhere in $\Omega$. We observe that
\begin{equation}{\label{hul}}
    \begin{array}{l}
         \left|\int_{\Omega} fu^{-\delta}\psi\, d x\right| \leq  \int_{\Omega} fu^{-\delta}|\psi| d x\leq \liminf _{k \rightarrow \infty} \int_{\Omega} fu^{-\delta}\psi_k dx\smallskip\\\qquad=  \liminf _{k \rightarrow \infty}\left(\int_\Omega |\nabla u|^{p-2}\nabla u\cdot\nabla \psi_kdx+\int_{\mathbb{R}^{2n}}\frac{|u(x)-u(y)|^{q-2}(u(x)-u(y))(\psi_k(x)-\psi_k(y))}{|x-y|^{n+sq}}dxdy\right)\smallskip\\\qquad\leq  C(\|u\|^{p-1}+\|u\|^{q-1}) \lim _{k \rightarrow \infty}\|\psi_k\| \leq C(\|u\|^{p-1}+\|u\|^{q-1}) \||\psi|\| \leq C(\|u\|^{p-1}+\|u\|^{q-1}) \|\psi\|.
         \end{array}
\end{equation}
for some positive constant $C$. Let $\phi \in W^{1,p}_0(\Omega) $, then there exists a sequence $\{\phi_k\} \subset C_c^\infty(\Omega)$ which converges to $\phi$ strongly in $W^{1,p}_0(\Omega)$. We claim that

\begin{equation*}
\lim _{k \rightarrow \infty}    \int_\Omega fu^{-\delta}\phi_kdx=\int_\Omega fu^{-\delta}\phi\,dx.
\end{equation*}
Indeed, using $\psi=\phi_k-\phi$ in \cref{hul}, we obtain
\begin{equation}{\label{hul1}}
    \lim _{k \rightarrow \infty}\left|\int_{\Omega} fu^{-\delta}(\phi_k-\phi) d x\right| \leq C(\|u\|^{p-1}+\|u\|^{q-1}) \lim _{k\rightarrow \infty}\|\phi_k-\phi\|=0 .
\end{equation}
Again, since $\phi_k \rightarrow \phi$ strongly in $W^{1,p}_0(\Omega)$ as $k \rightarrow \infty$, we have
\begin{equation}{\label{hul2}}
\begin{array}{l}
  \lim _{k \rightarrow \infty}\left(   \int_\Omega |\nabla u|^{p-2}\nabla u\cdot\nabla \phi_kdx+\int_{\mathbb{R}^n}\int_{\mathbb{R}^n}\frac{|u(x)-u(y)|^{q-2}(u(x)-u(y))(\phi_k(x)-\phi_k(y))}{|x-y|^{n+sq}}dxdy\right) \\
   = \int_\Omega |\nabla u|^{p-2}\nabla u\cdot\nabla \phi\,dx+\int_{\mathbb{R}^n}\int_{\mathbb{R}^n}\frac{|u(x)-u(y)|^{q-2}(u(x)-u(y))(\phi(x)-\phi(y))}{|x-y|^{n+sq}}dxdy.\end{array}
\end{equation}
Merging \cref{hul1,hul2} we get 
\begin{equation*}
    \int_\Omega |\nabla u|^{p-2}\nabla u\cdot\nabla \phi\,dx+\int_{\mathbb{R}^n}\int_{\mathbb{R}^n}\frac{|u(x)-u(y)|^{q-2}(u(x)-u(y))(\phi(x)-\phi(y))}{|x-y|^{n+sq}}dxdy=\int_\Omega fu^{-\delta}\phi\,dx, 
\end{equation*}
for every $\phi \in W^{1,p}_0(\Omega)$. One can now apply \cref{holder4} and the boundary behavior of $u$ obtained as in \cref{mainexis} to get the result for $\beta+\delta<1$.\smallskip\\Note now that a solution of \cref{problem} defined in the sense of \cref{mainweaksol} and as obtained in \cref{exis4.2} is indeed a local weak solution in the sense of \cref{11}, which can be deduced by a standard density argument. Also by \cref{holder2} (or \cref{holder1}), $u\in C^{0,\eta}_{\mathrm{loc}}(\Omega)$ for all $\eta\in (0,1)$. Further by \cref{holder3}, $u\in C^{1,\gamma}_{\mathrm{loc}}(\Omega)$ for some $\gamma\in (0,1)$. Also, by the boundary behavior of $u$ as obtained in \cref{mainexis}, the corresponding boundary H\"older regularity follows for the case (ii),(iii),(iv). For the sake of completeness, we include a few details.
In this regard, let $x, y \in \Omega_\rho$ and suppose without loss of generality $d(x) \geq d(y)$. Now two cases occur:\\
(I) either $|x-y| < {d(x)}/{64}$, in which case set $64 R=d(x)$ and $y \in B_{R}(x)$. Hence, we apply the local H\"older regularity in $B_{R}(x)$ and we obtain the regularity.\\
(II) or $|x-y| \geq {d(x)}/{64} \geq {d(y)}/{64}$ in which case we claim that the exponent of global H\"older regularity matches with the power $\zeta$ such that $u(x)\leq Cd(x)^\zeta$. Indeed for a constant $C>0$ large enough, we get
\begin{equation*}
\frac{|u(x)-u(y)|}{|x-y|^{\zeta}} \leq \frac{|u(x)|}{|x-y|^{\zeta}}+\frac{|u(y)|}{|x-y|^{\zeta}} \leq C_1\left(\frac{u(x)}{d^{\zeta}(x)}+\frac{u(y)}{d^{\zeta}(y)}\right) \leq C,    
\end{equation*}
Thus, we get our claim, and the proof is complete.
\subsection{\textbf{Non-existence: Proof of \cref{nonexist}}}
On the contrary assume $\exists$ a solution $v \in W_{l o c}^{1, p}(\Omega)$ of \cref{problem} and $\gamma_0 \geq 1$ such that $v^{\gamma_0} \in W_0^{1, p}(\Omega)$. For $\tilde{\beta}<p$, which will be specified later, we choose $f_{\tilde{\beta}} \in L_{l o c}^{\infty}(\Omega)$ such that $m f_{\tilde{\beta}}(x) \leq f(x)$ a.e. in $\Omega$, for some constant $m \in(0,1)$ independent of $\tilde{\beta}$, and for some positive constants $c_3, c_4$,
\begin{equation*}
    c_3 d(x)^{-\tilde{\beta}} \leq m f_{\tilde{\beta}}(x) \leq c_4 d(x)^{-\tilde{\beta}} \text { in } \Omega_{\rho} .
\end{equation*}We now construct a suitable subsolution near the boundary $\partial \Omega$, to arrive at some contradiction. For $\epsilon>0$, let $w_\epsilon \in W_0^{1, p}(\Omega)$ be the unique solution to the following problem (see \cref{aproexist})
\begin{equation}{\label{nonexi1}}
    -\Delta_p w_\epsilon+(-\Delta)^s_q w_\epsilon=m f_{\tilde{\beta}, \epsilon}(x)\left(w_\epsilon+\epsilon\right)^{-\delta},
\end{equation}
where $f_{\tilde{\beta}, \epsilon}(x):=\left(f_{\tilde{\beta}}(x)^{\frac{-1}{\tilde{\beta}}}+\epsilon^{\frac{p-1+\delta}{p-\tilde{\beta}}}\right)^{-\tilde{\beta}}$ if $f_{\tilde{\beta}}(x)>0$ and 0 otherwise.
Next, we will prove $w_\epsilon \leq u_0$ in $\Omega$. Note that $w_\epsilon \in C^{1, \alpha}(\overline{\Omega})$ (by \cref{holder4}) and $w_\epsilon=0$ on $\mathbb{R}^n\backslash \Omega$. Hence, for given $\sigma>0$, $\exists\,\varrho>0$ such that $w_\epsilon \leq \sigma / 2$ in $\Omega_\varrho$. Moreover, $w_\epsilon-v-\sigma \leq-\sigma / 2<0$ in $\Omega_\varrho$, because $v\geq 0$, and we have $\operatorname{supp}(w_\epsilon-v-\sigma)^{+} \subset \Omega \backslash \Omega_\varrho \subset\subset \Omega .
$ Therefore, $(w_\epsilon-v-\sigma)^{+} \in W_0^{1, p}(\Omega)$ and from the weak formulation of \cref{nonexi1}, we obtain
\begin{equation}{\label{nonexi2}}
\begin{array}{l}
    \int_{\Omega}|\nabla w_\epsilon|^{p-2} \nabla w_\epsilon \cdot\nabla T_k((w_\epsilon-v-\sigma)^{+})dx+\iint_{\mathbb{R}^{2n}}\mathcal{A}w_\ep(x,y)(T_k((w_\epsilon-v-\sigma)^{+})(x)-T_k((w_\epsilon-v-\sigma)^{+})(y))d\mu\\=\int_{\Omega} \frac{m f_{\tilde{\beta}, \epsilon}(x)}{(w_\epsilon+\epsilon)^\delta} T_k((w_\epsilon-v-\sigma)^{+}),
    \end{array}
\end{equation}
where $\mathcal{A}g(x,y)=|g(x)-g(y)|^{q-2}(g(x)-g(y)), \, d\mu=\frac{dxdy}{|x-y|^{n+qs}},\, T_k(t):=\min \{t, k\}$ for $k>0$ and $t \geq 0$. Furthermore, since $v\in W_{\mathrm{loc }}^{1, p}(\Omega)$ is a weak solution to \cref{problem}, for all $\psi \in C_c^{\infty}(\Omega)$, we have

\begin{equation}{\label{nonexi3}}
\int_{\Omega}|\nabla v|^{p-2} \nabla v\cdot \nabla \psi+\iint_{\mathbb{R}^{2n}}\mathcal{A}v(x,y)(\psi(x)-\psi(y))d\mu=\int_{\Omega} f(x)v^{-\delta} \psi .    
\end{equation}
Let $\{\psi_j\} \in C_c^{\infty}(\Omega)$ be such that $\psi_j \rightarrow(w_\epsilon-v-\sigma)^{+}$ in $W_0^{1, p}(\Omega)$. Set $\tilde{\psi}_{j, k}:=T_k(\min\{(w_\epsilon-v-\sigma)^{+}, \psi_j^{+}\})$. Then, $\tilde{\psi}_{j, k} \in W_0^{1, p}(\Omega) \cap L_c^{\infty}(\Omega)$, therefore by a standard density argument, from \cref{nonexi3}, we infer that
\begin{equation*}
\int_{\Omega}|\nabla v|^{p-2} \nabla v\cdot \nabla \tilde{\psi}_{j, k}+\iint_{\mathbb{R}^{2n}}\mathcal{A}v(x,y)(\tilde{\psi}_{j, k}(x)-\tilde{\psi}_{j, k}(y))d\mu=\int_{\Omega} f(x)v^{-\delta} \tilde{\psi}_{j, k} .    
\end{equation*}
Using the fact that $\operatorname{supp}(w_\epsilon-v-\sigma)^{+} \subset\subset \Omega$ and Fatou lemma, we obtain
\begin{equation}{\label{nonexi4}}
    \begin{array}{l}
         \int_{\Omega}|\nabla v|^{p-2} \nabla v\cdot \nabla  T_k((w_\epsilon-v-\sigma)^{+}+\iint_{\mathbb{R}^{2n}}\mathcal{A}v(x,y)( T_k((w_\epsilon-v-\sigma)^{+})(x)- T_k((w_\epsilon-v-\sigma)^{+})(y))d\mu\smallskip\\\geq\int_{\Omega} f(x) v^{-\delta} T_k((w_\epsilon-v-\sigma)^{+}) \geq\int_{\Omega} m f_{\tilde{\beta}, \epsilon}(x) v^{-\delta}  T_k((w_\epsilon-v-\sigma)^{+}).
    \end{array}
\end{equation}
Deducting \cref{nonexi4} from \cref{nonexi2}, we infer that
\begin{equation}{\label{nonexi5}}
    \begin{array}{l}
    \quad\int_{\Omega} (|\nabla w_\epsilon|^{p-2} \nabla w_\epsilon-|\nabla v|^{p-2} \nabla v) \cdot\nabla T_k((w_\epsilon-v-\sigma)^{+}) d x \smallskip\\\quad+\iint_{\mathbb{R}^{2n}}(\mathcal{A}w_\ep(x,y)-\mathcal{A}v(x,y))( T_k((w_\epsilon-v-\sigma)^{+})(x)- T_k((w_\epsilon-v-\sigma)^{+})(y))d\mu\smallskip\\\leq \int_{\Omega} m f_{\tilde{\beta}, \epsilon}(x)((w_\epsilon+\epsilon)^{-\delta}-v^{-\delta}) T_k((w_\epsilon-v-\sigma)^{+}) d x 
\leq  \int_{\Omega} m f_{\tilde{\beta}, \epsilon}(x)(w_\epsilon^{-\delta}-v^{-\delta}) T_k((w_\epsilon-v-\sigma)^{+}) d x .
    \end{array}
\end{equation}
Following the proof of [\citealp{scase}, Theorem 4.2] we have
\begin{equation*}
  \mathcal{A} w_\ep(x, y)(T_k((w_\epsilon-v-\sigma)^{+})(x)- T_k((w_\epsilon-v-\sigma)^{+})(y)) =\mathcal{A} w_\ep(x, y)((w_\ep-v)(x)-(w_\ep-v)(y)) H(x, y)  
\end{equation*}
and
\begin{equation*}
  \mathcal{A} v(x, y)(T_k((w_\epsilon-v-\sigma)^{+})(x)- T_k((w_\epsilon-v-\sigma)^{+})(y)) =\mathcal{A} v(x, y)((w_\ep-v)(x)-(w_\ep-v)(y)) H(x, y)  
\end{equation*}
with
\begin{equation*}
    H(x, y):=\frac{T_k((w_\epsilon-v-\sigma)^{+})(x)- T_k((w_\epsilon-v-\sigma)^{+})(y)}{(w_\ep-v)(x)-(w_\ep-v)(y)}
\end{equation*}
where $(w_\ep-v)(x)-(w_\ep-v)(y) \neq 0$. Using the above two identities and \cref{p case} and the non-negativity of $H(x,y)$ we deduce from \cref{nonexi5}
\begin{equation*}
    \begin{array}{l}
\quad C \int_{\Omega}(|\nabla w_\epsilon|+|\nabla v|)^{p-2}|\nabla T_k((w_\epsilon-v-\sigma)^{+})|^2 \smallskip\\\quad+C \int_{\mathbb{R}^n} \int_{\mathbb{R}^n} \frac{(|w_\ep(x)-w_\ep(y)|+|v(x)-v(y)|)^{q-2}((w_\ep-v)(x)-(w_\ep-v)(y))^2}{|x-y|^{n+sq}} H(x, y) d x d y \smallskip\\\leq \int_{\Omega} (|\nabla w_\epsilon|^{p-2} \nabla w_\epsilon-|\nabla v|^{p-2} \nabla v) \cdot\nabla T_k((w_\epsilon-v-\sigma)^{+}) d x \smallskip\\\quad+\iint_{\mathbb{R}^{2n}}(\mathcal{A}w_\ep(x,y)-\mathcal{A}v(x,y))( T_k((w_\epsilon-v-\sigma)^{+})(x)- T_k((w_\epsilon-v-\sigma)^{+})(y))d\mu\smallskip\\\leq\int_{\Omega} m f_{\tilde{\beta}, \epsilon}(x)(w_\epsilon^{-\delta}-v^{-\delta}) T_k((w_\epsilon-v-\sigma)^{+}) d x \leq 0,
    \end{array}
\end{equation*}
this implies that $T_k((w_\epsilon-v-\sigma)^{+})=0$ a.e. in $\Omega$ and since it is true for every $k>0$, we get $w_\epsilon \leq v+\sigma$ in $\Omega$. Moreover, the arbitrariness of $\sigma$ proves $w_\epsilon \leq v$ in $\Omega$. Owing to the estimates of $w_\epsilon$ given by \cref{mainexis}, we have
$
\eta\left(\left(d(x)+\epsilon^{\frac{p-1+\delta}{p-\tilde{\beta}}}\right)^{\frac{p-\tilde{\beta}}{p-1+\delta}}-\epsilon\right) \leq w_\epsilon(x) \leq v(x) \text { in } \Omega ,
$ for each $\tilde{\beta}>p-s(\delta+p-1)$. Since $v^{\gamma_0} \in W_0^{1, p}(\Omega)$, by Hardy inequality, we obtain
\begin{equation*}
    \eta^{\gamma_0 p} \int_{\Omega} \frac{\left(\left(d(x)+\epsilon^{\frac{p-1+\delta}{p-\tilde{\beta}}}\right)^{\frac{p-\tilde{\beta}}{p-1+\delta}}-\epsilon\right) ^{\gamma_0 p}}{d^p} \leq C \int_{\Omega} \frac{v^{\gamma_0 p}}{d^p}<\infty,
\end{equation*}
choosing $p-s(\delta+p-1)<\tilde{\beta}<p$ sufficiently close to $p$ and taking $\epsilon \downarrow 0$, we obtain the quantity on the left side is not finite, which yields a contraction. This completes the proof of \cref{nonexist}.
\section{Application to a perturbed problem: Proof of \cref{finalcor}}{\label{appli}}
\textbf{Step 1:} We follow the argument used in \cref{prtb}. Let $h$ and $\phi$ as defined in \cref{hhh,phii}. Let $v\in W^{1,p}_0(\Omega)$ be any weak solution to 
\begin{equation*}
        -\Delta_p v+(-\Delta)^s_q v+K(x)((v-1)^+)^{p-1}=F(x) +{\lambda}{(v^+)^{-\delta}}\text{ in }\Omega; \quad v=0\text{ in }\mathbb{R}^n\backslash\Omega,
    \end{equation*} 
    where $K,F\in L^{n/p}(\Omega)$. We take $\phi((v-1)^+) \in W_0^{1, p}(\Omega)$ as a test function to get
\begin{equation}{\label{coroo1ar}}
\begin{array}{l}
       \int_\Omega (h^{\prime}((v-1)^+))^p|\nabla v|^{p-2}\nabla v\cdot \nabla (v-1)^+ dx\\+\int_{\mathbb{R}^{n}}\int_{\mathbb{R}^{n}}\frac{|v(x)-v(y)|^{q-2}(v(x)-v(y))(\phi((v-1)^+(x))-\phi((v-1)^+(y)))}{|x-y|^{n+sq}}dxdy\smallskip\\+\int_\Omega K \phi ((v-1)^+)((v-1)^+)^{p-1}dx =\int_\Omega F \phi((v-1)^+)dx +\int_\Omega \frac{\lambda}{(v^+)^{\delta}}\phi ((v-1)^+)dx.
\end{array}
\end{equation}
We show that the nonlocal integral %present in the left 
(denote it by $J$) is non-negative. For this, the following cases can occur:
\\(i) If $v(x),v(y)\leq 1$, then $\phi((v-1)^+(x))=\phi((v-1)^+(y)=0$.\\(ii) If $v(x)>1\geq v(y)$, then by the monotonicity of the map $t\mapsto |t|^{q-2}t$, $t\in \mathbb{R}$, and non-negativity of $\phi$, we get 
\begin{equation*}
\begin{array}{c}
    |v(x)-v(y)|^{q-2}(v(x)-v(y))(\phi((v-1)^+(x))-\phi((v-1)^+(y)))=|v(x)-v(y)|^{q-2}(v(x)-v(y))\phi((v-1)^+(x))\smallskip\\\geq |v(x)-1|^{q-2}(v(x)-1)\phi((v-1)^+(x))\smallskip\\= |(v-1)^+(x)-(v-1)^+(y)|^{q-2}((v-1)^+(x)-(v-1)^+(y))(\phi((v-1)^+(x))-\phi((v-1)^+(y)))\geq 0.
\end{array}
\end{equation*}
Note that we have used [\citealp{parini}, Lemma A.2] in the last line. Due to symmetry, the same holds for $v(y)>1\geq v(x)$.
\\(iii) Finally if $v(x), v(y)>1$, then again by [\citealp{parini}, Lemma A.2], we have 
\begin{equation*}
\begin{array}{l}
    |v(x)-v(y)|^{q-2}(v(x)-v(y))(\phi((v-1)^+(x))-\phi((v-1)^+(y)))\smallskip\\=|(v-1)^+(x)-(v-1)^+(y)|^{q-2}((v-1)^+(x)-(v-1)^+(y))(\phi((v-1)^+(x))-\phi((v-1)^+(y)))\geq 0.
\end{array}
\end{equation*}
Combining above three cases and noting that $(v^+)^{-\delta} \phi((v-1)^+)\neq0$ only if $v\geq 1$, \cref{coroo1ar} yields 
\begin{equation*}{\label{hol7000}}
\begin{array}{l}
       \int_\Omega(h^{\prime}((v-1)^+))^p|\nabla (v-1)^+|^p dx+\int_\Omega K \phi ((v-1)^+)((v-1)^+)^{p-1}dx\leq\int_\Omega (F+\lambda) \phi((v-1)^+)dx  .
\end{array}
\end{equation*}
Observe that $F+\lambda$ is also in $ L^{n/p}(\Omega)$, and thus one can follow the same procedure as the proof of \cref{prtb}, to get $(v-1)^+\in L^t(\Omega)$ for all $t\in [1,\infty)$.\smallskip\\\textbf{Step 2} Now let $ u\in W^{1,p}_0(\Omega)$ be a solution to \cref{problem2}. %As $u>0$ in $\Omega$, 
Set $
    K(x)= b(x, u) /(1+((u-1)^+)^{p-1}) .$ Then $K$ satisfies
\begin{equation*}
    |K(x)| \leq C_b\frac{1+|u|^{p^*-1}}{1+((u-1)^+)^{p-1}}=C_b\frac{1+|u-1+1|^{p^*-1}}{1+((u-1)^+)^{p-1}}\leq C_1+C_2 \frac{|u-1|^{p^*-1}}{1+((u-1)^+)^{p-1}}\leq C_1+C_2(1+((u-1)^+)^{p^*-p}),
\end{equation*}
$C_1, C_2$ being constants. As $p^*-p=p^2 /(n-p)$ and $(u-1)^+\in L^{p^*}(\Omega)$ we deduce that $K \in L^{n/ p}(\Omega)$. Then as $u>0$ in $\Omega$, \cref{problem2} can be written in the form\begin{equation*}
    -\Delta_p u+(-\Delta )^s_q u=K(x)((u-1)^+)^{p-1}+ K(x) +\lambda (u^+)^{-\delta}.
\end{equation*}
By step 1, we deduce that $(u-1)^+ \in %\bigcap_{1 \leq t<+\infty} 
L^t(\Omega)$ for each $t\in[1,\infty)$. Hence $K(x)((u-1)^+)^{p-1}+ K(x) \in %\bigcap_{1 \leq t<+\infty} 
L^t(\Omega)$ for each $t\in[1,\infty)$, i.e. $K(x)((u-1)^+)^{p-1}+ K(x) \in L^r(\Omega)$ where $r>n/p$ if $p\leq n$ and $r=1$ if $p>n$. For simplicity we write the equation satisfied by $u$ as 
\begin{equation*}
    -\Delta_p u+(-\Delta )^s_q u=\lambda (u^+)^{-\delta}+G(x),
\end{equation*}
where $G(x)\in L^r(\Omega)$, $r$ is described above. Note that for any $k\geq 1$, $\int_\Omega \lambda (u^+)^{-\delta}(u-k)^+dx\leq \int_\Omega \lambda (u-k)^+dx$, and then one can follow the steps in [\citealp{MG}, Proposition 2.1], to conclude $u\in L^\infty(\Omega)$.\smallskip\\\textbf{Step 3:} As $u \in L^{\infty}(\Omega)$,
\begin{equation*}
    -\Delta_p u+(-\Delta)_q^{s} u=\lambda u^{-\delta}+b(x, u) \leq C_b\left(\lambda+\|u\|_{L^{\infty}(\Omega)}^\delta+\|u\|_{L^{\infty}(\Omega)}^{p^*-1+\delta}\right) u^{-\delta}:=\lambda_* u^{-\delta} .
\end{equation*}
Let $v \in W_0^{1, p}(\Omega)$ be the solution of 
\begin{equation*}
\begin{array}{c}
-\Delta_pv+(-\Delta)_q^s v={\lambda_*}{v^{-\delta}}\text { in } \Omega, \quad v>0 \text{ in } \Omega,\quad v =0  \text { in }\mathbb{R}^n \backslash \Omega;
\end{array}
\end{equation*} as obtained by \cref{mainexis} (put $\beta=0$ in \cref{condf}). Then by weak comparison principle (note that by the same argument, as given in the first case of proof of \cref{holder6}, it can be deduced that test functions can be taken from $W_0^{1,p}(\Omega)$), and the boundary behavior of $v$, we obtain
\begin{equation*}
  \eta d(x)\leq  u(x) \leq v(x) \leq \Gamma d(x)^{1-\sigma} \quad \text { in } \Omega,\quad \text{ for all } \sigma \in(0, 1),
\end{equation*}
for some $\eta,\Gamma>0$, depending on $\lambda, p, \delta $ and $\|u\|_{L^{\infty}(\Omega)}$. We obtained the lower bound on $u$ as $u$ is a super-solution to 
\begin{equation*}\begin{array}{c}
-\Delta_pu+(-\Delta)_q^s u={\lambda}{(u+1)^{-\delta}}\text { in } \Omega, \quad u>0 \text{ in } \Omega,\quad u =0  \text { in }\mathbb{R}^n \backslash \Omega.
\end{array}
\end{equation*} 
Finally we have $ \lambda u^{-\delta}+b(x,u)\leq cd(x)^{-\delta}+C_b(1+\|u\|_{L^\infty(\Omega)}^{p^*-1})\leq C_1 d(x)^{-\delta}$, for some constant $C_1$. Therefore using \cref{holder4}, we get the $C^{1, \gamma}$ regularity of $u$ in $\overline{\Omega}$.
\section*{Data availability} There are no associated data concerning this manuscript.
\section*{Conflict of interest} We declare no Conflict of interest.
\bibliography{main.bib}

\bibliographystyle{plain}

%\printbibliography

\end{document}